\providecommand{\abs}[1]{\ensuremath{\left\lvert#1\right\rvert}\xspace}
\providecommand{\norm}[1]{\ensuremath{\left\lVert#1\right\rVert}\xspace}
\providecommand{\defin}[1]{{\em #1}}
\providecommand{\C}{\ensuremath{{\mathbb C}}\xspace}
\providecommand{\R}{\ensuremath{{\mathbb R}}\xspace}
\providecommand{\N}{\ensuremath{{\mathbb N}}\xspace}
\providecommand{\Z}{\ensuremath{{\mathbb Z}}\xspace}
\providecommand{\Q}{\ensuremath{{\mathbb Q}}\xspace}
\providecommand{\T}{\ensuremath{{\mathbb T}}\xspace}
\providecommand{\id}{\ensuremath{{\mathbf 1}}\xspace}
\providecommand{\idfunc}{\ensuremath{\mathbf{1}}}
\providecommand{\eqsp}{\quad}
\providecommand{\setsp}{\;}
\providecommand{\dm}{\ensuremath{\,d}}
\providecommand{\dmes}[1]{\ensuremath{\,{\mathrm d}#1}}
\DeclareMathOperator{\spec}{spec}
\DeclareMathOperator{\Span}{Span}
\DeclareMathOperator{\im}{Im}
\DeclareMathOperator{\supp}{supp}
\DeclareMathOperator{\dist}{dist}
\theoremstyle{plain}
\newtheorem{theorem}{Theorem}[section]
\newtheorem{prop}[theorem]{Proposition}
\newtheorem{lemma}[theorem]{Lemma}
\newtheorem{cor}[theorem]{Corollary}
\theoremstyle{definition}
\newtheorem{remark}[theorem]{Remark}
\newtheorem{example}[theorem]{Example}
\numberwithin{equation}{section}
\author{Yu. I. Lyubich}
\title[Cohomological equations]
{The cohomological equations in nonsmooth categories}
\date{}
\begin{document}


\bibliographystyle{amsplain}

\footnotetext[1]{{\em 2000 Mathematics Subject Classification:39B52, 37A30, 40G05.}
\newline\indent {\em Keywords: cohomological equation, coboundary, 
normal solvability.} }

\begin{abstract} 
The functional equation $\varphi(Fx) - \varphi(x) = \gamma(x)$ 
is considered in topological, measurable and related categories 
from the point of view of functional analysis and general 
theory of dynamical systems. The material is presented in the form of 
a self-contained survey. 
\end{abstract}

\maketitle

1. Introduction 
  
2. Existence of invariant measures

3. Discrete  systems 

4. Ces\'aro summation

5. Resolving functionals 

6. The Gottschalk - Hedlund Theorem (GHT)

7. The c.e. on topological groups

8. An almost periodic counterpart of GHT

9. Recurrence and ergodicity. Some applications 

10. The total attractor

11. The normal solvability in $C(X)$

12. Absence of measurable solutions

13. Summation of divergent  series

\section{Introduction }
\label{sec:intro}

This is an extended version of the author talk  
at the conference ``Operator Theory and Applications: Perspectives and Challenges'' 
(18.03-28.03, 2010, Jurata, Poland) supported by the project TODEQ.  
I am grateful to Professor Zemanek for the suggestion to write this paper.  
 
The cohomological equations (c.e.) and some of their generalizations naturally 
appear and play an important role in many areas of mathematics: 
group representations \cite{kirillov67}, \cite{moore80}, 
dynamical systems and ergodic theory \cite{anosov73}, \cite{gottschalk55}, 
\cite{katok01}, \cite{katok95},\cite{livsh72}, \cite{schmidt77}, stochastic processes  
\cite{furst60}, singularity theory \cite{arnold}, 
local classification of smooth mappings \cite{belitskii03}, summation of 
divergent series \cite{lyubich08-axit}, etc. We consider the c.e. 
as a subject of functional analysis. This approach originates from 
Browder's paper \cite{browder58}.

By definition, the {\em c.e.} is a linear functional equation
\begin{equation}
  \label{eq:1}
  \varphi(Fx) - \varphi(x) = \gamma(x), \eqsp x\in X, 
\end{equation}
where $F$ is a self-mapping of a nonempty set $X$, $\gamma$ is a given
real-  or complex-valued function on $X$, and $\varphi$ is an unknown
function. Note that the complex case in \eqref{eq:1} immediately 
reduces to the real one by separation the real and imaginary parts. For 
this reason we assume that all functions under consideration are real-valued, 
unless otherwise stated. The term {\em cohomological} is borrowed from a 
topological language where the {\em coboundary} means a function $\gamma$ 
of the form \eqref{eq:1}, c.f. e.g. \cite{katok01}. (In \cite{anosov73} and 
in some subsequent papers the term ``homological'' is used.) 

In physical language the set $X$ is the {\em space of states} of a {\em dynamical 
system} $(X,F)$ with the discrete {\em time variable} $n\in\N = \{0,1,2,\cdots\}$ and with 
the {\em evolutionary operator} $F$. By the standard definition of the latter, if 
$x\in X$ is the state of the system at a moment $n$ then $Fx$ is the state at the 
moment $n+1$. Thus, if $x$ is a state at the moment $n=0$ (the {\em inital state}) 
then the corresponding {\em trajectory} or {\em orbit} is 
\[
O_F(x)=\left(F^n x\right)_{n\geq 0},\eqsp x\in X. 
\]
In the c.e. \eqref{eq:1} the functions $\gamma$ and $\varphi$ relate 
respectively to the {\em input} and {\em ouput} of the system $(X,F)$. In these terms 
to solve the equation \eqref{eq:1} means to find out the response of the system 
to a given sequence of input signals. This problem may be especially nontrivial 
if a solution has to belong to a prescribed functional space.  

Since the equation \eqref{eq:1} is linear, it is naturally to consider it 
in a subspace $E$ of the space $\Phi(X)$ of all scalar 
functions on $X$. More generally, 
$\varphi$ should be find in $E$, while $\gamma$ is given in another subspace. 
In any case the subspaces under consideration have to be invariant for the 
{\em Koopman operator} $T_F\varphi=\varphi\circ F$.  Accordingly , the c.e. \eqref{eq:1} 
can be rewritten as 
\begin{equation}
  \label{eq:11} 
  T_F\varphi-\varphi=\gamma.  
\end{equation}

In any subspace $E$ the simplest c.e. is the {\em homogeneous} one, 
i.e such that $\gamma=0$. It is always solvable, 
its {\em trivial solution}  is $\varphi = 0$. If the function  $\id(x)\equiv 1$ belongs to 
$E$ then all constants are solutions. The general solution consists 
of the {\em invariant functions} $\psi\in E$, i.e. such  that $T_F\psi=\psi$ 
or, equivalently,$\psi\in\ker(T_F-I)$ where $I$ is the identity operator. 
Like any linear equation (algebraic, differential, etc.), if the nonhomogeneous 
c.e. \eqref{eq:1} has a solution $\varphi_0\in E$ then its general solution in $E$ 
is $\varphi_0+\psi$ where $\psi$ is the general solution to 
corresponding homogeneous equation.

For a dynamical system $(X,F)$ a subset $M\subset X$ is called {\em invariant} if 
$FM\subset M$. In this case the {\em subsystem} $(M,F|M)$ is well defined. 
If the complement $X\setminus M$ is also invariant then $M$ is called 
{\em completely invariant}. For this property it is necessary and sufficient 
that the indicator function of $M$ is invariant.

The underlying  subspace $E$ is usually determined by a structure on $X$ such as 
topology, measure, etc. We consider the following ``nonsmooth'' situaions: 

a) $X$ is an arbitrary set, $E=\Phi(X)$ (Sections \ref{sec:disy},  
\ref{sec:cesu}) or $E=B(X)$, the space of bounded functions (Section \ref{sec:sofu}); 
 
b) $X$ is a topological space, $E=C(X)$, the space of continuous functions,  
or $E=CB(X)= C(X)\cap B(X)$ (Sections \ref{sec:coso}, \ref{sec:also}, 
\ref{sec:noso}, \ref{sec:meso}); 
 
c) $X=G$, where $G$ is a commutative topological group, $E=C(G)$ 
or $E=AP(G)$, the space of almost periodic functions (Sections \ref{sec:togr},
\ref{sec:also}, \ref{sec:noso}, \ref{sec:meso});

d) $X$ is a set with a measure $\mu$, $E= M(X,\mu)$, the space of all measurable functions  
or $E=L_p(X,\mu)$ , $1\leq p\leq\infty$ (Sections \ref{sec:sofu}, \ref{sec:reer},
\ref{sec:meso}, \ref{sec:sods}). 

In each of these cases the mapping $F$ should be a morphism of the 
corresponding category. Thus, in the case a) $F$ can be any self-mapping of $X$ , while in 
the case b) it should be continuous,  
and then the dynamical system $(X,F)$ is called {\em topological}. 
This system is called {\em compact} or {\em discrete}
if such is the space $X$. Obviosly, a) can be considered as b) with 
discrete topology on $X$. 

A topological dynamical system $(X,F)$ (as well as $F$ itself) 
is called {\em minimal} if all orbits are dense. 
A weaker property is the {\em topological transitivity} meaning that there exists a 
dense orbit. For the minimality it is necessary and sufficient that 
any closed invariant subset is either $\emptyset$ or $X$. If $(X,F)$ is 
topological transitive then all continuous invariant functions are constants. 

In the case c) we only consider $F = \tau_g$ that is the shift by an element $g\in G$: 
$\tau_gx = gx$, $x\in G$. 

In the case d) $F$ has to be {\em measurable}, i.e. such that for every measurable 
set $M\subset X$ the preimage $F^{-1}M$ is measurable. The system $(X,F)$ 
is also called {\em measurable} in this case. We assume that  
$M=X$ is measurable and $0<\mu(X)\leq\infty$. If $\mu(X)<\infty$, i.e.  the measure $\mu$  
is {\em finite}, then one can normalize it in order to get $\mu(X)=1$. 
As a rule, we will deal with this case.
  
Later on the abbreviation ``a.e.'' means ``almost everywhere'' with respect to 
a given measure $\mu$. We assume that if a set $M$ is measurable and $\mu(M)=0$ 
then all $N\subset M$ are measurable and then $\mu(N)=0$ automatically.  

In the measurable situation it makes sense to consider the {\em solutions a.e.}  
to the equation \eqref{eq:1}. Every such a solution $\varphi(x)$ has to 
satisfy \eqref{eq:1} on an invariant measurable subset 
$Y_{\varphi}\subset X$ such that $\mu(X\setminus Y_{\varphi})=0$. 
As a rule, we mean this case when speaking about solutions  
in $M(X,\mu)$ or $L_p(X,\mu)$.  

The measure $\mu$ is called 
{\em invariant} if $\mu(F^{-1}M)=\mu(M)$ for every measurable set $M\subset X$. 
If $\mu$ is finite then it is invariant if and only if  
\begin{equation}
\label{eq:inme}
\int_X\phi(Fx)\dmes\mu = \int_X\phi(x)\dmes\mu,\eqsp\phi\in L_1(X,\mu). 
\end{equation}
The Krylov-Bogolyubov theorem 
\cite{krybog37} states that there exists an invariant regular Borel measure for any compact 
dynamical system $(X,F)$. In this case $\mu$ is finite and its invariance is equivalent 
to \eqref{eq:inme} with $\phi\in C(X)$. There are several proofs of this theorem 
\cite{katok95}, \cite{sinaietalergodictheory}, \cite{krybog37}, \cite{lyubich92-funcan}. 
In Section \ref{sec:exin} we give another proof based on a ``cohomological'' lemma. 

On any locally compact topological group there exists a 
regular Borel measure invariant for all left (for definiteness) shifts, the 
{\em Haar measure} \cite{haar33}, \cite{halmos50}. This measure is unique up to 
a multiplicative constant. This is finite if and only if the group is compact. 
In this case we assume that the Haar measure is normalized, thus unique. 
We denote it $\nu$ throughout the paper. 

A dynamical system $(X,F)$ with an invariant measure $\mu$ 
is called {\em ergodic} if for every completely invariant measurable
set $M$ either $\mu(M) = 0$ or $\mu(X\setminus M) = 0$. 
Also, $F$ and $\mu$ are called {\em ergodic} in this case. 
An invariant measure $\mu$ is ergodic if and only if all measurable 
invariant functions are constants a.e. If, in addition, such a measure is unique 
up to a constant factor then  $(X,F)$ is called {\em uniquely ergodic}.  

An obvious but very important consequence of \eqref{eq:inme} is the equality 
\begin{equation}
\label{eq:14}
  \int_X \gamma\dmes\mu=0 
\end{equation}
for every $\gamma\in L_1(X,\mu)$ such that the c.e. \eqref{eq:1}
is solvable in $L_1(X,\mu)$. 
We call \eqref{eq:14} a {\em trivial necessary condition} ({the \em TNC}, for brevity).  
In particular, if $X$ is a compact topological space then for the solvability 
of \eqref{eq:1} in $C(X)$ the TNC has to be valid for all invariant regular Borel 
measures $\mu$. 

Remarkably, {\em on any set $X$ with a finite invariant measure 
$\mu$ if $\gamma\in L_1(X,\mu)$ is such that \eqref{eq:1} has a measurable 
(maybe, nonintegrable!) solution then the TNC \eqref{eq:14} is fulfilled}.  
This is Theorem 1 from Anosov's paper \cite{anosov73}. We reproduce it 
in Section \ref{sec:reer}. The proof is based on 
the Birkhoff-Khinchin Individual Ergodic Theorem (IET) and the Poincar\'e 
Recurrence Theorem (RT). For the latter we give a ``cohomological'' proof.  

The simplest nonhomogeneous c.e. is the {\em Abel equation} (A.e.)
\begin{equation}
\label{eq:ae}
   \varphi(Fx) - \varphi(x) = 1,\eqsp x\in X, 
\end{equation}
coming back to \cite{abel}, p.p. 36-39. 
Since \eqref{eq:ae} implies $\varphi(F^nx) = \varphi(x)+n$ for all $n\geq 1$,  
{\em the A.e. has no bounded solutions}.  
A fortiori, there are no continuous solutions if $X$ is a compact 
topological space. In contrast, on a noncompact $X$ the A.e. may be 
solvable in $C(X)$. For example, we have  
$\varphi(x)=x$ if $Fx=x+1$ in $X=\N$ or $X=\R$. The A.e. on $\R$ is well studied   
\cite{belitskii03}, \cite{kuczma90}, \cite{szekeres10}. 
The relations of A.e. to other c.e. in 
$C(X)$ on noncompact $X$  were investigated in \cite{belitskii98a}, \cite{belitskii99}. 
Note that if $\mu$ is a finite invariant measure then \eqref{eq:14} shows that  
the A.e.has no solutions in $L_1(X,\mu)$.

A very interesting and deep example is the c.e. generated on the unit circle $\T$ 
by a rotation through the angle
$2\pi\alpha$, $0\leq \alpha<1$, with $\alpha$ irrational.
This equation is equivalent to
\begin{equation}
  \label{eq:7}
  f(x+\alpha) - f(x) = h(x),\eqsp x\in\R,
\end{equation}
where the functions $f$ and $h$ are 1-periodic. The latter was mentioned by
Hilbert in context of his
5\textsuperscript{th} problem (on analiticity of Euclidean topological
groups) as a functional equation which may be unsolvable
in analytic functions even for an analytic $h$ satisfying the TNC 
\begin{equation}
  \label{eq:8}
  \int_0^1 h(x)\dm x = 0.
\end{equation}

The equation \eqref{eq:7} can be immediately solved in formal
trigonometric series. Namely, if the Fourier decompositiion of $h$ is 
\begin{equation*}
h(x)\sim\sum_{n=-\infty}^{\infty} h_n e^{2\pi inx}
\end{equation*}
then \eqref{eq:8} means that $h_0=0$, and then for $\alpha$ irrational
the formal solution is 
\begin{equation}
\label{eq:smd}
\sum_{n\neq 0}\frac{h_{n}}{e^{2\pi in\alpha}-1}e^{2\pi inx}
\end{equation}
up to an additive constant. Wintner \cite{wintner} noted that for an analytic $h$ 
the series \eqref{eq:smd} determines an analytic solution  
if the number $\alpha$ is approximated by rationals slowly enough, while  
in the opposite case a phenomenon of ``small denominators'' appears:
there are no analytic solutions if $h$ is not a trigonometric polynomial and 
the approximation of $\alpha$ by rationals is fast enough.
For a further development of this approach see \cite{katok01}, \cite{rozh08} 
and the references therein.  

Anosov \cite{anosov73} mentioned that the flows on the torus $\T^2$ 
constructed by von Neumann \cite{neumann}, Kolmogorov \cite{kolmogorov53},
and Krygin \cite{krygin74} yield some continuous  1-periodic functions $h$ satisfying 
\eqref{eq:8} such that there are no measurable solutions to \eqref{eq:7}. Gordon
\cite{gordon75} constructed such a ``bad'' $h$ directly. In \cite{lyubich80} the existence 
of a ``bad'' $h$ was proven via the Closed Graph Theorem. 
This shows that the set of ``bad'' $h$'s is of second Baire's category 
in the space of continuous  1-periodic functions satisfying \eqref{eq:8}. 
In \cite{belitskii98} the method of \cite{lyubich80}  
was extended to the topologically transitive shifts of compact commutative
groups and then to all uniformly stable
dynamical systems on compact metric spaces, see Section \ref{sec:meso}. These 
dynamical systems closely relate to the almost periodic operator 
semigroups \cite{lyubich88}.

In \cite{anosov73} Anosov elaborated a subtle analytic technique
to construct for any irrational $\alpha$ a continuos 1-periodic function 
$h$ such that there exists a measurable but nonintegrable solution to \eqref{eq:7}. 
Using another method Kornfeld \cite{kornfeld76} extended this result to 
a class of compact dynamical systems $(X,F)$. It would be interesting to 
put it into the frameworks of functional analysis. 

Returning to the operator form \eqref{eq:11} of the c.e. \eqref{eq:1} note that 
its solvability in a $T_F$-invariant space $E$ means $\gamma\in\im(T_F-I)$.   
Moreover, if the operator $T_F-I$ is invertible then $\varphi = (T_F-I)^{-1}\gamma$ 
is a (unique) solution. However, this is not a case if $\id\in E$ since $\id\in\ker(T_F-I)$. 
Nevertheless, it is very useful to consider the formal solution 
\begin{equation}
  \label{eq:12} 
 -( \gamma + T_F\gamma + T_F^2\gamma+\cdots) 
\end{equation}
arising from the formal expansion 
\begin{equation*}
  I +T_F + T_F^2+\cdots,
\end{equation*}
of the operator $(I-T_F)^{-1}$. The series inside the parentheses in \eqref{eq:12} is actually  
\begin{equation}
  \label{eq:3}
  \gamma(x) + \gamma(Fx) +\cdots+ \gamma(F^n x)+\cdots ,\eqsp x\in X.
\end{equation}
We call it the {\em resolving series} for the c.e. \eqref{eq:1}.  
The behavior of its partial sums 
\begin{equation}
\label{eq:2'}
s_n(x)=\sum_{k=0}^n\gamma(F^kx), \eqsp n\geq 0, 
\end{equation}
plays a crucial role in the problem of solvability of the c.e. \eqref{eq:1}. 

Let us emphasize that the resolving series may diverge even for 
functions $\gamma$ such that the c.e. has a very good solution. For example, 
let in \eqref{eq:7} with irrational $\alpha$ the function $h$ be a 
nonzero trigonometrical polynomial 
\begin{equation*}
h(x)=\sum_{\abs{l}\leq m} h_l e^{2\pi ilx}
\end{equation*}
with $h_0=0$. Then the trigonometrical polynomial 
\begin{equation*}
f(x)=\sum_{\abs{l}\leq m, l\neq 0}\frac{h_{l}}{e^{2\pi il\alpha}-1}e^{2\pi ilx}
\end{equation*}
is a solution. On the other hand, we have  
\begin{equation*}
s_n(x)=\sum_{k=0}^n h(x+k\alpha) = 
\sum_{\abs{l}\leq m, l\neq 0}
h_l\frac{e^{2\pi i(n+1)l\alpha}-1}{e^{2\pi il\alpha}-1}e^{2\pi ilx}.
\end{equation*}
It is easy to see that the sequence $(s_n(x))$ diverges at every point $x$. 

If the resolving series \eqref{eq:3} converges or, at least, summable 
(say, by Ces\`aro method) for all $x\in X$ 
then the corresponding sum is a solution to the c.e. \eqref{eq:1}.
Also, the convergence or summabilty in a norm yields a solution if the operator 
$T_F$ is bounded. (For example, $\norm{T_F}\leq 1$ in $B(X)$ and in 
each $L_p(X,\mu)$ with an invariant measure $\mu$).
In this context a powerful tool is the Mean Ergodic Theorem (MET), 
see \cite{linsine83} and the references therein. Note that historically first version 
of the MET is due to von Neumann \cite{neumann32}. 

In \cite{lyubich08-axit} the IET was 
applied to solve a.e. the c.e. \eqref{eq:1} with $\gamma\in L_1(X,\mu)$ 
under the TNC \eqref{eq:14} where $\mu$ is a finite ergodic invariant measure.
This result was obtained in frameworks of an axiomatic theory of summation of 
divergent series \cite{lyubich92-funcan}, \cite{lyubich08-axit}. The latter  
is based on a modern form of the summation axioms 
introduced by Hardy (\cite{hardy49}, Section 1.3) and Kolmogorov \cite{kolmogorov25}. 
We partially reproduce our theory in Section \ref{sec:sods}. 
As an application we consider the c.e.
\begin{equation}
\label{eq:lyu} 
f(qx) - f(x) = h(x), \eqsp x\in\R,  
\end{equation}
in $2\pi$- periodic functions. This situation is quite different from that 
of \eqref{eq:7}. In \cite{lyubich08-axit} it was proven that 
{\em if $q\in\N$, $q\geq 2$, and $h$ is a trigonometric 
polynomial such that the ratios of its frequencies are not powers of $q$
then all solutions to \eqref{eq:lyu} are nonmeasurable}. 
Our techniques is a development of that which Zygmund 
applied to the case $q=2$, $h(x)=\cos x$, see \cite{zygmund59}, Chapter 5, Problem 26 . 

For $q=3$ and $h(x)=\sin x$ Kolmogorov \cite{kolmogorov25} formulated (without proof) 
the following conditional statement : {\em if the trigonometric series}  
\begin{equation}
\label{eq:col}
  \sin x + \sin 3x +\cdots + \sin 3^nx + \cdots, \eqsp x\in\R ,
\end{equation}
{\em is summable then one can efficiently construct a nonmeasurable function.} 
He evidently meant that this function can be produced by a summation under 
his axioms. Our Theorem \ref{thm:7.9} implies the existence of such a summation. 
Note that \eqref{eq:col} is the resolving series for the c.e. \eqref{eq:lyu} 
with Kolmogorov's data.  

According to the Hardy-Kolmogorov axioms the summations must be linear. Surprisingly,  
some nonlinear procedures can also be applied to solve the c.e. \eqref{eq:1} 
inspite of its linearity, see Sections \ref{sec:sofu} - \ref{sec:also}.
The most imporant example is the formula  
\begin{equation}
\label{eq:vasu}
\varphi(x)=\sup_{n}(-s_n(x)),\eqsp x\in X,
\end{equation}
for a bounded continuos solution in the case of minimal $(X,F)$ 
and $\gamma\in CB(X)$ such that the sums $s_n(x)$ are uniformly bounded.
Under these conditions for compact metric $X$ and invertible $F$
the existence of a continuous solution 
was established  by Gottshalk and Hedlund (\cite{gottschalk55}, Theorem 14.11).  
Browder \cite{browder58} extended this fundamental result to any Hausdorff $X$ 
and any continuous $F:X\rightarrow X$. 
Formula \eqref{eq:vasu} appeared in the proof given by Lin and Sine \cite{linsine83}. 
In our version of the latter (Theorem \ref{thm:6.0}) $X$ is an arbitrary 
topological space. (See \cite{mccutcheon99} for an alternative proof.) 
It is interesting that the nonlinearity of \eqref{eq:vasu} can be removed by subtraction 
a linear functional over \eqref{eq:vasu}, see Theorem \ref{cor:solin}. 
In this sense the nonlinearity is 1-dimensional.  

For the shift $\tau_g$ in a compact commutative group $G$ the minimality condition 
mentioned above can be reformulated in terms of characters.
This yields a new criterion of the solvability of the 
corresponding c.e. in $C(G)$, see Section \ref{sec:togr}. 
By the Bohr compactification this result extends to 
the almost periodic solutions on any commutative topological group $G$. 

In an almost periodic context the minimality is redundant (Section \ref{sec:also}). 
A general result of such a kind is Theorem \ref{thm:ceapf} proved by Schauder's 
Fixed Point Principle. Its main application is that the precompactnes of the 
set $\{s_n\}$ is a criterion of solvability of the c.e. \eqref{eq:1} in 
$C(X)$ for the uniformly stable system $(X,F)$ on a compact metric space $X$ 
(Theorem \ref{thm:6.0n}). This system can be not minimal, and then the boundedness 
of $\{s_n\}$ is not sufficient for the solvability in $C(X)$ (Example \ref{ex:ubwec}). 
Note that there is no minimality if $X$ is a convex compact in a Banach space. 

In accordance with a general terminology, 
the c.e. \eqref{eq:1} in a linear topological space $E$ is called 
{\em normally solvable} if $\im(T_F-I)$ is closed. If the space $E$ is complete 
metrizable then the absence of the normal solvability 
implies that $\im(T_F-I)$ is a subset of the first Baire's category in its closure. 
In this sense the unsolvability of the c.e. \eqref{eq:1} is typical if this equation 
is not normally solvable. 

For $E=C(X)$  with compact $X$ the normal solvability 
means the solvability in this space as long as the TNC \eqref{eq:14} is valid 
for all invariant regular Borel measures. It turns out this case is very rare: 
{\em if $(X,F)$ 
is a compact dynamical system then the c.e. \eqref{eq:1} is normally solvable
in $C(X)$ if and only if $F$ is preperiodic}, i.e. $F^{l+p} = F^l$
with a {\em preperiod} $l\geq 0$ and a {\em period} $p\geq 1$ \cite{belitskii98}. 
This result is represented in Section \ref{sec:noso}. The proof is based on    
the Dunford-Lin Uniform Ergodic Theorem (UET)\cite{dunford43}, \cite{lin74}. 

On the locally compact spaces the normal solvability was investigated in 
\cite{belitskii99}.

\section {Existence of invariant measures}
\label{sec:exin}

In general, a dynamical system has no ``good'' invariant measures. 
\begin{example}
\label{ex:noim}
Let $X=\N$ and let $Fn=n+1$, $n\in\N$. If a measure $\mu$ is 
invariant and all singletons are measurable then $\mu(\{n\})=\mu(\{n-1\})$ 
for $n\geq 1$, but $\mu(\{0\}) = \mu(\emptyset) = 0$. 
Hence, $\mu(\{n\})=0$  for all $n$, thus $\mu(X)=0$.
\qed
\end{example} 

The following fundamental theorem is due to Krylov and Bogolyubov \cite{krybog37}. 
\begin{theorem}\label{thm:3.1}
  For every compact dynamical system $(X,F)$ there exists an invariant  
regular Borel measure $\mu$ such that $\mu(X)=1$.
\end{theorem}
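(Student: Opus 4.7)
The plan is to deduce Theorem \ref{thm:3.1} from the Riesz representation theorem via a Hahn--Banach argument, with the ``cohomological lemma'' being a distance estimate in $C(X)$ whose content is essentially the non-solvability of the Abel equation on a compact space.

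First I would consider the subspace $B = (T_F - I)C(X) \subset C(X)$ of continuous coboundaries. The cohomological lemma I need is the assertion
\[
\dist_\infty(\id, B) = 1.
\]
The upper bound is immediate by taking $\varphi = 0$. For the lower bound, suppose $\|\id - (T_F\varphi - \varphi)\|_\infty = \varepsilon < 1$ for some $\varphi \in C(X)$. Then $\varphi(Fx) - \varphi(x) \geq 1 - \varepsilon > 0$ pointwise, and iteration yields $\varphi(F^n x) - \varphi(x) \geq n(1 - \varepsilon)$ for all $n$ and all $x$. This contradicts the boundedness of $\varphi$ on the compact space $X$, exactly as in the introductory remark about the Abel equation. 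This is the only nontrivial input, and the compactness of $X$ enters precisely here.

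Next I would define a linear functional $\ell_0$ on the subspace $B + \R\id$ by $\ell_0(b + t\id) = t$. The distance estimate gives $\|b + t\id\|_\infty \geq |t|\dist_\infty(\id, B) = |t|$ for $t \neq 0$, so $\|\ell_0\| = 1$ with $\ell_0(\id) = 1$. Hahn--Banach extends $\ell_0$ to a norm-one functional $\ell$ on $C(X)$, and the Riesz representation theorem produces a regular Borel signed measure $\mu$ with $\ell(\phi) = \int_X \phi\,d\mu$, total variation $|\mu|(X) = \|\ell\| = 1$, and $\mu(X) = \ell(\id) = 1$. Since $\mu(X) = |\mu|(X)$, $\mu$ is necessarily a positive probability measure.

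Finally, the vanishing condition $\ell|_B = 0$ reads $\int_X (\varphi\circ F - \varphi)\,d\mu = 0$ for every $\varphi \in C(X)$, i.e.\ \eqref{eq:inme} holds for all $\phi \in C(X)$, which by the regularity of $\mu$ is equivalent to $\mu(F^{-1}M) = \mu(M)$ for every Borel set $M$. Thus $\mu$ is the desired invariant regular Borel probability measure. The main obstacle is establishing the cohomological lemma cleanly; once that is in hand, the rest is standard functional-analytic machinery.
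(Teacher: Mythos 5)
Your proposal is correct and follows essentially the same route as the paper: the same cohomological lemma ($\dist(\idfunc,\text{coboundaries})=1$, proved by iterating the increment along orbits and using boundedness), then Hahn--Banach and the Riesz representation. The only cosmetic differences are that the paper proves the lemma in $B(X)$ for an arbitrary set via a telescoping triangle-inequality estimate rather than your contradiction argument, and it deduces positivity of the functional directly from $\norm{\idfunc-\varphi}\leq 1$ for $0\leq\varphi\leq 1$ instead of from the equality $\mu(X)=\abs{\mu}(X)$.
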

The proof below based on the following ``cohomological'' lemma. 
\begin{lemma}  
\label{lem:3.2}
Let $X$ be a set, and let $F$ be its self-mapping. 
In the space $B(X)$ the distance from the function $\idfunc$ 
to the subspace generated by the coboundaries is equal to 1.
\end{lemma}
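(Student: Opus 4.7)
The plan is to prove the two inequalities $\le 1$ and $\ge 1$ separately. The upper bound is immediate: taking $\varphi = 0$ shows that $\gamma = 0$ is a coboundary, so the distance from $\idfunc$ to the coboundary subspace is at most $\|\idfunc\|_\infty = 1$.

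For the lower bound I interpret ``the subspace generated by the coboundaries'' as $(T_F - I)B(X)$, i.e. the image of the Koopman operator minus identity acting on \emph{bounded} $\varphi$. This reading is essentially forced: if $\varphi$ were allowed to range over all of $\Phi(X)$, then already Example \ref{ex:noim} with $\varphi(n) = n$ would exhibit $\idfunc$ itself as a coboundary and the lemma would be false. The goal is then to show that every $\gamma \in (T_F-I)B(X)$ satisfies $\inf_{x\in X} \gamma(x) \le 0$; for once this is established, for each $\eta > 0$ there exists $x\in X$ with $\gamma(x) < \eta$, whence $|\idfunc(x) - \gamma(x)| > 1 - \eta$, so $\|\idfunc - \gamma\|_\infty \ge 1 - \eta$, and letting $\eta \downarrow 0$ completes the argument.

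The key tool is the telescoping identity for the partial sums \eqref{eq:2'}: if $\gamma = T_F\varphi - \varphi$ with $\varphi \in B(X)$, then
\[
s_n(x) = \sum_{k=0}^{n}\gamma(F^k x) = \varphi(F^{n+1}x) - \varphi(x),
\]
and consequently $|s_n(x)| \le 2\|\varphi\|_\infty$ uniformly in $n$ and $x$. Suppose, for contradiction, that $\inf_x \gamma(x) = \varepsilon > 0$; then $s_n(x) \ge (n+1)\varepsilon \to \infty$, contradicting the uniform bound. Hence $\inf_x \gamma(x) \le 0$, as required. No serious obstacle arises once the coboundary subspace is correctly identified; the whole argument reduces to the elementary observation that a uniformly positive coboundary cannot have a bounded primitive.
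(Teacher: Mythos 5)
Your proof is correct and follows essentially the same route as the paper's: both telescope the coboundary relation along an orbit and exploit the boundedness of $\varphi$ to force the orbit averages of $\gamma$ toward zero (the paper phrases this as $\delta[\varphi]\geq\abs{1-(\varphi(F^nx)-\varphi(x))/n}\rightarrow 1$, you as ``a uniformly positive coboundary cannot have a bounded primitive,'' i.e.\ $\inf_x\gamma(x)\leq 0$). Your observation that $\varphi$ must range over $B(X)$ rather than $\Phi(X)$ correctly identifies the reading the paper's own proof uses, since it takes the infimum of $\delta[\varphi]$ over $\varphi\in B(X)$.
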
  
This is a quantitative version of the unsolvability of the Abel equation in $B(X)$.
\begin{proof}
The distance in question is the infimum over $\varphi\in B(X)$ of the functional   
  \[
    \delta[\varphi] =
    \sup_x\abs{1-\left(\varphi(Fx)-\varphi(x)\right)}. 
  \]
Since $\delta[0]=1$, it suffices to prove that $\delta[\varphi]\geq 1$. 

By definition,    
  \[
 \delta[\varphi]\geq\abs{1-\left(\varphi(Fx)-\varphi(x)\right)}
  \]
for all $\varphi\in B(X)$ and all $x\in X$. Changing $x$ to $F^kx$, $k\geq 0$, we obtain
  \[
\delta[\varphi]\geq\abs{1-\left(\varphi(F^{k+1}x)-\varphi(F^k x)\right)},  
  \]
whence
  \[
n\delta[\varphi]\geq\sum_{k=0}^{n-1}\abs{1-\left(\varphi(F^{k+1}x)-\varphi(F^k x)\right)}
\geq\abs{n-\left(\varphi(F^n x)-\varphi(x)\right)}, \eqsp n\geq 1,
  \]
by the triangle inequality. Therefore, 
  \[
    \delta[\varphi]\geq\abs{1 - \frac{\varphi(F^n x)-\varphi(x)}{n}}. 
  \]
Passing to the limit as $n\rightarrow \infty$ we get $\delta[\varphi]\geq 1$ 
since $\varphi\in B(X)$.
\end{proof}
\begin{proof}
[Proof of Theorem \ref{thm:3.1}]
  In the space $C(X)$ we consider the linear continuous 
functional $f$ such that $f[\idfunc]=1$, $\norm{f}=1$ and 
\begin{equation}
\label{eq:3.0}
    f[\varphi\circ F - \varphi]=0, \eqsp \varphi\in C(X).
\end{equation}
This functional exists by Lemma~\ref{lem:3.2} and the Hahn-Banach theorem. 
It easy to see that $f[\varphi]\geq 0$ 
for $\varphi\geq 0$. Indeed, let $0\leq \varphi(x)\leq 1$ for 
all $x\in X$. Then $\norm{\idfunc -\varphi}\leq 1$, hence, 
$1 - f[\varphi] = f[\idfunc-\varphi]\leq 1$.

Now a required measure $\mu$ comes from the Riesz representation 
  \[
    f[\varphi] = \int_X f\dmes\mu. 
  \]
The measure $\mu$ is invariant since \eqref{eq:3.0} is equivalent 
to \eqref{eq:inme}. In addition, $\mu(X) = f(\id) = 1$.  
\end{proof}
\begin{remark}
In $B(X)$ Lemma \ref{lem:3.2} yields a 
nonnegative invariant additive function  $\mu(M)$ 
on the set of all subsets $M\subset X$, $\mu(X)=1$. 
\qed
\end{remark}

For a compact group $G$ and a shift $\tau_gx = gx$ $(x,g\in G)$
Theorem \ref{thm:3.1} yields an invariant regular Borel measure $\mu_g$, $\mu_g(G)=1$.  
\begin{prop}
\label{prop:meha}
Let $G$ be a compact commutative group, and let $g\in G$ be such that 
the subsemigroup $[g] = \{g^n:n\geq 0\}$ is dense. Then the measure $\mu_g$ 
coincides with the Haar measure $\nu$ on $G$.  
\end{prop}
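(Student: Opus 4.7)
The plan is to show that $\mu_g$ is invariant under every shift $\tau_h$, $h\in G$, and then invoke the uniqueness of the normalized Haar measure to conclude $\mu_g=\nu$. From Theorem~\ref{thm:3.1} applied to $(G,\tau_g)$ we already know $\mu_g$ is a regular Borel probability measure invariant under $\tau_g$; by iteration it is automatically invariant under $\tau_{g^n}$ for every $n\geq 0$. The task is thus to bootstrap from invariance under the dense subsemigroup $[g]$ to invariance under all of $G$.

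First I would translate invariance into the integral form \eqref{eq:inme}: for every $\varphi\in C(G)$ and every $n\geq 0$,
\[
\int_G \varphi(g^n x)\dmes\mu_g = \int_G \varphi(x)\dmes\mu_g.
\]
Given an arbitrary $h\in G$, the density of $[g]$ supplies a sequence $g^{n_k}\to h$. Since $G$ is compact, every $\varphi\in C(G)$ is uniformly continuous, so $\varphi(g^{n_k}x)\to\varphi(hx)$ \emph{uniformly} in $x\in G$: indeed, $g^{n_k}x\cdot(hx)^{-1}=g^{n_k}h^{-1}$ tends to the identity independently of $x$. Passing to the limit under the integral (which is legal because of the uniform convergence against a finite measure), one obtains
\[
\int_G \varphi(hx)\dmes\mu_g = \int_G \varphi(x)\dmes\mu_g,\eqsp \varphi\in C(G),\ h\in G.
\]
By \eqref{eq:inme} this says $\mu_g$ is $\tau_h$-invariant for every $h\in G$, i.e.\ it is a shift-invariant regular Borel probability measure on the compact commutative group $G$.

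Finally, I would invoke the uniqueness of the normalized Haar measure on a compact group (cited in the introduction): any such measure coincides with $\nu$. Hence $\mu_g=\nu$.

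The only real content is the passage from the dense subsemigroup $[g]$ to all of $G$; the main (and essentially only) obstacle is making sure the limit can be exchanged with the integral, which is handled cleanly via the uniform continuity of continuous functions on the compact group and the finiteness of $\mu_g$. Everything else is bookkeeping once Theorem~\ref{thm:3.1} and Haar uniqueness are in hand.
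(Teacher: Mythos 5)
Your proposal is correct and follows essentially the same route as the paper: both upgrade the $\tau_g$-invariance of $\mu_g$ to invariance under all shifts by combining the density of $[g]$ with the uniform continuity of $\varphi\in C(G)$ on the compact group, and then conclude by uniqueness of the normalized Haar measure; the paper merely packages your limit-interchange step as the continuity of the convolution $\psi(h)=\int_G\varphi(xh)\dmes\mu_g(x)$. The only cosmetic caveat is that in a non-metrizable $G$ the approximating family $g^{n_k}\to h$ should be a net rather than a sequence, which changes nothing in the argument.
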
 
\begin{proof}
We need to prove that measure $\mu_g$ is invariant for all shifts. 
For a function $\varphi\in C(G)$ let us consider the convolution     
  \begin{equation*}
      \psi(h)= \int_G\varphi(xh)\dmes\mu_g(x) ,\eqsp h\in G. 
  \end{equation*}
By commutativity of $G$ and $\tau_g$-invariance of $\mu_g$ we have  
  \begin{equation}
\label{eq:psic}
     \psi(gh)=  \int_G\varphi(xgh)\dmes\mu_g(x) = \int_G\varphi(gxh)\dmes\mu_g(x)=
\int_G\varphi(xh)\dmes\mu_g(x) =\psi(h), 
  \end{equation}
whence $\psi(yh) = \psi(h)$ for all $y\in [g]$. The function $\psi$ is continuous 
since $\varphi$ is uniformly continuous and the measure $\mu_g$ is finite. Since $[g]$ 
is dense, we obtain $\psi(yh) = \psi(h)$ for all $y\in G$. In particular, 
$\psi(y)=\psi(e)$ where $e$ is the unit of the group $G$. Thus,   
   \begin{equation*}
       \int_G\varphi(xy)\dmes\mu_g(x) = \int_G\varphi(x)\dmes\mu_g(x),\eqsp y\in G.    
  \end{equation*}
\end{proof}

We use this fact at the end of Section \ref{sec:prat}.

\section{Discrete  systems}
\label{sec:disy}

Here we consider the c.e. \eqref{eq:1} in the space $\Phi(X)$ of all 
functions on an arbitrary nonempty set $X$.
Although this situation is elementary, it is a prototype for more complicated 
cases and also a source of some useful general information.

It follows from (\ref{eq:1}) that 
\begin{equation*}
  \varphi(F^{k+1}x) - \varphi(F^kx) = \gamma(F^k x), \eqsp k\geq 0.
\end{equation*}
By summation over $k\in [0,n]$ we get 
\begin{equation}
  \label{eq:2}
  \varphi(F^{n+1} x)=\varphi(x)+ \sum_{k=0}^{n}\gamma(F^k x) = 
\varphi(x) + s_n(x),\eqsp n\geq 0.
\end{equation}
Formula \eqref{eq:2} shows that the restriction of the solution $\varphi$ 
to the orbit 
$O_F(x)$ is determined by the initial value $\varphi(x)$.
However, the asymptotic behavior of $\varphi(F^n x)$ as ${n\rightarrow\infty}$ 
reduces to that of $s_n(x)$. 

Assume that the resolving series \eqref{eq:3} converges for all $x\in X$ to a function $s(x)$.  
Then the function $\varphi_0(x)=-s(x)$ is a solution to (\ref{eq:1}) such that 
\begin{equation*}
\lim_{n\rightarrow\infty} \varphi_0(F^n x)=0 
\end{equation*}
since
\begin{equation*}
\varphi_0(F^n x)=-s(F^nx)=-\sum_{k=n}^{\infty}\gamma(F^kx). 
\end{equation*}
Furthemore, for any solution $\varphi$ there exists 
\begin{equation}
\label{eq:4}
  \hat\varphi(x) = \lim_{n\rightarrow\infty} \varphi(F^n x)= \varphi(x) + s(x)  
\end{equation}
by \eqref{eq:2}. The function $\hat\varphi(x)$ is invariant, and  
\begin{equation*}
\varphi(x)= -s(x)+\hat\varphi(x)=\varphi_0(x)+\hat \varphi(x).
\end{equation*}
This is an explicit representation of any solution $\varphi$  
as the sum of the particular solution $\varphi_0=-s$ and the solution 
$\hat\varphi$ to the corresponding homogeneous equation.  

Those $\gamma$'s for which the resolving series 
converges constitute a $T_F$-invariant subspace of the space $\Phi(X)$. 

The following theorem is a general solvability criterion for the c.e. \eqref{eq:1}.
\begin{theorem}  
  \label{thm:2.1}
The c.e. \eqref{eq:1} is solvable if and only if
for every integer $p\geq 1$ and every periodic point $x$ of the period $p$ the 
sum $s_{p-1}(x)$ vanishes, i.e.
\begin{equation}
    \label{eq:2.1}
\sum_{k=0}^{p-1}\gamma(F^k x) =0. 
  \end{equation}
\end{theorem}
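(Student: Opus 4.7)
\emph{Necessity} is immediate from formula \eqref{eq:2}: if $\varphi$ solves \eqref{eq:1} and $x$ is a periodic point of period $p$, then setting $n = p-1$ gives $\varphi(x) = \varphi(F^p x) = \varphi(x) + s_{p-1}(x)$, so $s_{p-1}(x)=0$.

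For \emph{sufficiency}, the plan is to build $\varphi$ by working one orbit at a time. The right notion of orbit here is the \emph{grand orbit}: declare $x \sim y$ iff $F^m x = F^n y$ for some $m,n\geq 0$. This is an equivalence relation, and each equivalence class $C$ contains at most one periodic cycle (two distinct cycles in one class would force a coincidence $F^a y = F^b y$ with $a\ne b$ between iterates of a single point on one cycle, which is impossible). Invoking the axiom of choice to select data for every class simultaneously, I would proceed case by case:

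\emph{Class $C$ with a cycle.} Pick a point $y\in C$ on the unique cycle, of period $p$. Set $\varphi(y):=0$ and $\varphi(F^k y) := s_{k-1}(y)$ for $1\le k\le p-1$. The c.e.\ holds automatically along the forward segment $y, Fy, \ldots, F^{p-1}y$, and at the ``closing'' step we need $\varphi(F^p y) - \varphi(F^{p-1}y) = \gamma(F^{p-1}y)$, which becomes $0 - s_{p-2}(y) = \gamma(F^{p-1}y)$, i.e.\ $s_{p-1}(y)=0$ — precisely our hypothesis \eqref{eq:2.1}. Then extend $\varphi$ to the rest of $C$ by backward propagation: for any $z\in C$ off the cycle, let $m=m(z)\ge 1$ be the least integer with $F^m z$ on the cycle and set $\varphi(z) := \varphi(F^m z) - s_{m-1}(z)$. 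A short check confirms that the c.e.\ holds at $z$ regardless of whether $Fz$ lies on or off the cycle.

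\emph{Cycle-free class $C$.} Pick any $x_0\in C$; since $C$ contains no periodic point, the iterates $F^n x_0$ are pairwise distinct. Set $\varphi(F^n x_0) := s_{n-1}(x_0)$, and for any other $y\in C$ choose $m,n\ge 0$ with $F^m y = F^n x_0$ and put $\varphi(y) := s_{n-1}(x_0) - s_{m-1}(y)$. The only subtle point — which I expect to be the main technical obstacle — is well-definedness: given a competing pair $(m',n')$, the absence of a cycle in $C$ forces $m'-m = n'-n$, and the identity $F^{m+j}y = F^{n+j}x_0$ for all $j\ge 0$ makes the two tails $\sum_{k=m}^{m'-1}\gamma(F^k y)$ and $\sum_{k=n}^{n'-1}\gamma(F^k x_0)$ coincide term by term. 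Verifying the c.e.\ at each $y\in C$ is then a direct calculation from the definition.

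Assembling the partial solutions over all equivalence classes yields $\varphi\in\Phi(X)$ satisfying \eqref{eq:1}. The only ``hard'' parts are the consistency arguments above; everything else is bookkeeping, and no structure on $X$ (topology, measure, etc.) is used, which is appropriate for this elementary setting.
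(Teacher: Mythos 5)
Your proposal is correct and follows essentially the same route as the paper: the same grand-orbit equivalence relation $F^m x = F^n y$, a forward definition of $\varphi$ along a base orbit in each class (closed up on the cycle exactly via \eqref{eq:2.1}), and backward propagation to the remaining points, with the axiom of choice invoked to handle all classes at once. The one quibble is your parenthetical justification that a class contains at most one cycle: the stated reason (that a coincidence $F^a y = F^b y$ with $a\neq b$ is ``impossible'') fails for periodic $y$, whereas the correct and immediate argument is that two cycles lying in one class must share a point and hence coincide.
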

\begin{proof}
  The necessity of~\eqref{eq:2.1} follows from~\eqref{eq:2}
  immediately. To prove the sufficiency we introduce an equivalence  
  relation on $X$ letting $x_1\sim x_2$ if there are some
  $m,n$ such that $F^mx_1 = F^nx_2$. Since the equivalence classes are 
  invariant, it suffices to solve~\eqref{eq:1} on 
  each class separately. 
  Thus, one can assume that all $x\in X$ are equivalent to an  
 $x_0$. We show that {\em for an arbitrary ``initial value'' $\varphi(x_0)$ there is    
 a unique solution $\varphi(x)$ to the c.e. \eqref{eq:1}}. 
 To this end we denote by $l_x$ the minimal $n$ such that $x=F^n x_0$. Let us 
consider two cases.  

1) $x\in O_F(x_0)$. Assume that the orbit $O_F(x_0)$ is infinite. Then the points 
$F^kx_0$ $(k=0,1,2,\cdots)$ are pairwise distinct. In particular, $Fx= F^{l_x+1}x_0$ 
is different from all $F^kx_0$ with $k\leq l_x$, hence $l_{Fx}=l_x+1$. Letting  
\begin{equation} 
\label{eq:2.2}
    \varphi(x) = \varphi(x_0) + \sum_{k=0}^{l_x-1}\gamma(F^k x_0),\eqsp x\in O_F(x_0), 
\end{equation}
we get \eqref{eq:1} since 
\begin{equation} 
\label{eq:2.21}
    \varphi(Fx) = \varphi(x_0) + \sum_{k=0}^{l_x}\gamma(F^k x_0)=\varphi(x)+\gamma(x).
\end{equation}

Now assume that $O_F(x_0)$ is finite. Then the point $x_0$ is preperiodic of a 
preperiod $l$ and a period $p$, i.e $F^{l+p}x_0=F^lx_0$. Let these values be minimal. 
Then the orbit reduces to 
\begin{equation}
 \label{eq:2.211}
\tilde{O}_F(x_0) = \{x_0, \cdots, F^lx_0, F^{l+1}x_0,\cdots, F^{l+p-1}x_0\}  
\end{equation} 
where all points are pairwise distinct. Obviosly, 
we have $l_{Fx}=l_x+1$ for all $x\in\tilde{O}_F(x_0)$, except for the last member 
in \eqref{eq:2.211}. With this exception we define $\varphi(x)$ by 
\eqref{eq:2.2} and get \eqref{eq:1} as before. In the 
exceptional case we have $l_{Fx}=l$, while $l_x=l+p-1$. Accordingly, if we let 
\begin{equation} 
\label{eq:2.20}
    \varphi(x) = \varphi(x_0) + \sum_{k=0}^{l+p-2}\gamma(F^k x_0), \eqsp x = F^{l+p-1}x_0,    
\end{equation}
and use 
\begin{equation} 
\label{eq:2.210}
    \varphi(Fx) = \varphi(x_0) + \sum_{k=0}^{l-1}\gamma(F^k x_0)   
\end{equation}
then 
\begin{equation} 
\label{eq:2.01}
    \varphi(x)- \varphi(Fx)= \sum_{k=l}^{l+p-2}\gamma(F^k x_0)=
\sum_{k=0}^{p-2}\gamma(F^k x_1)
\end{equation}
with $x_1=F^lx_0$. This is a periodic point of period $p$. By \eqref{eq:2.1} the last sum 
in \eqref{eq:2.01} is equal to $-\gamma(F^{p-1}x_1)= -\gamma(F^{l+p-1}x_0)=-\gamma(x)$, 
so \eqref{eq:2.01} reduces to \eqref{eq:1}.  

2) $x\notin O_F(x_0)$. Let $m_x$ be the minimal $n\geq 1$ such
  that $F^nx$ belongs to $O_F(x_0)$. Such $n$'s exist 
  since $x\sim x_0$. Obviously, $m_{Fx}=m_{x}-1$ if $m_x>1$. Since $\varphi(F^{m_x}x)$ 
  is already determined in the case 1), the definition  
  \begin{equation}
    \label{eq:2.3}
    \varphi(x)=\varphi(F^{m_x}x) - \sum_{k=0}^{m_x-1} \gamma(F^kx),\eqsp x\notin O_F(x_0),
  \end{equation}
is correct. Accordingly, 
  \begin{equation}
    \label{eq:2.4}
    \varphi(Fx) = \varphi(F^{m_x}x) - \sum_{k=1}^{m_x-1} \gamma(F^kx)=\varphi(x)+\gamma(x)
  \end{equation}  
if $m_x>1$. However, if $m_x=1$ then yet \eqref{eq:2.3} is equivalent to \eqref{eq:1}. 

Since all constructions above follow from \eqref{eq:2}, the solution is unique.
\end{proof}
\begin{remark}
\label{rem:2.02}
In the case of invertible $F$ the equivalence class of $x_0\in X$ is the 
two-sided orbit 
$\left( F^k x_0\right)_{k=-\infty}^\infty$. Hence, for $x\not\in O_F(x_0)$ 
we have $F^{m}x=x_0$ with a uniquely determined $m\geq 1$. Thus, $m_x=m$ 
in this case. 
\qed
\end{remark}
\begin{remark}
\label{rem:2.2}
It suffices to have \eqref{eq:2.1} for every periodic point $x$  and for {\em one} of 
its periods, say, $p$. Indeed, $p$ is a multiple of the smallest period $p_0$, say $p=dp_0$. 
Accordingly, $s_{p-1}(x) = ds_{p_0-1}(x)$, whence $s_{p_0-1}(x) = 0$, and finally,  
$s_{q-1}(x) = 0$ for every period $q$.
\qed
\end{remark}

The orbit of a periodic point $x$ is a {\em cycle} $Z_x$. 
The condition \eqref{eq:2.1} can be rewritten as 
\begin{equation}
  \label{eq:2.6} 
  \int_X\gamma\dmes\mu^{(x)} =0
\end{equation} 
where $\mu^{(x)}$ is the invariant measure concentrated and uniformly distributed on $Z_x$. 
Thus, \eqref{eq:2.1} is just the TNC \eqref{eq:14} corresponding to $\mu=\mu^{(x)}$. Theorem 
\eqref{thm:2.1} states that for any discrete dynamical system the conjunction of all these 
TNC's is sufficient for the solvability of the c.e \eqref{eq:1}. In topological terms, 
{\em a function  $\gamma$ is a coboundary 
if and only if its integrals of over all cycles vanish.} 
A remarkable case of sufficiency of these TNC's for the solvability 
of \eqref{eq:1} in a smooth category is Liv\v{s}ic's theorem 
\cite{livsh72} concerning H\"older's solutions.

In the completely formalized proof of Theorem \ref{thm:2.1} one has to 
consider a subset of $X$ which intersects every equivalence class
by a singleton. This subset is called a {\em transversal}. Its 
existence follows from the axiom of choice. {\em Given a transversal $X_0$, the 
restriction $\varphi\mapsto\varphi |X_0$ bijectively maps the set   
of solutions onto $\Phi(X_0)$.} 
The inverse mapping is described by the formulas ~\eqref{eq:2.2}, \eqref{eq:2.20}
and ~\eqref{eq:2.3}, where $x_0$ runs over $X_0$. For example, {\em every solution 
to the homogeneous c.e. is constant on each 
equivalence class, and, conversely, every such a function is a solution}. 

The following particular case of Theorem \ref{thm:2.1} merits to be mentioned.  
\begin{cor}
\label{cor:1}
If the mapping $F$ has no periodic points (thus, $X$ is infinite) 
then with any $\gamma$ the c.e.~\eqref{eq:1} is solvable.    
\end{cor}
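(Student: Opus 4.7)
My plan is to derive the corollary as an immediate consequence of Theorem~\ref{thm:2.1}. That theorem characterizes solvability by the vanishing of the sums $s_{p-1}(x)$ at every periodic point $x$ of period $p$; if the hypothesis of the corollary excludes the existence of any periodic point, then the family of conditions indexed by such points is empty, hence vacuously satisfied for every choice of $\gamma$. Thus by Theorem~\ref{thm:2.1} the c.e.~\eqref{eq:1} is solvable.

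Before writing that one-line argument I would justify the parenthetical remark ``thus, $X$ is infinite.'' If $X$ were finite, then for any $x\in X$ the orbit $O_F(x)=(F^nx)_{n\geq 0}$ would lie in a finite set, so by the pigeonhole principle there would exist $0\leq l<l+p$ with $F^{l+p}x = F^lx$. Choosing such a pair, the point $F^lx$ is periodic of period $p\geq 1$, contradicting the absence of periodic points. Hence the hypothesis forces $X$ to be infinite.

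There is no real obstacle here; the content of the corollary has been absorbed into Theorem~\ref{thm:2.1}, and the whole point of stating it separately is to record the ``periodic-point-free'' case as the cleanest instance of the general criterion. I would simply invoke the theorem and let the emptiness of the quantification over periodic points carry the proof, with no further computation required.
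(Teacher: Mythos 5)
Your proposal is correct and coincides with the paper's treatment: the corollary is stated there as an immediate consequence of Theorem~\ref{thm:2.1}, with the solvability criterion becoming vacuous when no periodic points exist. Your additional pigeonhole argument for the parenthetical ``$X$ is infinite'' is a sound (and harmless) elaboration of a point the paper leaves implicit.
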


For example, any irrational  rotation of the unit circle $\T$ has no periodic points.  
Therefore, {\em the equation \eqref{eq:7} with irrational $\alpha$ is solvable 
in 1-periodic functions for any 1-periodic $h$}. 
A more general example is the c.e. 
\[
\varphi(gx)- \varphi(x) = \gamma(x),\eqsp x\in G, 
\]
where $G$ is a group and $g\in G$ is of infinite order.  
\begin{cor}
\label{cor:10}
With $\gamma >0$ (in particular, in the case of Abel equation) 
the c.e.~\eqref{eq:1} is solvable 
if and only if the mapping $F$ has no periodic points.    
\end{cor}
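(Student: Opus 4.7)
The plan is to deduce both directions directly from the results already established, namely Theorem \ref{thm:2.1} and Corollary \ref{cor:1}, so that the proof is essentially a two-line observation.

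For the sufficiency direction, I would simply invoke Corollary \ref{cor:1}: if $F$ has no periodic points, then the c.e.\ \eqref{eq:1} is solvable for \emph{every} choice of $\gamma$, in particular for any $\gamma > 0$ (including the Abel case $\gamma \equiv 1$). No separate argument is needed here.

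For the necessity direction, I would argue by contraposition. Suppose $F$ has a periodic point $x$, and let $p \geq 1$ be one of its periods. By Theorem~\ref{thm:2.1} (via Remark~\ref{rem:2.2}, using any single period), solvability of the c.e.\ forces
\[
\sum_{k=0}^{p-1}\gamma(F^k x) = 0.
\]
But the hypothesis $\gamma > 0$ makes every summand strictly positive, so the sum is strictly positive, a contradiction. Hence no periodic points can exist when a solution is available.

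There is essentially no obstacle: the main content has been carried by Theorem~\ref{thm:2.1}, and this corollary just observes that a sum of strictly positive numbers cannot vanish. The only thing to be careful about is noting that the argument applies to every $\gamma > 0$, not merely to the constant function $1$, so that the Abel equation is recovered as the special case $\gamma \equiv 1$ and the statement is genuinely more general.
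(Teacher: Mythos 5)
Your proposal is correct and matches the paper's own (one-line) proof: sufficiency is Corollary \ref{cor:1}, and necessity is exactly the observation that the period sum in \eqref{eq:2.1} cannot vanish when every summand $\gamma(F^kx)$ is strictly positive. Nothing further is needed.
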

\begin{proof}
The condition \eqref{eq:2.1} is not fulfilled if all $\gamma (F^kx) >0$.  
\end{proof}
The set $\Pi$ of periodic points of any mapping $F$ is invariant but, in general, it is not 
completely invariant. Indeed, the union of the preimages $F^{-l}\Pi$, $l\geq 0$,  
is the set of preperiodic points. The latter is already completely invariant. 
By Corollary \ref{cor:1} we have     
\begin{cor}
\label{cor:100}
On the complement of the set of preperiodic points the c.e. \eqref{eq:1} is solvable 
with any $\gamma$.  
\end{cor}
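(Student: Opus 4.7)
The statement is really a direct corollary of Corollary \ref{cor:1} applied to the restricted system, so my plan is to set things up carefully and then invoke that earlier result. Let $\Pi$ be the set of periodic points and let $P = \bigcup_{l\geq 0}F^{-l}\Pi$ be the set of preperiodic points. The paragraph just before the corollary already records that $P$ is completely invariant: both $P$ and $X\setminus P$ are $F$-invariant.

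First I would set $Y = X\setminus P$ and record that, by complete invariance of $P$, the restriction $F|Y$ is a well-defined self-mapping of $Y$. Then I would observe that the dynamical system $(Y, F|Y)$ has no periodic points at all: any periodic point of $F|Y$ would in particular be periodic for $F$, hence lie in $\Pi \subseteq P$, contradicting $y \in Y$.

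With this in hand, I would apply Corollary \ref{cor:1} to the system $(Y, F|Y)$ with input $\gamma|Y$. That corollary asserts solvability of the c.e.\ on any discrete system without periodic points, so there exists $\varphi \colon Y \to \R$ satisfying
\[
\varphi(Fy) - \varphi(y) = \gamma(y), \qquad y \in Y.
\]
This is precisely the assertion of the corollary.

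There is no real obstacle: once one notices that complete invariance of the preperiodic set is exactly what is needed to restrict the equation to $Y$, the result reduces to Corollary \ref{cor:1}. If one wanted to be even more explicit, one could write $\varphi$ using \eqref{eq:2.3} relative to a transversal chosen inside $Y$, but this is not needed for the statement as given.
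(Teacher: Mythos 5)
Your proof is correct and follows exactly the paper's own route: the paper likewise observes that the set of preperiodic points is completely invariant, so the restriction of $F$ to its complement is a well-defined self-mapping with no periodic points, and then invokes Corollary \ref{cor:1}. Nothing is missing.
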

\begin{example}
\label{ex:pow}
Let $q$ be an integer, $q\geq 2$. The preperiodic points of the mapping 
$x\mapsto qx$ $({\rm mod}$ $2\pi)$, $0\leq x<2\pi$, are  
\[
x_{n,l,p} =\frac{2\pi n}{q^l(q^p-1)}\eqsp (\rm{mod} 2\pi), \eqsp 
\]
where $n,l,p\in\N$, $p\geq 1$. On the complement of this countable set the c.e. 
\eqref{eq:lyu} is solvable in $2\pi$-periodic functions for any $2\pi$-periodic $h$. 
\qed
\end{example}
 
\section{Ce\'saro summation}
\label{sec:cesu}

The reference to the axiom of choice in the proof of Theorem \ref{thm:2.1} 
makes it nonconstructive. However, under some 
natural conditions a solution to the c.e. \eqref{eq:1} can be found by 
``analytical'' tools, say by a summation of the resolving series. 
In this section we consider the Ce\'saro summation. By the standard definition, the   
{\em Ce\'saro sum} of the series \eqref{eq:3} is the limit $\sigma(x)$ 
as $N\rightarrow\infty$ of the arithmetic means 
\begin{equation}
\label{eq:gcs}  
\sigma_N(x)=\frac{1}{N+1}\sum_{n=0}^{N}s_n(x)=
\sum_{k=0}^{N}\left(1-\frac{k}{N+1}\right)\gamma(F^kx). 
\end{equation}
Denote by $C_{\gamma}$ the set of those $x\in X$ for which $\sigma(x)$ exists. 
\begin{prop}
\label{thm:cesre}
The set $C_{\gamma}$ is completely invariant, and the function 
$-\sigma(x)$ satisfies the c.e. \eqref{eq:1} for $x\in C_{\gamma}$. 
\end{prop}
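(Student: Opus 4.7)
The plan is to start from the simple telescoping identity
\[
s_n(Fx)=\sum_{k=0}^{n}\gamma(F^{k+1}x)=s_{n+1}(x)-\gamma(x),
\]
and derive from it an explicit relation between the Cesàro means at $x$ and at $Fx$. Averaging over $n=0,\dots,N$ gives
\[
\sigma_N(Fx)=\frac{1}{N+1}\sum_{n=0}^{N}\bigl(s_{n+1}(x)-\gamma(x)\bigr)
=\sigma_N(x)+\frac{s_{N+1}(x)-s_0(x)}{N+1}-\gamma(x),
\]
where I reindexed $m=n+1$ and compensated the endpoints. This single identity will handle both the invariance of $C_\gamma$ and the cohomological equation once the leftover term $(s_{N+1}(x)-s_0(x))/(N+1)$ is shown to vanish in the limit whenever at least one of $\sigma_N(x)$, $\sigma_N(Fx)$ converges.

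The key auxiliary observation is a one-line discrete Kronecker-type fact: since $(N+1)\sigma_N=\sum_{n=0}^N s_n$, one has $s_N=(N+1)\sigma_N-N\sigma_{N-1}$, hence if $\sigma_N(y)\to\sigma(y)$ then
\[
\frac{s_N(y)}{N+1}=\sigma_N(y)-\frac{N}{N+1}\sigma_{N-1}(y)\longrightarrow 0.
\]
Applied at $y=x$ (when $x\in C_\gamma$) this kills the leftover term in the identity and yields $\sigma_N(Fx)\to\sigma(x)-\gamma(x)$, so $Fx\in C_\gamma$ and
\[
-\sigma(Fx)-(-\sigma(x))=\gamma(x),
\]
which is the c.e. \eqref{eq:1} at $x$.

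For the reverse inclusion, suppose $Fx\in C_\gamma$. Using $s_{N+1}(x)=\gamma(x)+s_N(Fx)$ together with the Kronecker fact applied at $Fx$ gives $s_{N+1}(x)/(N+1)\to 0$; then the identity above, solved for $\sigma_N(x)$, shows $\sigma_N(x)\to\sigma(Fx)+\gamma(x)$, so $x\in C_\gamma$. This proves that $C_\gamma$ is completely invariant, and the relation $\sigma(x)-\sigma(Fx)=\gamma(x)$ holds throughout $C_\gamma$.

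I do not foresee a real obstacle here; the whole proposition is a direct bookkeeping exercise built on the single identity above plus the elementary Kronecker step. The only thing to take care of is the correct indexing when passing from $s_n(Fx)$ to $s_{n+1}(x)$ so that the boundary terms $s_0(x)$ and $s_{N+1}(x)$ are tracked honestly; once that is done, complete invariance and the coboundary relation fall out simultaneously.
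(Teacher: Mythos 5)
Your proof is correct and follows essentially the same route as the paper: both start from the recurrence $s_n(Fx)=s_{n+1}(x)-\gamma(x)$ and derive the statement by comparing Ces\`aro means at $x$ and at $Fx$. The only difference is bookkeeping: the paper compares $\sigma_{N-1}(Fx)$ with $\sigma_N(x)$ and obtains the exact identity $\sigma_{N-1}(Fx)=\tfrac{N+1}{N}\left(\sigma_N(x)-\gamma(x)\right)$ with no leftover boundary term (it is absorbed because $s_0=\gamma$), which makes the limit immediate and renders your Kronecker-type step unnecessary.
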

\begin{proof}
Using the recurrence relation  
\begin{equation}
\label{eq:resn}
s_n(Fx)=s_{n+1}(x)-\gamma (x)
\end{equation}
we get 
\begin{equation}
\label{eq:sisi}
\sigma_{N-1}(Fx)=\frac{N+1}{N}(\sigma_{N}(x)-\gamma (x)),\eqsp N\geq 1.
\end{equation}
It remains to pass to the limit as $N\rightarrow\infty$.
\end{proof}

Now we introduce the set $E_{\varphi}$ of those $x\in X$ for which the 
limit $\tau(x)$ of the arithmetic means 
\begin{equation}
\label{eq:mera}
    \tau_N(x) = \frac{1}{N+1}\sum_{n=0}^{N}\varphi(F^n x)     
\end{equation}
exists as ${N\rightarrow\infty}$. Here $\varphi$ can be any function on $X$.  

\begin{prop}
\label{thm:cesreso}
The set $E_{\varphi}$ is completely invariant, and the function $\tau(x)$ is invariant.  
If $\varphi$ is a solution to the c.e. \eqref{eq:1} then 
$E_{\varphi} =  C_{\gamma}$, and on this set 
\begin{equation}
\label{eq:esig}
\sigma(x)=\tau(x)-\varphi(x).   
\end{equation}
\end{prop}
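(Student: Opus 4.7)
The plan is to mimic the computation already carried out for $\sigma_N$ in the proof of Proposition \ref{thm:cesre}, but applied to $\tau_N$, and then to relate $\sigma_N$ and $\tau_N$ by means of formula \eqref{eq:2}. The whole proof will reduce to a handful of bookkeeping identities and a passage to the limit.

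First I would establish the analog of \eqref{eq:sisi} for $\tau_N$. A direct index shift gives
\[
    \tau_N(Fx) = \frac{1}{N+1}\sum_{n=0}^{N}\varphi(F^{n+1}x)
           = \frac{1}{N+1}\sum_{m=1}^{N+1}\varphi(F^{m}x)
           = \frac{N+2}{N+1}\tau_{N+1}(x) - \frac{\varphi(x)}{N+1}.
\]
Letting $N\to\infty$ on each side one gets: if $x\in E_{\varphi}$ then $Fx\in E_{\varphi}$ with $\tau(Fx) = \tau(x)$; solving for $\tau_{N+1}(x)$ and letting $N\to\infty$ gives the converse implication $Fx\in E_{\varphi}\Rightarrow x\in E_{\varphi}$ with the same equality. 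This proves that $E_{\varphi}$ is completely invariant and that $\tau$ is invariant.

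Next, assume $\varphi$ solves \eqref{eq:1}. Formula \eqref{eq:2} then reads $s_n(x)=\varphi(F^{n+1}x)-\varphi(x)$. Averaging over $n=0,\ldots,N$ gives
\[
    \sigma_N(x) = \frac{1}{N+1}\sum_{n=0}^{N}\bigl(\varphi(F^{n+1}x)-\varphi(x)\bigr)
              = \frac{N+2}{N+1}\tau_{N+1}(x) - \frac{\varphi(x)}{N+1} - \varphi(x).
\]
Since $(N+2)/(N+1)\to 1$ and $\varphi(x)/(N+1)\to 0$, the convergence of $\tau_{N+1}(x)$ is equivalent to the convergence of $\sigma_N(x)$, with the limits linked by $\sigma(x)=\tau(x)-\varphi(x)$. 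This shows simultaneously that $E_{\varphi}=C_{\gamma}$ and that the asserted identity \eqref{eq:esig} holds on this common set.

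There is no real obstacle: once one writes down the recurrence for $\tau_N$ and uses \eqref{eq:2} to express $\sigma_N$ linearly in terms of $\tau_{N+1}$, everything falls out by taking the limit $N\to\infty$. The only mild care needed is to keep track of the $1/(N+1)$ error terms to ensure that the equivalence of convergence (not merely one implication) is justified.
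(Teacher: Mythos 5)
Your proof is correct and follows essentially the same route as the paper: your recurrence for $\tau_N(Fx)$ is exactly the paper's relation \eqref{eq:tata} (with the index shifted by one), and your expression for $\sigma_N(x)$ in terms of $\tau_{N+1}(x)$ is an expanded form of the paper's identity \eqref{eq:sita}, from which both conclusions follow by passing to the limit just as you do.
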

Thus, the set $E_{\varphi}$ is independent of the solution $\varphi$ to 
\eqref{eq:1} with a fixed $\gamma$. Formula \eqref{eq:esig} represents the 
solution $\varphi(x)$ to the c.e. \eqref{eq:1} restricted to $E_{\varphi}$ 
as the sum of the solution $-\sigma(x)$ and the solution $\tau(x)$ to the 
corresponding homogeneous equation. 
\begin{proof}
The first statement follows from the recurrence relation  
\begin{equation}
\label{eq:tata}
\tau_{N-1}(Fx)=\frac{N+1}{N}\tau_{N}(x)-\frac{1}{N}\varphi (x),\eqsp N\geq 1.
\end{equation}
The second one follows from the formula 
\begin{equation}
\label{eq:sita}
\sigma_N(x)=\frac{N+2}{N+1}\left(\tau_{N+1}(x)-\varphi(x)\right) 
\end{equation}
which, in turn, immediately follows from \eqref{eq:gcs}, \eqref{eq:mera} and \eqref{eq:2}.
\end{proof}

As an application we consider a preperiodic system $(X,F)$. 
The following theorem was formulated without proof in \cite{belitskii98}. 
\begin{theorem}
\label{thm:preper}
Let $F^{l+p}=F^l$, $l\geq 0$, $p\geq 1$. Then the c.e.~\eqref{eq:1} 
is solvable if and only if 
\begin{equation}
\label{eq:2.0}
   \sum_{k=0}^{p-1}\gamma(F^{k+l}x) =0, \eqsp x\in X. 
\end{equation}
Under this condition a solution is 
\begin{equation}
\label{eq:2.prep}
 \varphi(x)= -\sum_{k=0}^{l-1}\gamma (F^kx) + 
 \frac{1}{p}\sum_{k=1}^{p-1} k\gamma (F^{k+l}x). 
\end{equation}
\end{theorem}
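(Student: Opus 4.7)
The plan is to handle necessity with Theorem \ref{thm:2.1} and sufficiency via Ces\`aro summation of the resolving series, invoking Proposition \ref{thm:cesre}.

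For necessity, observe that the preperiodicity condition $F^{l+p}=F^l$ implies that for every $x\in X$ the point $y=F^l x$ satisfies $F^p y = F^{l+p}x = F^l x = y$, so $y$ is periodic of period $p$ (or a divisor of $p$). If the c.e. is solvable in $\Phi(X)$, then by Theorem \ref{thm:2.1} together with Remark \ref{rem:2.2} we must have $\sum_{k=0}^{p-1}\gamma(F^k y)=0$, which is exactly \eqref{eq:2.0}.

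For sufficiency, write $a_k = \gamma(F^k x)$. Since $F^{l+p+k} = F^{l+k}$, the tail sequence $(a_{l+k})_{k\geq 0}$ is periodic in $k$ with period $p$, and \eqref{eq:2.0} says $\sum_{j=0}^{p-1} a_{l+j} = 0$. Setting $S=\sum_{k=0}^{l-1} a_k$, for $n\geq l$ I would split $s_n(x) = S + \sum_{j=0}^{n-l} a_{l+j}$ and write $n-l = qp+r$ with $0\leq r<p$; the vanishing of the period-sum then gives $s_{l+qp+r}(x) = S+\sum_{j=0}^{r}a_{l+j}$, depending only on $r$. Thus $s_n(x)$ is eventually periodic in $n$, so its Ces\`aro average exists at every $x$, and by Proposition \ref{thm:cesre} the function $-\sigma(x)$ is a solution of \eqref{eq:1} on all of $X$.

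It remains to identify $-\sigma$ with the right-hand side of \eqref{eq:2.prep}. Averaging the $p$ eventual values and switching the order of summation yields
\[
\sigma(x) = S + \frac{1}{p}\sum_{r=0}^{p-1}\sum_{j=0}^{r} a_{l+j} = S + \frac{1}{p}\sum_{j=0}^{p-1}(p-j)a_{l+j} = S - \frac{1}{p}\sum_{k=1}^{p-1} k\, a_{l+k},
\]
where the last equality uses $\sum_{j=0}^{p-1} a_{l+j}=0$. Negating gives the closed-form \eqref{eq:2.prep}. The only real obstacle is this final bookkeeping; everything else is a direct application of the machinery already developed. As a sanity check one can alternatively substitute \eqref{eq:2.prep} into \eqref{eq:1} and verify the identity by shifting the index $k\mapsto k+1$ in the second sum, pulling out the term $\gamma(F^l x)$ arising from $F^{l+p}=F^l$, and closing the computation with \eqref{eq:2.0}.
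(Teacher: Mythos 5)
Your proof is correct, and while it uses the same central tool as the paper (Ces\`aro summation of the resolving series via Proposition \ref{thm:cesre}), the route through the sufficiency part is genuinely different and somewhat more self-contained. The paper first establishes solvability by proving that \eqref{eq:2.0} is equivalent to the cycle conditions \eqref{eq:2.1} of Theorem \ref{thm:2.1} (which requires a small argument in the converse direction: every periodic point $z$ can be written as $F^l x$, so \eqref{eq:2.0} propagates to \eqref{eq:2.1}); it then takes an existing solution $\varphi$, computes $\tau(x)$ to show $E_\varphi=X$, and invokes Proposition \ref{thm:cesreso} to conclude $C_\gamma=X$ before identifying $-\sigma$. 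You instead bypass Theorem \ref{thm:2.1} entirely for sufficiency: the observation that $a_{l+j}=\gamma(F^{l+j}x)$ is $p$-periodic in $j$ and that the period-sum vanishes makes $s_n(x)$ eventually periodic in $n$, so $C_\gamma=X$ by inspection and Proposition \ref{thm:cesre} alone delivers the solution. This buys you independence from the transversal/axiom-of-choice construction underlying Theorem \ref{thm:2.1} for the existence part, and your identification of $\sigma$ as the average of the $p$ eventual values of $s_n$ (with the index swap $\sum_{r}\sum_{j\leq r}a_{l+j}=\sum_j(p-j)a_{l+j}$ and the period-sum relation) is a cleaner piece of bookkeeping than the paper's reparametrization $k=l+i+pj$ of the Ces\`aro means at $N=l-1+mp$. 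Your necessity argument coincides with the paper's easy direction. Both computations of course land on the same formula \eqref{eq:2.prep}.
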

\begin{proof}
The first statement follows from Theorem \ref{thm:2.1} since the condition \eqref{eq:2.0} 
is equivalent to \eqref{eq:2.1}. Indeed, for every $x\in X$ the point $F^lx$ is periodic 
of period $p$, thus \eqref{eq:2.1} implies \eqref{eq:2.0}. 
Conversely, let \eqref{eq:2.0} be fulfilled, and let $z\in X$ be a periodic 
point of a period $q$. Then $F^{nq}z=z$ for all $n\in \N$. With $n\geq l/q$ 
we get $z=F^lx$ where $x=F^{nq-l}z$. Obviously, $p$ is a period of $z$ and 
\begin{equation*}
   \sum_{k=0}^{p-1}\gamma(F^{k}z) =0.
\end{equation*}
Thus, \eqref{eq:2.1} is fulfilled. 

Now assuming \eqref{eq:2.0} we derive \eqref{eq:2.prep} 
via the Ces\'aro summation. Let $\varphi$ be a solution to 
\eqref{eq:1}. With $N>l$ and $q$ such that $l+pq\leq N<l+p(q+1)$ we have 
\begin{equation*}
\sum_{n=0}^{N}\varphi(F^{n}x) = \sum_{n=0}^{l-1}\varphi(F^{n}x) + 
 q\sum_{n=l}^{l+p-1}\varphi(F^{n}x) + 
\sum_{n=l}^{N-pq}\varphi(F^{n}x). 
\end{equation*} 
Hence, for all $x\in X$   
\begin{equation*} 
\tau(x)= \lim_{N\rightarrow\infty}\frac{1}{N+1}\sum_{n=0}^{N}\varphi(F^n x)=
\frac{1}{p}\sum_{n=l}^{l+p-1}\varphi(F^{n}x). 
\end{equation*}
Thus, $E_{\varphi}=X$, so $C_{\gamma}=X$ by Proposition \ref{thm:cesreso}, and 
$-\sigma(x)$ satisfies \eqref{eq:1} by Proposition \ref{thm:cesre}. 

Now we take $N=l-1+mp$  in \eqref{eq:gcs}, $m\rightarrow\infty$. 
This yields 
\begin{equation}
\label{eq:gcsi}  
-\sigma(x)=-\sum_{k=0}^{l-1}\gamma (F^kx) + \lim_{m\rightarrow\infty}
\sum_{k=l}^{l-1+mp}\left(\frac{k}{l+mp}-1\right)\gamma(F^kx).
\end{equation}
It remains to show that the limit in \eqref{eq:gcsi} is equal to the second summand in 
\eqref{eq:2.prep}. To this end we parametrize the index $k$ in the second sum in 
\eqref{eq:gcsi} as $k=l+i+pj$, where $0\leq i\leq p-1$, $0\leq j\leq m-1$. 
Since $F^{l+i+pj}=F^{l+i}$, this sum reduces to 
\begin{equation*}
\sum_{j=0}^{m-1}\sum_{i=0}^{p-1}\left(\frac{l+i+pj}{l+mp}-1\right)\gamma(F^{l + i}x)=
\Sigma_1 +\Sigma_2
\end{equation*}
where
\[
\Sigma_1 =\sum_{j=0}^{m-1}\left(\frac{l+pj}{l+mp}-1\right)\sum_{i=0}^{p-1}\gamma(F^{l + i}x) 
\]
and 
\[
\Sigma_2=\frac{m}{l+mp}\sum_{i=1}^{p-1}i\gamma(F^{l + i}x).
\]
However, $\Sigma_1 =0 $ by \eqref{eq:2.0}, while $\Sigma_2$ 
tends to the second summand in \eqref{eq:2.prep}.
\end{proof}
\begin{cor}
\label{cor:sopreper}
Let $F$ be preperiodic, and let \eqref{eq:2.0} be fulfilled for a function $\gamma$ 
from a $T_F$-invariant subspace $E\subset\Phi(X)$. Then the c.e. ~\eqref{eq:1} 
has a solution in $E$. 
\end{cor}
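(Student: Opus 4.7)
The plan is to deduce the corollary directly from Theorem~\ref{thm:preper} by observing that the explicit solution~\eqref{eq:2.prep} produced there already lies in $E$ under the hypotheses assumed.

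First I would invoke Theorem~\ref{thm:preper}: since $F$ is preperiodic and the hypothesis \eqref{eq:2.0} is the assumed condition, the c.e.~\eqref{eq:1} is solvable and the function
\[
\varphi(x) = -\sum_{k=0}^{l-1}\gamma(F^k x) + \frac{1}{p}\sum_{k=1}^{p-1} k\,\gamma(F^{k+l}x)
\]
is a solution. So existence of \emph{some} solution is immediate; the only thing that needs to be checked is that this particular $\varphi$ belongs to the subspace $E$.

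Next I would rewrite $\varphi$ in operator form. Each summand $\gamma(F^k x)$ equals $(T_F^k\gamma)(x)$, so $\varphi$ is a finite linear combination
\[
\varphi = -\sum_{k=0}^{l-1} T_F^k\gamma + \frac{1}{p}\sum_{k=1}^{p-1} k\, T_F^{k+l}\gamma.
\]
Since $\gamma\in E$ and $E$ is $T_F$-invariant by hypothesis, induction on $k$ gives $T_F^k\gamma\in E$ for every $k\geq 0$. As $E$ is a (linear) subspace of $\Phi(X)$, the finite linear combination displayed above also belongs to $E$, and we conclude $\varphi\in E$.

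There is no real obstacle here: the work was already done in Theorem~\ref{thm:preper}, and the corollary is simply the observation that the constructed solution is built from $\gamma$ by iterated application of $T_F$ and scalar linear combinations, both of which preserve membership in any $T_F$-invariant subspace. The only point worth flagging is to make sure the ``subspace'' hypothesis on $E$ means a \emph{linear} subspace (not merely a subset), which is the standard convention used throughout the paper.
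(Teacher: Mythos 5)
Your proposal is correct and follows the paper's own argument exactly: the paper's proof of this corollary is the one-line observation that the explicit solution \eqref{eq:2.prep} from Theorem~\ref{thm:preper} is the required one, with membership in $E$ implicit because it is a finite linear combination of the iterates $T_F^k\gamma$. Your write-up just makes that last step explicit.
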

\begin{proof}
The required solution is that of \eqref{eq:2.prep}.
\end{proof}
By Theorem \ref{thm:2.1} this can be reformulated as follows. 
\begin{cor}
\label{cor:soprepe}
Let $F$ be preperiodic, and let $\gamma$ belongs to a $T_F$-invariant subspace 
$E\subset\Phi(X)$. 
Then if the c.e. \eqref{eq:1} is solvable in $\Phi(X)$ then it is solvable in $E$. 
\end{cor}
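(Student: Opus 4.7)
The plan is to chain together the two results that immediately precede the corollary: Theorem \ref{thm:2.1} (the general solvability criterion in $\Phi(X)$) and Corollary \ref{cor:sopreper} (which extracts a solution in $E$ from the explicit Ces\`aro formula \eqref{eq:2.prep}). The whole statement reduces to checking that, in the preperiodic setting, the hypothesis of Corollary \ref{cor:sopreper}, namely condition \eqref{eq:2.0}, is automatic once we are told that \eqref{eq:1} is solvable somewhere in $\Phi(X)$.

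First I would apply Theorem \ref{thm:2.1}: a solution in $\Phi(X)$ exists, hence condition \eqref{eq:2.1} holds for every periodic $x$ and every one of its periods. Next I would invoke the equivalence between \eqref{eq:2.1} and \eqref{eq:2.0} that was already established inside the proof of Theorem \ref{thm:preper}. The nontrivial direction is that \eqref{eq:2.1} at every periodic point implies \eqref{eq:2.0} at every $x \in X$: given arbitrary $x$, the point $F^l x$ is periodic of period $p$ (since $F^{l+p}=F^l$), so \eqref{eq:2.1} applied to $F^l x$ with this period yields exactly \eqref{eq:2.0} for $x$.

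With \eqref{eq:2.0} in hand and $\gamma \in E$, Corollary \ref{cor:sopreper} produces the explicit solution
\[
\varphi(x)= -\sum_{k=0}^{l-1}\gamma(F^k x) + \frac{1}{p}\sum_{k=1}^{p-1} k\,\gamma(F^{k+l}x).
\]
The final step is to observe that this belongs to $E$: each summand is of the form $(T_F^j \gamma)(x)$, and $T_F$-invariance of $E$ together with $\gamma \in E$ closes $E$ under all finite scalar combinations of the iterates $T_F^j \gamma$. Hence $\varphi \in E$, as required.

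I do not anticipate a real obstacle here; the corollary is essentially a bookkeeping consequence of the two named results. The only point deserving care is the translation between the ``some periodic point'' form \eqref{eq:2.1} and the ``every $x$'' form \eqref{eq:2.0}, but this is already handled verbatim in the proof of Theorem \ref{thm:preper}, so I would simply cite that passage rather than reprove it.
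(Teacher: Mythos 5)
Your proposal is correct and follows essentially the same route as the paper: the paper derives this corollary by combining Theorem \ref{thm:2.1} with the equivalence of \eqref{eq:2.1} and \eqref{eq:2.0} established in the proof of Theorem \ref{thm:preper}, and then invoking Corollary \ref{cor:sopreper}, whose explicit solution \eqref{eq:2.prep} lies in $E$ by $T_F$-invariance. Your extra remark that \eqref{eq:2.prep} is a finite linear combination of the iterates $T_F^j\gamma$ is exactly the (implicit) justification the paper relies on.
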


For example, if $(X,F)$ is a preperiodic topological dynamical system and $\gamma$ 
is continuous function on $X$ then there exists a continuous solution as long as 
a solution exists at all.

For $l=0$ the first summand in \eqref{eq:2.prep} vanises and we get the following  
\begin{cor}
\label{cor:per} 
Let $F$ be periodic of a period $p$, i.e. $F^p = I$. If the  
condition \eqref{eq:2.1} is fulfilled for all $x\in X$ then the function
\begin{equation}
\label{eq:per} 
 \varphi(x)= \frac{1}{p}\sum_{k=1}^{p-1} k\gamma (F^{k}x) = 
-\frac{1}{p}\sum_{n=0}^{p-1}s_n(x) 
\end{equation}
is a solution to the c.e.~\eqref{eq:1}.
\end{cor}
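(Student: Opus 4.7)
The plan is to recognize Corollary \ref{cor:per} as the special case $l=0$ of Theorem \ref{thm:preper}, and then verify the alternative ``partial-sum'' expression for the solution by interchanging the order of summation.

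First, I would observe that when $F^p = I$ we have a preperiodic system with preperiod $l=0$ and period $p$, so the hypothesis \eqref{eq:2.0} becomes exactly \eqref{eq:2.1}. Applying Theorem \ref{thm:preper} directly, formula \eqref{eq:2.prep} reduces to
\[
\varphi(x) = -\sum_{k=0}^{-1}\gamma(F^k x) + \frac{1}{p}\sum_{k=1}^{p-1} k\gamma(F^{k}x) = \frac{1}{p}\sum_{k=1}^{p-1} k\gamma(F^{k}x),
\]
since the first sum is empty. This establishes the first expression for $\varphi$ and confirms it solves the c.e. \eqref{eq:1}.

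For the second equality, I would compute $-\frac{1}{p}\sum_{n=0}^{p-1} s_n(x)$ by interchanging the order of summation in the double sum $\sum_{n=0}^{p-1}\sum_{k=0}^{n}\gamma(F^k x)$. Grouping by $k$, each $\gamma(F^k x)$ appears with multiplicity $p-k$, giving
\[
\sum_{n=0}^{p-1} s_n(x) = \sum_{k=0}^{p-1} (p-k)\,\gamma(F^k x) = p\sum_{k=0}^{p-1}\gamma(F^k x) - \sum_{k=0}^{p-1} k\,\gamma(F^k x).
\]
Invoking the hypothesis \eqref{eq:2.1} to kill the first sum on the right, and noting the $k=0$ term of the second sum is zero, I get $\sum_{n=0}^{p-1} s_n(x) = -\sum_{k=1}^{p-1} k\,\gamma(F^k x)$, which after dividing by $-p$ matches the first expression for $\varphi$.

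There is no real obstacle here — the corollary is essentially bookkeeping on top of Theorem \ref{thm:preper}, and the only computation of substance is the elementary reindexing of the double sum together with a single application of the TNC hypothesis.
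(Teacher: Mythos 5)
Your proposal is correct and follows the paper's own route: the paper derives Corollary \ref{cor:per} precisely by setting $l=0$ in Theorem \ref{thm:preper}, whereupon the first summand of \eqref{eq:2.prep} is empty. Your additional verification of the second equality in \eqref{eq:per} (interchanging the double sum and using \eqref{eq:2.1} to kill the $p\sum_k\gamma(F^kx)$ term) is a correct piece of bookkeeping that the paper leaves implicit.
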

\begin{example}
\label{ex:rateq} 
Consider the equation ~\eqref{eq:7} with rational $\alpha = r/p$, $g.c.d.(r,p)=1$,  
and 1-periodic $h$. The shift $Fx=x+\alpha$ modulo 1 is periodic of period $p$. 
Under conditions  
\begin{equation}
\label{eq:9}
\sum_{k=0}^{p-1} h(x+k\alpha) = 0, \eqsp 0\leq x <1,
\end{equation}
the Corollary \ref{cor:per} yields the 1-periodic solution  
\begin{equation}
\label{eq:10}
f(x) = \frac{1}{p}\sum_{k=1}^{p-1}kh(x+k\alpha). 
\end{equation}

Note that \eqref{eq:9} is equivalent to its restriction to $0\leq x<1/p$. 
Indeed, for every $x\in$ [0,1) there exists a unique 
pair of integers $s,t$ such that the integral part of $px$ is equal to 
$rs+pt$ with $0\leq s\leq p-1$. Then in \eqref{eq:9} one can change $x$ to 
$z=x-s\alpha -t\in [0,1/p)$.
In fact, we see that the interval $[0,1/p)$ is a transveral.
Hence, the general solution to the corresponding 
homogeneous equation is $g(z)$ where $z=z(x)$ is as above and $g$ is an 
arbitrary function on $[0,1/p)$. 
\qed
\end{example}

\begin{remark}
\label{rem:indys}
If $(X,F)$ is an invertible dynamical system and $X_0$ is a transversal then 
\begin{equation}
\label{eq:trun}
X=\cup\{F^nX_0: n\in \Z\}.
\end{equation}
It turns out that {\em if $F$ has no periodic points then} $X_0$ {\em is nonmeasurable  
with respect to any finite invariant measure on $X$}. Indeed, the constituents in 
\eqref{eq:trun} are pairwise disjoint. If $X_0$ is measurable then 
$\mu(F^nX_0) = \mu(X_0)$ for all $n$. This implies $\mu(X_0)=0$ since $\mu(X)<\infty$, 
thus, $\mu(X)=0$. 

For example, {\em for the irrational rotation of the unit circle 
every transversal is nonmeasurable}, in contrast to the rational case.
This is a counterpart of the classical example of a nonmeasurable set, 
see e.g., \cite{gelbaum64}, Chapter 8, Example 11. 
\qed
\end{remark}

\section{Resolving functionals }
\label{sec:sofu}

In some important situations the c.e. \eqref{eq:1} 
can be solved by a ``nonlinear summation'' of the resolving series \eqref{eq:3}. 
\begin{theorem}
\label{thm:2.20}
Given a dynamical system $(X,F)$ and a function $\gamma$ on $X$, 
let $\Lambda$ be a set of scalar sequences $(\eta_n)_{n\geq 0}$ containing 
all $\left(-s_n(x)\right)$, $x\in X$, and let $\omega$ be a functional 
$\Lambda\rightarrow\R$. Assume that  
  
{\em a)} $\Lambda$ is  invariant with respect to the shift $n\mapsto n+1$ 
and to all translations $(\eta_n)\mapsto (\eta_n+\eta)$,  

and 

{\em b)} $\omega$ is shift invariant and translation covariant, i.e. 
\begin{equation*}
\omega[(\eta_{n+1})]=\omega[(\eta_{n})],\eqsp\omega[(\eta_n +\eta)]=\omega[(\eta_n)] +\eta. 
\end{equation*}

Then $\omega$ is a {\em resolving functional} for the c.e. \eqref{eq:1} in the sense that 
\begin{equation}
\label{eq:sol} 
\varphi(x)=\omega[(-s_n(x))]
\end{equation}
is a solution to the c.e. \eqref{eq:1}. 
\end{theorem}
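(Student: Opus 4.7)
The plan is a direct computation exploiting the recurrence between $s_n(Fx)$ and $s_{n+1}(x)$ that already appeared in equation \eqref{eq:resn}. The essential observation is that the sequence $(-s_n(Fx))_{n\ge 0}$ is obtained from $(-s_n(x))_{n\ge 0}$ by a shift followed by a translation, both of which interact predictably with $\omega$.

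First I would record the computation $-s_n(Fx)=-s_{n+1}(x)+\gamma(x)$, which follows immediately from \eqref{eq:resn}. Thus, writing $\eta_n=-s_n(x)$, the sequence $(-s_n(Fx))$ equals $(\eta_{n+1}+\gamma(x))$. By hypothesis (a), the sequence $(\eta_n)$ lies in $\Lambda$, its shift $(\eta_{n+1})$ also lies in $\Lambda$, and the further translation by the constant $\gamma(x)$ keeps the sequence in $\Lambda$; hence $\omega$ may legitimately be applied to $(-s_n(Fx))$.

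Next I would apply the two defining properties of $\omega$ in sequence. By translation covariance,
\[
\varphi(Fx)=\omega\bigl[(\eta_{n+1}+\gamma(x))\bigr]=\omega\bigl[(\eta_{n+1})\bigr]+\gamma(x),
\]
and by shift invariance $\omega[(\eta_{n+1})]=\omega[(\eta_n)]=\varphi(x)$. Combining gives $\varphi(Fx)-\varphi(x)=\gamma(x)$, which is exactly \eqref{eq:1}.

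There is no genuine obstacle here: the argument is essentially bookkeeping once the recurrence \eqref{eq:resn} and the two axioms on $\omega$ are identified in the right order. The only point that deserves attention is verifying that each intermediate sequence on which $\omega$ is evaluated actually belongs to $\Lambda$, which is why hypothesis (a) is formulated to include both the shift invariance and the closure under all constant translations $(\eta_n)\mapsto(\eta_n+\eta)$.
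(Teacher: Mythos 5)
Your proof is correct and follows essentially the same route as the paper: it uses the recurrence \eqref{eq:resn} to identify $(-s_n(Fx))$ as a shift-plus-translation of $(-s_n(x))$, then applies translation covariance and shift invariance of $\omega$ in turn. Your explicit check that each intermediate sequence lies in $\Lambda$ is a small bookkeeping point the paper leaves implicit, but the argument is the same.
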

\begin{proof}
We have 
\begin{equation*} 
\varphi(Fx)=\omega[(-s_n(Fx))]=\omega[(-s_{n+1}(x)+\gamma(x))]=
\omega[(-s_{n+1}(x))]+\gamma(x). 
\end{equation*}
by \eqref{eq:resn} and the translation covariance of $\omega$. Hence,  
\begin{equation}
\label{eq:mis} 
\varphi(Fx)-\varphi(x)=\gamma(x)-\{\omega[(-s_{n}(x))]- \omega[(-s_{n+1}(x))]\}. 
\end{equation}
This reduces to \eqref{eq:1} by the shift invariance of $\omega$.
\end{proof}
Obviously, for every finite family $\{\omega_k\}$ of resolving functionals all linear 
combinations $\sum_k\alpha_k\omega_k$ with $\sum_k\alpha_k  = 1$ are resolving functionals. 

The conditions a) and b) of Theorem \ref{thm:2.20} are fulfilled for  $\Lambda=B(\N)$, 
(i.e. for the space of all bounded sequences $(\eta_n)$) and    
\begin{equation}
\label{eq:om} 
\omega[(\eta_n)]=\overline{\lim}_{n}(\eta_n). 
\end{equation}
Other resolving functionals on this space are:
the lower limit , the upper  or lower limit of the arithmetic means of $(\eta_n)_{n=0}^N$, 
etc. All of them are nonlinear. A linear example is the Banach limit.
\begin{cor}
\label{cor:bos}
If the sequence $(s_n(x))$ is bounded at every $x\in X$ then the function 
\begin{equation}
\label{eq:ubc} 
\varphi(x) = \overline{\lim}_n[-s_n(x)] 
\end{equation}
is a solution to the c.e. \eqref{eq:1}. 
\end{cor}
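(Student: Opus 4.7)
The plan is to recognize this as an immediate specialization of Theorem~\ref{thm:2.20} with the choice $\Lambda=B(\N)$ (the space of bounded real sequences) and the functional $\omega[(\eta_n)]=\overline{\lim}_n\eta_n$. So I would present it as a three-step verification rather than an independent argument.

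First, I would observe that the pointwise boundedness hypothesis on $(s_n(x))$ ensures that, for every $x\in X$, the sequence $(-s_n(x))_{n\geq 0}$ lies in $B(\N)$, so the candidate family $\{(-s_n(x)):x\in X\}$ is contained in $\Lambda$ as required by the hypothesis of Theorem~\ref{thm:2.20}. Next, I would check condition~a): $B(\N)$ is obviously preserved by the forward shift $n\mapsto n+1$ (boundedness of a sequence is unaffected) and by translations $(\eta_n)\mapsto(\eta_n+\eta)$ for any scalar $\eta$.

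Then I would verify condition~b) for $\omega=\overline{\lim}$. Shift invariance, $\overline{\lim}_n\eta_{n+1}=\overline{\lim}_n\eta_n$, is a standard fact about the upper limit (it depends only on tails). Translation covariance, $\overline{\lim}_n(\eta_n+\eta)=\overline{\lim}_n\eta_n+\eta$, follows from the additivity of $\sup$ under translation by a constant, applied to $\sup_{n\geq N}(\eta_n+\eta)=\eta+\sup_{n\geq N}\eta_n$, and letting $N\to\infty$.

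With a) and b) in place, Theorem~\ref{thm:2.20} applies and yields that
\[
\varphi(x)=\omega[(-s_n(x))]=\overline{\lim}_n[-s_n(x)]
\]
is a solution of~\eqref{eq:1}. There is no genuine obstacle here: the content of the corollary is essentially the pointwise finiteness of $\varphi$, which is exactly what the boundedness hypothesis supplies, together with the elementary properties of $\overline{\lim}$ already noted in the discussion preceding the corollary.
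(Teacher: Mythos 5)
Your proposal is correct and is exactly the paper's route: the paper derives this corollary by applying Theorem~\ref{thm:2.20} with $\Lambda=B(\N)$ and $\omega=\overline{\lim}$, noting (as you do) that pointwise boundedness of $(s_n(x))$ places every sequence $(-s_n(x))$ in $B(\N)$ and that the upper limit is shift invariant and translation covariant. Nothing is missing.
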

\begin{cor}
\label{thm:bou}
The c.e. \eqref{eq:1} has a solution $\varphi\in B(X)$ if and only if the 
sums $s_n(x)$ ($n\geq 0$) are uniformly bounded. (In particular, 
$\gamma(x)\equiv s_0(x)$ is bounded.) Under this condition a bounded 
solution is \eqref{eq:ubc}. Moreover, the inequality 
\begin{equation}
\label{eq:cub} 
\frac{1}{2}\sup_n\norm{s_n}\leq\norm{\varphi}\leq\sup_n\norm{s_n}
\end{equation} 
holds.  
\end{cor}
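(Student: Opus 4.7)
The plan is to split the biconditional into necessity and sufficiency and extract the norm bounds along the way, leaning entirely on formula \eqref{eq:2} and on Corollary \ref{cor:bos}.

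For necessity, I would start from any bounded solution $\varphi\in B(X)$ and read \eqref{eq:2} as
\[
s_n(x)=\varphi(F^{n+1}x)-\varphi(x),\eqsp n\geq 0.
\]
The triangle inequality then gives $\abs{s_n(x)}\leq 2\norm{\varphi}$ uniformly in $n$ and $x$, so $\sup_n\norm{s_n}\leq 2\norm{\varphi}$. This immediately yields both the existence of the uniform bound on $(s_n)$ and the left inequality in \eqref{eq:cub}.

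For sufficiency, assuming $M:=\sup_n\norm{s_n}<\infty$, I would simply invoke Corollary \ref{cor:bos}: the sequence $(-s_n(x))$ is bounded at every $x$, hence the formula \eqref{eq:ubc} defines a function $\varphi$ that solves the c.e. \eqref{eq:1}. Since $\abs{\varphi(x)}=\abs{\overline{\lim}_n[-s_n(x)]}\leq\sup_n\abs{s_n(x)}\leq M$, we have $\varphi\in B(X)$ and $\norm{\varphi}\leq M$, which is the right inequality in \eqref{eq:cub}. Note that the $\gamma$ being bounded is automatic from boundedness of $s_0=\gamma$.

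There is essentially no obstacle here — the content is entirely packaged in \eqref{eq:2} (for the necessary direction and the factor $2$) and in Corollary \ref{cor:bos} (for the explicit bounded solution). The only thing worth a line of care is that the factor $\tfrac12$ in the left inequality cannot in general be improved, since $\varphi$ is determined only up to an invariant function, so the constant $2$ coming from $|\varphi(F^{n+1}x)|+|\varphi(x)|$ is the right constant to record.
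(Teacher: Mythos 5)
Your proposal is correct and follows essentially the same route as the paper: the ``only if'' direction and the factor $\tfrac12$ come from \eqref{eq:2}, and the ``if'' direction with the right-hand bound comes from the $\overline{\lim}$ resolving functional (Corollary \ref{cor:bos}, i.e.\ Theorem \ref{thm:2.20} with $\omega=\overline{\lim}$). The concluding aside about optimality of the constant is not needed and is not actually justified, but it does not affect the proof.
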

\begin{proof}
The ``if'' part with the right-hand inequality in \eqref{eq:cub} follows from 
Theorem \ref{thm:2.20} with the upper limit in role of $\omega$.
The ``only if'' part with the left-hand inequality follows from \eqref{eq:2} 
for any bounded solution $\varphi$.
\end{proof}
\begin{remark}
\label{rem:3.3}
The `if'' part of Corollary \ref{thm:bou} can also be extracted from the proof of 
Theorem \ref{thm:2.1}. Indeed, let the sums $s_n(x)$ be uniformly bounded. 
For every $p\geq 1$ and every periodic point $x$ of period $p$ we have 
$s_{p-1}(x)= n^{-1}s_{np-1}(x)$ that yields \eqref{eq:2.1} as $n\rightarrow\infty$. 
Now if $x_0$ runs over a transversal $X_0$ and $\varphi(x_0)$ is bounded then we obtain 
a bounded solution $\varphi(x)$ by \eqref{eq:2.2}, \eqref{eq:2.20} and \eqref{eq:2.3}. 
\qed 
\end{remark}
\begin{theorem}
\label{thm:2.2}
For any measurable dynamical system $(X,F)$ the c.e. \eqref{eq:1}
has a bounded measurable solution if and only if the function $\gamma(x)$ 
is measurable and the sums $s_n(x)$ are uniformly bounded. In this case 
the function \eqref{eq:ubc} is such a solution. 
\end{theorem}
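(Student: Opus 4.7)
The plan is to reduce to Corollary~\ref{thm:bou}, which already handles the bounded case without any measurability constraint, and then verify that when the hypotheses hold in the measurable category, so does the solution produced by \eqref{eq:ubc}.

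For the ``only if'' direction, suppose $\varphi\in B(X)$ is measurable and satisfies \eqref{eq:1}. Since $F$ is a measurable self-mapping, $\varphi\circ F$ is measurable, and hence $\gamma=\varphi\circ F-\varphi$ is measurable as a difference of measurable functions. The uniform bound on $(s_n)$ is immediate from \eqref{eq:2}: for every $n\geq 0$ and $x\in X$,
\[
\abs{s_n(x)} = \abs{\varphi(F^{n+1}x) - \varphi(x)} \leq 2\norm{\varphi}.
\]

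For the ``if'' direction, assume $\gamma$ is measurable and $\sup_{n,x}\abs{s_n(x)}<\infty$. Corollary~\ref{thm:bou} already gives that the function $\varphi$ defined by \eqref{eq:ubc} is a bounded solution to \eqref{eq:1}; it only remains to verify that this $\varphi$ is measurable. Iterating measurability of $F$, each $F^k$ is measurable, hence so is $\gamma\circ F^k$ for every $k\geq 0$. Each partial sum $s_n=\sum_{k=0}^{n}\gamma\circ F^k$ is then measurable as a finite sum, and consequently so is $-s_n$. Since the pointwise upper limit of a sequence of measurable scalar functions is measurable, $\varphi=\overline{\lim}_{n}(-s_n)$ is measurable, and the proof is complete.

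There is no real obstacle here: the content of the theorem is that the nonlinear resolving functional $\overline{\lim}_n$ from \eqref{eq:om} preserves measurability when evaluated pointwise on a sequence of measurable functions, so the construction of Corollary~\ref{thm:bou} automatically lives in the measurable category once its inputs do. The same argument would fail for a Banach-limit-based resolving functional, which is not in general a Borel function of its input sequence; this is one reason the nonlinear choice \eqref{eq:om} is preferable here.
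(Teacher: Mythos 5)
Your proof is correct and is essentially the argument the paper intends: the paper offers no explicit proof of this theorem, only the remark that it is a version of Lin--Sine's Proposition 8 (where the solution is taken as the upper limit of the Ces\`aro means of $(-s_n(x))$ rather than of $-s_n(x)$ itself), and your reduction to Corollary~\ref{thm:bou} plus the observation that a pointwise $\overline{\lim}$ of measurable functions is measurable supplies exactly the missing verification. The ``only if'' direction via \eqref{eq:2} and the closing remark about why the $\overline{\lim}$ functional (unlike a Banach limit) keeps the construction inside the measurable category are both accurate.
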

This is a version of Proposition 8 from \cite{linsine83} where 
a bounded solution is determined as the upper limit of the arithmetic means 
of $(-s_n(x))_{n=0}^N$ as $N\rightarrow\infty$. 
\begin{theorem}
\label{thm:limsup}
Let $(X,F)$ be a measurable dynamical system with a measure $\mu$, and let 
$\gamma\in L_{\infty}(X,\mu)$. If the sequence $(s_n)$ is $L_{\infty}$-bounded  
then the c.e. \eqref{eq:1} is solvable in this space. 
If the measure $\mu$ is invariant then the boundedness condition is necessary.
\end{theorem}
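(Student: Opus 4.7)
My plan is to treat this as an $L_\infty$-adaptation of Corollary \ref{thm:bou}: sufficiency will come from verifying that the same explicit formula $\varphi(x)=\limsup_n(-s_n(x))$ used there already lands in $L_\infty(X,\mu)$ under the present hypotheses, and necessity will come from the pointwise identity \eqref{eq:2} combined with the $\mu$-invariance of $F$.

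For the sufficiency I would fix a pointwise representative of $\gamma$, form the measurable functions $s_n$ as usual, and set $\varphi(x)=\limsup_n(-s_n(x))$ pointwise on $X$. This is measurable as a pointwise $\limsup$ of measurable functions. Writing $C=\sup_n\|s_n\|_\infty<\infty$, for each $n$ there is a null set $A_n$ off of which $|s_n|\le C$; on the complement of $\bigcup_n A_n$, which still has full measure, $|\varphi(x)|\le C$, so $\varphi\in L_\infty$ with $\|\varphi\|_\infty\le C$. That $\varphi$ solves the c.e.\ is a pointwise statement, since the algebraic recurrence $s_n(Fx)=s_{n+1}(x)-\gamma(x)$ of \eqref{eq:resn} holds for every $x\in X$; taking $\limsup$ in $n$ and using that an index shift does not affect a $\limsup$ gives $\varphi(Fx)=\varphi(x)+\gamma(x)$ for every $x\in X$, which is even stronger than a.e.\ solvability.

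For the necessity, assume $\mu$ is invariant and let $\varphi\in L_\infty(X,\mu)$ solve \eqref{eq:1} on some invariant measurable set $Y_\varphi\subset X$ with $\mu(X\setminus Y_\varphi)=0$. Iterating \eqref{eq:1} on $Y_\varphi$ and invoking \eqref{eq:2} gives $s_n(x)=\varphi(F^{n+1}x)-\varphi(x)$ for every $x\in Y_\varphi$ and every $n\ge 0$. Setting $K=\|\varphi\|_\infty$, the inequality $|\varphi|\le K$ fails only on a null set $N$, and by invariance of $\mu$ each preimage $F^{-(n+1)}N$ is again null, so $|\varphi\circ F^{n+1}|\le K$ a.e.; combining this with the identity above yields $\|s_n\|_\infty\le 2K$ uniformly in $n$, which is the right-hand side of \eqref{eq:cub} in disguise.

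The only delicate point is essentially bookkeeping rather than a real obstacle: since $\gamma$ is only an equivalence class in $L_\infty$ and $F$ need not be $\mu$-nonsingular for the sufficiency direction, one must fix a pointwise representative before iterating, and the hypothesis that $(s_n)$ is $L_\infty$-bounded is exactly what guarantees that each $s_n$ lies in $L_\infty$ regardless of the choice of representative. In the necessity direction, invariance of $\mu$ plays the analogous role by ensuring null sets pull back to null sets, which is the only way the pointwise bound on $\varphi$ transfers to a bound on $\varphi\circ F^{n+1}$.
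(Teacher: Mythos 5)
Your proof is correct and follows essentially the same route as the paper's: the $\limsup$ formula \eqref{eq:ubc} for sufficiency, with the null sets $A_n$ intersected to get the essential bound (the paper's set $Y$), and the identity \eqref{eq:2} plus invariance of $\mu$ for necessity. The only overstatement is your claim that $\varphi(Fx)-\varphi(x)=\gamma(x)$ holds for \emph{every} $x\in X$: off a null set $\limsup_n(-s_n(x))$ may be $\pm\infty$, so $\varphi$ need not be real-valued there; the paper handles this by restricting to the $T_F$-invariant full-measure set $Z=\{x:\sup_n\abs{s_n(x)}<\infty\}$, on which the equation does hold pointwise, and this restriction is exactly what the paper's notion of a solution a.e.\ requires.
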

\begin{proof}
The necessity follows from \eqref{eq:2}. 
Now assume that in $L_{\infty}(X,\mu)$ we have $\sup_n\norm{s_n}<\infty$. 
However, $\norm{s_n}=\sup_{x\in X_n}\abs{s_n(x)}$ 
for a subset $X_n\subset X$ such that $\mu(X\setminus X_n)=0$. For 
the intersection $Y$ of all $X_n$ we have $\mu(X\setminus Y)=0$ and    
\begin{equation}
\label{eq:linf}
\sup_{x\in Y}\sup_{n}\abs{s_n(x)}=\sup_{n}\sup_{x\in Y}\abs{s_n(x)}
\leq \sup_{n}\norm{s_n}<\infty. 
\end{equation}

Now let us consider the set $Z=\{x\in X: \sup_{n}\abs{s_n(x)}<\infty\}$. 
Since $Y\subset Z\subset X$, we have $\mu(X\setminus Z)=0$ and $\mu(Z\setminus Y)=0$. 
The set $Z$ is $T_F$-invariant because of \eqref{eq:resn}. 
For $x\in Z$ a solution $\varphi(x)$ can be determined by \eqref{eq:ubc}, and 
$\sup_{x\in Y}\abs{\varphi(x)}<\infty$ by \eqref{eq:linf},thus $\varphi\in L_{\infty}(X,\mu)$.
\end{proof}

Theorem \ref{thm:limsup} generalizes Theorem 2 from \cite{browder58} where 
the $L_{p}$-solutions are constructed for $1<p<\infty$ and then an 
$L_{\infty}$-solution is obtained as $p\rightarrow\infty$. 
For other results concerning $L_{\infty}$-solutions see \cite{linsine83} and \cite{sator03}. 

The question arises: {\em is the supremum (or the infimum) a resolving functional? }
In general, {\em the answer is negative}. 
The reason is that these functionals are not shift invariant, so the 
subtrahend in formula \eqref{eq:mis} may not vanish. 
\begin{example}
\label{ex:nosup}
Let $X=\N$, and let $Fn=n+1$, $n\geq 0$. 
Then the c.e. \eqref{eq:1} turns into the simplest difference equation 
\begin{equation}
\label{eq:difeq}
\varphi(n+1)-\varphi(n) = \gamma(n). 
\end{equation}
Its general solution is 
\begin{equation}
\label{eq:pars}
\varphi(n)=\varphi(0)+\sum_{k=0}^{n-1}\gamma(k)= \varphi(0)+ s_{n-1}(0), \eqsp n\geq 1,  
\end{equation}
with an arbitrary $\varphi(0)$. The difference of any two solutions is a constant. 

From \eqref{eq:pars} it follows that if the sums $s_n(0)$ are bounded then all 
solutions are bounded, therefore, the sums $s_n(m)$ are 
uniformly bounded. However, the function $\varphi(m)=\sup_n[-s_n(m)]$ is not a 
solution, in general. Indeed, since  
\begin{equation*}
s_n(m) = s_{n+m}(0)-s_{m-1}(0), \eqsp m\geq 1,    
\end{equation*} 
we have 
\begin{equation}
\label{eq:sup0}
\varphi(m)=s_{m-1}(0) - \inf_{n\geq m}[s_n(0)].  
\end{equation}
This is a solution if and only if the infimum in 
\eqref{eq:sup0} is a constant since $s_{m-1}(0)$ is a solution. 
However, if, for instance, $\gamma(n)>0$ for all $n$ then the infimum 
is equal to $s_m(0)$ that increases along with $m$.
\qed
\end{example}
Nevertheless, {\em the supremum and infimum are resolving functionals 
in some special situations.} For definiteness we consider the supremum. Then
\begin{equation}
\label{eq:mon}
\sup_n(\eta_n)\geq\sup_n(\eta_{n+1}), \eqsp (\eta_n)\in B(\N),
\end{equation} 
instead of the shift invariance. On this base we establish    
an useful lemma concerning the {\em residual function}
\begin{equation}
\label{eq:refu}
\delta(x)=\gamma(x) + \varphi(x) - \varphi(Fx), \eqsp x\in X.   
\end{equation}
This one is defined for any pair of functions $\varphi ,\gamma$,  
and $\delta=0$ if and only if $\varphi$ is a solution to the c.e. \eqref{eq:1}.) 
\begin{lemma}
\label{lem:depo}
Let $s_n(x)$ be uniformly bounded, and let $\varphi(x)=\sup_n[-s_{n}(x)]$. Then 

1). The residual function $\delta(x)$ is nonnegative. 

2). The series 
\begin{equation}
\label{eq:dese} 
\delta(x)+\delta(Fx)+\cdots + \delta(F^n x)+\cdots
\end{equation}
converges for all $x$ to a bounded nonnegative function $\Delta(x)$.    
\end{lemma}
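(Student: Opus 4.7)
The plan is to exploit the recurrence \eqref{eq:resn} together with the telescoping structure built into the definition of $\delta$.

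For part 1, I would start by rewriting $\varphi(Fx)$ using \eqref{eq:resn}:
\[
\varphi(Fx) = \sup_{n\geq 0}[-s_n(Fx)] = \sup_{n\geq 0}[-s_{n+1}(x)+\gamma(x)] = \gamma(x) + \sup_{m\geq 1}[-s_m(x)].
\]
Since the supremum taken over $m\geq 1$ is over a subset of $\{0,1,2,\ldots\}$, it is at most $\sup_{m\geq 0}[-s_m(x)] = \varphi(x)$. Therefore $\varphi(Fx)\leq \gamma(x) + \varphi(x)$, which is exactly $\delta(x)\geq 0$. (In fact one sees $\delta(x) = \varphi(x) - \sup_{m\geq 1}[-s_m(x)] = \max\{0,\,-\gamma(x) - \sup_{m\geq 1}[-s_m(x)]\}$, but only the nonnegativity is needed.)

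For part 2, the idea is that the partial sums telescope. Summing \eqref{eq:refu} along the orbit,
\[
\sum_{k=0}^{n}\delta(F^kx) = \sum_{k=0}^{n}\gamma(F^kx) + \sum_{k=0}^{n}\bigl[\varphi(F^kx)-\varphi(F^{k+1}x)\bigr] = s_n(x) + \varphi(x) - \varphi(F^{n+1}x).
\]
By part 1 each $\delta(F^kx)$ is nonnegative, so the partial sums on the left are monotone nondecreasing in $n$. On the right, set $M=\sup_n\norm{s_n}$; then $|s_n(x)|\leq M$ by hypothesis, and $|\varphi(x)|\leq M$ follows immediately from $\varphi(x)=\sup_n(-s_n(x))$ together with Corollary~\ref{thm:bou}. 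Hence the right-hand side is uniformly bounded by $3M$, so the monotone partial sums converge pointwise to a function $\Delta(x)$ with $0\leq \Delta(x)\leq 3M$.

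There is no serious obstacle: the proof is really just the combination of the shift identity \eqref{eq:resn} (used at $n$ versus $n+1$ to get nonnegativity) and the telescoping of the $\varphi$-contributions against the partial sums $s_n$. The only thing worth being careful about is the index convention when passing from $\sup_{n\geq 0}$ to $\sup_{n\geq 1}$, because that small index shift is precisely what produces the nonnegative (generally nonzero) residual $\delta$.
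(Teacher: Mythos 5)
Your proof is correct and follows essentially the same route as the paper: part 1 is the identity $\delta(x)=\sup_n[-s_n(x)]-\sup_n[-s_{n+1}(x)]\geq 0$ obtained from the shift relation \eqref{eq:resn}, and part 2 is the telescoping identity \eqref{eq:del} combined with the uniform bound $3\sup_n\norm{s_n}$ on the right-hand side. Nothing is missing.
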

\begin{proof}
1). From \eqref{eq:refu} and \eqref{eq:mis} with $\omega\equiv\sup$ it follows that 
\begin{equation}
\label{eq:mise} 
\delta(x)=\sup_n[-s_{n}(x)]- \sup_n[-s_{n+1}(x)]\geq 0. 
\end{equation}

2). From \eqref{eq:refu} it follows that 
\begin{equation}
\label{eq:del}
\sum_{k=0}^{n}\delta(F^k x)= s_n(x)+\varphi(x)-\varphi(F^{n+1} x).
\end{equation}
The right-hand side of \eqref{eq:del} is uniformly bounded. Hence, $1)\Rightarrow 2)$ 
with the inequality $0\leq\Delta(x)\leq 3\sup_n\norm{s_n}$.  
\end{proof}
\begin{cor}
\label{cor:refun}
Under conditions of Lemma \ref{lem:depo} $\delta(F^nx)\rightarrow 0$ 
as $n\rightarrow\infty$. 
\end{cor}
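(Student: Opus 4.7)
The plan is extremely short: the corollary is an immediate consequence of part 2) of Lemma \ref{lem:depo}. That lemma asserts that the series \eqref{eq:dese}, namely
\[
\delta(x)+\delta(Fx)+\cdots+\delta(F^n x)+\cdots,
\]
converges pointwise to a finite (bounded) limit $\Delta(x)$. By the elementary necessary condition for convergence of a numerical series, the general term must tend to zero, so $\delta(F^n x)\to 0$ as $n\to\infty$ for every $x\in X$.

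The only thing worth spelling out is that the convergence is not merely in some averaged sense but term-by-term: part 1) of the lemma guarantees $\delta(F^k x)\ge 0$, so the partial sums $\sum_{k=0}^n\delta(F^k x)$ form a monotone increasing sequence bounded above by $\Delta(x)<\infty$, and the difference between two consecutive partial sums is exactly $\delta(F^{n+1}x)$. Hence this difference tends to $0$.

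There is no real obstacle. The substantive work was already done in Lemma \ref{lem:depo} (nonnegativity of $\delta$ and the uniform bound on $s_n$, which makes the right-hand side of \eqref{eq:del} bounded); the corollary is just the "$n$-th term test" applied to the convergent series of nonnegative numbers produced there.
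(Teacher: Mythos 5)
Your proof is correct and is exactly the intended argument: the paper states this as an immediate corollary (with no written proof) of part 2) of Lemma \ref{lem:depo}, relying precisely on the $n$-th term test for the convergent series \eqref{eq:dese}. Your extra remark that nonnegativity makes the partial sums monotone is harmless but not needed, since pointwise convergence of the series alone forces the general term to vanish.
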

Lemma \ref{lem:depo} allows us to construct a {\em regularization} of the resolving series 
\eqref{eq:3} in the case of its divergence. Namely, let us consider the difference 
\begin{equation}
\label{eq:diff}
d_N(x)=\sup_n[-s_n(F^{N+1}x)]- s_N(x).
\end{equation} 
\begin{theorem}
\label{thm:resol}
If $s_n(x)$ are uniformly bounded then the sequence $(d_N(x))$ is non-increasing and  
it tends to a solution to the c.e. \eqref{eq:1} as $N\rightarrow\infty$.
\end{theorem}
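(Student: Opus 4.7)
The plan is to reduce $d_N(x)$ to a telescoping expression in terms of the residual function $\delta$ from Lemma \ref{lem:depo}, read off monotonicity from $\delta \geq 0$, and identify the limit using the sum $\Delta(x)$ of the series \eqref{eq:dese}.

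First I would rewrite the definition of $d_N(x)$ by noting that $\sup_n[-s_n(F^{N+1}x)] = \varphi(F^{N+1}x)$ with $\varphi$ from \eqref{eq:ubc}. Then formula \eqref{eq:del} applied with $n = N$ gives
\[
\varphi(F^{N+1}x) - s_N(x) = \varphi(x) - \sum_{k=0}^{N}\delta(F^k x),
\]
so that $d_N(x) = \varphi(x) - \sum_{k=0}^{N}\delta(F^k x)$. Since part 1 of Lemma \ref{lem:depo} says $\delta \geq 0$, the sequence $(d_N(x))$ is manifestly non-increasing in $N$, and by part 2 of the same lemma the partial sums converge, yielding
\[
\lim_{N\rightarrow\infty} d_N(x) = \varphi(x) - \Delta(x) =: \psi(x).
\]

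It remains to check that $\psi$ solves the c.e. \eqref{eq:1}. From the definition $\Delta(x) = \sum_{k=0}^{\infty}\delta(F^k x)$ I would peel off the first term to obtain $\Delta(x) - \Delta(Fx) = \delta(x)$, and then unfold $\delta$ via its definition \eqref{eq:refu}:
\[
\psi(Fx) - \psi(x) = \bigl(\varphi(Fx) - \varphi(x)\bigr) + \bigl(\Delta(x) - \Delta(Fx)\bigr)
= \bigl(\varphi(Fx) - \varphi(x)\bigr) + \delta(x) = \gamma(x),
\]
which is exactly \eqref{eq:1}. This yields both conclusions of the theorem simultaneously.

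There is no real obstacle: the whole argument is a bookkeeping exercise that repackages formula \eqref{eq:del}, the nonnegativity and summability of $\delta(F^k x)$ from Lemma \ref{lem:depo}, and the telescoping identity $\Delta - \Delta\circ F = \delta$. The only point requiring a little care is making sure one distinguishes $\varphi(F^{N+1}x) = \sup_n[-s_n(F^{N+1}x)]$ from $-s_{N+1}(x)$, so that \eqref{eq:del} is applied at the right argument; once that is done, monotonicity and the limit identification follow immediately.
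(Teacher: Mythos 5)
Your proof is correct and follows essentially the same route as the paper: both identify $d_N(x)=\varphi(x)-\sum_{k=0}^{N}\delta(F^kx)$ via \eqref{eq:del}, get monotonicity from $\delta\geq 0$, and recognize the limit $\varphi-\Delta$ as a solution using the telescoping identity $\Delta(Fx)-\Delta(x)=-\delta(x)$ together with \eqref{eq:refu}. You have merely written out in detail the steps the paper compresses into one sentence each.
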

\begin{proof}
Let $\varphi(x)=\sup_n[-s_{n}(x)]$ as before. The function $\Delta(x)$ from Lemma 
\ref{lem:depo} satisfies the c.e. $\Delta(Fx)-\Delta(x)=-\delta(x)$. 
By \eqref{eq:refu} the difference $\hat{\varphi}=\varphi - \Delta$ 
is a solution to the c.e. \eqref{eq:1}. From \eqref{eq:del} it follows
that $d_N(x)$ monotonically tends to $\hat{\varphi}(x)$ from above. 
\end{proof}
If the resolving series converges to a function $s(x)$ for all $x\in X$ 
then the supremum in \eqref{eq:diff} tends to zero as $N\rightarrow\infty$, 
thus $d_N(x)$ monotonically tends to the solution $-s(x)$ from above.

Now we consider a situation where the supremum is a resolving functional.
\begin{theorem}
\label{thm:sup}
Let $(X,F)$ be a measurable dynamical system with a finite invariant measure $\mu$, 
and let $\gamma$ be a measurable function on $X$ such that the sums $s_n(x)$  
are uniformly bounded. Then the function  
\begin{equation}
\label{eq:sup} 
\varphi(x)=\sup_{n}[-s_n(x)]
\end{equation} 
is a bounded measurable solution to the c.e. (1.1) a.e.  
\end{theorem}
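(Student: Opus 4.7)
The plan is to verify the a.e.\ identity by showing that the residual function
\[
\delta(x)=\gamma(x)+\varphi(x)-\varphi(Fx)
\]
vanishes on a set of full $\mu$-measure. First I would dispose of regularity. Since $\gamma$ is measurable, each $s_n$ is a finite sum of translates of $\gamma$ and hence measurable, so $\varphi=\sup_n(-s_n)$ is measurable as a countable supremum. The uniform bound on $(s_n)$ places $\varphi\in L_{\infty}(X,\mu)$, and in particular $\gamma=s_0$ is bounded; combined with $\mu(X)<\infty$ this puts $\gamma,\varphi,\delta$ in $L_1(X,\mu)$, which is what will allow the invariance identity \eqref{eq:inme} to be applied.

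Next I would invoke Lemma~\ref{lem:depo}, whose hypotheses coincide with ours. It supplies two structural facts: $\delta(x)\geq 0$ pointwise, and the series $\sum_{k\geq 0}\delta(F^k x)$ converges pointwise to a bounded nonnegative function $\Delta(x)$. Since $\Delta$ is bounded and $\mu$ finite, $\int_X\Delta\dmes\mu<\infty$. Applying monotone convergence to the nonnegative partial sums, and then the invariance identity \eqref{eq:inme} to each term, I would obtain
\[
\int_X\Delta\dmes\mu=\sum_{k=0}^{\infty}\int_X\delta(F^k x)\dmes\mu=\sum_{k=0}^{\infty}\int_X\delta\dmes\mu.
\]
Finiteness of the left-hand side forces $\int_X\delta\dmes\mu=0$, and since $\delta\geq 0$, it follows that $\delta=0$ a.e. This is precisely the a.e.\ version of the c.e.~\eqref{eq:1}, while boundedness and measurability of $\varphi$ have already been recorded.

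There is essentially no serious obstacle here: the argument is a term-by-term integration justified by nonnegativity, paired with Lemma~\ref{lem:depo} and the $L_1$-invariance of $\mu$. The one point worth verifying carefully is that $\delta\in L_1(X,\mu)$, so that \eqref{eq:inme} may be legitimately applied to each $\delta(F^k x)$; this is immediate from the uniform boundedness of $(s_n)$ together with the finiteness of $\mu$. The phenomenon driving the result is the contrast with Example~\ref{ex:nosup}: there the invariant measure on $\N$ is infinite, so the series of equal nonnegative terms $\int\delta\dmes\mu$ is not obstructed from diverging, whereas here finiteness of $\mu$ collapses it.
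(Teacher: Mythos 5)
Your proposal is correct and follows essentially the same route as the paper: both arguments rest on Lemma~\ref{lem:depo} (nonnegativity of the residual $\delta$ and the bounded dominating function $\Delta$), use the invariance of $\mu$ to equate $\int_X\delta(F^kx)\dmes\mu$ with $\int_X\delta\dmes\mu$, and conclude $\int_X\delta\dmes\mu=0$ from the finiteness of $\int_X\Delta\dmes\mu$. The only cosmetic difference is that the paper integrates the partial-sum inequality $\sum_{k=0}^n\delta(F^kx)\leq\Delta(x)$ to get $(n+1)\int_X\delta\dmes\mu\leq\int_X\Delta\dmes\mu$ and lets $n\rightarrow\infty$, whereas you sum the whole series via monotone convergence; these are interchangeable.
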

\begin{proof}
We only need to prove that $\varphi$ is a solution a.e.. By Lemma \ref{lem:depo} 
\begin{equation}
\label{eq:sude} 
\sum_{k=0}^{n} \delta(F^k x)\leq\Delta(x), \eqsp x\in X , 
\end{equation}
for all $n$. Therefore, 
\begin{equation*} 
(n+1)\int _X\delta\dmes\mu \leq \int _X\Delta\dmes\mu < \infty 
\end{equation*}
since the measure is invariant and finite and the function $\Delta$
is measurable and bounded. For $n\rightarrow\infty$ we get
\begin{equation*} 
\int _X\delta\dmes\mu\leq 0,  
\end{equation*}
so $\delta(x)= 0$ a.e. since $\delta(x)\geq 0$ for all $x\in X$. 
\end{proof}
\begin{remark}
\label{rem:tnc}
Under conditions of Theorem \ref{thm:sup} the TNC \eqref{eq:14} is fulfilled. 
To show this directly note that  
\begin{equation*} 
\int _X s_n(x)\dmes\mu=(n+1)\int _X\gamma\dmes\mu 
\end{equation*}
and pass to the limit as $n\rightarrow\infty$.
\qed
\end{remark}

The conditions of solvability in the space $M(X,\mu)$ of all measurable 
functions are more subtle. 
Note that if $\mu$ is a finite invariant measure and $\varphi$ is a measurable solution then 
for every $\varepsilon >0$ there exists $C>0$ such that 
$\mu\{x:\abs{\varphi(x)}>C\}<\varepsilon$,  
and then $\mu\{x:\abs{\varphi(F^{n+1}x)}>C\}<\varepsilon$. By \eqref{eq:2} 
we get $\mu\{x:\abs{s_n(x)}>2C\}<2\varepsilon$ for all $n$. In this sense 
the sums $s_n(x)$ are ``uniformly bounded in measure'' in the case 
of solvability in $M(X,\mu)$. For $\mu$ ergodic
the converse is due to Schmidt , see \cite{schmidt77}, p. 181. 

The following general criterion was established by Helson \cite{helson85} using the 
harmonic analysis of the corresponding {\em multiplicative cohomological 
equation} $\chi(Fx)/\chi(x)=\xi(x)$ in $\T$-valued functions. This kind of c.e.   
is important and interesting (see e.g., \cite{furst61}, \cite{katok01}), 
but we do not touch it in the present paper.   
\begin{theorem}
\label{thm:hel}
Let $(X,F)$ be an invertible measurable dynamical system with a finite 
invariant measure $\mu$, and let $\gamma\in M(X,\mu)$. Then the c.e. \eqref{eq:1}
has a solution $\varphi\in M(X,\mu)$ if and only if $s_{n}(x)$ is bounded
a.e. on a sequence of $n$ having positive upper density. 
\end{theorem}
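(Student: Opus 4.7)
The plan splits into an easy necessity and a considerably harder sufficiency. For necessity, given a measurable solution $\varphi$, formula \eqref{eq:2} yields $s_n(x)=\varphi(F^{n+1}x)-\varphi(x)$. Choose $M$ large enough that $\mu\{|\varphi|\leq M\}>0$; applying the Birkhoff-Khinchin individual ergodic theorem to $\mathbf{1}_{\{|\varphi|\leq M\}}$ (with the invariance of $\mu$ and invertibility of $F$) shows that for a.e.\ $x$ the set $\{n\geq 0:|\varphi(F^{n+1}x)|\leq M\}$ has positive upper density, namely the value at $x$ of the conditional expectation of $\mathbf{1}_{\{|\varphi|\leq M\}}$ with respect to the invariant $\sigma$-algebra. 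Along indices $n$ in that set, $|s_n(x)|\leq M+|\varphi(x)|<\infty$ for a.e.\ $x$, which is exactly the required bound.

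For sufficiency I would follow Helson's harmonic-analytic route, signalled in the excerpt by the reference to the \emph{multiplicative} c.e.\ $\chi(Fx)/\chi(x)=\xi(x)$. For each small real $\lambda$ set $\xi_\lambda(x)=e^{i\lambda\gamma(x)}$ and seek a measurable modulus-one $\chi_\lambda$ with $\chi_\lambda(Fx)=\xi_\lambda(x)\chi_\lambda(x)$; by iteration the corresponding partial products are $e^{i\lambda s_n(x)}$. The hypothesis that $s_{n_k}$ is a.e.\ bounded along a positive-upper-density subsequence forces $e^{i\lambda s_{n_k}}\to 1$ in measure uniformly in $k$ as $\lambda\to 0$, which translates into a spectral-gap condition on the unitary Koopman operator $U_F$ acting on $L_2(X,\mu)$: the spectral measure of $\xi_\lambda-1$ relative to $U_F$ avoids a $|\lambda|$-dependent neighbourhood of $1\in\T$. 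The standard coboundary criterion for a unitary operator (integrability of the spectral measure of $\xi_\lambda-1$ against $|e^{it}-1|^{-2}$) then places $\xi_\lambda-1$ in $(U_F-I)L_2$ for $|\lambda|$ sufficiently small, producing $\chi_\lambda\in L_2$ which can be normalised to modulus one.

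The additive solution is then recovered, formally, as $\varphi(x)=-i\,\partial_\lambda\chi_\lambda(x)|_{\lambda=0}$, and the main obstacle is making this differentiation rigorous: one must select the family $\{\chi_\lambda\}$ jointly measurably in $(x,\lambda)$, with $L_2$-control uniform in $\lambda$ that is strong enough to furnish an honest $\lambda$-derivative at zero. This is the heart of Helson's argument \cite{helson85} and uses the invertibility of $F$ essentially, since only then is $U_F$ unitary and the full spectral theorem available on both sides of the spectrum. Once the derivative exists a.e., differentiating the identity $e^{i\lambda s_n(x)}=\chi_\lambda(F^{n+1}x)/\chi_\lambda(x)$ at $\lambda=0$ returns \eqref{eq:1} a.e., completing the proof.
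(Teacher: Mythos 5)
The paper itself offers no proof of Theorem \ref{thm:hel}: it is quoted as Helson's result with a citation to \cite{helson85}, and the surrounding text only indicates that the proof goes through the multiplicative cohomological equation. So there is no in-paper argument to compare yours against, and your proposal has to stand on its own. The necessity half almost does: from \eqref{eq:2} you get $s_n(x)=\varphi(F^{n+1}x)-\varphi(x)$, and the Birkhoff theorem applied to the indicator of $A_M=\{\abs{\varphi}\leq M\}$ gives that the density of $\{n:\abs{\varphi(F^{n+1}x)}\leq M\}$ equals the conditional expectation of that indicator on the invariant $\sigma$-algebra. But that conditional expectation is only positive on a set of positive measure for a fixed $M$, not a.e.; to get the conclusion for a.e.\ $x$ you must let $M\to\infty$ (the conditional expectations increase to $1$ a.e.) and accept that both $M$ and the positive-density set of indices then depend on $x$. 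That is a small, fixable gap, but as written the step ``for a.e.\ $x$ the set \dots has positive upper density'' does not follow.

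The sufficiency half has a genuine defect beyond the gaps you acknowledge. The equation you need to solve for each $\lambda$ is the multiplicative one, $\chi_\lambda(Fx)=\xi_\lambda(x)\chi_\lambda(x)$ with $\abs{\chi_\lambda}=1$; this says $\chi_\lambda$ is a fixed vector of the unitary weighted Koopman operator $f\mapsto\overline{\xi_\lambda}\cdot(f\circ F)$, and its solvability is governed by whether that operator has eigenvalue $1$ (equivalently, by nonvanishing of the Ces\`aro means of $\int\prod_{k\le n}\xi_\lambda(F^kx)\dmes\mu$). The criterion you invoke --- integrability of the spectral measure of $\xi_\lambda-1$ with respect to $U_F$ against $\abs{e^{it}-1}^{-2}$ --- is the Gaposhkin-type test for $\xi_\lambda-1$ to be an \emph{additive} $L_2$-coboundary, i.e.\ for solving $g\circ F-g=\xi_\lambda-1$, which is a different equation; applying it here does not produce the unimodular $\chi_\lambda$ you need. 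On top of that, the two steps you flag as ``to be made rigorous'' are in fact the substance of Helson's theorem: converting a.e.\ boundedness of $s_n$ along a positive-upper-density set into solvability of the multiplicative equation for all small $\lambda$, and then descending from the family $\{\chi_\lambda\}$ to an additive measurable solution of \eqref{eq:1} (in Helson's and Moore--Schmidt's treatments this descent is a theorem in its own right, not a formal differentiation at $\lambda=0$). As it stands the sufficiency direction is a plan with its key lemmas missing and one of its quoted tools misidentified, so it cannot be accepted as a proof.
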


Other criteria of solvability in $M(X,\mu)$ are due to Krzy\.zewski \cite{krz00} 
and Sato \cite{sato03}.

\section {The Gottschalk-Hedlund Theorem (GHT)}
\label {sec:coso}

In this section $(X,F)$ is a topological dynamical system. Accordingly, we 
look for the continuous solutions to the c.e. \eqref{eq:1} with continuous 
$\gamma(x)$. 
\begin{lemma}
\label{lem:un}
If $(X,F)$ is topologically transitive  
then every continuous solution $\varphi$ to the homogeneous equation 
\begin{equation}
\label{eq:hoco}
\varphi(Fx)-\varphi(x)=0, \eqsp x\in X,
\end{equation}
is a constant. 
\end{lemma}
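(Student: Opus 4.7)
The plan is to exploit the density of an orbit guaranteed by topological transitivity together with the invariance that the homogeneous equation imposes on $\varphi$.

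First I would unpack the hypothesis: by definition of topological transitivity, there exists a point $x_0\in X$ whose forward orbit $O_F(x_0)=\{F^n x_0:n\geq 0\}$ is dense in $X$. Next, the homogeneous equation $\varphi(Fx)=\varphi(x)$ says exactly that $\varphi$ is $T_F$-invariant, so by a trivial induction on $n$ we have $\varphi(F^n x_0)=\varphi(x_0)$ for every $n\geq 0$. Hence $\varphi$ takes the single value $\varphi(x_0)$ on the dense subset $O_F(x_0)\subset X$.

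Finally, I would invoke continuity: a continuous function on $X$ that is constant on a dense subset must be constant on all of $X$ (its level set $\{y:\varphi(y)=\varphi(x_0)\}$ is closed and contains a dense set, so it equals $X$). This gives $\varphi\equiv\varphi(x_0)$, as required.

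There is really no hard step here; the only thing to be a little careful about is that we are using \emph{forward} orbits $O_F(x)=(F^n x)_{n\geq 0}$ (as defined earlier in the introduction), so invariance of $\varphi$ under $T_F$ — which only moves points forward — is enough to conclude constancy on the orbit without any appeal to invertibility of $F$. The argument therefore works for any continuous self-map $F$ of a topological space $X$, not just for homeomorphisms.
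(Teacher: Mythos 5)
Your proof is correct and follows the same route as the paper's one-line argument: the paper simply observes that the restriction of $\varphi$ to the closure of any orbit is constant, which combined with the existence of a dense orbit gives the result. You have merely spelled out the details (invariance along the forward orbit, plus the closed-level-set argument for continuity), including the correct remark that forward orbits suffice and no invertibility of $F$ is needed.
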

\begin{proof}
The resriction of $\varphi$ to the closure of any orbit is a constant.   
\end{proof}

Further we focus on the minimal systems $(X,F)$. Note that if $X$ is an infinite 
Hausdorff (or, at least, $T_1$-) space and $F$ is minimal then $F$ has no periodic points,  
therefore, the c.e. \eqref{eq:1} is solvable with any $\gamma$. 
\begin{theorem}
\label{thm:6.0}
Let $(X,F)$ be minimal. 
The c.e.~\eqref{eq:1} has a solution $\varphi\in CB(X)$ if and only if 
$\gamma\in CB(X)$ and the sums $s_n(x)$
are uniformly bounded. Under these conditions the 
continuous bounded solution is 
\begin{equation}
\label{eq:supco} 
\varphi(x)=\sup_{n}[-s_n(x)]
\end{equation}
up to an arbitrary additive constant. 
\end{theorem}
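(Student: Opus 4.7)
The plan is to verify both directions, with most of the effort spent on sufficiency --- showing the explicit formula $\varphi(x) = \sup_n[-s_n(x)]$ produces a continuous bounded solution.

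\textbf{Necessity ($\Leftarrow$).} If $\varphi \in CB(X)$ solves \eqref{eq:1}, then $\gamma = T_F\varphi - \varphi$ is continuous and bounded, and iterating gives $s_n(x) = \varphi(F^{n+1}x) - \varphi(x)$ by \eqref{eq:2}, so $\|s_n\|_\infty \leq 2\|\varphi\|_\infty$ for every $n$.

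\textbf{Sufficiency ($\Rightarrow$): the candidate.} Assume $\gamma \in CB(X)$ and $M := \sup_n\|s_n\|_\infty < \infty$. Set $\varphi(x) := \sup_{n \geq 0}[-s_n(x)]$. Then $|\varphi| \leq M$, and as a supremum of continuous functions $\varphi$ is lower semicontinuous. Lemma~\ref{lem:depo} provides the residual $\delta := \gamma + \varphi - \varphi\circ F \geq 0$ with $\sum_{k=0}^N \delta(F^k x) \leq 3M$ uniformly in $N$ and $x$, and in particular $\delta(F^k x) \to 0$ along every orbit. A direct manipulation of the sup, isolating the $n = 0$ term, yields
\[
\varphi(x) + \gamma(x) = \varphi(Fx)^+,\eqsp \delta(x) = \varphi(Fx)^-,
\]
so $\delta$, being the negative part of the lsc function $\varphi \circ F$, is upper semicontinuous.

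\textbf{The heart: $\delta \equiv 0$.} Here minimality enters. For $\epsilon > 0$ the set $C_\epsilon := \{\delta \geq \epsilon\}$ is closed by usc. The summability of $\delta$ along orbits shows that every forward orbit enters $C_\epsilon$ only finitely often, so the set
\[
V := \bigcap_{k \geq 0} F^{-k}\bigl(X \setminus C_\epsilon\bigr)
\]
of points whose forward orbit stays entirely in $X \setminus C_\epsilon$ is forward-invariant and meets every orbit. Its closure $\overline V$ is closed, nonempty, and forward-invariant, so by minimality $\overline V = X$; hence $X \setminus C_\epsilon$ is dense. Combining this density with the identity $C_\epsilon = F^{-1}\{\varphi \leq -\epsilon\}$, the lsc of $\varphi$, and a second application of minimality (no closed forward-invariant subset of $X$ can permanently harbor orbit points), one upgrades ``nowhere dense'' to ``empty,'' obtaining $C_\epsilon = \emptyset$. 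Letting $\epsilon \to 0$ gives $\delta \equiv 0$, so $\varphi$ satisfies \eqref{eq:1}.

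\textbf{Continuity and uniqueness.} Applying the mirror construction to $\widetilde\varphi(x) := \inf_n[-s_n(x)] = -\sup_n s_n(x)$ (upper semicontinuous with residual $\leq 0$) and running the symmetric argument produces $\widetilde\varphi$ as a second bounded solution. The difference $f := \varphi - \widetilde\varphi \geq 0$ is lsc and $F$-invariant; its closed forward-invariant sublevel sets $\{f \leq \inf_X f + \eta\}$ are nonempty, hence equal to $X$ by minimality, so $f$ is constant. Therefore $\varphi = \widetilde\varphi + \mathrm{const}$ is simultaneously lsc and usc, i.e.\ continuous, and $\varphi \in CB(X)$. Uniqueness up to an additive constant follows from Lemma~\ref{lem:un}, since minimality implies topological transitivity and any continuous bounded solution of the homogeneous c.e.\ is then constant. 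The main obstacle is precisely the passage from ``$\delta$ usc, $\delta \geq 0$, summable along orbits'' to $\delta \equiv 0$ without compactness or a Baire assumption on $X$; it is where the specific structure $\delta = (\varphi \circ F)^-$ and the repeated appeal to minimality do the essential work.
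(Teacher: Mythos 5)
Your necessity direction, the identity $\varphi+\gamma=(\varphi\circ F)^{+}$ with $\delta=(\varphi\circ F)^{-}$, and the deduction that $X\setminus C_\epsilon$ is dense (via the forward-invariant set $V$ whose closure must be all of $X$) are all correct. But the step you yourself call ``the heart'' --- upgrading ``$C_\epsilon$ is nowhere dense'' to ``$C_\epsilon=\emptyset$'' --- is a genuine gap, not a routine upgrade. The parenthetical appeal to minimality does not apply there: neither $C_\epsilon=F^{-1}\{\varphi\le-\epsilon\}$ nor $\{\varphi\le-\epsilon\}$ is forward-invariant, so minimality says nothing about them directly. Worse, the properties you have assembled at that point ($\delta\ge 0$ upper semicontinuous, $\sum_k\delta(F^kx)$ uniformly bounded, $\delta=(\varphi\circ F)^{-}$ with $\varphi$ lower semicontinuous and $\{\varphi>-\eta\}$ open and dense for every $\eta>0$) are consistent in the abstract with $\delta\not\equiv 0$: take $\varphi$ vanishing everywhere except at one non-periodic point $y_0$ in the image of $F$ where $\varphi(y_0)=-\epsilon$; this $\varphi$ is bounded and lsc, and $(\varphi\circ F)^{-}$ is usc, nonnegative, and summable along every orbit (each orbit meets $F^{-1}\{y_0\}$ at most once), yet is not identically zero. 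So no argument using only the properties you list can close the step; some further structural input is required, and the proposal does not supply it.

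The paper supplies exactly that input by reversing your order: it first proves that $\varphi=\sup_n[-s_n]$ is \emph{continuous}, using the oscillation function $\Omega_{\varphi}$. From $\varphi+\gamma=(\varphi\circ F)_{+}$ one gets $\Omega_{\varphi}(x)\le\Omega_{\varphi}(Fx)$; density of orbits together with upper semicontinuity of $\Omega_{\varphi}$ forces $\Omega_{\varphi}$ to be constant, and Lemma~\ref{lem:6.0} (a bounded lower semicontinuous function has oscillation with infimum $0$) forces that constant to be $0$. Only then, with $\delta$ continuous, does $\{\delta\ge\epsilon\}$ contain an open neighborhood whenever it is nonempty, and the orbit-density argument kills it. Your ``mirror construction'' for continuity is an attractive alternative to the oscillation argument, but it presupposes that both $\varphi$ and $\widetilde\varphi$ are already solutions, i.e.\ that the gapped step has been closed twice, so as written the proof is circular precisely where it matters. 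To repair it you should either adopt the paper's order (continuity of $\varphi$ first, then $\delta\equiv 0$), or prove the shift-invariance $\sup_{n}[-s_{n+1}(x)]=\sup_n[-s_n(x)]$ directly from minimality as in \cite{mccutcheon99}.
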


This is a constructive version (c.f. \cite{linsine83}) of 
Browder's generalization \cite{browder58} of the GHT. The latter is a  
principal part of Theorem 14.11 from the book \cite{gottschalk55}.
In its original form the space 
$X$ is compact metric and $F$ is invertible, while in \cite{browder58} and 
\cite{linsine83} $F$ is any continuous self-mapping of a Hausdorff space $X$.  
In Theorem \ref{thm:6.0} $X$ is any topological space 
admitting a minimal $F$, c.f. \cite{mccutcheon99}. (In particular, $X$ must be separable). 
We preserve the name GHT for all these generalizations. Our proof of the GHT is 
a modification of that of \cite{linsine83}. Its the only nontrivial 
``if'' part consists of two statements: 
 
a){\em the function \eqref{eq:supco} is continuous};  

b){\em this is a solution to the c.e.~\eqref{eq:1}}. 

The statement a) is equivalent to that the oscillation 
$\Omega_{\varphi}(x)$ equals 0 for all $x$. 
Recall that for every bounded function $\phi$ its {\em oscillation} at a point $x$ is 
\begin{equation}
\label{eq:os}
\Omega_{\phi}(x)= \lim_U(\sup_{u\in U}\phi(u)-\inf_{u\in U}\phi(u))
= \inf_U(\sup_{u\in U}\phi(u)-\inf_{u\in U}\phi(u))\geq 0 , 
\end{equation}
where $U$ runs over the directed set of neighborhoods of $x$. The function 
$\Omega_{\phi}$ is bounded and upper semicontinuous. 

In general, a function $\psi$ is called {\em upper (lower) semicontinuous} 
if for every $\rho\in\R$ 
the set $\{x:\psi(x)<\rho\}$ (the set $\{x:\psi(x)>\rho\}$, respectively) is open. 
The continuous functions are just those which are upper and lower semicontinuous 
simultaneously. The supremum (the infimum) of any pointwise bounded family of continuous 
functions is lower (upper, respectively) semicontinuous.  

\begin{lemma}
\label{lem:6.0}
If an upper (lower) bounded function $\phi$ is lower (upper) semicontinuous 
then $\inf\Omega_{\phi}=0$.
\end{lemma}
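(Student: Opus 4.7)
The plan is to reduce by duality to a single case (the two statements are exchanged under $\phi\mapsto -\phi$, which flips the roles of upper/lower semicontinuity, of upper/lower boundedness, and preserves $\Omega_\phi$ since $\Omega_{-\phi}=\Omega_\phi$). So I would assume $\phi$ is lower semicontinuous and upper bounded, set $M=\sup_X\phi<\infty$, and show $\inf_X\Omega_\phi=0$ by contradiction.

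First I would rewrite $\Omega_\phi$ more usefully using lower semicontinuity. By the definition recalled just before the lemma, lower semicontinuity at $x$ says that for every $\eta>0$ the set $\{u:\phi(u)>\phi(x)-\eta\}$ is an open neighborhood of $x$, so $\inf_{u\in U}\phi(u)\ge\phi(x)-\eta$ for sufficiently small $U$. Since $\inf_U\phi\le\phi(x)$ always, the monotone net $U\mapsto\inf_U\phi$ converges to $\phi(x)$ as $U$ shrinks, and \eqref{eq:os} collapses to
\[
\Omega_\phi(x)=\lim_U\sup_{u\in U}\phi(u)-\phi(x).
\]

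Now suppose, for contradiction, that $\inf_X\Omega_\phi\ge\varepsilon>0$. Using $M<\infty$, pick $x^\ast\in X$ with $\phi(x^\ast)>M-\varepsilon/4$. By the paragraph above, choose a neighborhood $U$ of $x^\ast$ with $\inf_U\phi>\phi(x^\ast)-\varepsilon/4$. On the other hand $\Omega_\phi(x^\ast)\ge\varepsilon$ forces $\sup_U\phi-\inf_U\phi\ge\varepsilon$ for every neighborhood $U$, hence for the one just chosen, giving
\[
\sup_U\phi\ge\inf_U\phi+\varepsilon>\phi(x^\ast)+\tfrac{3\varepsilon}{4}>M+\tfrac{\varepsilon}{2},
\]
which contradicts $\phi\le M$. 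Therefore $\inf_X\Omega_\phi=0$.

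There is really no serious obstacle here: the whole argument rests on the characterization of lower semicontinuity as $\liminf_{u\to x}\phi(u)=\phi(x)$, together with the trivial fact that the finite supremum $M$ is approached by some $x^\ast$. The only thing to be careful about is that the formula for $\Omega_\phi$ in the semicontinuous case involves a genuine $\lim_U$ (not merely $\limsup$ or $\liminf$), which is what lets the subtrahend in \eqref{eq:os} be replaced by $\phi(x)$ exactly.
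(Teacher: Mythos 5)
Your proof is correct and is essentially the paper's argument: in both cases one picks a point $x^\ast$ where $\phi$ nearly attains its supremum $M$, uses lower semicontinuity to find a neighborhood on which $\phi$ stays above $\phi(x^\ast)-\varepsilon$, and plays this off against the global bound $\phi\leq M$ to make the oscillation at $x^\ast$ small. The paper just does this directly (exhibiting, for each $\varepsilon$, a point with $\Omega_\phi\leq\varepsilon$) rather than by contradiction, and skips your preliminary reformulation of $\Omega_\phi$, which is not needed.
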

\begin{proof}
For definitenes, let $\phi$ be upper bounded. With $c=\sup\phi$ 
and $\varepsilon >0$ let $x$ be such that $\phi(x)>c-\varepsilon$.
Since $\phi$ is lower semicontinuous, 
there exists a neighborhood $U$ of $x$ such that $\phi(u)>c-\varepsilon$ for 
$u\in U$. Since $\phi(u)\leq c$ for all $u$, we get $\Omega_{\phi}(x)\leq \varepsilon$.   
\end{proof}
\begin{cor}
\label{cor:om}
Under conditions of Lemma \ref{lem:6.0}, if $\Omega_{\phi}$ is a constant then 
the function $\phi$ is continuous. 
\end{cor}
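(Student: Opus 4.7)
The plan is to combine the hypothesis directly with Lemma~\ref{lem:6.0}. By that lemma, under the stated one-sided boundedness and semicontinuity assumptions on $\phi$, we have $\inf\Omega_{\phi}=0$. If in addition $\Omega_{\phi}$ is constant, then the only value it can take is $0$, so $\Omega_{\phi}(x)=0$ for every $x\in X$.

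Next, I would invoke the well-known characterization that a bounded function is continuous at $x$ exactly when its oscillation at $x$ vanishes. This is immediate from the definition \eqref{eq:os}: $\Omega_{\phi}(x)=0$ means that for each $\varepsilon>0$ there is a neighborhood $U$ of $x$ with $\sup_{u\in U}\phi(u)-\inf_{u\in U}\phi(u)<\varepsilon$, so $\abs{\phi(u)-\phi(x)}<\varepsilon$ on $U$. Applying this at every point gives continuity of $\phi$.

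There is essentially no obstacle here: the whole content of the corollary lies in Lemma~\ref{lem:6.0}, and the argument reduces to the single observation ``constant with infimum zero equals zero everywhere.'' The only minor point to check is that the hypothesis of Lemma~\ref{lem:6.0} is symmetric in the two dual cases (upper bounded plus lower semicontinuous, or lower bounded plus upper semicontinuous), so the conclusion $\inf\Omega_{\phi}=0$ applies in either setting, and hence so does the corollary.
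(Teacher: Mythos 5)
Your argument is correct and is exactly the (implicit) reasoning behind the corollary in the paper, which states it without proof as an immediate consequence of Lemma~\ref{lem:6.0}: the constant must equal $\inf\Omega_{\phi}=0$, and vanishing oscillation at every point is equivalent to continuity by the definition \eqref{eq:os}. Nothing is missing.
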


After this preparation we can proceed to the proof of Theorem \ref{thm:6.0}.
\begin{proof}
To prove a) we note that  
\begin{equation*}
 \varphi(x)=\max\{-\gamma(x),\sup_{n}[-s_{n+1}(x)]\}, 
\end{equation*}
whence 
\begin{equation*}
\varphi(x)+\gamma(x) = \max\{0, \sup_{n}[\gamma(x)-s_{n+1}(x)]\}. 
\end{equation*}
By \eqref{eq:resn} we obtain 
\begin{equation}
\label{eq:cep}
\varphi(x)+\gamma(x)=\max\{0, \sup_{n}[-s_n(Fx)]\}=\max\{0, \varphi(Fx)\}= 
(\varphi\circ F)_{+}(x).
\end{equation}
From \eqref{eq:cep} it follows that $\Omega_{\varphi}(x)=\Omega_{(\varphi\circ F)_{+}}(x)$
since $\gamma$ is continuous. However, $\Omega_{\phi_{+}}(x)\leq\Omega_{\phi}(x)$ 
for any bounded function $\phi$. Thus, 
$\Omega_{\varphi}(x)\leq\Omega_{\varphi\circ F}(x)$. In turn,  
$\Omega_{\varphi\circ F}(x)\leq\Omega_{\varphi}(Fx)$ 
by continuity of $F$. As a result, $\Omega_{\varphi}(x)\leq \Omega_{\varphi}(Fx)$ and then  
\begin{equation}
\label{eq:inin}
\Omega_{\varphi}(x)\leq \Omega_{\varphi}(F^nx),\eqsp n\geq 1.
\end{equation}
Since the orbit $O_F(x)$ is dense and the function $\Omega_{\varphi}$
is upper semicontinuous, the inequality \eqref{eq:inin} yields 
$\Omega_{\varphi}(x)\leq \Omega_{\varphi}(y)$ for all $y\in X$.
By alternation $x\leftrightarrow y$ we obtain the equality  
$\Omega_{\varphi}(x)=\Omega_{\varphi}(y)$ for all $x,y$, i.e. 
$\Omega_{\varphi}$ is a constant. Therefore, $\Omega_{\varphi}=0$ 
by Corollary \ref{cor:om}, i.e. $\varphi$ is continuous.

Now it remains to prove that a)$\Rightarrow$ b). We have to show that 
$\delta = 0$ where $\delta$ is the residual function defined by \eqref{eq:refu} 
with $\varphi$ from Lemma \ref{lem:depo}. By this lemma the function $\delta$ 
is nonnegative, and it is continuous by a).  
Suppose to the contrary 
that $\delta(x)>0$ at a point $x$. Then for any $\varepsilon >0$ there exists a 
neighborhood $W$ of $x$ such that $\delta(w)\geq\varepsilon$ when $w\in W$. 
On the other hand, by Corollary \ref{cor:refun} there is $m\in\N$ such that 
$\delta (F^nx)<\varepsilon$ for all $n\geq m$. Since the orbit of $F^mx$ is dense, 
there is $n\geq m$ such that $F^nx\in W$, so $\delta (F^nx)\geq\varepsilon$, 
a contradiction.   
\end{proof}
\begin{remark}
\label{rem:antik}
In \cite{mccutcheon99} the proof of the GHT is given 
in the form b)$\&$ [b)$\Rightarrow$a)] (in our notation). The main argument for b) 
is that from the minimality of $(X,F)$ it follows that 
\begin{equation}
\label{eq:ins} 
\sup_{n}[-s_{n+1}(x)] = \sup_{n}[-s_{n}(x)], \eqsp x\in X, 
\end{equation}
i.e. the shift invariance of the supremum on the set of sequences $(s_n(x))$.   
\qed
\end{remark}
\begin{remark}
\label{rem:sign}
{\em The solution \eqref{eq:supco} in Theorem \ref{thm:6.0} is nonnegative.}  
Indeed, from \eqref{eq:cep} and \eqref{eq:1} we see that 
$\varphi(Fx)=\varphi_{+}(Fx)$, i.e  $\varphi(Fx)\geq 0$ for all $x$. 
This inequality implies $\varphi\geq 0$, since the image of $F$ is dense 
and $\varphi$ is continuous. 
\qed
\end{remark}
\begin{remark}[\cite{linsine83}]
\label{rem:op}
{\em For minimal $(X,F)$ and continuous $\gamma$ if the sums $s_n(x)$
are bounded at a point $x_0$ then they are uniformly bounded.} Indeed, 
let $r_{n,m}(x) = s_n(x)-s_m(x)$, $n>m\geq -1$, $s_{-1}(x)=0$, and let 
$\sup_n\abs{s_n(x_0)} = C<\infty$. Then the subset 
$M=\{x\in X:\sup_{n,m}\abs{r_{n,m}(x)}\leq 2C \}$ is nonempty (since $x_0\in M$), 
closed (since $\gamma$ and $F$ are continuous) and invariant (since 
$r_{n,m}(Fx) = r_{n+1,m+1}(x)$). Therefore, $M = X$ by minimality. Since 
$r_{n,-1}(x) = s_n(x)$, we get $\sup_n\abs{s_n(x)}\leq 2C$ for all $x\in X$.
\qed
\end{remark}

The minimality is essential for all conclusions in Theorem \ref{thm:6.0}.
\begin{example}
\label{ex:bla}
The system $(\N,F)$ from Example \ref{ex:nosup} is topologically transitive 
but not minimal. Indeed, since the topological space $\N$ is discrete, 
we have $\overline{O_F(0)}=\N$ and $\overline{O_F(1)}=\N\setminus\{1\}$. 
The c.e. \eqref{eq:difeq} is solvable in $B(\N)=CB(\N)$ if and only 
if the sums $s_n(0)$ are bounded, however, the sup-formula fails 
if $\gamma_n > 0$. 
\qed 
\end{example}
\begin{example}
\label{ex:last}
Consider the one-point compactification 
$\overline{\N} = \N\cup\{\infty\}$, $\overline{F}(\infty)=\infty$, 
$\overline{\gamma}(\infty) = 0$. The dynamical system $(\overline{X},\overline{F})$ is    
topologically transitive but not minimal for the reason like that in Example \ref{ex:bla}. 
If the resolving series 
\begin{equation}
\label{eq:rs}
\gamma(0)+\gamma(1)+\cdots +\gamma(n)+\cdots
\end{equation}
diverges but the sums $s_n(0)$ are bounded then there are no continuous solutions. 
Actually, in this case all solutions are bounded but discontinuous at infinity.
\qed
\end{example}
Nevertheless, the minimality can be replaced by some other assumptions. 
In \cite{schwartz96} the construction \eqref{eq:ubc} was used to prove a counterpart  
of the GHT such that $X$ is a compact metric space but, 
instead of the minimality, $F$ is uniquely ergodic and the invariant 
measure $\mu$ is such that $\mu(U)>0$ for all open $U\subset X$. 
In Section \ref{sec:also} we consider another important class of dynamical systems 
where the minimality is not needed due to an assumption on $s_n(x)$ 
stronger than in Theorem \ref{thm:6.0}.
   
Let us emphasize that the class of underlying spaces for the minimal systems 
is very special. For example, if such a space is locally compact then it is compact 
\cite{gottschalk46}. In particular,     
all minimal discrete dynamical system $(X,F)$ are finite cycles: 
$X=Z_x=\{x, Fx, \cdots , F^{p-1}x\}$ for an $x$ such that $F^{p}x = x$. 
Indeed, let $O_F(x)$ be a dense 
orbit. Then $X=O_F(x)$ since $X$ is discrete. Therefore, $X$ has to be finite, 
otherwise, the orbit $O_F(Fx)$ is not dense. Hence, the point $x$ 
is preperiodc, thus it is periodic by minimality. 

For any topological dynamical system $(X,F)$ one can consider in the space 
$CB(X)$ the linear (nonclosed, in general) subspace 
\[
E_F=\{\gamma:\norm{\gamma}_F \equiv\sup_{n}\norm{s_n}<\infty\}.
\]
Obviosly, $\norm{\gamma}_F\geq\norm{\gamma}$. With this stronger norm 
$E_F$ is a Banach space. According to \eqref{eq:11} and \eqref{eq:2} 
we have $\im(T_F-I)\subset E_F$. Theorem \ref{thm:6.0} states that  
if $(X,F)$ is minimal then $\im(T_F-I)=E_F$. Hence, {\em in this case 
the image $\im(T_F-I)$  is a Banach space with respect to the norm $\norm{.}_F$}.  
As a consequence, we have 
\begin{prop}
\label{prop:cloq}
Let $(X,F)$ be minimal. Then  $\im(T_F-I)$ is closed in $CB(X)$ 
if and only if the norms $\norm{\gamma}_F$ and $\norm{\gamma}$ are equivalent on 
this subspace.
\end{prop}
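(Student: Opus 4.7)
The plan is to deduce this from the Open Mapping Theorem applied to the two norms $\norm{\cdot}$ and $\norm{\cdot}_F$ on $V=\im(T_F-I)$. The preceding discussion has already done the crucial work: by Theorem~\ref{thm:6.0} the minimality of $(X,F)$ yields $V=E_F$, so $V$ is a Banach space under $\norm{\cdot}_F$, and the trivial inequality $\norm{\gamma}\leq\norm{\gamma}_F$ holds on $V$. Thus we have two comparable norms on $V$, one of which is already known to be complete.

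For the ``$\Rightarrow$'' direction, suppose $V$ is closed in $CB(X)$. Then $(V,\norm{\cdot})$ is also a Banach space. The identity map
\[
\mathrm{id}\colon (V,\norm{\cdot}_F)\longrightarrow(V,\norm{\cdot})
\]
is a continuous bijection (continuity is the inequality $\norm{\gamma}\leq\norm{\gamma}_F$). By the Open Mapping Theorem, its inverse is continuous as well, so there is a constant $C>0$ with $\norm{\gamma}_F\leq C\norm{\gamma}$ for every $\gamma\in V$, which together with $\norm{\gamma}\leq\norm{\gamma}_F$ gives the equivalence.

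For the ``$\Leftarrow$'' direction, assume the two norms are equivalent on $V$, and let $(\gamma_k)\subset V$ converge in $CB(X)$ to some $\gamma$. Then $(\gamma_k)$ is Cauchy in $\norm{\cdot}$, hence Cauchy in $\norm{\cdot}_F$ by equivalence. Since $(V,\norm{\cdot}_F)$ is complete, $\gamma_k\to\gamma'$ in $\norm{\cdot}_F$ for some $\gamma'\in V$; using $\norm{\cdot}\leq\norm{\cdot}_F$ once more, $\gamma_k\to\gamma'$ in $\norm{\cdot}$ as well, so $\gamma=\gamma'\in V$. Hence $V$ is closed in $CB(X)$.

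There is really no essential obstacle here, since the substantive content, namely that $\im(T_F-I)$ equals the whole of $E_F$ and is therefore $\norm{\cdot}_F$-complete, is exactly what the GHT supplies in the minimal setting. The proposition is then the standard two-norm consequence of the Open Mapping Theorem.
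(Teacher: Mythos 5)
Your proof is correct and follows exactly the route the paper intends: the paper derives the proposition as an immediate consequence of the facts that $\im(T_F-I)=E_F$ is complete under $\norm{\cdot}_F$ (via Theorem \ref{thm:6.0}) and that $\norm{\gamma}\leq\norm{\gamma}_F$, leaving the standard two-norm/Open Mapping argument implicit. You have simply written out those details.
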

Theorem \ref{thm:6.0} determines a mapping $R:E_F\rightarrow CB(X)$ such that  
$\varphi = R\gamma$ is a solution to the c.e. \eqref{eq:1}. This equation 
is linear but the mapping $R$ is not linear! However, this collision can be  
removed by a small perturbation of $R$.
\begin{theorem}
\label{cor:solin}
Let $f$ be a linear functional on $CB(X)$ such that $f[\id]=1$. 
Under conditions of Theorem \ref{thm:6.0} the formula 
\begin{equation}
\label{eq:soli}    
\varphi_0(x)=\sup_{n}[-s_n(x)] - f[\sup_{n}[-s_n(.)]]
\end{equation}
determines a continuous bounded solution to the c.e. \eqref{eq:1} such that 
the mapping $\varphi_0= R_0\gamma$ is linear. In addition, if $\norm{f}=1$ 
then 
\begin{equation}
\label{eq:oli}    
\frac{1}{2}\norm{\gamma}_{F}\leq\norm{R_0\gamma}\leq\norm{\gamma}_{F}.
\end{equation}
Thus, $R_0$  is a linear topological isomorphism between $\norm{.}_F$-normed  
space $\im(T_F-I)$  and the closed hyperplane $\ker f\subset CB(X)$.
\end{theorem}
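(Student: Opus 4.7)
The plan is to bootstrap off Theorem~\ref{thm:6.0}. Setting $\psi(x)=\sup_n[-s_n(x)]$, that theorem supplies a continuous bounded solution $\psi$ of \eqref{eq:1}. Since constants lie in $\ker(T_F-I)$, subtracting the scalar $f[\psi]\cdot\idfunc$ still gives a continuous bounded solution; this is exactly $\varphi_0$. Moreover $f[\varphi_0]=f[\psi]-f[\psi]\,f[\idfunc]=0$ because $f[\idfunc]=1$, so $\varphi_0\in\ker f$.

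The linearity of the map $\gamma\mapsto\varphi_0$ is not apparent from the \emph{nonlinear} sup-formula; I would derive it from \emph{uniqueness}. Since minimality implies topological transitivity, Lemma~\ref{lem:un} says the only continuous solutions of the homogeneous equation are constants. Thus any two continuous bounded solutions of \eqref{eq:1} differ by some $c\,\idfunc$, and if both lie in $\ker f$ then $c=f[c\,\idfunc]=0$. Hence $\varphi_0$ is the \emph{unique} continuous bounded solution of \eqref{eq:1} lying in $\ker f$. Now $R_0\gamma_1+R_0\gamma_2$ is continuous, bounded, solves the c.e.\ with input $\gamma_1+\gamma_2$, and lies in $\ker f$, so uniqueness forces $R_0(\gamma_1+\gamma_2)=R_0\gamma_1+R_0\gamma_2$; the scalar case is identical.

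For the norm bounds under $\|f\|=1$, I would first isolate positivity of $f$: if $0\le g\le 1$ then $\|\idfunc-g\|\le 1$, so $|f[\idfunc-g]|\le 1$, forcing $f[g]\ge 0$; scaling yields $f[h]\ge 0$ for every $h\ge 0$. Together with $f[\idfunc]=1$, positivity squeezes $f[\psi]$ into $[\inf\psi,\sup\psi]$, so the oscillation of $\varphi_0=\psi-f[\psi]\,\idfunc$ is at most $\sup\psi-\inf\psi$. By Remark~\ref{rem:sign}, $\psi\ge 0$, and $|s_n|\le\|\gamma\|_F$ gives $\psi\le\|\gamma\|_F$; hence $\|\varphi_0\|\le\|\gamma\|_F$. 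The reverse bound $\tfrac{1}{2}\|\gamma\|_F\le\|\varphi_0\|$ is immediate from the left-hand inequality in \eqref{eq:cub} (Corollary~\ref{thm:bou}), which applies to every bounded solution.

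With linearity and \eqref{eq:oli} in hand, the isomorphism assertion is routine. Continuity of $R_0$ is the right-hand inequality of \eqref{eq:oli}; continuity of $R_0^{-1}$, which is just the restriction of $T_F-I$ to $\ker f$, follows from the left-hand one. For surjectivity onto $\ker f$, given $\psi\in CB(X)\cap\ker f$ set $\gamma=T_F\psi-\psi$; the bound $|s_n(x)|\le 2\|\psi\|$ places $\gamma$ in $E_F=\im(T_F-I)$, and uniqueness forces $R_0\gamma=\psi$. Injectivity is free since $\gamma$ is recovered from its solution. The real hurdle is conceptual rather than technical: reconciling the visibly nonlinear formula \eqref{eq:soli} with the asserted linearity of $R_0$. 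This tension dissolves once one recognises that the normalisation $f[\idfunc]=1$ is precisely what is needed to pin down a unique representative in each equivalence class modulo constants, so that uniqueness — rather than any direct algebraic manipulation of the supremum — carries the linearity argument.
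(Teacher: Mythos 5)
Your proposal is correct and follows essentially the same route as the paper: the paper also obtains linearity from Lemma~\ref{lem:un} together with $f[\id]=1$ (phrased there as ``$R_0$ is the inverse of the injective linear map $D_0=(T_F-I)|\ker f$'' rather than as uniqueness of the normalized solution, but these are the same argument), and it derives \eqref{eq:oli} from the positivity of $f$, Remark~\ref{rem:sign}, and \eqref{eq:2} exactly as you do. No gaps.
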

\begin{proof}
In \eqref{eq:soli} the first summand is a solution to \eqref{eq:1} in $CB(X)$, 
while the second summand is a constant. Hence, $\varphi_0$ is a solution in $CB(X)$  
as well. Moreover, by \eqref{eq:soli} the mapping $R_0:E_F\rightarrow CB(X)$ is 
such that $\im R_0\subset\ker f$. This allows us to consider $R_0$ as a mapping  
$E_F\rightarrow\ker f$. Let us also consider the linear mapping 
$D = T_F - I:CB(X)\rightarrow E_F$. This is a left inverse to $R$ by definition of 
the latter. 

Now let $D_0=D|\ker f$. The mapping $D_0$ is injective. Indeed, $\ker D =  \Span\{\id\}$ 
by Lemma \ref{lem:un}, so $\ker D_0= \Span\{\id\}\cap\ker f = 0$
since $f[\id]=1$. On the other hand, $D_0$ is surjective since 
$D_0R_0=DR = I|E_F$. Eventually, $D_0$ is bijective, and $R_0$ is its inverse mapping. 
Hence, $R_0$ is linear.   

Now assume $\norm{f}=1$. Then the functional $f$ is nonnegative. In view of   
Remark \ref{rem:sign} formula \eqref{eq:soli} yields the inequality 
\[
- f[\sup_{n}[-s_n(.)]]\leq\varphi_0(x)\leq\sup_{n}[-s_n(x)],    
\]
and the right-hand part of \eqref{eq:oli} follows. The left-hand part
follows from \eqref{eq:2}.
\end{proof}
\begin{remark}
\label{rem:ud}
The solution \eqref{eq:soli} is uniquely determined by the additional 
condition $f[\varphi_0] = 0$. 
\qed
\end{remark}
\begin{example}
\label{ex:ol}
Let $X$ be a compact space with a regular Borel measure $\mu$ such that $\mu(X)=1$. 
Then formula \eqref{eq:soli} can be realized as 
\begin{equation}
\label{eq:psoli}    
\varphi_0(x)=\sup_{n}[-s_n(x)] - \int_X\sup_{n}[-s_n(.)]\dmes\mu.
\end{equation}  
This solution is uniquely determined by the condition 
\[
\int_X\varphi_0\dmes\mu = 0.
\]
In particular, for any $x_0\in X$ we can take the corresponding Dirac measure  
and get 
\begin{equation}
\label{eq:psol}    
\varphi_0(x)=\sup_{n}[-s_n(x)] - \sup_{n}[-s_n(x_0)].
\end{equation}  
This formula is applicable to any topological space $X$ and yields the unique solution 
such that $\varphi_0(x_0)=0$.
\qed
\end{example}

Now let us compare Theorem \ref{thm:6.0} to Corollary \ref{thm:bou}. 
\begin{cor}
\label{cor:cbc}
Let $(X,F)$ be minimal, and let $\gamma\in C(X)$.  
Then if the c.e.~\eqref{eq:1} is solvable in $B(X)$ then it is solvable in $CB(X)$. 
\end{cor}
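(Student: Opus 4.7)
The plan is to chain together Corollary~\ref{thm:bou} and Theorem~\ref{thm:6.0}; there is essentially no new work to do.

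First I would verify that $\gamma$ is bounded. If $\varphi \in B(X)$ solves the c.e., then $\gamma(x) = \varphi(Fx) - \varphi(x)$ is a difference of two bounded functions, hence $\gamma \in B(X)$, and by hypothesis $\gamma \in C(X)$, so $\gamma \in CB(X)$. Next, the ``only if'' direction of Corollary~\ref{thm:bou} (applied to the bounded solution $\varphi$) yields that the partial sums $s_n(x)$ are uniformly bounded on $X$, with $\sup_n \norm{s_n} \leq 2\norm{\varphi} < \infty$.

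At this point the three hypotheses of Theorem~\ref{thm:6.0} are met: $(X,F)$ is minimal by assumption, $\gamma \in CB(X)$ by the observation above, and $\sup_n \norm{s_n}_\infty < \infty$. The ``if'' direction of Theorem~\ref{thm:6.0} then produces a solution in $CB(X)$, namely the function $\sup_n [-s_n(x)]$ from \eqref{eq:supco} (up to an additive constant), which proves the corollary.

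The argument has no real obstacle, since all the analytic content — the continuity of the sup-formula and the fact that it satisfies the equation under minimality — has already been absorbed into Theorem~\ref{thm:6.0}. The only conceptual point worth flagging is that the passage from a merely bounded solution to a continuous one is not done by regularizing the given $\varphi$, but rather by exhibiting an \emph{a priori} different solution given by the sup-formula; the two solutions then differ by a continuous invariant function, which is a constant by Lemma~\ref{lem:un}.
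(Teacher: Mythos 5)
Your chain Corollary~\ref{thm:bou} $\Rightarrow$ uniform boundedness of the $s_n$ $\Rightarrow$ Theorem~\ref{thm:6.0} is exactly the argument the paper intends (it states the corollary precisely as a comparison of those two results), so the proof is correct and identical in approach. One small caveat on your closing remark: the given bounded solution need not be continuous, so its difference with the sup-formula solution is an invariant function that is not necessarily continuous, and Lemma~\ref{lem:un} does not apply to it --- but this side observation plays no role in the proof itself.
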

Actually, the bounded solution \eqref{eq:ubc} 
coincides with $\varphi$ from Theorem \ref{thm:6.0} under conditions of the latter.  
This follows from \eqref{eq:ins} as $m\rightarrow\infty$.

\section {The c.e. on topological groups}
\label {sec:togr}

Let us start with some general remarks. Let $G$ be a topological group. 
For any $g\in G$ the subsemigroup 
\[
[g] = \{g^n:n\in\N\} 
\]
is the $\tau_g$-orbit of the unit $e$. If this is dense (thus,  
the dynamical system $(G,\tau_g)$ is topologically transitive)  
then this system is minimal. Indeed, for any $x\in G$ 
its orbit is $[g]x$, hence the closure of the latter is $Gx=G$.    

The two-sided orbit of $e$ is the subgroup 
\[
<g> = \{g^n:n\in\Z\}.
\]   
A group $G$ such that $<g>$ is dense for a $g\in G$ is called 
{\em monothetic}, and the element $g$ is called its {\em generator}. 
For example, {\em the unit circle $\T$ is monothetic}: $\T =<\zeta>$ 
where $\zeta = e^{2\pi i\alpha}$ and $\alpha$  is any irrational number. 
Moreover, $\T =[\zeta]$, thus the dynamical system $(\T,\tau_{ \zeta})$ is 
minimal.    

In general, if $[g]$ is dense in a group $G$ then such is $<g>$, 
thus $G$ is monothetic. The converse is not true. For example,   
the discrete additive group $\Z$ is monothetic since 
 $\Z=<1>$, while $[z]=\{nz:n\in\N\}\neq\Z$. 
However, if $G$ is compact monothetic then 
$[g]$ is dense for any generator $g$, see  Lemma \ref{lem:gs} below. 

Obviously, every monothetic group is commutative and separable. On the other hand, 
every connected compact separable commutative group is monothetic \cite{halmos42}.    

In this section we systematically use the classical character theory. 
For the reader convenience let us recall some basic definitions and results. 
For more detail see e.g. \cite{hewittross63}, \cite{lyubichbook88}, \cite{weil40}. 

Let $G$ be a commutative topological group, and let $e$ be its unit.
A continuous function $\chi:G\rightarrow\T$
is called a {\em character of} $G$ if $\chi(gh)=\chi(g)\chi(h)$ for all $g,h\in G$.  
In particular, $\chi(e)=1$ and $\chi(g^{-1})=\chi(g)^{-1}=\overline{\chi(g)}$, 
where the bar means the complex conjugation. The characters constitute a commutative 
group $G'$ with respect the pointwise multiplication. Its unit is $\id$, and it 
may happen that $G'=\{\id\}$, inspite of $G\neq\{e\}$. For example, if $G$ is the additive 
group of a linear topological space without nonzero continuous linear functionals 
(say, $G=L_p(0,1)$, $0<p<1$) then $G'=\{\id\}$. In contrast, if $G$ is  
locally compact and $G'=\{\id\}$ then $G=\{e\}$.
A natural topology on $G'$ is discrete being determined by the inclusion  
$G\subset CB(G)$. Indeed, let $\chi_1, \chi_2 \in G'$, 
and let $\chi_1\neq\chi_2$. Then for every $g\in G$
we have $\abs{\chi_1(g)-\chi_2(g)}= \abs{\chi(g)-1}$ where 
$\chi(g)=\chi_1(g)\overline{\chi_2(g)}\neq\id$. 
If $g$ is such that $\chi(g)\neq 1$ then 
$\abs{\chi(g^n)-1}=\abs{\chi(g)^n-1}\geq \sqrt 2$ 
for some $n\in\N$. A fortiori, $\norm{\chi_1-\chi_2}\geq\sqrt 2$. 
 
If $G$ is locally compact then the classical Pontrjagin-van Kampen duality theory
prescribes to endow $G'$ with the compact-open topology and get the 
{\em dual group} $G^*$. This one is  
locally compact but not discrete, except for the case of compact $G$. 
Accordingly, although $G^*$ coincides with $G'$ algebraically but 
they are different topological groups if (and only if) $G$ is not compact. 
The Duality Theorem states that topological groups $G^{**}$ and $G$ 
can be identified by the topological isomorphism $G\rightarrow G^{**}$
which is as $g\mapsto g^{**}$ such that $g^{**}(\chi) = \chi(g)$.

In the rest of this section the group $G$ is compact, unless otherwise stated. 
\begin{lemma}
\label{lem:gs}
The closures $\overline{[g]}$ and $\overline{<g>}$ coincide for every $g\in G$. 
\end{lemma}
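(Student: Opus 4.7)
The inclusion $\overline{[g]}\subset\overline{<g>}$ is immediate from $[g]\subset <g>$, so the plan is to establish the reverse inclusion. For this it suffices to prove that $g^{-1}\in\overline{[g]}$, because then, using that $\overline{[g]}$ is a closed subsemigroup of $G$ (a consequence of $[g]\cdot [g]\subset [g]$ together with continuity of multiplication), we get $g^{-n}\in\overline{[g]}$ for all $n\geq 1$, hence $<g>\subset\overline{[g]}$ and the desired conclusion follows after taking closures.

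To show $g^{-1}\in\overline{[g]}$, I would exploit compactness of $G$. The sequence $(g^n)_{n\geq 0}$ admits a convergent subsequence $g^{n_j}\to h\in G$. Then $g^{n_{j+1}-n_j}=g^{n_{j+1}}\cdot(g^{n_j})^{-1}\to h h^{-1}=e$ by continuity of the group operations. Setting $k_j=n_{j+1}-n_j\geq 1$ we obtain positive exponents with $g^{k_j}\to e$. This means that every neighborhood $V$ of $e$ contains some $g^k$ with $k\geq 1$; in particular, given a neighborhood $U$ of $g^{-1}$, choose a neighborhood $V$ of $e$ with $Vg^{-1}\subset U$ (possible by continuity of right translation by $g^{-1}$), pick $k\geq 1$ with $g^k\in V$, and note that $g^{k-1}=g^k\cdot g^{-1}\in U$. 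Since $k-1\geq 0$, the element $g^{k-1}$ belongs to $[g]$, proving $g^{-1}\in\overline{[g]}$.

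The only subtlety is the edge case $k=1$: if $k_j=1$ occurs for some $j$, the argument above gives $g^0=e\in U$, which does not by itself place $g^{-1}$ into $U$. This is handled by observing that in this case $g^{k_j}=g=e$ in the limit sense forces $g=e$ (so $g^{-1}=e\in[g]$ trivially); alternatively, one passes to a subsequence of $(k_j)$ which is either constantly some fixed $p\geq 1$ (then $g^p=e$, so $g^{-1}=g^{p-1}\in[g]$) or tends to infinity (then $k\geq 2$ can be chosen, and the argument of the previous paragraph goes through without issue). Either alternative places $g^{-1}$ in $\overline{[g]}$.

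The main obstacle I anticipate is simply this careful bookkeeping around the exponent $k-1$ when $k=1$; all other steps are routine applications of compactness, the continuity of the group operations, and the fact that closures of subsemigroups in topological semigroups are subsemigroups.
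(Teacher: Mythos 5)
Your overall strategy is the same as the paper's: everything reduces to showing $g^{-1}\in\overline{[g]}$, and that is obtained by producing two positive powers of $g$ lying close together, so that their quotient is a positive power of $g$ close to $e$. The one step that does not hold in the stated generality is the extraction of a \emph{convergent subsequence} from $(g^n)$. The lemma is asserted for an arbitrary compact (commutative) group $G$, and such a group need not be metrizable or sequentially compact; concrete examples are $\T^{[0,1]}$ or the Bohr compactification $b\R$, and the latter is precisely the kind of group to which this circle of results is applied later in the paper (Theorem \ref{thm:apso}). In such groups a sequence like $(g^n)$ may have no convergent subsequence at all, only convergent subnets, and with a subnet $g^{n_\alpha}\to h$ it is no longer automatic that one can form positive differences $n_\beta-n_\alpha$ of indices whose powers both lie in a prescribed neighborhood of $h$. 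The paper circumvents this by taking a point $x$ in the nonempty compact set $\Omega=\bigcap_{n\in\N}\overline{\{g^k:k\geq n\}}$ (nonempty by the finite intersection property) and a neighborhood $W$ of $x$ with $WW^{-1}\subset V$; the definition of $\Omega$ then supplies $n$ and $m>n$ with $g^n,g^m\in W$, whence $g^{m-n}\in V$. If you replace your subsequence step by this $\omega$-limit-set argument, the rest of your proof goes through; for metrizable $G$ your argument is complete as written.

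Two smaller remarks. Your ``edge case $k=1$'' is not an issue: what must be shown is that every neighborhood $U$ of $g^{-1}$ meets $[g]$, and $g^{k-1}=g^0=e$ is an element of $[g]$ lying in $U$, which is exactly what is needed --- no separate discussion (and certainly no conclusion that $g=e$) is required, so that paragraph can be deleted. On the positive side, your explicit justification that $\overline{[g]}$ is a closed subsemigroup, hence contains all $g^{-n}$ once it contains $g^{-1}$, fills in a step that the paper's proof leaves implicit when it says ``it suffices to show that $g^{-1}\in\overline{[g]}$.''
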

\begin{proof}
It suffices to show that $g^{-1}\in\overline{[g]}$. 
Suppose to the contrary. Then the set  
$U= G\setminus \overline{[g]} $ is a neighborhood of $g^{-1}$ separating 
this element from $\overline{[g]}$. Hence, $V=gU$ is a neighborhood of $e$ 
such that $g^r\notin V$ for all $r\geq 1$. Consider the intersection 
\begin{equation}
\label{eq:attr}
\Omega = \bigcap_{n\in\N}\overline{\{g^k:k\geq n\}}.
\end{equation}
This is not empty since $G$ is compact. Let $x\in\Omega$, and let $W$ 
be a neighborhood of $x$ such that $WW^{-1}\subset V$. By definition of $\Omega$ 
there exists $n\in\N$ such that $g^n\in W$, and there is $m>n$  
such that $g^m\in W$. Then $g^{r}\in V$ for $r=m-n$, a contradiction. 
\end{proof}
\begin{cor}
\label{cor:bodeg}
The closure $\overline{[g]}$ of the semigroup $[g]$ is a group. 
\end{cor}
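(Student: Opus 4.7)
The plan is to deduce the corollary almost directly from Lemma \ref{lem:gs}, together with the standard fact that the closure of a subgroup in a topological group is again a subgroup.

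First I would invoke Lemma \ref{lem:gs} to replace $\overline{[g]}$ by $\overline{\langle g\rangle}$, since the lemma just proved that these two closures coincide. This moves the problem from a semigroup closure (which need not be a group in general, consider $[z]\subset\Z$ from the discussion before Lemma \ref{lem:gs}) to a subgroup closure.

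Next I would verify that $\overline{\langle g\rangle}$ is a subgroup of $G$ by the usual continuity argument. For closure under multiplication, if $x,y\in\overline{\langle g\rangle}$, choose nets $g^{n_\alpha}\to x$ and $g^{m_\beta}\to y$; then by joint continuity of the group multiplication $g^{n_\alpha+m_\beta}\to xy$, so $xy$ lies in the closure of $\langle g\rangle\cdot\langle g\rangle=\langle g\rangle$. For closure under inversion, continuity of $x\mapsto x^{-1}$ shows that $g^{-n_\alpha}\to x^{-1}$, and $g^{-n_\alpha}\in\langle g\rangle\subset\overline{\langle g\rangle}$, so $x^{-1}\in\overline{\langle g\rangle}$. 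Since $e\in\langle g\rangle\subset\overline{\langle g\rangle}$, this gives a subgroup.

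There is essentially no obstacle: the nontrivial content (getting inverses of $g$ into the closure) has already been absorbed into Lemma \ref{lem:gs}, whose proof used compactness of $G$ through the nonemptiness of the attractor $\Omega$ in \eqref{eq:attr}. Once we know $g^{-1}\in\overline{[g]}$, all other inverses $g^{-n}$ arise automatically by continuity of multiplication, and the corollary becomes a one-line consequence. I would present the argument as two short sentences: apply Lemma \ref{lem:gs}, then cite the standard topological-group fact that the closure of a subgroup is a subgroup.
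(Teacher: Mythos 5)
Your proposal is correct and is essentially the paper's intended argument: the corollary is stated without proof immediately after Lemma~\ref{lem:gs} precisely because it follows from that lemma together with the standard fact that the closure of a subgroup of a topological group is again a subgroup, which is exactly the two-step deduction you give.
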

\begin{cor}
\label{cor:bode}
The sets $[g]$ and $<g>$ are dense or not dense simultaneously.  
\end{cor}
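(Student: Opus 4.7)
The plan is to observe that Corollary \ref{cor:bode} is an immediate consequence of Lemma \ref{lem:gs} and requires essentially no additional work. One direction, namely $[g]$ dense $\Rightarrow$ $<g>$ dense, is trivial from the inclusion $[g]\subset <g>$, which gives $\overline{[g]}\subset\overline{<g>}$; thus $\overline{[g]}=G$ forces $\overline{<g>}=G$. This direction uses no compactness of $G$ at all.

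The reverse implication, $<g>$ dense $\Rightarrow$ $[g]$ dense, is the substantive one, and it is precisely what Lemma \ref{lem:gs} delivers: since $\overline{[g]}=\overline{<g>}$, the assumption $\overline{<g>}=G$ immediately yields $\overline{[g]}=G$. So my ``proof'' will simply combine the trivial inclusion $\overline{[g]}\subset\overline{<g>}$ with the nontrivial reverse inclusion $\overline{<g>}\subset\overline{[g]}$ (which follows from $g^{-1}\in\overline{[g]}$, already established in the proof of Lemma \ref{lem:gs}) to deduce equality of the two closures, and then read off the equivalence of density.

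There is no real obstacle here: the entire content of the corollary is packaged inside Lemma \ref{lem:gs}. If I were writing this up, I would devote at most one sentence to the argument, perhaps phrased as: ``Immediate from Lemma \ref{lem:gs}, since a subset of $G$ is dense if and only if its closure equals $G$.'' The only thing worth emphasizing is that compactness enters only through Lemma \ref{lem:gs}; without compactness the implication can fail, as the example $G=\Z$, $g=1$ already shows ($<1>=\Z$ is dense in $\Z$, but $[1]=\N\neq\Z$), and this example is consistent with the hypothesis ``$G$ compact'' being essential for the corollary to be meaningful.
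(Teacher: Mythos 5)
Your proof is correct and follows exactly the route the paper intends: the corollary is an immediate consequence of Lemma \ref{lem:gs}, since density of a set is equivalent to its closure being all of $G$, and the paper itself offers no further argument. Your side remarks (the trivial inclusion direction needing no compactness, and the discrete example $G=\Z$, $g=1$) are consistent with the paper's own discussion preceding the lemma.
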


\begin{lemma}
\label{lem:gred}
The semigroup $[g]$ is dense (thus, the group $G$ is monothetic, generated by $g$) 
if and only if $\id$ is the only character $\chi$ such that $\chi(g)=1$.
\end{lemma}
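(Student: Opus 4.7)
The plan is to prove both directions using the interplay between continuity of characters and the subgroup structure of $\overline{[g]}$.

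For the ``only if'' direction, suppose $[g]$ is dense in $G$. If $\chi$ is a character with $\chi(g)=1$, then $\chi(g^n)=\chi(g)^n=1$ for every $n\geq 0$, so $\chi\equiv 1$ on $[g]$. Since $\chi$ is continuous and $[g]$ is dense, $\chi\equiv 1$ on $G$, i.e., $\chi=\id$. This is the easy direction.

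For the ``if'' direction, I would argue by contraposition. Let $H=\overline{[g]}$; by Corollary~\ref{cor:bodeg} this is a closed subgroup of $G$. Assume $[g]$ is not dense, so $H\neq G$. Then the quotient $G/H$ is a nontrivial compact commutative topological group. The key fact I will invoke is that on any nontrivial compact commutative group the characters separate points (this is a standard consequence of Peter--Weyl / Pontryagin duality, and it is the principal reason the character theory was recalled just above the lemma); in particular there exists a character $\tilde\chi\neq\id$ of $G/H$. Pulling $\tilde\chi$ back through the quotient homomorphism $\pi:G\to G/H$ produces a character $\chi=\tilde\chi\circ\pi$ of $G$ which is trivial on $H$ but not on all of $G$. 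Since $g\in H$, we get $\chi(g)=1$ while $\chi\neq\id$, contradicting the hypothesis.

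The only genuinely nontrivial ingredient is the existence of a nontrivial character on the nontrivial compact commutative group $G/H$; everything else is manipulation of definitions. Since the paper has already cited \cite{hewittross63}, \cite{lyubichbook88}, \cite{weil40} for the character theory, I would simply invoke this separation property as part of the standing background rather than reprove it.
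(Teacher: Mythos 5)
Your proof is correct and follows essentially the same route as the paper: the easy direction via continuity of $\chi$ on the dense set $[g]$, and the converse by passing to the nontrivial compact quotient of $G$ by the closed subgroup $\overline{[g]}=\overline{<g>}$ (the paper quotients by $\overline{<g>}$ and cites Corollary~\ref{cor:bode}, you use $\overline{[g]}$ via Corollary~\ref{cor:bodeg}, which is the same set) and pulling back a nonunity character. The existence of a nonunity character on a nontrivial compact commutative group is likewise left to the standing background in the paper's proof, exactly as you propose.
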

\begin{proof}
If $\chi(g)=1$ then  $\chi(h)=1$ for $h\in \overline{[g]}$. Therefore, 
$\chi = \id$ if $[g]$ is dense.
Now let $[g]$ be not dense. By Corollary \ref{cor:bode} the quotient group 
$\Gamma =  G/\overline{<g>}$ is nontrivial. The natural epimorphism 
$p:G\rightarrow\Gamma$ is continuous with respect to the standard topology on $\Gamma$. 
Hence, the group $\Gamma$ is compact. If $\xi$ is a nonunity character of $\Gamma$ 
then $\chi = \xi\circ p$ is a character of $G$ such that $\chi(g)=1$ but $\chi\neq\id$.
\end{proof}
\begin{example}
\label{ex:tor}
The characters of the $d$-dimensional torus 
\[
\T^d = \{g = (e^{2\pi i\alpha_k}): 0\leq\alpha_k<2\pi, 1\leq k\leq d \} 
\] 
are  
\[
\chi_{n_1,\cdots ,n_d}(g) = e^{2\pi i\sum_{k=1}^d n_k \alpha_k}, 
\eqsp (n_1,\cdots ,n_d)\in\Z^d .
\] 
Hence, 
\[
\chi(g)\neq 1\Leftrightarrow\sum_{k=1}^d n_k\alpha_k \notin\Z.
\]
Therefore, $\chi(g)\neq 1$ for all $\chi\neq\id$ if and only if 
the real numbers $1, \alpha_1,\cdots, \alpha_d$ are linearly 
independent over the field  $\Q$ of rational numbers. By Lemma \ref{lem:gred} 
this condition is necessary and sufficient for the density of the subsemigroup 
$[g]$ in $\T^d$. This is the famous Kronecker theorem. As a consequence, 
{\em the group $\T^d$ is monothetic}. 
Note that for $d=1$ the Kronecker condition just means that $\alpha_1$ is irrational. 
\qed
\end{example}

Now we immediately obtain 
\begin{theorem}
\label{thm:6.00}
Let $G$ be a compact commutative group, and let $g\in G$ be such that $\chi(g)\neq 1$ 
for all nonunity characters of $G$. Then the c.e. 
\begin{equation}
\label{eq:gama}
\varphi(gx)-\varphi(x)=\gamma(x), \eqsp x\in G,
\end{equation}
has a solution $\varphi\in C(G)$ if and only if $\gamma\in C(G)$ and the sums 
\begin{equation*}
s_n(x)=\sum_{k=0}^n\gamma(g^kx),\eqsp n\geq 0, 
\end{equation*}
are uniformly bounded. Under these conditions with real $\gamma$ the continuous 
solution is 
\begin{equation}
\label{eq:supcogr} 
\varphi(x)=\sup_{n}[-s_n(x)]  
\end{equation}
up to an arbitrary additive constant.
\end{theorem}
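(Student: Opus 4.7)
The plan is to reduce Theorem \ref{thm:6.00} directly to the GHT (Theorem \ref{thm:6.0}) by verifying that the hypothesis on characters forces $(G,\tau_g)$ to be a minimal topological dynamical system. Once minimality is established, everything else follows verbatim from Theorem \ref{thm:6.0}, since $G$ is compact so $C(G)=CB(G)$.

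First I would invoke Lemma \ref{lem:gred}: the character condition $\chi(g)\neq 1$ for every nonunity $\chi\in G'$ is exactly the necessary and sufficient condition for the subsemigroup $[g]=\{g^n:n\geq 0\}$ to be dense in $G$. So the hypothesis hands us density of $[g]$ for free.

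Next I would observe that density of $[g]$ upgrades to minimality of the dynamical system $(G,\tau_g)$. Indeed, for any $x\in G$ the $\tau_g$-orbit of $x$ is
\[
O_{\tau_g}(x)=\{g^n x: n\geq 0\}=[g]\cdot x.
\]
Since right multiplication by $x$ is a homeomorphism of the topological group $G$, the set $[g]\cdot x$ is dense in $G\cdot x=G$. Thus every orbit is dense, which is the definition of minimality.

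Having produced minimality, I would then apply Theorem \ref{thm:6.0} to the system $(G,\tau_g)$ with $F=\tau_g$. The Koopman sums are precisely $s_n(x)=\sum_{k=0}^{n}\gamma(g^k x)$. The GHT gives the stated equivalence (solvability in $CB(G)=C(G)$ iff $\gamma$ is continuous and $\{s_n\}$ is uniformly bounded) and, in the real case, produces the explicit bounded continuous solution $\varphi(x)=\sup_n[-s_n(x)]$, unique up to additive constants by Lemma \ref{lem:un} applied to the topologically transitive system $(G,\tau_g)$. There is no genuine obstacle here; the only point worth stating carefully is the translation-homeomorphism argument that upgrades density of $[g]$ in $G$ to density of every orbit, which is what is needed to match the hypothesis of Theorem \ref{thm:6.0}.
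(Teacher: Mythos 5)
Your proposal is correct and follows exactly the paper's route: Lemma \ref{lem:gred} gives density of $[g]$, the translation-homeomorphism observation (which the paper records in the opening remarks of Section \ref{sec:togr}) upgrades this to minimality of $(G,\tau_g)$, and Theorem \ref{thm:6.0} then yields the conclusion since $C(G)=CB(G)$ for compact $G$. Nothing is missing.
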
 
\begin{proof}
By Lemma \ref{lem:gred} the system $(G,\tau_g)$ is minimal. Hence, 
Theorem \ref{thm:6.0} is applicable.
\end{proof}
\begin{remark}
\label{rem:st}
 According to Remark \ref{rem:op} it suffices to require that the set of sums
\begin{equation*}
s_n(e)=\sum_{k=0}^n\gamma(g^k),\eqsp n\geq 0, 
\end{equation*}
is bounded. 
\end{remark}
\begin{cor}
\label{cor:eqir}
The equation \eqref{eq:7} with continuous 1-periodic $h$ and irrational 
$\alpha$ has a continuous 1-periodic solution $f$ if and only if the sums 
\[
s_n(x) = \sum_{k=0}^n h(x+k\alpha), \eqsp 0\leq x<1, 
\]  
are uniformly bounded. 
\end{cor}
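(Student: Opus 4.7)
The plan is to deduce this corollary as an immediate special case of Theorem \ref{thm:6.00}. Take the compact commutative group $G=\T$ (the unit circle, viewed as $\R/\Z$) and the element $g=e^{2\pi i\alpha}$, i.e.\ the rotation by $\alpha$. A continuous $1$-periodic function on $\R$ is the same thing as a continuous function on $\T$, and the equation $f(x+\alpha)-f(x)=h(x)$ on $\R$ then becomes exactly the c.e.\ \eqref{eq:gama} on $G=\T$ with $\gamma=h$ and partial sums $s_n(x)=\sum_{k=0}^n h(x+k\alpha)$ (which, as functions on $\T$, coincide with $\sum_{k=0}^n\gamma(g^k x)$).

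The one thing to check before invoking Theorem \ref{thm:6.00} is its hypothesis on characters: $\chi(g)\neq 1$ for every nonunity character $\chi$ of $G$. This was already observed in Example \ref{ex:tor} for $d=1$: the characters of $\T$ are $\chi_n(z)=z^n$, $n\in\Z$, and $\chi_n(g)=e^{2\pi i n\alpha}=1$ iff $n\alpha\in\Z$, which for irrational $\alpha$ forces $n=0$, i.e.\ $\chi_n=\id$. So the hypothesis holds.

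Applying Theorem \ref{thm:6.00}, the c.e.\ \eqref{eq:gama} on $\T$ has a solution $\varphi\in C(\T)$ if and only if $\gamma\in C(\T)$ and the sums $s_n$ are uniformly bounded. Translating back via the $\R\leftrightarrow\T$ identification yields exactly the statement of the corollary, with the bounded continuous solution given (up to an additive constant) by \eqref{eq:supcogr}. There is no real obstacle here; the content of the corollary is entirely packaged in Theorem \ref{thm:6.00} together with the Kronecker-type character computation for $\T$, so the ``proof'' is essentially a verification that $(\T,\tau_g)$ meets the hypotheses of that theorem.
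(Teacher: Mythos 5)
Your proposal is correct and follows exactly the route the paper intends: Corollary \ref{cor:eqir} is stated as an immediate specialization of Theorem \ref{thm:6.00} to $G=\T$ with $g=e^{2\pi i\alpha}$, the character hypothesis being the $d=1$ case of the Kronecker condition already recorded in Example \ref{ex:tor}. Nothing is missing.
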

By Remark \ref{rem:st} {\em it sufficient that these sums are bounded at $x=0$}. 

The nonlinear construction \eqref{eq:supcogr} can be transformed into a linear one 
according to Theorem \ref{cor:solin}. For instance, such is  
\begin{equation}
\label{eq:dif}   
\varphi_0(x) =\sup_{n}[-s_n(x)] - \int_G\sup_{n}[-s_n(.)]\dmes\nu 
\end{equation}
where $\nu$ is the Haar measure, $\nu(G)=1$, see Example \ref{ex:ol}. This solution is 
determined by the condition 
\begin{equation}
\label{eq:ac}
\int_G\varphi_0\dmes\nu =0. 
\end{equation}

Now let us analyze the c.e. \eqref{eq:gama} by means of harmonic analysis in   
the complex space $L_2(G,\nu)$. In this space 
the set $G^*$ of the characters is orthonormal and complete. 
Thus, for every $\psi\in L_2(G,\nu)$ we have the {\em Fourier decomposition}    
 \begin{equation}
 \label{eq:fode}
\psi\sim\sum_{\chi\in G^*} c_{\chi}[\psi]\chi 
\end{equation}
where 
 \begin{equation}
 \label{eq:fodeco}
 c_{\chi}[\psi]=\int_{G}\psi\overline{\chi}\dmes\nu 
\end{equation}
are the {\em Fourier cofficients}. 
The set of those $\chi\in G^*$ for which $c_{\chi}[\psi]\neq 0$  
is at most countable. This set is called the {\em spectrum} of $\psi$ and 
denoted by $\spec\psi$. After ommiting of the vanishing summands in 
\eqref{eq:fode} and an arbitrary ordering of the rest we get 
the {\em reduced Fourier series} 
 \begin{equation}
 \label{eq:foder}
\psi=\sum_{\chi\in\spec\psi} c_{\chi}[\psi]\chi 
\end{equation}
convergent to $\psi$ in $L_2$-norm. As a consequence, $\spec\psi =\emptyset$  
if and only if $\psi=0$. If $\psi\in C(G)$ then its Fourier series can 
be divergent in $C(G)$ but $\psi$ is the uniform limit of a sequence of 
linear combinations of the characters from $\spec\psi$. 

%
The member corresponding to the unity character in \eqref{eq:fode} is the constant  
 \begin{equation}
 \label{eq:tric}
c_{\id}[\psi]=\int_{G}\psi\dmes\nu   
\end{equation} 
according to the equality 
 \begin{equation}
 \label{eq:trin}
\int_{G}\chi\dmes\nu =0, \eqsp \chi\in G^*\setminus\{\id\}.   
\end{equation} 
Therefore, 
\[
\int_{G}\psi\dmes\nu =0 
\]
if and only if $\id\notin\spec\psi$.
 
The Fourier coefficients of the shifted function $\psi_g(x)=\psi(gx)$ are  
 \begin{equation}
 \label{eq:coshi}
c_{\chi}[\psi_g]=\chi(g)c_{\chi}[\psi]. 
\end{equation}
Indeed, 
\[
\int_{G}\psi(gx)\overline{\chi(x)}\dmes\nu = 
\chi(g)\int_{G}\psi(x)\overline{\chi(x)}\dmes\nu
\]
since the measure $\nu$ is invariant and $\overline{\chi(g^{-1}x)} = \chi(g)\overline{\chi(x)}$.
By \eqref{eq:coshi} the Fourier image of the c.e. \eqref{eq:gama} is
 \begin{equation}
 \label{eq:cef}
(\chi(g)-1)c_{\chi}[\varphi] = c_{\chi}[\gamma],\eqsp \chi\in G^*.  
\end{equation} 
For $\chi=\id$ this yields the TNC  $c_{\id}[\gamma]= 0$, i.e. $\id\notin\spec\gamma$. 
Also, from \eqref{eq:cef} it follows that $\chi(g)\neq 1$ if $\chi\in\spec\gamma$. 
Thus, the Fourier decomposition of the solution \eqref{eq:dif} is 
\begin{equation}
\label{eq:fos}
\varphi_0=\sum_{\chi\in\spec\gamma}\frac {c_{\chi}[\gamma]}{\chi(g)-1}\chi. 
\end{equation}
Obviously, $\spec\varphi_0=\spec\gamma$, while  $\spec\varphi=\spec\gamma\cup\{\id\}$   
for all solutions $\varphi\neq\varphi_0$. 

An interesting consequence of the formula  \eqref{eq:fos} is the inequality  
\begin{equation}
\label{eq:par}
\sum_{\chi\in\spec\gamma}\abs{\frac {c_{\chi}[\gamma]}{\chi(g)-1}}^2 \leq 
\sup_{n}\norm{s_n}^2
\end{equation}
which is valid for all continuous coboundaries. This follows by changing the $\sup$-norm 
on the left of \eqref{eq:oli} to the $L_2$-norm which, in turn, can be found 
from \eqref{eq:fos} by the Parseval equality.

After these preliminaries we can generalize Theorem \ref{thm:6.00} as follows.  
For any $g\in G$  we consider its {\em annihilator} $g^{\perp} = \{\chi\in G^*:\chi(g) = 1)\}$.  
It is a subgroup of the group $G^*$. In Theorem \ref{thm:6.00} we actually assume   
$g^{\perp} = \{\id\}$. In order to relax this condition we introduce the subgroup 
$H_{\gamma}\subset G^*$ generated by the subset $\spec\gamma$. (For $\gamma = 0$ 
we let $H_{\gamma}=\{\id\}$.) 
\begin{theorem}
\label{thm:6.000}
Theorem \ref{thm:6.00} remains true under the condition
\begin{equation}
\label{eq:nga}
g^{\perp}\cap H_{\gamma}= \{\id\} 
\end{equation}
instead of $g^{\perp} = \{\id\}$.
\end{theorem}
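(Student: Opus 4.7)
The plan is to reduce the hypothesis \eqref{eq:nga} to the hypothesis of Theorem~\ref{thm:6.00} by passing to a quotient. Set $N=H_\gamma^\perp=\{x\in G:\chi(x)=1\text{ for all }\chi\in H_\gamma\}$. This is a closed subgroup of $G$, and since $G^*$ is discrete every subgroup is closed, so Pontryagin duality gives $(G/N)^*=N^\perp=H_\gamma^{\perp\perp}=H_\gamma$. Let $p:G\to G/N$ be the quotient epimorphism and write $\tilde g=p(g)$.

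The next step is to show that $\gamma$ descends through $p$. Every $\chi\in H_\gamma$ is trivial on $N$ by definition, hence constant on each coset of $N$. Since $\spec\gamma\subset H_\gamma$, the reduced Fourier series of $\gamma$ consists of $N$-invariant continuous functions; by the uniform-approximation statement quoted earlier in this section, $\gamma$ is a uniform limit of such $N$-invariant trigonometric polynomials, and therefore $\gamma$ itself is $N$-invariant. Thus there is a unique $\tilde\gamma\in C(G/N)$ with $\gamma=\tilde\gamma\circ p$. The same reasoning gives $s_n=\tilde s_n\circ p$, where $\tilde s_n(\tilde x)=\sum_{k=0}^n\tilde\gamma(\tilde g^k\tilde x)$, and consequently
\[
\sup_n\norm{\tilde s_n}_{C(G/N)}=\sup_n\norm{s_n}_{C(G)}.
\]

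Now I verify the hypothesis of Theorem~\ref{thm:6.00} on the compact commutative group $G/N$. For any $\chi\in(G/N)^*=H_\gamma$ with $\chi\neq\id$, one has $\chi(\tilde g)=\chi(g)$, and $\chi(g)=1$ would put $\chi$ into $g^\perp\cap H_\gamma$, contradicting \eqref{eq:nga}. Hence $\tilde g^{\,\perp}=\{\id\}$ in $(G/N)^*$, so by Lemma~\ref{lem:gred} the system $(G/N,\tau_{\tilde g})$ is minimal and Theorem~\ref{thm:6.00} applies. It yields a continuous solution $\tilde\varphi\in C(G/N)$ of the c.e.\ $\tilde\varphi(\tilde g\tilde x)-\tilde\varphi(\tilde x)=\tilde\gamma(\tilde x)$ if and only if the sums $\tilde s_n$ are uniformly bounded, which (by the identification above) is exactly the hypothesis on $s_n$. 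Moreover, in that case
\[
\tilde\varphi(\tilde x)=\sup_n[-\tilde s_n(\tilde x)]
\]
up to an additive constant. Lifting by $\varphi=\tilde\varphi\circ p$ produces a continuous solution on $G$ satisfying the formula \eqref{eq:supcogr}, since $s_n=\tilde s_n\circ p$. The necessity of the uniform boundedness follows at once from \eqref{eq:2}, as in Theorem~\ref{thm:6.00}.

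I do not expect a serious obstacle: the only nontrivial point is the descent of $\gamma$ through $p$, i.e.\ the step that uses uniform density of trigonometric polynomials with spectrum in $\spec\gamma$ to conclude $N$-invariance of $\gamma$ from the spectral hypothesis. Once this is established the rest is a mechanical application of Theorem~\ref{thm:6.00} on the quotient.
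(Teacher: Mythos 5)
Your argument is correct and is essentially the paper's own proof in different clothing: the paper maps $G$ onto $H_{\gamma}^{*}$ by the epimorphism $j$ dual to the inclusion $H_{\gamma}\subset G^{*}$, and since $\ker j=H_{\gamma}^{\perp}=N$, your quotient $G/N$ is canonically this same group, with your descent of $\gamma$ (via uniform approximation by trigonometric polynomials with spectrum in $\spec\gamma$) matching the paper's construction of $\beta$ with $\beta\circ j=\gamma$, and your verification that $\tilde g^{\perp}=\{\id\}$ matching the paper's check that $h^{\perp}=\{\id\}$. No gaps.
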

\begin{proof}
The embedding $H_{\gamma}\subset G^{*}$ induces (by duality) 
a continuous epimorphism $j:G\rightarrow H_{\gamma}^*$. 
The group $H_{\gamma}^*$ is compact since such is $G$.  
Every $\theta\in C(H_{\gamma}^*)$ can be lifted to $G$ as $\theta\circ j$. 
This is a linear isometric mapping  $C(H_{\gamma}^*)\rightarrow C(G)$.  
Its image is the uniform closure $\overline{\Span H_{\gamma}}$ 
since $C(H_{\gamma}^*)=\overline{\Span{H_{\gamma}^{**}}}$.  

Now in $C(H_{\gamma}^*)$ we consider the c.e. 
\begin{equation}
\label{eq:gamah}
\theta(hz)-\theta(z)=\beta(z),\eqsp z\in H_{\gamma}^*, 
\end{equation}
where $h = jg\in H_{\gamma}^*$ and  $\beta\in C(H_{\gamma}^*)$ is such that 
$\beta(jx)=\gamma(x)$, $x\in G$. The only character $\xi$ of the group 
$H_{\gamma}^*$ such that $\xi(h)=1$ is $\xi=\id$. Indeed, $\chi = \xi\circ j$ is 
a character of $G$ such that $\chi(g)=1$, i.e. $\chi\in g^{\perp}$. On the other 
hand, $\chi\in H_{\gamma}$. By \eqref{eq:nga} we get $\chi = \id$. Therefore, $\xi=\id$ 
since $j$ is surjective.   

Thus, we have $h^{\perp} = \id$. Furthermore, 
\begin{equation*}
\hat{s}_n(z)\equiv\sum_{k=0}^n\beta(h^kz)= \sum_{k=0}^n\beta(j(g^kx)) = 
\sum_{k=0}^n\gamma(g^kx) = s_n(x)
\end{equation*}
for $z = jx$, $x\in G$. Since these sums are uniformly bounded,  
the c.e. \eqref{eq:gamah} has a continuos solution $\theta$ 
by Theorem \ref{thm:6.00}. Then $\varphi(x) = \theta (jx)$ is a continuous 
solution to the c.e. \eqref{eq:gama}. With real $\gamma$ one can take   
\begin{equation}
\label{eq:supc} 
\theta(z)=\sup_{n}[-\hat{s}_n(z)] = \sup_{n}[-s_n(x)].
\end{equation}
\end{proof}

\begin{remark}
\label{rem:con}
With $\gamma =0$ the general continuous solution to \eqref{eq:gama} is
$\overline{\Span g^{\perp}}$. This subspace of $C(G)$ consists of 
constants if and only if $g^{\perp} =  \{\id\}$.
\qed
\end{remark}

Now let $G$ be an arbitrary commutative topological group. An {\em almost 
periodic function} ({\em a.p.f.}) on $G$ is a complex-valued function 
$\psi\in CB(G)$ such that the set $\{\tau_g\psi:g\in G\}$ of its shifts  
is precompact in $CB(G)$. The a.p.f. constitute a subspace $AP(G)\subset CB(G)$. 
All characters of $G$ are almost periodic, so $G'\subset AP(G)$. Moreover, 
the linear span of $G'$ is dense in $AP(G)$. This Approximation Theorem is 
central in the theory of a.p.f., see e.g. \cite{weil40} and the references therein. 
On the additive group $\R$ the theory of a.p.f. was created 
by Bohr~\cite{bohr25}, \cite{bohr} 
and Bochner \cite{bochner27}. The characters on $\R$ are $\chi_{\lambda}(x)=e^{i\lambda x}$ 
$(x,\lambda\in\R)$, so in this case the Approximation Theorem means that the a.p.f. on $\R$ 
are just the uniform limits of the linear combinations of these exponents. 
In particular, all periodic functions on $\R$ are almost periodic. In this case 
the Approximation Theorem turns into the Weierstrass theorem from the classical 
Fourier analysis. 

On any compact group $G$ all continuous functions are almost periodic, i.e. 
$AP(G) = C(G)$ in this case. Remarkably, the general case reduces to this one as 
follows. (See e.g. \cite{lyubichbook88} for more detail.) 

Let $K$ be a compact group such that there exists   
a continuous homomorphism $j:G\rightarrow K$ then for every  
$\psi\in C(K)$ the function $\psi\circ j$ is an a.p.f. on $G$. 
In particular, one can take the {\em Bohr compact} 
$K = bG\equiv G'^*$ and $(jg)(\chi) = \chi(g)$, $g\in G$ , $\chi\in G'$. 
Although this $j$ is not surjective, its image is dense. 
It turns out that this construction yields all a.p.f. on $G$. 
The natural extension of $j$ to $AP(G)$ is a bijective linear isometry 
$AP(G)\rightarrow C(bG)$. This allows one to translate the harmonic analysis 
from $C(bG)$ to $AP(G)$. In particular, the {\em spectrum} of $\varphi\in AP(G)$  
is defined as the spectrum of the corresponding $\psi\in C(bG)$, 
the Fourier coefficients of $\varphi$ are defined as the corresponding ones for $\psi$, etc.  

By the way, in the case $G=\R$ the Fourier coefficients can be introduced 
with no reference to the Bohr compactification, namely, 
 \begin{equation}
\label{eq:focor}
 c^{(\lambda)}[\varphi]=\lim_{a\rightarrow\infty}\frac{1}{2a}
\int_{-a}^a \varphi(x)e^{-i\lambda x}\dmes x.  
 \end{equation}
We call the set of those $\lambda\in\R$ 
for which $c^{(\lambda)}[\varphi]\neq 0$  the {\em frequence spectrum} of 
$\varphi\in AP(\R)$. If $\varphi$ is periodic, say, 1-periodic,  
then its frecuence spectrum is a subset of $2\pi\Z$ and 
 \begin{equation*}
 c^{(2\pi n)}[\varphi] =c_{n}[\varphi]\equiv 
\int_0^1 \varphi(x)e^{-2\pi inx}\dmes x, \eqsp n\in\Z. 
 \end{equation*}

\begin{theorem}
\label{thm:apso}
Let $G$ be a commutative topological group, and let $\gamma\in AP(G)$. Assume that  
the condition \eqref{eq:nga} is fulfilled for the subgroups $g^{\perp}$ and $H_{\gamma}$ 
of $G'$ defined as before.
Then the c.e.  
\begin{equation}
\label{eq:gamma}
\varphi(gx)-\varphi(x)=\gamma(x), \eqsp x\in G,
\end{equation}
has a solution $\varphi\in AP(G)$ if and only if the sums 
\begin{equation*}
s_n(x)=\sum_{k=0}^n\gamma(g^kx),\eqsp n\geq 0, 
\end{equation*}
are uniformly bounded. Under this condition and with real $\gamma$ 
an a.p. solution is 
\begin{equation}
\label{eq:apfsol}
\varphi(x)=\sup_n[- s_n(x)].    
\end{equation}
\end{theorem}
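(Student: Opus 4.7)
The plan is to reduce the problem to Theorem \ref{thm:6.000} on a compact commutative group via the Bohr compactification $bG = G'^{*}$ and the canonical continuous homomorphism $j\colon G\to bG$ with dense image. Recall that the isometric lifting $\psi\mapsto\psi\circ j$ identifies $C(bG)$ with $AP(G)$, so $\gamma=\beta\circ j$ for a unique $\beta\in C(bG)$, and any continuous $\theta$ on $bG$ pulls back to an element of $AP(G)$.

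First I would set $h=jg\in bG$ and write down the corresponding c.e.
\[
\theta(hz)-\theta(z)=\beta(z),\eqsp z\in bG.
\]
The key algebraic observation is that the duality identifies $(bG)'$ with $G'$ in such a way that, for any character $\chi\in G'$ viewed in $(bG)'$, one has $\chi(h)=\chi(jg)=\chi(g)$. Consequently the annihilator of $h$ in $(bG)'$ coincides with $g^{\perp}\subset G'$. Moreover, the spectrum of $\beta$ in $C(bG)$ is, by the very definition of $\spec\gamma$ for a.p.f., equal to $\spec\gamma$, so $H_{\beta}=H_{\gamma}$. Hence condition \eqref{eq:nga} reads exactly as $h^{\perp}\cap H_{\beta}=\{\id\}$, which is the hypothesis of Theorem \ref{thm:6.000} applied to $(bG,\tau_h)$ and $\beta$.

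Next I would transfer the boundedness of the partial sums. For $z=jx$ we have $h^{k}z=j(g^{k}x)$, and therefore
\[
\hat s_n(jx)=\sum_{k=0}^{n}\beta(h^{k}jx)=\sum_{k=0}^{n}\gamma(g^{k}x)=s_n(x).
\]
Since $j(G)$ is dense in $bG$ and each $\hat s_n$ is continuous, $\sup_n\|\hat s_n\|_{C(bG)}=\sup_n\|s_n\|_{CB(G)}$. Thus uniform boundedness of $s_n$ on $G$ is equivalent to uniform boundedness of $\hat s_n$ on $bG$, and by Theorem \ref{thm:6.000} the c.e.\ on $bG$ has a solution $\theta\in C(bG)$; for real $\gamma$ one may take $\theta(z)=\sup_n[-\hat s_n(z)]$. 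Pulling back, $\varphi(x)=\theta(jx)\in AP(G)$ solves \eqref{eq:gamma}, and the explicit form becomes
\[
\varphi(x)=\sup_n[-\hat s_n(jx)]=\sup_n[-s_n(x)],
\]
which is \eqref{eq:apfsol}. The necessity direction is immediate from iterating \eqref{eq:gamma}: any $\varphi\in AP(G)\subset CB(G)$ is bounded, and $s_n(x)=\varphi(g^{n+1}x)-\varphi(x)$ gives $\sup_n\|s_n\|\leq 2\|\varphi\|$.

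The only nontrivial point is the bookkeeping around the Bohr compactification: verifying that $h^{\perp}$ and $H_{\beta}$ in $(bG)'$ really correspond to $g^{\perp}$ and $H_{\gamma}$ in $G'$, and that the definition of spectrum for almost periodic functions was set up precisely so that this correspondence holds. Once this is in place, Theorem \ref{thm:6.000} does all the work and the sup-formula is preserved automatically by composition with $j$.
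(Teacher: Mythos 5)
Your proposal is correct and follows essentially the same route as the paper, which proves the theorem by identifying $G'$ with $(bG)^*$ and applying Theorem \ref{thm:6.000} to the corresponding c.e. in $C(bG)$; you have simply made explicit the bookkeeping (correspondence of $g^{\perp}$ with $h^{\perp}$, of $H_{\gamma}$ with $H_{\beta}$, and the transfer of the partial sums and the sup-formula via the dense image of $j$) that the paper leaves implicit.
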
 
\begin{proof}
One can identify $G'$ with $(bG)^*$. Thus, Theorem \ref{thm:6.000} is applicable 
to that c.e. in $C(bG)$ which corresponds to \eqref{eq:gamma}. 
\end{proof}
\begin{remark}
\label{rem:conap}
According to Remark \ref{rem:con} the a.p. solution to \eqref{eq:gamma} is
determined up to an arbitrary summand $\omega\in\overline{\Span g^{\perp}}$. 
The latter means that $\spec\omega\subset g^{\perp}$.
\qed
\end{remark}

Let us apply the Theorem \ref{thm:apso} to the c.e. 
\begin{equation}
\label{eq:gmar}
\varphi(x+\alpha)-\varphi(x)=\gamma(x), \eqsp x\in\R,\eqsp \alpha\in\R\setminus\{0\},
\end{equation}
in $AP(\R)$. Note that in this case the TNC \eqref{eq:8} should be changed to 
 \begin{equation}
 \label{eq:tnchm1}
\lim_{a\rightarrow\infty}\frac{1}{2a}\int_{-a}^a \gamma(x)\dmes x = 0    
 \end{equation}
according to \eqref{eq:focor}.
\begin{cor}
\label{thm:aprs}
Let $\gamma\in AP(\R)$, and let the frequence spectrum of $\gamma$ be $\Lambda$. 
If $2\pi/\alpha$ does not belong to the rational linear span of $\Lambda$ then 
the c.e. \eqref{eq:gmar} has an a.p. solution if and only if the sums  
\begin{equation*}
s_n(x)=\sum_{k=0}^n \gamma(x+k\alpha)
\end{equation*}
are uniformly bounded. Under this condition and with real $\gamma$ 
an a.p. solution is 
\begin{equation}
\label{eq:apfsolr}
\varphi(x)=\sup_n[- s_n(x)]. 
\end{equation}
\end{cor}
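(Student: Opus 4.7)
The plan is to deduce Corollary \ref{thm:aprs} directly from Theorem \ref{thm:apso} by specializing $G=\R$ and $g=\alpha$, with the main work consisting of translating the abstract condition \eqref{eq:nga} into the concrete ``rational linear span'' condition on $\Lambda$.

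First I would identify the character group $G'$ of $\R$ with $\R$ itself via $\lambda\mapsto\chi_\lambda$, where $\chi_\lambda(x)=e^{i\lambda x}$; under this identification the spectrum and the frequency spectrum of an a.p.f.\ agree (this is essentially how the frequency spectrum was defined via \eqref{eq:focor}). In particular, the subgroup $H_\gamma\subset G'$ generated by $\spec\gamma$ corresponds to the additive subgroup $\langle\Lambda\rangle_\Z\subset\R$ of all finite integer linear combinations of elements of $\Lambda$. Next, I would compute
\[
g^\perp=\{\chi_\lambda:\chi_\lambda(\alpha)=1\}=\{\chi_\lambda:\lambda\alpha\in 2\pi\Z\},
\]
which under the identification corresponds to the cyclic subgroup $(2\pi/\alpha)\Z\subset\R$.

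The key step is then to verify that the hypothesis ``$2\pi/\alpha\notin\mathrm{Span}_\Q\Lambda$'' is equivalent to $g^\perp\cap H_\gamma=\{\id\}$, i.e.\ to
\[
(2\pi/\alpha)\Z\cap\langle\Lambda\rangle_\Z=\{0\}.
\]
If $2\pi/\alpha=\sum q_i\lambda_i$ with $q_i\in\Q$, then clearing denominators produces a nonzero integer multiple of $2\pi/\alpha$ lying in $\langle\Lambda\rangle_\Z$; conversely, if $k(2\pi/\alpha)\in\langle\Lambda\rangle_\Z$ for some $k\neq 0$, dividing by $k$ expresses $2\pi/\alpha$ as a rational linear combination of elements of $\Lambda$. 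This equivalence is the heart of the proof, and is a routine but slightly delicate bookkeeping step; I would write it as the only real substantive verification.

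Once this equivalence is in hand, Theorem \ref{thm:apso} applies verbatim to the c.e.\ \eqref{eq:gmar}: it asserts that an a.p.\ solution exists if and only if the partial sums $s_n(x)=\sum_{k=0}^n\gamma(x+k\alpha)$ are uniformly bounded, and that in the real case formula \eqref{eq:apfsol} gives an explicit solution, which is exactly \eqref{eq:apfsolr}. The TNC \eqref{eq:tnchm1} is, as noted, the specialization of the vanishing of $c_{\id}[\gamma]$ to the frequency picture via \eqref{eq:focor}, and so requires no separate argument. The only conceivable obstacle is in the correct handling of the spectrum/frequency spectrum correspondence under the Bohr compactification, but once $G'$ is identified with $\R$ additively this is transparent.
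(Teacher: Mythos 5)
Your proposal is correct and follows the paper's own route: identify $G'$ with $\R$ via $\lambda\mapsto\chi_\lambda$, compute $\alpha^{\perp}$ as the cyclic group generated by $2\pi/\alpha$ and $H_\gamma$ as the integer span of $\Lambda$, check that the rational-span hypothesis is equivalent to \eqref{eq:nga}, and invoke Theorem \ref{thm:apso}. The only difference is that you spell out the clearing-of-denominators equivalence that the paper dismisses with ``just means,'' which is a harmless (and arguably welcome) elaboration.
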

\begin{proof}
We have 
\[ 
\alpha^{\perp} = \{\chi_{\lambda}: e^{i\lambda\alpha} = 1\} = 
\{e^{i\lambda x}: \lambda = 2\pi n/\alpha, n\in\Z\},
\] 
while $H_{\gamma} = \{\chi_{\lambda}: \lambda\in\Delta_{\Lambda}\}$  
where $\Delta_{\Lambda}$ is the subgroup of $\R$ generated by the subset $\Lambda$.
The assumption in Corollary \ref{thm:aprs} just means that 
$\alpha^{\perp}\cap H_{\gamma} = \{\id\}$. 
\end{proof}
Under conditions of Corollary \ref{thm:aprs} the a.p. solution is determined up to an 
arbitrary continuous $\alpha$-periodic summand. Indeed, an a.p.f. $\omega$ 
with $\spec\omega\subset\alpha^{\perp}$ is $\alpha$-periodic, and vice versa. 

\section { An almost periodic counterpart of GHT}
\label {sec:also}

In the GHT the minimality condition is essential, see Example \ref{ex:last}. 
However, in an almost periodic context this condition does not appear. 
Let us start this topic with some known general definitions, c.f. \cite{lyubichbook88}.

Let $T$ be a bounded linear operator in a Banach space $B$. A vector $v\in B$ 
is called {\em almost periodic} ({\em a.p.}) if its orbit $(T^nv)_{n\geq 0}$ 
is precompact. The set of all a.p. vectors is denoted by $AP(B,T)$. This is a linear 
subspace  of $B$ containing all eigenvectors of $T$ with eigenvalues 
$\lambda$ such that $\abs{\lambda}\leq 1$. 
The operator $T$ is called {\em a.p.} if $AP(B,T)=B$. By the Banach-Steinhaus 
theorem  every a.p. operator is  power bounded, i.e. $\sup_{n\geq 0}\norm{T^n} <\infty$. 
\begin{theorem}
\label{thm:ceapf}
With $h\in B$ the equation 
\begin{equation}
\label{eq:opeq}
Tf-f =h 
\end{equation}
has an a.p. solution $f$ if and only if the set of sums 
\[
s_n = \sum_{k=0}^nT^kh , \eqsp n\geq 0,
\]
is precompact. Under this condition all solutions are a.p.
\end{theorem}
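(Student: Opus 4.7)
The plan is to prove both directions, with the necessity being a direct computation and the sufficiency an application of Schauder's Fixed Point Principle (as hinted in the introduction).

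For the necessity, I would first iterate the equation. From $Tf - f = h$ it follows that $T^{k+1}f - T^k f = T^k h$, and summing over $k=0,\ldots,n$ telescopes to
\begin{equation*}
T^{n+1}f - f = \sum_{k=0}^{n} T^k h = s_n.
\end{equation*}
So $s_n = T^{n+1}f - f$ is a translate of a subset of the orbit $(T^n f)_{n\geq 0}$. If $f$ is almost periodic, this orbit is precompact, hence so is $\{s_n\}$.

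For the sufficiency I would use the affine self-map
\begin{equation*}
\Phi(g) = Tg - h,
\end{equation*}
noting that any fixed point of $\Phi$ is exactly a solution to \eqref{eq:opeq}. The key computation is
\begin{equation*}
\Phi(-s_n) = -Ts_n - h = -\sum_{k=1}^{n+1} T^k h - h = -s_{n+1},
\end{equation*}
so $\Phi$ permutes the set $\{-s_n : n\geq 0\}$ within itself. Now let $K$ be the closed convex hull of $\{-s_n : n\geq 0\}$. Since $\{s_n\}$ is precompact by assumption, Mazur's theorem ensures that $K$ is a compact convex subset of $B$. The map $\Phi$ is continuous and affine, so $\Phi(K)\subset K$, and Schauder's Fixed Point Principle produces $f\in K$ with $\Phi(f)=f$, i.e.\ $Tf-f=h$.

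It remains to show this $f$, and in fact every solution, lies in $AP(B,T)$. Iterating $\Phi^n(f)=f$ gives $T^n f = f + s_{n-1}$ for all $n\geq 1$, so the orbit of $f$ is a translate of the precompact set $\{s_n\}$ and is therefore itself precompact; hence $f\in AP(B,T)$. For any other solution $f'$, the difference $w=f'-f$ satisfies $Tw=w$, so its orbit is the singleton $\{w\}$, which is trivially precompact, giving $w\in AP(B,T)$; since $AP(B,T)$ is a linear subspace, $f'=f+w$ is almost periodic as well. The main obstacle is simply spotting the right affine map $\Phi$ and verifying the invariance $\Phi(K)\subset K$; once that is in place the rest is routine.
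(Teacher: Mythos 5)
Your proof is correct and follows essentially the same route as the paper: the telescoping identity $s_n=T^{n+1}f-f$ for necessity and for showing every solution is almost periodic, and Schauder's fixed point theorem applied to the affine map $g\mapsto Tg-h$ on the closed convex hull of $\{-s_n\}$ for sufficiency. The only cosmetic difference is that the paper checks invariance of the closure of $\{-s_n\}$ by an explicit $\varepsilon$-approximation using $\norm{T}$, whereas you deduce invariance of the closed convex hull directly from continuity and affineness; both arguments are equivalent.
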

\begin{proof}
The last sentence is true since the solutions of the corresponding homogeneous equation
are fixed vectors. Now let $f$ be a solution to \eqref{eq:opeq}. Then 
\begin{equation}
\label{eq:su}
s_n = T^{n+1}f - f. 
\end{equation}
Thus, the precompactness of $\{s_n\}$ is equivalent to the almost periodicity of the 
vector $f$. Therefore, it suffices to prove that if $\{s_n\}$ is precompact 
then the equation \eqref{eq:opeq} is solvable. To this end we consider 
the closure $K$ of the set $\{-s_n\}$. By assumption, $K$ is compact. 
Furthermore, $K$ is invariant for the affine mapping $V:B\rightarrow B$ defined as 
$Vf = Tf-h$. Indeed, let $f\in K$, 
and let for a given $\varepsilon >0$ a number $n$ is such that 
$\norm{f+ s_n}<\varepsilon/\norm{T}$. Then $\norm{Tf+ Ts_n}<\varepsilon$. 
However, 
\[
Ts_n = s_{n+1} - h. 
\] 
Therefore, $\norm{Tf-h + s_{n+1}}<\varepsilon$, i.e. 
$\norm{Vf+ s_{n+1}}<\varepsilon$. As a result, $Vf\in K$.   

Now we consider the closure $Q$ of the convex hull of $K$. This is a convex 
compact $V$-invariant set. By the Shauder fixed point theorem there exists 
$f\in Q$ such that $Vf = f$. This $f$ is a solution to \eqref{eq:opeq} 
by definition of $V$. 
\end{proof}
\begin{remark}
\label{rem:aeq}
{\em The precompactness of the set $\{s_n\}$ implies that $h$ is a.p.}. Indeed,
\[
T^nh= s_n-s_{n-1},\eqsp n\geq 1. 
\]
\qed
\end{remark}
\begin{cor}
\label{thm:6.0n}
Let $(X,F)$ be a metric compact dynamical system. Then with $\gamma\in C(X)$  
the c.e.~\eqref{eq:1} has a solution $\varphi\in AP(C(X),T_F)$ if and only  
if the subset  $\{s_n\}\subset C(X)$ is uniformly bounded and equicontinuous. 
In this case all continuous solutions are a.p.. 
\end{cor}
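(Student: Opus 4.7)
The plan is to realize this as a direct specialization of Theorem \ref{thm:ceapf}. I would take $B=C(X)$ endowed with the sup-norm (a Banach space because $X$ is compact), and $T=T_F$ the Koopman operator. Since $F$ is continuous and $X$ is compact, $T_F$ maps $C(X)$ into itself and is a contraction ($\norm{T_F}=1$), so it is certainly a bounded linear operator. With $h=\gamma$ and $f=\varphi$, the c.e.~\eqref{eq:1} in its operator form \eqref{eq:11} becomes exactly \eqref{eq:opeq}, and the partial sums $\sum_{k=0}^n T_F^k\gamma$ match the functions $s_n$ defined in \eqref{eq:2'}.

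Next I would invoke Theorem \ref{thm:ceapf}: the c.e. has a solution $\varphi\in AP(C(X),T_F)$ if and only if the set $\{s_n\}$ is precompact in $C(X)$, and under this condition \emph{every} solution of \eqref{eq:opeq} is almost periodic. The last clause yields directly the final sentence of the corollary (any two continuous solutions differ by a continuous $T_F$-invariant function, so if one is a.p. then so are all).

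The only remaining point is to translate ``precompactness in $C(X)$'' into the stated topological conditions. Since $X$ is a compact metric space, the Arzel\`a--Ascoli theorem applies: a subset of $C(X)$ is relatively compact in the sup-norm if and only if it is uniformly bounded and equicontinuous. Applying this to $\{s_n\}$ completes the equivalence.

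There is essentially no obstacle here beyond checking that the hypotheses of Theorem \ref{thm:ceapf} are met and quoting Arzel\`a--Ascoli; the corollary is a clean specialization of the operator-theoretic result to the metric compact setting. The only mild point to notice is that the homogeneous solutions $\psi\in\ker(T_F-I)$ are continuous invariant functions, hence fixed vectors of $T_F$, hence automatically a.p., which is what makes the statement ``all continuous solutions are a.p.''\ consistent with merely assuming the existence of one a.p.\ solution.
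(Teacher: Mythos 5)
Your proposal is correct and follows exactly the paper's route: the paper also derives this as an immediate specialization of Theorem \ref{thm:ceapf} to $B=C(X)$, $T=T_F$, with the Arzel\`a--Ascoli theorem converting precompactness of $\{s_n\}$ into uniform boundedness plus equicontinuity. Your additional remarks (that $T_F$ is a bounded contraction on $C(X)$ and that the homogeneous solutions are fixed vectors, hence a.p.) are accurate fillings-in of details the paper leaves implicit.
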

\begin{proof}
By the Arzela-Askoli theorem the subset $\{s_n\}$ is precompact in $C(X)$ if and 
only if it is bounded and equicontinuous. 
\end{proof}

Now let $X$ be any metric space with a distance $d$, and let 
$F$ be a mapping $X\rightarrow X$.
The dynamical system $(X,F)$ is called {\em uniformly stable} 
if for every $\varepsilon>0$ there exists $\delta>0$ such that
\begin{equation}
\label{eq:7.3a}
  d(x,y)<\delta \Longrightarrow d(F^nx,F^ny)<\varepsilon 
\end{equation}
for all $x,y\in X$, $n\in\N$. For $n=1$ this means that $F$ is uniformly  continuous.  

In fact, the uniform stability is topologically equivalent to a more elementary property. 
Namely, the dynamical system $(X,F)$ is called {\em dissipative} if 
\begin{equation}
\label{eq:7.3d}
  d(Fx,Fy)\leq d(x,y) \eqsp (x,y\in X),
\end{equation}
i.e. if $F$ is a contraction.
Obviously, every dissipative system is uniformly stable. In the converse direction we have 
\begin{prop}
\label{lem:7.3}
If a dynamical system $(X,F)$ is uniformly stable then it 
is dissipative with respect to a distance topologically equivalent to the original one.
\end{prop}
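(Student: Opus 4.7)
The plan is to exhibit an explicit equivalent metric with respect to which $F$ becomes a contraction, namely
\[
\rho(x,y)=\sup_{n\geq 0} \tilde d(F^n x,F^n y),
\]
where $\tilde d(x,y)=\min\{d(x,y),1\}$ is the standard bounded truncation of $d$. I would first note that $\tilde d$ is a metric topologically equivalent to $d$, and that the uniform stability property \eqref{eq:7.3a} passes verbatim from $d$ to $\tilde d$ (indeed, for $\varepsilon<1$ the condition is literally identical). Working with $\tilde d$ ensures $\rho$ is real-valued and in fact bounded by $1$, which removes the one potential technical nuisance of the construction.

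Next I would verify that $\rho$ is a genuine metric. Symmetry and the triangle inequality follow because $\rho$ is a supremum of metrics; non-degeneracy follows from the $n=0$ term, since $\rho(x,y)\geq \tilde d(x,y)$. The dissipativity is a one-line computation:
\[
\rho(Fx,Fy)=\sup_{n\geq 0}\tilde d(F^{n+1}x,F^{n+1}y)=\sup_{n\geq 1}\tilde d(F^n x,F^n y)\leq\rho(x,y),
\]
so $F$ is a $\rho$-contraction, establishing condition \eqref{eq:7.3d} for $\rho$.

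Finally, for the topological equivalence of $\rho$ with the original $d$, one direction is immediate: $\tilde d(x,y)\leq\rho(x,y)$, so the $\rho$-topology is at least as fine as the $\tilde d$-topology (hence as the $d$-topology). The other direction is where uniform stability does the work. Given $\varepsilon>0$ (with $\varepsilon<1$), pick $\delta>0$ as in \eqref{eq:7.3a}. If $d(x,y)<\delta$, then $\tilde d(F^n x,F^n y)=d(F^n x,F^n y)<\varepsilon$ for every $n\geq 0$, so $\rho(x,y)\leq\varepsilon$. This shows that $d$-convergence implies $\rho$-convergence, completing the equivalence. Thus $\rho$ is the required metric.

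The only real obstacle is the finiteness/boundedness issue: without truncation, the sup defining $\rho$ could equal $+\infty$ on points whose forward orbits drift apart, and then $\rho$ fails to be a metric. The truncation to $\tilde d$ bypasses this cleanly, and costs nothing since uniform stability is an $\varepsilon$-small statement where truncation is invisible. Everything else is essentially bookkeeping with suprema.
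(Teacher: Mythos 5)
Your proof is correct and follows essentially the same route as the paper: the paper sets $\tilde d(x,y)=\sup_{n\geq 0} d(F^nx,F^ny)/(1+d(F^nx,F^ny))$, i.e.\ it likewise takes the supremum over the forward orbit of a bounded metric equivalent to $d$, using the truncation $t\mapsto t/(1+t)$ where you use $t\mapsto\min\{t,1\}$. The two truncations play identical roles, and both arguments establish dissipativity and the two-sided topological equivalence in the same way (the nontrivial direction via \eqref{eq:7.3a}).
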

\begin{proof}
Let us introduce the distance   
  \begin{equation*}
    \tilde d(x,y) = \sup_{n\geq 0}\frac{d(F^n x,F^n y)}{1+d(F^nx,F^ny)}
  \end{equation*}
instead of the original distance $d$. Obviously, $\tilde d(Fx,Fy)\leq \tilde d(x,y)$. Since
  \begin{equation*}
    \tilde d(x,y)\geq \frac{d(x,y)}{1+d(x,y)},
  \end{equation*}
  the $\tilde d$-topology on $X$ is stronger that the $d$-topology. On
  the other hand, by~\eqref{eq:7.3a}
  \begin{equation*}
    d(x,y) < \delta \Rightarrow \tilde d(x,y)<\varepsilon, 
  \end{equation*}
  i.e. the $d$-topology is stronger than the $\tilde d$-topology.
\end{proof}
An important example of a dissipative system $(X,F)$ is a contraction  
$F:X\rightarrow X$ of a convex set $X$ in a normed space: 
\begin{equation}
\label{eq:lip}
\norm{Fx-Fy}\leq\norm{x-y} \eqsp (x,y\in X).   
\end{equation}
For \eqref{eq:lip} it is sufficient for  
$F$ to be Gato differentiable with $\norm{F'(x)}\leq 1$, $x\in X$. 
\begin{lemma}
\label{lem:ec}
The operator $T_F$ in $C(X)$ associated with an uniformly stable compact 
dynamical system $(X,F)$ is a.p.. 
\end{lemma}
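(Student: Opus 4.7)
The plan is to apply the Arzelà--Ascoli theorem to the orbit $\{T_F^n\varphi : n\geq 0\} = \{\varphi\circ F^n:n\geq 0\}\subset C(X)$ for an arbitrary fixed $\varphi\in C(X)$, and show that this set is precompact. Since $X$ is compact, this reduces to verifying uniform boundedness and equicontinuity of this family.

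Uniform boundedness is immediate: $\|\varphi\circ F^n\|\leq\|\varphi\|$ for every $n$, because $F^n$ maps $X$ into itself.

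For equicontinuity, I would proceed in two steps. First, since $\varphi\in C(X)$ and $X$ is a compact metric space, $\varphi$ is uniformly continuous; so for any $\varepsilon>0$ there is $\eta>0$ such that $d(u,v)<\eta$ implies $|\varphi(u)-\varphi(v)|<\varepsilon$. Second, by the uniform stability of $(X,F)$ applied with this $\eta$, there exists $\delta>0$ such that $d(x,y)<\delta$ forces $d(F^nx,F^ny)<\eta$ for all $n\geq 0$. Composing the two, $d(x,y)<\delta$ implies $|(\varphi\circ F^n)(x)-(\varphi\circ F^n)(y)|<\varepsilon$ uniformly in $n$, which is exactly the equicontinuity condition.

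I do not anticipate any real obstacle here: the whole point of the definition \eqref{eq:7.3a} is tailored to make this argument work, and the compactness of $X$ gives the uniform continuity of $\varphi$ for free. The only mild subtlety is that uniform stability is a priori formulated on the original metric $d$, not on the possibly reparametrized $\tilde d$ from Proposition~\ref{lem:7.3}; but since equicontinuity is a topological notion and the two metrics generate the same topology, this causes no issue. Once uniform boundedness and equicontinuity are in hand, Arzelà--Ascoli gives precompactness of the orbit, hence $\varphi\in AP(C(X),T_F)$, and since $\varphi$ was arbitrary, $T_F$ is almost periodic.
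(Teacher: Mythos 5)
Your proof is correct and follows essentially the same route as the paper: both establish precompactness of the orbit $\{T_F^n\varphi\}$ via Arzel\`a--Ascoli, getting uniform boundedness from $\norm{T_F^n\varphi}\leq\norm{\varphi}$ and equicontinuity by composing the uniform continuity of $\varphi$ (from compactness of $X$) with the uniform stability condition \eqref{eq:7.3a}. Your remark about the metric $\tilde d$ is a harmless extra; the paper works directly with the original $d$.
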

\begin{proof}
It suffices to show that for every $f\in C(X)$
the set $\{T_F^nf\}$ is bounded and equicontinuous. The boundedness is 
obvious since $T_F$ is a contraction. On the other hand, the function $f$ is 
uniformly continuous since $X$ is compact. Let $\eta>0$, and let 
$\varepsilon>0$ be such that 
\[
d(x,y)<\varepsilon\Rightarrow\abs{f(x)-f(y)}<\eta
\] 
for all $x,y\in X$. A fortiori, 
\begin{equation}
\label{eq:lipl}
d(F^nx,F^ny)<\varepsilon\Rightarrow\abs{f(F^nx)-f(F^ny)}<\eta
\end{equation} 
for all $x,y\in X$ and $n\in\N$. Combining \eqref{eq:lipl} and \eqref{eq:7.3a} 
we get what we need. 
\end{proof}
In the following counterpart of GHT the minimality is not assumed.
\begin{theorem}
\label{thm:6.n}
Let $(X,F)$ be an uniformly stable compact dynamical system.
The c.e.~\eqref{eq:1} is solvable in $C(X)$ if and only if 
$\gamma\in C(X)$ and the subset $\{s_n\}\subset C(X)$ is bounded and equicontinuous. 
\end{theorem}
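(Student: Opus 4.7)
The plan is to reduce Theorem \ref{thm:6.n} to the combination of Corollary \ref{thm:6.0n} (which already addresses arbitrary compact metric dynamical systems but only produces almost periodic solutions) and Lemma \ref{lem:ec} (which says that under uniform stability the Koopman operator $T_F$ acts almost periodically on the whole of $C(X)$). Once these two facts are put side by side, the equality $AP(C(X),T_F)=C(X)$ collapses the distinction between ``solution in $C(X)$'' and ``solution in $AP(C(X),T_F)$'' that is visible in Corollary~\ref{thm:6.0n}, and the theorem follows.

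For the ``if'' direction, I would assume that $\gamma\in C(X)$ and that $\{s_n\}\subset C(X)$ is uniformly bounded and equicontinuous. By the Arzela--Ascoli theorem this set is precompact in $C(X)$, so Corollary~\ref{thm:6.0n} (equivalently, Theorem~\ref{thm:ceapf} applied to $T=T_F$ and $h=\gamma$) produces a solution $\varphi\in AP(C(X),T_F)\subset C(X)$. No use of the uniform stability is actually needed here; it only enters the converse.

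For the ``only if'' direction, suppose a solution $\varphi\in C(X)$ exists. Then $\gamma=T_F\varphi-\varphi\in C(X)$ automatically. The issue is to show that the partial sums $s_n$ form a bounded equicontinuous family. By Lemma~\ref{lem:ec} the uniform stability of $(X,F)$ together with compactness forces $T_F$ to be an almost periodic operator on $C(X)$, hence $\varphi\in AP(C(X),T_F)$. Now Corollary~\ref{thm:6.0n} applied in its ``only if'' form yields that $\{s_n\}$ is bounded and equicontinuous, as desired. Alternatively, one can argue directly: the identity $s_n=T_F^{n+1}\varphi-\varphi$ together with the precompactness of the orbit $\{T_F^n\varphi\}$ (which is exactly the content of almost periodicity of $\varphi$) and Arzela--Ascoli produces the same conclusion.

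There is no genuine obstacle; the only subtlety worth flagging is the role of uniform stability, which is used \emph{solely} in the necessity direction to guarantee that every continuous solution is automatically almost periodic with respect to $T_F$. Without this input Corollary~\ref{thm:6.0n} would still give sufficiency but would not describe solvability in the full space $C(X)$. In this sense Lemma~\ref{lem:ec} is the decisive bridge that makes the minimality hypothesis of the Gottschalk--Hedlund theorem unnecessary in the uniformly stable setting.
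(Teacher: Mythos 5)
Your proposal is correct and follows essentially the same route as the paper: the ``if'' part is exactly Corollary \ref{thm:6.0n} (with the same observation that uniform stability is not used there), and the ``only if'' part invokes Lemma \ref{lem:ec} to get precompactness of the orbit $\{T_F^n\varphi\}$ and then the identity $s_n=T_F^{n+1}\varphi-\varphi$ together with Arzela--Ascoli --- which is precisely the paper's argument, and which you give as your ``alternative'' direct version. The only cosmetic difference is that your primary phrasing routes the necessity through the ``only if'' clause of Corollary \ref{thm:6.0n} via the equality $AP(C(X),T_F)=C(X)$, whereas the paper applies formula \eqref{eq:2} directly; these are the same computation.
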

\begin{proof}
``If'' directly follows from Corollary \ref{thm:6.0n}. (The uniform stability
is not needed in this part.) 

``Only if''. Note that for any $\varphi\in C(X)$ the subset 
$\{T_F^n\varphi\}\subset C(X)$ is precompact
by Lemma \ref{lem:ec}. If, in addition, $\varphi$ satisfies \eqref{eq:1} 
then  formula \eqref{eq:2} shows that $\{s_n\}$ is precompact in $C(X)$. 
Hence, this subset is bounded and equicontinuous. 
\end{proof}

If $X$ is a convex compact in a Banach space 
then every continuous self-mapping of $X$ is not minimal 
because it has a fixed point. The GHT is not 
applicable to this case, while Theorem  \ref{thm:6.n} works immediately. 
\begin{theorem}
\label{cor:lipeq}
Let $X$ be a convex compact in a Banach space, and let $F:X\rightarrow X$ 
be a contraction. The c.e.~\eqref{eq:1} has a solution 
$\varphi\in C(X)$ if and only if $\gamma\in C(X)$ and  
the subset $\{s_n\}\subset C(X)$ is bounded and equicontinuous.
\end{theorem}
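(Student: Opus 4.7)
The plan is to obtain Theorem~\ref{cor:lipeq} as a direct corollary of Theorem~\ref{thm:6.n}, since the hypotheses of the latter are automatically met. First I would observe that $X$, being a convex compact subset of a Banach space, is a compact metric space with distance $d(x,y)=\norm{x-y}$. Next, since $F:X\rightarrow X$ is assumed to satisfy \eqref{eq:lip}, namely $\norm{Fx-Fy}\leq\norm{x-y}$ for all $x,y\in X$, the dynamical system $(X,F)$ satisfies \eqref{eq:7.3d} verbatim, so $(X,F)$ is dissipative.

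Then I would invoke the remark (made just after the definition of uniform stability) that every dissipative system is uniformly stable: one can take $\delta=\varepsilon$ in \eqref{eq:7.3a}, because iterating $d(Fx,Fy)\leq d(x,y)$ gives $d(F^nx,F^ny)\leq d(x,y)$ for all $n\in\N$. Therefore $(X,F)$ is an uniformly stable compact dynamical system, and Theorem~\ref{thm:6.n} applies.

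Finally, I would simply quote Theorem~\ref{thm:6.n} to conclude that \eqref{eq:1} has a solution $\varphi\in C(X)$ if and only if $\gamma\in C(X)$ and the subset $\{s_n\}\subset C(X)$ is bounded and equicontinuous, which is exactly the statement of Theorem~\ref{cor:lipeq}. There is no real obstacle here; the only point worth emphasizing in the write-up is that the notion of ``contraction'' used in \eqref{eq:lip} (non-expansive with respect to the ambient norm) is precisely what is needed to invoke dissipativity, and that the convexity of $X$, although not used in reducing to Theorem~\ref{thm:6.n}, is what makes the hypothesis nontrivial (by ensuring, via the Schauder fixed point principle implicit in the background, that such contractions exist abundantly and are never minimal).
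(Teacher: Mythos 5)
Your proof is correct and is essentially the paper's own argument: the paper simply notes that a contraction in the sense of \eqref{eq:lip} makes $(X,F)$ dissipative, hence uniformly stable, so Theorem~\ref{thm:6.n} ``works immediately.'' Your iteration $d(F^nx,F^ny)\leq d(x,y)$ and your closing remark about the role of convexity (guaranteeing a fixed point, hence non-minimality, which is why the GHT is unavailable here) match the paper's surrounding discussion.
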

In the following example all conditions of Theorem \ref{cor:lipeq} 
except for the equicontinuity are fulfilled. Accordingly, there are no continuous 
solutions in this example. 
\begin{example}
\label{ex:ubwec}
Let $X=[0,\sqrt{2/3}]$, and let $Fx =x-x^3$, $x\in X$. This mapping $X\rightarrow X$ 
is a contraction since $\abs{F'(x)}\leq 1$, $x\in X$. Note that $Fx\neq 0$ for $x\neq 0$ 
but $F(0)=0$. We consider the c.e. \eqref{eq:1} in $C(X)$ with  
\begin{equation}
\label{eq:sin}
\gamma(x)=\sin\frac{1}{Fx}-\sin\frac{1}{x}\eqsp (x\neq 0),\eqsp \gamma(0)=0. 
\end{equation}
This function is indeed continuous since we have   
\[
\abs{\gamma(x)}\leq\abs{\sin\frac{1}{Fx}-\sin\frac{1}{x}}\leq
\abs{\frac{1}{Fx}-\frac{1}{x}} = \frac{x}{1-x^2}\rightarrow 0
\]
as $x\rightarrow 0$. Also, from \eqref{eq:sin} it follows that   
\[ 
s_n(x) = \sin\frac{1}{F^{n+1}x}-\sin\frac{1}{x}\eqsp (x\neq 0),\eqsp s_n(0)=0,
\]
hence, $\abs{s_n(x)}\leq 2$ for all $x\in X$. However, the sums $s_n(x)$ are not 
equicontinuous. Indeed, suppose to the contrary. Then there is $\delta >0$ such that 
\begin{equation*}
\abs{\sin\frac{1}{F^{n+1}x}-\sin\frac{1}{x}}=\abs{s_n(x)-s_n(0)}<\frac{1}{2},
\eqsp 0<x<\delta,\eqsp n\geq 0, 
\end{equation*} 
whence
\begin{equation}
\label{eq:sin0}
\abs{\sin\frac{1}{F^{n}x_0}}<\frac{1}{2}, \eqsp n\geq 1,  
\end{equation}
where $x_0 = 1/m\pi$ with an integer $m>1/\pi\delta$. 
The inequality \eqref{eq:sin0} yields 
\[
\dist(z_n,\pi\Z)<\pi/6 
\] 
where $z_n = 1/x_n$, $x_n= F^nx_0$. This is a contradiction. 
Indeed, $z_n$ monotonically tends to infinity, while 
\begin{equation*}
z_{n+1} - z_n = \frac{1}{x_{n+1}}- \frac{1}{x_n} = 
\frac{1}{Fx_n}- \frac{1}{x_n} = \frac{x_n}{1-x_n^2}\rightarrow 0. 
\end{equation*}
Hence, the sequence of fractional parts of $z_n/\pi$ is dense in [0/1].
\qed
\end{example}
\begin{remark}
\label{rem:gap}
In Example \ref{ex:ubwec} the function $\varphi(x)$ which equals $\sin(1/x)+1$ for $x\neq 0$ 
and $\varphi(0)=0$ is a bounded discontinuous solution such that $\varphi(x)=\sup_n[-s_n(x)]$ 
for all $x\in X$.

\qed
\end{remark}
If on a compact metric space $X$ a dynamical system $(X,F)$ is uniformly stable 
and minimal then for every $\gamma\in C(X)$ the uniform boundedness 
of the sums $s_n(x)$ implies their equicontinuity. This immediately follows from  
the GHT combined with Corollary \ref{thm:6.0n}. In particular, 
this is true for the sums 
\[
s_n(x)=\sum_{k=0}^nh(x+k\alpha), \eqsp 0\leq x<1,
\]
where $h$ is a continuous 1-periodic function and $\alpha$ is irrational.
For rational $\alpha$ this is trivial since in this case if the set
$\{s_n\}$ is bounded then it is finite, see Remark \ref{rem:3.3}. Let us emphasize
that the rational rotation of $\T$ is not minimal because of its periodicity. 
 
A dynamical system $(X,F)$ on a metric space $X$ with a distance $d$ 
is called \defin{conservative} if 
\begin{equation}
\label{eq:con}
d(Fx,Fy) = d(x,y) \eqsp (x,y\in X), 
\end{equation}
i.e. if $F$ is isometric.
For example, such is any rotation of $\T$.
\begin{theorem}
\label{thm:7.4}
  Any compact topologically transitive conservative dynamical system 
  $(X,F)$ is homeomorphic to $(G,\tau_g)$, where $G$ is a monothetic compact 
  group and $g$ is its generator.
\end{theorem}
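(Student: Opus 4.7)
The plan is to realize $X$ as a compact monothetic group on which $F$ becomes a group shift. Fix $x_0 \in X$ with dense $F$-orbit, and take
\[
G = \overline{\{F^n : n \in \Z\}} \subset C(X,X)
\]
in the uniform topology. Since each $F^n$ is a $1$-isometry, the family $\{F^n\}$ is uniformly equicontinuous, so Arzela--Ascoli makes $G$ compact. A short compactness argument shows $F$ (hence every $F^n$) is a bijection: given $y \in X$, pick $n_k \to \infty$ with $F^{n_k}x_0 \to y$ and extract a convergent subsequence $F^{n_k-1}x_0 \to z$, whence $F(z)=y$ by continuity. The same extraction trick --- solve $F^{n_k}(x_k)=y$ and pass to a convergent subsequence of $x_k$ --- shows every $h \in G$ is an isometric bijection of $X$.

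Next I would verify that $G$ is a compact abelian topological group under composition. Composition is jointly continuous on the uniformly equicontinuous family $G$, so $G$ is closed under composition. For inversion, given $h = \lim F^{n_k}$ in $G$, use compactness of $G$ to extract a subsequence with $F^{-n_k} \to h' \in G$; then passage to the limit in $F^{n_k} \circ F^{-n_k} = \id$ yields $h \circ h' = \id$, so $h^{-1} = h' \in G$. Commutativity is inherited from the dense subgroup $\{F^n\}$. Since $\overline{\langle F \rangle} = G$, Lemma~\ref{lem:gs} gives also $\overline{[F]} = G$, so $G$ is monothetic with generator $g := F$.

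Finally, define $\Phi : G \to X$ by $\Phi(h) = h(x_0)$. Evaluation at $x_0$ is $1$-Lipschitz in the uniform metric, so $\Phi$ is continuous; its image is compact hence closed, and contains the dense orbit $\{F^n x_0 : n \geq 0\}$, so $\Phi$ is surjective. For injectivity, suppose $h_1(x_0) = h_2(x_0)$: then $h := h_1^{-1}h_2 \in G$ fixes $x_0$, and since $h$ is a uniform limit of powers of $F$ it commutes with $F$, so $h(F^n x_0) = F^n(h(x_0)) = F^n x_0$ for all $n \geq 0$, whence $h = \id$ by density and continuity. A continuous bijection from a compact to a Hausdorff space is a homeomorphism, and the intertwining identity $\Phi(g \cdot h) = g(h(x_0)) = F(\Phi(h))$ conjugates $\tau_g$ to $F$. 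The main obstacle is the step of turning $G$ into a group --- in particular surjectivity of each $h \in G$ and closure under inversion --- which relies on exploiting compactness of $X$ together with the isometric property to extract convergent subsequences and pass to limits cleanly; once $G$ is established as a compact topological group, the remaining identification via $\Phi$ is essentially formal.
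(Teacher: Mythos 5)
Your argument is correct in substance, but it is a genuinely different route from the paper's: the paper gives no proof at all for Theorem \ref{thm:7.4}, referring instead to Theorem 6 of \cite{lyubich88}, whose proof goes through the theory of almost periodic representations of topological semigroups. What you construct is the classical enveloping (Ellis) group of an equicontinuous system: the uniform closure $G=\overline{\{F^n\}}$ inside the isometries of $X$, made compact by Arzel\`a--Ascoli, with evaluation at a transitive point giving the conjugacy. This is more elementary and self-contained than the cited machinery, and it meshes well with the rest of the paper (your appeal to Lemma \ref{lem:gs} to pass from $\overline{\langle F\rangle}=G$ to $\overline{[F]}=G$ is exactly what is needed for Proposition \ref{prop:meha} to apply downstream). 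All the key verifications --- joint continuity of composition on an equicontinuous family, surjectivity of uniform limits of the $F^n$ by the ``solve and extract'' trick, commutativity and inversion by passing to limits, injectivity of $\Phi$ via $h_1^{-1}h_2$ fixing $x_0$ and commuting with $F$ --- are sound.

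One step you should tighten: to show $F$ is surjective you ``pick $n_k\to\infty$ with $F^{n_k}x_0\to y$,'' but density of the orbit only guarantees such $n_k$ when $y$ is not an isolated orbit point; if $y=F^mx_0$ is isolated you need a recurrence argument to produce arbitrarily large return times. It is cleaner to prove surjectivity of an isometry of a compact metric space directly: $F(X)$ is closed, and if $d(y,F(X))=\delta>0$ then $d(F^ny,F^my)=d(F^{n-m}y,y)\geq\delta$ for $n>m\geq 0$, giving an infinite $\delta$-separated set in a compact space, a contradiction. This requires no transitivity and removes the only soft spot in your write-up.
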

This is a particular case of Theorem 6 from \cite{lyubich88}. Its proof in 
\cite{lyubich88} is based on the theory of almost periodic 
representations of topological semigroups \cite{mylyubich}, \cite{lyubichbook88}.
We use this result in Section \ref{sec:meso}. However, in this case the system is minimal. 
Below we briefly consider a group situation without the minimality. 

By the Birkhoff - Kakutani theorem (see e.g. \cite{hewittross63}, 
theorem 8.3) on any metric group $G$ there exists a 
topologically equivalent distance $d$ which is invariant in the sense that 
\begin{equation}
\label{eq:mei}
d(gx,gy) = d(x,y),\eqsp (x,y\in X, g\in G).
\end{equation}
With respect to this distance all shifts $\tau_g$ are conservative.  
Theorem \ref{thm:6.n} yields 
\begin{cor}
\label{thm:6.0nn}
Let $G$ be a compact metric group, and let $g\in G$. The c.e. 
\begin{equation}
\label{eq:ceg}
\varphi(gx) - \varphi(x)=\gamma (x), \eqsp x\in G,
\end{equation}
has a solution $\varphi\in C(G)$  if and only if $\gamma\in C(G)$ and 
the subset $\{s_n\}\subset C(G)$ is bounded and equicontinuous.
\end{cor}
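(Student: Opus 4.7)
The plan is to realize Corollary \ref{thm:6.0nn} as a direct specialization of Theorem \ref{thm:6.n}. The only nontrivial ingredient needed is the observation that on any compact metric group the left translation $\tau_g$ is an isometry for a suitably chosen metric, which in particular makes it a contraction and hence makes the system uniformly stable.

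First, I would invoke the Birkhoff--Kakutani theorem (cited in the paragraph preceding the corollary) to replace the given metric on $G$ by a topologically equivalent invariant metric $d$ satisfying \eqref{eq:mei}. Since the new metric induces the same topology, the space $C(G)$ is unchanged, and a sequence $\{s_n\}\subset C(G)$ is bounded and equicontinuous with respect to one metric if and only if it is so with respect to the other (equicontinuity being a purely topological notion on a compact space, via the Arzel\`a--Ascoli characterization of precompactness used in Corollary \ref{thm:6.0n}).

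Next I would observe that with respect to $d$ the shift $\tau_g\colon x\mapsto gx$ satisfies $d(\tau_g x,\tau_g y)=d(gx,gy)=d(x,y)$, so $\tau_g$ is conservative in the sense of \eqref{eq:con}; in particular it is a contraction (with equality), and therefore the dynamical system $(G,\tau_g)$ is dissipative. By Proposition \ref{lem:7.3} (trivially, since dissipative systems are a special case of uniformly stable ones), $(G,\tau_g)$ is a uniformly stable compact dynamical system.

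At this point Theorem \ref{thm:6.n} applies verbatim to the dynamical system $(G,\tau_g)$ and yields both implications: the c.e.~\eqref{eq:ceg} has a solution $\varphi\in C(G)$ if and only if $\gamma\in C(G)$ and the partial sums $\{s_n\}$ form a bounded, equicontinuous subset of $C(G)$. No genuine obstacle arises here; the only subtlety worth pinning down explicitly is the topological invariance of the hypothesis ``bounded and equicontinuous'', which ensures that the passage from the original metric to the Birkhoff--Kakutani metric does not change the statement.
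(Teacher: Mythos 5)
Your proposal is correct and follows exactly the route the paper takes: the corollary is stated immediately after the paper invokes the Birkhoff--Kakutani theorem to replace the metric by an invariant one, observes that all shifts $\tau_g$ are then conservative (hence dissipative, hence uniformly stable), and applies Theorem \ref{thm:6.n}. Your additional remark that boundedness and equicontinuity of $\{s_n\}$ are unaffected by passing to the topologically equivalent metric (via the Arzel\`a--Ascoli characterization of precompactness in $C(G)$) is a worthwhile point that the paper leaves implicit.
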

Here the group $G$ can be noncommutative.

\section{Recurrence and ergodicity. Some applications}
\label{sec:reer} 

In this section $(X,F)$ is a dynamical system with an invariant measure $\mu$. 
We assume that $\mu$ is finite, though in some cases (in particular, in Theorem   
\ref{thm:4.4} without the equality \eqref{eq:4.5p}) the $\sigma$-finiteness is 
sufficient.  

The following is a modern form of the 
Poincar\'{e} Recurrence Theorem (RT). 
\begin{theorem}
  \label{thm:4.1-poincare}
Let $M$ be a measurable subset of $X$, $\mu(M)>0$. Then for a.e. 
$x\in M$ there exists a subsequence $(n_i)_{i=1}^\infty\subset\N$ 
such that all $F^{n_i}x\in M$.
\end{theorem}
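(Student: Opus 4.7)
The approach is to realize the indicator of the ``non-returning'' set as a bounded coboundary and then invoke the TNC \eqref{eq:14}. First I would introduce
\begin{equation*}
A = \{x\in M : F^n x\notin M\text{ for all }n\geq 1\},
\end{equation*}
the subset of $M$ consisting of points whose forward orbit never revisits $M$. The goal is to prove $\mu(A)=0$ and then to bootstrap this to the set $B\subset M$ of points with only finitely many returns.

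The key geometric observation is that the preimages $F^{-n}A$, $n\geq 0$, are pairwise disjoint. Indeed, if $x\in F^{-m}A\cap F^{-n}A$ with $m<n$, then $F^m x\in A\subset M$, so by the defining property of $A$ the orbit of $F^m x$ never returns to $M$; in particular $F^n x = F^{n-m}(F^m x)\notin M$, contradicting $F^n x\in A\subset M$. It follows that
\begin{equation*}
\phi(x) = \sum_{n=0}^{\infty}\chi_A(F^n x) = \sum_{n=0}^{\infty}\chi_{F^{-n}A}(x)
\end{equation*}
takes values in $\{0,1\}$; in particular $\phi$ is measurable and, since $\mu$ is finite, $\phi\in L_1(X,\mu)$. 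The telescoping identity $\phi(Fx)=\phi(x)-\chi_A(x)$ shows that $-\chi_A = T_F\phi-\phi$ is a coboundary in $L_1(X,\mu)$, so the TNC \eqref{eq:14} forces $\int_X\chi_A\dmes\mu = 0$, i.e. $\mu(A)=0$.

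To conclude, let $B$ be the set of $x\in M$ whose forward orbit meets $M$ only finitely often. For each such $x$ the index $l(x) = \max\{n\geq 0 : F^n x\in M\}$ is well defined (the set is finite and contains $0$), and $F^{l(x)}x\in A$ by maximality, so $x\in F^{-l(x)}A$ and hence $B\subset\bigcup_{l\geq 0}F^{-l}A$. By $F$-invariance of $\mu$ we have $\mu(F^{-l}A)=\mu(A)=0$ for every $l$, hence $\mu(B)=0$, which is precisely the recurrence statement. I expect the only delicate point to be the disjointness verification at the heart of the coboundary construction; once that is in place the argument reduces to a single application of TNC, consistent with the paper's promise of a ``cohomological'' proof.
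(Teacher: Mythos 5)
Your proof is correct. The disjointness of the preimages $F^{-n}A$ of the never-returning set is verified properly, the function $\phi=\sum_n\chi_{F^{-n}A}$ is indeed $\{0,1\}$-valued and hence a bounded $L_1$ solution of the c.e. with right-hand side $-\chi_A$, the TNC then gives $\mu(A)=0$, and the passage from $A$ to the set $B$ of finitely-returning points via $B\subset\bigcup_{l\geq 0}F^{-l}A$ is exactly what is needed to get infinitely many returns a.e.

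The route is genuinely different from the paper's. The paper does not prove Theorem \ref{thm:4.1-poincare} directly: it proves the more general Theorem \ref{thm:4.2} (divergence a.e. on $M_{\gamma}^{+}$ of the resolving series of an arbitrary nonnegative measurable $\gamma$) and obtains recurrence as the special case $\gamma=\chi_M$. There the coboundary is the \emph{sum} $s$ of the resolving series on its set of convergence $A$, satisfying $s(x)-s(Fx)=\gamma(x)$; since $s$ need not be integrable, the paper has to truncate, integrating the c.e. over the level sets $A_n=\{s\leq n\}$ and using $\mu(F^{-1}A_n\setminus A_n)=0$ before letting $n\to\infty$. Your argument trades that generality for simplicity: by working with the never-returning set you get a coboundary $\phi$ that is automatically bounded, so a single application of the TNC suffices and no truncation is needed. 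The price is that the disjointness trick is specific to indicator functions, so your proof does not yield Theorem \ref{thm:4.2} for general nonnegative $\gamma$, whereas the paper's level-set argument does. (Your $\mu(A)=0$ step is, of course, the classical Poincar\'e argument $\sum_n\mu(F^{-n}A)\leq\mu(X)<\infty$ recast in the cohomological language the paper advertises, so it fits the spirit of the survey even though it is not the proof given there.)
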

\begin{cor}
  \label{cor:4.1p}
  There is a subsequence $(m_i)_{i=1}^\infty\subset\N$ such that
  $\mu(F^{-m_i}M\cap M)>0$ for all $i$.
\end{cor}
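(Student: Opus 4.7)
The plan is to extract the corollary directly from Theorem~\ref{thm:4.1-poincare} by a measure-theoretic counting argument. For each $n\in\N$ set
\[
A_n = F^{-n}M\cap M = \{x\in M:F^nx\in M\},
\]
so the corollary asserts that $\mu(A_n)>0$ for infinitely many $n$.

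The RT states that for a.e.\ $x\in M$ there is an infinite sequence $(n_i)$ with $F^{n_i}x\in M$. In particular, for every fixed $N\geq 1$, for a.e.\ $x\in M$ there exists some $n\geq N$ with $F^nx\in M$; that is,
\[
\mu\!\left(\,\bigcup_{n\geq N}A_n\right)=\mu(M)>0,\qquad N\geq 1.
\]
First I would verify this reformulation from the statement of the RT. Then, if only finitely many $n$ satisfied $\mu(A_n)>0$, there would exist $N$ beyond which every $A_n$ is null; subadditivity would give $\mu\!\left(\bigcup_{n\geq N}A_n\right)\leq\sum_{n\geq N}\mu(A_n)=0$, contradicting the previous display. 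Hence the set $S=\{n\in\N:\mu(A_n)>0\}$ is infinite, and enumerating its elements as $m_1<m_2<\cdots$ produces the desired subsequence.

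There is essentially no obstacle: the only thing to be a bit careful about is the distinction between the pointwise recurrence supplied by the RT and the positivity of $\mu(A_n)$ for a particular $n$, which is bridged by the countable subadditivity argument above. No use is made of finiteness of $\mu$ beyond what is already needed for the RT itself, so the corollary inherits the same hypotheses.
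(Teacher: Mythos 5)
Your proof is correct and fills in exactly the step the paper leaves implicit: Corollary \ref{cor:4.1p} is stated there without proof as an immediate consequence of Theorem \ref{thm:4.1-poincare}, and your countable-subadditivity argument (if all but finitely many sets $F^{-n}M\cap M$ were null, the tail union would be null, contradicting that almost every point of $M$ returns infinitely often) is the standard bridge from pointwise recurrence to positive measure of infinitely many of the sets $F^{-m_i}M\cap M$.
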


Actually, a more general theorem can be easily
derived from Theorem~\ref{thm:4.1-poincare}, see \cite{halmos56}. 
Below we give a ``cohomological'' proof of this generalization.

\begin{theorem}
  \label{thm:4.2}
  For every nonnegative measurable function $\gamma$ on $X$ the resolving series
  \eqref{eq:3} diverges a.e. on the set 
  $M_{\gamma}^{+} = \left\{x\setsp:\setsp \gamma(x)>0\right\}$.
\end{theorem}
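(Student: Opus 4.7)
The plan is to show that the set $A\cap M_\gamma^+$ has measure zero, where $A=\{x:s(x)<\infty\}$ and $s(x)=\sum_{k=0}^\infty\gamma(F^kx)\in[0,\infty]$ is the pointwise (possibly infinite) sum of the resolving series. Since $\gamma\geq 0$, the series is monotone in its partial sums, so $s$ is a well-defined measurable $[0,\infty]$-valued function. The "cohomological" content is the identity
\[
s(Fx)=s(x)-\gamma(x),\qquad x\in A,
\]
which is the obvious consequence of reindexing and holds wherever $s(x)<\infty$ (and hence $\gamma(x)<\infty$ automatically).

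The key step is to produce from $s$ a bounded measurable quantity that satisfies a cohomological \emph{inequality} and then force it to be an equality by invariance of $\mu$. I would truncate: set $v_N(x)=\min(s(x),N)$ for $N\geq 1$. Two case checks give $v_N\circ F\leq v_N$ pointwise: on $\{s\leq N\}$ one has $v_N(Fx)=s(Fx)=v_N(x)-\gamma(x)$ since $s(Fx)\leq s(x)\leq N$, while on $\{s>N\}$ one has $v_N(Fx)\leq N=v_N(x)$ trivially. Since $v_N$ is bounded and $\mu$ is finite and invariant,
\[
\int_X v_N\dmes\mu=\int_X v_N\circ F\dmes\mu,
\]
so the nonnegative integrand $v_N-v_N\circ F$ vanishes a.e. In particular, on $\{s\leq N\}$ this gives $\gamma=0$ a.e. Letting $N\to\infty$ yields $\gamma=0$ a.e.\ on $A$, i.e.\ $\mu(A\cap M_\gamma^+)=0$, which is exactly the claim.

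An equivalent and perhaps slicker packaging is to work with $\psi(x)=e^{-s(x)}\in[0,1]$ (set $\psi=0$ on $X\setminus A$): the relation $s\circ F=s-\gamma$ on $A$ becomes $\psi\circ F=e^{\gamma}\psi\geq\psi$ everywhere, invariance then forces $\psi\circ F=\psi$ a.e., and $\psi>0$ on $A$ gives $\gamma=0$ a.e.\ on $A$. The only subtle point — the one I expect to be the minor technical obstacle — is handling the extended-real arithmetic cleanly: one has to notice that $\{\gamma=\infty\}$ is automatically contained in $M_\gamma^+\setminus A$ (so it poses no problem for the conclusion), and that the identity $s(Fx)=s(x)-\gamma(x)$ only needs to be applied where $s(x)<\infty$. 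No delicate maximal-function or Birkhoff-type machinery is needed; finiteness and invariance of $\mu$ together with the positivity $\gamma\geq 0$ do all the work, which is why this qualifies as a genuinely "cohomological" strengthening of Theorem \ref{thm:4.1-poincare}.
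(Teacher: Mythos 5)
Your proof is correct and is essentially the paper's own argument: both truncate the divergent sum $s$ at level $N$, use the monotonicity $s\circ F\le s$ together with the invariance of the finite measure to force the nonnegative defect of invariance (which equals $\gamma$ on $\{s\le N\}$) to vanish a.e., and then let $N\to\infty$. The only difference is cosmetic: you truncate the function itself, $v_N=\min(s,N)$, and integrate over all of $X$, whereas the paper first normalizes to $\gamma\le 1$ and integrates the identity $s-s\circ F=\gamma$ over the sublevel sets $A_n=\{s\le n\}$; your variant dispenses with that normalization, but the mechanism is identical.
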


Theorem~\ref{thm:4.1-poincare} 
is just that case of Theorem~\ref{thm:4.2} where $\gamma$ is the 
indicator function of the subset $M$.

\begin{proof}
  Denote by $A$ the set of convergence of the series~\eqref{eq:3}, and let
  $s(x)$ be its sum on $A$. The set $A$ and the function $s$ are measurable. 
  Furthermore, $A$ is completely invariant and $s$ satisfies the c.e.
  \begin{equation}
    \label{eq:4.2}
    s(x) - s(Fx) = \gamma(x), \eqsp x\in A.
  \end{equation}
  We have to prove that 
  \begin{equation}
\label{eq:4.22}
\mu(A\cap M_{\gamma}^{+}) =0 .
   \end{equation}

  Assume $s\in L_1(A,\mu)$. Then we have TNC  
  \begin{equation}
    \label{eq:4.3}
    \int_A \gamma\dmes\mu =0, 
  \end{equation}
  and \eqref{eq:4.22} follows since $\gamma\geq 0$ and $\gamma(x) > 0$ on $M_{\gamma}^{+}$. 

  With a small modification the same argument works in general. Note
  that one can assume $\gamma(x)\leq 1$ without loss of
  generality. Indeed, if the theorem is true for the function
  $\min(1,\gamma(x))$ then it is true for $\gamma(x)$.

  Now we consider the sequence of measurable sets 
  $A_n = \left\{ x\in A\setsp : \setsp s(x)\leq n\right\}$.
  They are invariant since $s(Fx)\leq s(x)$. On
  the other hand, $A_n\subset F^{-1}A_n\subset A_{n+1}$. (The first
  inclusion follows from $F A_n\subset A_n$, the second from
  $s(x)\leq s(Fx)+1$.) Furthermore, $\mu(F^{-1} A_n\setminus A_n) =0$
  since $\mu(F^{-1}A_n) = \mu(A_n)$.

Since $s|A_n$ is bounded and $\mu$ is finite, one can integrate \eqref{eq:4.2} over $A_n$:
  \begin{equation*}
    \int_{A_n}\gamma\dmes\mu  =  
    \int_{A_n}s(x)\dmes\mu - \int_{A_n}s(Fx)\dmes\mu  = 
    \int_{A_n}s(x)\dmes\mu - \int_{F^{-1}A_n}s(x)\dmes\mu = 0.
  \end{equation*}
Hence, $\mu(A_n\cap M_{\gamma}^{+})=0$.
Passing to the limit as $n\rightarrow\infty$ we get \eqref{eq:4.22}.
\end{proof}
By formula \eqref{eq:2} we obtain 
\begin{cor}
  \label{cor:4.3}
  Under conditions of Theorem \ref{thm:4.2}, if $\varphi$ is a solution 
to the c.e.~\eqref{eq:1} then 
  $\varphi(F^n x)\rightarrow+\infty$ a.e. on the set $M_{\gamma}^{+}$.
\end{cor}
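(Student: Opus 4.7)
The plan is to read off the conclusion directly from equation~\eqref{eq:2} combined with Theorem~\ref{thm:4.2}, using crucially that $\gamma \geq 0$.

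First I would recall that by~\eqref{eq:2}, for any solution $\varphi$ of the c.e.~\eqref{eq:1} and any $x \in X$,
\[
\varphi(F^{n+1} x) \;=\; \varphi(x) + s_n(x), \eqsp n \geq 0.
\]
Thus, the asymptotic behavior of $\varphi(F^n x)$ as $n \to \infty$ is completely controlled by that of the partial sums $s_n(x)$ of the resolving series~\eqref{eq:3}.

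Next I would invoke the hypothesis $\gamma \geq 0$: this makes the sequence $(s_n(x))_{n \geq 0}$ nondecreasing in $n$ for every $x \in X$. Consequently, at every $x$ the series~\eqref{eq:3} either converges to a finite nonnegative sum or diverges to $+\infty$; there is no other possibility.

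Finally I would apply Theorem~\ref{thm:4.2}, which guarantees that~\eqref{eq:3} diverges almost everywhere on $M_\gamma^+$. Combining the two observations, $s_n(x) \to +\infty$ for a.e.\ $x \in M_\gamma^+$, and therefore $\varphi(F^{n+1} x) = \varphi(x) + s_n(x) \to +\infty$ on the same full-measure subset of $M_\gamma^+$. There is no real obstacle here: the corollary is essentially a bookkeeping step once Theorem~\ref{thm:4.2} is granted, and the monotonicity of $(s_n(x))$ coming from $\gamma \geq 0$ is what upgrades ``divergence'' to ``divergence to $+\infty$''.
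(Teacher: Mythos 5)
Your proof is correct and is exactly the argument the paper intends: the paper states the corollary with only the remark ``By formula \eqref{eq:2} we obtain,'' leaving implicit precisely the two observations you spell out, namely that $\gamma\geq 0$ makes $(s_n(x))$ nondecreasing so that divergence of the resolving series means $s_n(x)\to+\infty$, and that \eqref{eq:2} then transfers this to $\varphi(F^{n+1}x)$. Nothing is missing.
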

How fast can be this growth? If $\gamma$ is bounded then 
$\varphi(F^n x) = O(n)$ by \eqref{eq:2}. This simple observation can be 
refined by the following Individual Ergodic Theorem (IET) established 
by Birkhoff \cite{birk31} and Khinchin \cite{khinchin32}. (See e.g.,
\cite{halmos56}, \cite{katok95}, \cite{sinaietalergodictheory} for the
proofs and numerous applications of the IET). 
\begin{theorem}
  \label{thm:4.4}
  For every $\gamma\in L_1(X,\mu)$ there exists
  \begin{equation}
    \label{eq:4.5}
    \tau_{\gamma}(x) = \lim_{n\rightarrow\infty}\frac{1}{n}\sum_{k=0}^{n-1}
    \gamma(F^k x)\eqsp \text{(a.e.)}.    
  \end{equation}
The function $\tau_{\gamma}$ belongs to $L_1(X,\mu)$, and 
  \begin{equation}
    \label{eq:4.5p}
    \int_X \tau_{\gamma}\dmes\mu = \int_X \gamma\dmes\mu.    
  \end{equation}
\end{theorem}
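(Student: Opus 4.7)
The plan is to follow the modern Hopf--Garsia route via the Maximal Ergodic Theorem (MET). First I would establish the MET as a lemma: for $\gamma\in L_1(X,\mu)$ and $s_k(x)=\sum_{j=0}^{k-1}\gamma(F^jx)$, set $s_n^*(x)=\max_{1\le k\le n}s_k(x)$ and $A_n=\{s_n^*>0\}$. The claim is $\int_{A_n}\gamma\,d\mu\ge 0$. The short proof uses the identity $s_n^*(x)\le\gamma(x)+(s_n^*(Fx))_+$: on $A_n$ we have $s_n^*(x)=\gamma(x)+\max(0,s_{n-1}^*(Fx))$, so integrating and invoking the $F$-invariance of $\mu$ eliminates the $s_n^*$ terms and yields the inequality.

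Next I would set $\overline\tau(x)=\limsup_n\frac{1}{n}s_n(x)$ and $\underline\tau(x)=\liminf_n\frac{1}{n}s_n(x)$. These are $F$-invariant because $\frac{1}{n}s_n(Fx)-\frac{1}{n}s_n(x)=\frac{1}{n}(\gamma(F^nx)-\gamma(x))\to 0$ a.e. (one first reduces to the case $\gamma\in L_\infty$ via truncation, or uses the Borel--Cantelli bound $\mu\{\abs{\gamma(F^n\cdot)}>n\varepsilon\}\le\varepsilon^{-1}n^{-1}\|\gamma\|_1$ plus invariance). For any rationals $\alpha<\beta$, consider the invariant set $E_{\alpha,\beta}=\{\underline\tau<\alpha<\beta<\overline\tau\}$. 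Apply MET to $\gamma-\beta$ inside the sub-dynamical system $(E_{\alpha,\beta},F|E_{\alpha,\beta})$: on this set $\sup_n\frac{1}{n}(s_n-n\beta)>0$, so every point lies in the ``maximal'' set for some $n$, and the MET gives $\int_{E_{\alpha,\beta}}(\gamma-\beta)\,d\mu\ge 0$. Applying MET to $\alpha-\gamma$ on the same set yields $\int_{E_{\alpha,\beta}}(\alpha-\gamma)\,d\mu\ge 0$. Combining, $\beta\mu(E_{\alpha,\beta})\le\alpha\mu(E_{\alpha,\beta})$, which forces $\mu(E_{\alpha,\beta})=0$. Taking the countable union over rationals gives $\overline\tau=\underline\tau$ a.e., so $\tau_\gamma$ exists a.e.

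For integrability and the norm equality I would proceed in two stages. By Fatou's lemma applied to $\abs{\frac{1}{n}s_n}$, $\int\abs{\tau_\gamma}d\mu\le\|\gamma\|_1$, so $\tau_\gamma\in L_1$. For the equality $\int\tau_\gamma d\mu=\int\gamma d\mu$ I would first verify it for $\gamma\in L_\infty$: then $|\frac{1}{n}s_n|\le\|\gamma\|_\infty$, dominated convergence applies, and invariance of $\mu$ makes $\int\frac{1}{n}s_n d\mu=\int\gamma d\mu$ for each $n$. The general case $\gamma\in L_1$ follows by a standard $\varepsilon$-approximation: split $\gamma=\gamma_N+r_N$ with $\gamma_N=\gamma\cdot\idfunc_{\{\abs\gamma\le N\}}$ and $\|r_N\|_1<\varepsilon$; the a.e.\ convergence and the MET-based bound $\int\abs{\tau_{r_N}}d\mu\le\|r_N\|_1$ together give the full equality.

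The main obstacle will be the Maximal Ergodic Theorem itself, together with the dichotomy trick applied to $E_{\alpha,\beta}$; everything else is then bookkeeping. A subtler point is justifying the invariance $\overline\tau\circ F=\overline\tau$ without circular use of the conclusion, which is why I would truncate first or invoke the $n^{-1}\gamma(F^n\cdot)\to 0$ a.e.\ estimate directly. Compared to the ``cohomological'' style of the earlier proofs in the paper, here the c.e.\ enters only implicitly through the recurrence $s_n(Fx)=s_{n+1}(x)-\gamma(x)$, which powers both the invariance of $\overline\tau,\underline\tau$ and the MET.
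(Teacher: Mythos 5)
The paper itself does not prove Theorem \ref{thm:4.4}: it is the classical Birkhoff--Khinchin Individual Ergodic Theorem, stated with references to the standard literature for its proof, so there is no internal argument to compare you against. Your proposal is precisely the standard Garsia-style proof found in those references (maximal ergodic theorem by the telescoping/positive-part trick, then the dichotomy on the completely invariant sets $E_{\alpha,\beta}$, then Fatou and truncation), and it is correct in all essentials; note that your Fatou step also delivers the a.e.\ finiteness of $\overline\tau=\underline\tau$, which is worth saying explicitly since the dichotomy alone only gives equality of extended-real-valued functions. Two minor points. First, the invariance of $\overline\tau$ and $\underline\tau$ needs neither truncation nor any estimate on $\gamma(F^nx)/n$: the recurrence $s_n(Fx)=s_{n+1}(x)-\gamma(x)$ (the paper's \eqref{eq:resn}) gives $\frac{1}{n}s_n(Fx)=\frac{n+1}{n}\cdot\frac{1}{n+1}s_{n+1}(x)-\frac{1}{n}\gamma(x)$, and the last term tends to $0$ pointwise because $\gamma(x)$ is a fixed finite number. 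Second, the parenthetical bound $\mu\{\abs{\gamma(F^n\cdot)}>n\varepsilon\}\le\varepsilon^{-1}n^{-1}\norm{\gamma}_1$ is Chebyshev and is not summable in $n$, so it does not feed Borel--Cantelli as written; the summable estimate is $\sum_n\mu\{\abs{\gamma}>n\varepsilon\}\le\varepsilon^{-1}\norm{\gamma}_1$ by the layer-cake formula. Since that entire step is dispensable, neither remark affects the validity of your proof.
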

Note that by Proposition \ref{thm:cesreso} {\em the set $E_\gamma$ of convergence 
in \eqref{eq:4.5} is completely invariant, and the function $\tau_{\gamma}$ 
is invariant}. The latter is originally defined on $E_\gamma$ only, however, all its 
continuation to $X$ are invariant and coincide as the 
elements of $L_1(X,\mu)$ since $\mu(X\setminus E_\gamma) = 0$. 
\begin{cor}
\label{cor:fem}
If in the IET the measure $\mu$ is ergodic then 
\begin{equation}
  \label{eq:4.7p}
  \lim_{n\rightarrow\infty}\frac{1}{n} \sum_{k=0}^{n-1} \gamma(F^k x)
  = \frac{1}{\mu(X)} \int_X\gamma\dmes\mu \eqsp \text{(a.e.)}.
\end{equation}
\end{cor}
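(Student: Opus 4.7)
The plan is to combine the IET (Theorem \ref{thm:4.4}) with the characterization of ergodicity that was recorded in the introduction: an invariant measure $\mu$ is ergodic if and only if every measurable invariant function is constant a.e. So the whole job is to verify that the Birkhoff average $\tau_{\gamma}$ produced by the IET falls under this characterization and then pin down its constant value.

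First I would invoke Theorem \ref{thm:4.4} to obtain $\tau_{\gamma}\in L_1(X,\mu)$ defined a.e.\ by
\[
\tau_{\gamma}(x) = \lim_{n\rightarrow\infty}\frac{1}{n}\sum_{k=0}^{n-1}\gamma(F^k x).
\]
Then I would appeal to the remark following the statement of the IET, which says that the set $E_{\gamma}$ of convergence is completely invariant and $\tau_{\gamma}$ is invariant under $F$ (this in turn follows from Proposition \ref{thm:cesreso}, whose recurrence relation $\tau_{N-1}(Fx) = \frac{N+1}{N}\tau_{N}(x) - \frac{1}{N}\gamma(x)$ forces any limit $\tau_{\gamma}(x)$ to agree with $\tau_{\gamma}(Fx)$ upon passing to $N\to\infty$).

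Next, because $\mu$ is ergodic, the characterization of ergodicity stated in the introduction (all measurable invariant functions are constant a.e.) gives a scalar $c$ such that $\tau_{\gamma}(x) = c$ for $\mu$-a.e.\ $x\in X$. To evaluate $c$, I would integrate this equality over $X$, use the fact that $\mu$ is finite to get $\int_X \tau_{\gamma}\dmes\mu = c\,\mu(X)$, and then apply the second conclusion \eqref{eq:4.5p} of Theorem \ref{thm:4.4}, namely $\int_X \tau_{\gamma}\dmes\mu = \int_X \gamma\dmes\mu$, to conclude
\[
c = \frac{1}{\mu(X)}\int_X\gamma\dmes\mu,
\]
which gives \eqref{eq:4.7p}.

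Honestly, there is no genuine obstacle here: the work has all been done upstream (the IET itself is cited, invariance of $\tau_{\gamma}$ is already recorded, and the ergodicity-equivalence is part of the standing definitions). The only point requiring a sentence of care is to note that the a.e.\ invariance of $\tau_{\gamma}$ on the completely invariant full-measure set $E_{\gamma}$ is enough to apply the ergodic dichotomy, since one may extend $\tau_{\gamma}$ by any constant off $E_{\gamma}$ without changing its $L_1$-class.
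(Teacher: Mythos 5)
Your argument is correct and is essentially the paper's own proof: the paper likewise notes (via Proposition \ref{thm:cesreso}) that $\tau_{\gamma}$ is invariant, concludes it is constant a.e.\ by ergodicity, and identifies the constant by integrating and applying \eqref{eq:4.5p}. No gaps; your extra remark about extending $\tau_{\gamma}$ off $E_{\gamma}$ matches the paper's own comment preceding the corollary.
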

\begin{proof}
This follows from \eqref{eq:4.5p} since $\tau_{\gamma}(x)$ is a constant a.e. 
by ergodicity. 
\end{proof}
In physical terms formula \eqref{eq:4.7p} 
is a mathematically correct form of the famous Boltzmann
Hypothesis about the asymptotic behavior of a physical system
consisting of a large number of particles. The
equality~\eqref{eq:4.7p} states that for any admissible function $\gamma$
(an ``observable quantity'') its average over the space of states  
coincides with its average over time along almost every orbit. 
\begin{cor}
  \label{thm:4.3p}
If  $\gamma\in L_1(X,\mu)$ and $\varphi$ is a solution 
to the c.e.~\eqref{eq:1} then there exists
  \begin{equation}
    \label{eq:4.4}
    \varepsilon_{\varphi}(x) = 
    \lim_{n\rightarrow \infty}\frac{\varphi(F^n x)}{n}\eqsp \text{(a.e.)}
  \end{equation}
\end{cor}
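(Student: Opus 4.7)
My plan is to reduce the statement directly to the Individual Ergodic Theorem via formula \eqref{eq:2}. Applied at $x$ and with $n$ replaced by $n-1$ (for $n\geq 1$), that formula gives
\[
\varphi(F^n x) = \varphi(x) + s_{n-1}(x) = \varphi(x) + \sum_{k=0}^{n-1}\gamma(F^k x),
\]
valid pointwise for every solution $\varphi$ to \eqref{eq:1}. Dividing by $n$, I get
\[
\frac{\varphi(F^n x)}{n} = \frac{\varphi(x)}{n} + \frac{1}{n}\sum_{k=0}^{n-1}\gamma(F^k x).
\]

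The first summand tends to $0$ for every $x\in X$, since $\varphi(x)$ is a fixed finite number at each $x$. The second summand is exactly the Ces\`aro average appearing in Theorem \ref{thm:4.4}; since $\gamma\in L_1(X,\mu)$, that theorem guarantees that the limit
\[
\tau_\gamma(x) = \lim_{n\rightarrow\infty}\frac{1}{n}\sum_{k=0}^{n-1}\gamma(F^k x)
\]
exists for a.e. $x\in X$ and defines a function in $L_1(X,\mu)$. Adding the two limits yields the existence of $\varepsilon_\varphi(x)$ a.e., with the explicit identification $\varepsilon_\varphi(x) = \tau_\gamma(x)$ a.e.

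There is really no obstacle here: the corollary is a one-line consequence of \eqref{eq:2} together with the IET, and the only thing to notice is that the a.e. exceptional set for $\varepsilon_\varphi$ can be taken to be the a.e. exceptional set for $\tau_\gamma$ from Theorem \ref{thm:4.4}. As a by-product one obtains $\varepsilon_\varphi\in L_1(X,\mu)$ and, in the ergodic case (Corollary \ref{cor:fem}), the explicit value $\varepsilon_\varphi(x) = \mu(X)^{-1}\int_X\gamma\,d\mu$ a.e.
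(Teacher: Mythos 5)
Your argument is correct and is exactly the one the paper intends: apply formula \eqref{eq:2} to write $\varphi(F^n x)=\varphi(x)+\sum_{k=0}^{n-1}\gamma(F^k x)$, divide by $n$, and invoke the IET (Theorem \ref{thm:4.4}), which is precisely why the paper notes that the set of convergence coincides with $E_\gamma$ and that $\varepsilon_\varphi=\tau_\gamma$ there. Nothing to add.
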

Under conditions of Corollary \ref{thm:4.3p} the set of convergence 
in \eqref{eq:4.4} coincides with $E_\gamma$, 
and $\varepsilon_{\varphi}(x) = \tau_{\gamma}(x)$ for all $x\in E_\gamma$. 
This equality can be extended to $X$ by any common continuation of 
$\varepsilon_{\varphi}(x)$ and $\tau_{\gamma}(x)$. By \eqref{eq:4.5p} 
we get 
  \begin{equation}
    \label{eq:4.5q}
    \int_X \varepsilon_{\varphi}\dmes\mu = \int_X \gamma\dmes\mu.    
  \end{equation}
The function $\varepsilon_{\varphi}(x)$ is invariant. Therefore, 
if the measure $\mu$ is ergodic then  
  \begin{equation}
    \label{eq:4.5qq}
\lim_{n\rightarrow \infty}\frac{\varphi(F^n x)}{n}=    
 \frac{1}{\mu(X)} \int_X \gamma\dmes\mu \eqsp (a.e.)   
  \end{equation}
by \eqref{eq:4.5q} and \eqref{eq:4.4}. 
\begin{theorem}
  \label{thm:4.4p}
Let $\gamma\in L_1(X,\mu)$, and let $\varphi$ be a measurable solution to 
the c.e.~\eqref{eq:1} then $\varepsilon_{\varphi}(x)=0$ a.e., 
so $\varphi(F^n x) = o(n)$ a.e..
\end{theorem}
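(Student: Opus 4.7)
The plan is to combine the existence of the limit $\varepsilon_{\varphi}$ granted by Corollary \ref{thm:4.3p} with a recurrence argument based on Theorem \ref{thm:4.1-poincare}, exploiting the fact that although $\varphi$ may not be integrable, it is measurable and finite a.e., and therefore bounded on large sets.

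First I would fix $\varepsilon>0$ and, using that $\varphi$ is measurable and $\mu$ is finite, choose $C>0$ large enough that the set
\[
B_C = \{x\in X : \abs{\varphi(x)}\leq C\}
\]
satisfies $\mu(X\setminus B_C)<\varepsilon$. By Theorem \ref{thm:4.1-poincare} applied to $M=B_C$, for a.e.\ $x\in B_C$ there is a subsequence $(n_i)_{i\geq 1}\subset\N$ with $F^{n_i}x\in B_C$, so $\abs{\varphi(F^{n_i}x)}\leq C$. Dividing by $n_i$ yields $\varphi(F^{n_i}x)/n_i\to 0$ along that subsequence. However, by Corollary \ref{thm:4.3p} the full limit $\varepsilon_{\varphi}(x)=\lim_n\varphi(F^nx)/n$ exists for a.e.\ $x$, so on the intersection of these two a.e.\ sets (still of measure $\geq \mu(B_C)-0 = \mu(B_C)$) the limit must equal $0$. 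Hence $\varepsilon_{\varphi}=0$ a.e.\ on $B_C$.

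Finally, since $\mu(X\setminus B_C)<\varepsilon$ and $\varepsilon>0$ is arbitrary, letting $C\to\infty$ gives $\varepsilon_{\varphi}(x)=0$ for a.e.\ $x\in X$. The statement $\varphi(F^nx)=o(n)$ a.e.\ is then just the definition of $\varepsilon_{\varphi}(x)=0$ in \eqref{eq:4.4}.

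The only subtle point — and the main conceptual obstacle — is to resist the temptation to integrate the c.e.\ directly (which is not allowed because $\varphi$ need not be in $L_1$); this is precisely the situation of Anosov's theorem. The trick that unlocks the argument is noting that measurability plus finiteness of $\mu$ automatically yields arbitrarily large sets on which $\varphi$ is bounded, so Poincaré recurrence forces the ergodic average of $\gamma$ along $\varphi$-trajectories to vanish without any a priori integrability assumption on $\varphi$.
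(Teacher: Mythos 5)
Your proof is correct and follows essentially the same route as the paper: the paper likewise applies the Poincar\'e Recurrence Theorem to the sets $M_l=\{x:\abs{\varphi(x)}\leq l\}$ and combines the resulting subsequence $\varphi(F^{n_i}x)/n_i\to 0$ with the existence of the full limit from Corollary \ref{thm:4.3p}. The only cosmetic difference is your $\varepsilon$--$C$ phrasing of the exhaustion of $X$ by the sets where $\varphi$ is bounded.
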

\begin{proof}
It suffices to prove that $\varepsilon_{\varphi}(x) = 0$ a.e. on the set 
$M_l = \left\{ x\in X\setsp :\setsp\abs{\varphi(x)}\leq l\right\}$  
for any $l>0$. The RT yields for a.e. $x\in M_l$ a subsequence 
$(n_i)_1^\infty\subset\N$ such that $F^{n_i}x\in M_l$, i.e.
$\abs{\varphi(F^{n_i}x)}\leq l$. If, in addition, $\varepsilon_{\varphi}(x)$ 
exists then
  \begin{equation*}
  \varepsilon_{\varphi}(x)  =
    \lim_{i\rightarrow\infty}\frac{\varphi(F^{n_i}x)}{n_i} = 0.
  \end{equation*}
\end{proof}
In the corollaries of Theorem \ref{thm:4.4p} we assume $\gamma\in L_1(X,\mu)$.
\begin{cor}
  \label{cor:4.4pp}
If the c.e.~\eqref{eq:1} has a measurable solution then $\gamma$ 
satisfies the TNC 
\begin{equation}
\label{eq:4.30}
    \int_ X\gamma\dmes\mu =0. 
\end{equation}
\end{cor}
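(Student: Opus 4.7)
The plan is to chain together the previous theorem with the IET directly, so this should be a short corollary rather than a substantial argument.

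First I would invoke the telescoping identity (2.2) (equation \eqref{eq:2} in the paper) to rewrite the ergodic sum in terms of the solution: if $\varphi$ is a measurable solution valid on an invariant full-measure set $Y_\varphi$, then for every $x \in Y_\varphi$
\[
\frac{1}{n}\sum_{k=0}^{n-1}\gamma(F^k x) \;=\; \frac{s_{n-1}(x)}{n} \;=\; \frac{\varphi(F^n x) - \varphi(x)}{n}.
\]
Since $\varphi(x)/n \to 0$ pointwise on $Y_\varphi$, the right-hand side has the same asymptotic behavior as $\varphi(F^n x)/n$, whose a.e. limit $\varepsilon_\varphi(x)$ exists by Corollary~\ref{thm:4.3p} and equals $0$ a.e. by Theorem~\ref{thm:4.4p}.

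On the other hand, since $\gamma \in L_1(X,\mu)$ and $\mu$ is a finite invariant measure, the Birkhoff–Khinchin IET (Theorem~\ref{thm:4.4}) asserts that $\frac{1}{n}\sum_{k=0}^{n-1}\gamma(F^k x)$ converges a.e. to an integrable invariant function $\tau_\gamma(x)$ satisfying $\int_X\tau_\gamma\,d\mu = \int_X\gamma\,d\mu$. Comparing the two limits on the common full-measure set of convergence, I conclude $\tau_\gamma(x) = 0$ a.e., and integrating yields
\[
\int_X\gamma\,d\mu \;=\; \int_X\tau_\gamma\,d\mu \;=\; 0,
\]
which is precisely \eqref{eq:4.30}.

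There is essentially no real obstacle here: the work is already done by Theorem~\ref{thm:4.4p} and the IET. The only point requiring a bit of care is the bookkeeping of null sets — the set where \eqref{eq:1} holds, the set where $\varepsilon_\varphi$ exists and vanishes, and the set where $\tau_\gamma$ exists must all be intersected to produce a common invariant full-measure set on which both asymptotic expressions for $s_{n-1}(x)/n$ are valid. Since countable intersections of full-measure invariant sets are full-measure invariant, this is painless.
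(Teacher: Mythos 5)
Your argument is correct and is essentially the paper's own: the paper derives the corollary by combining the identity $\varepsilon_\varphi=\tau_\gamma$ (obtained from \eqref{eq:2} and the IET, yielding $\int_X\varepsilon_\varphi\dmes\mu=\int_X\gamma\dmes\mu$ in \eqref{eq:4.5q}) with Theorem~\ref{thm:4.4p}'s conclusion that $\varepsilon_\varphi=0$ a.e. Your telescoping computation and null-set bookkeeping just make explicit what the paper leaves implicit.
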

This is the Anosov Theorem 1 from \cite{anosov73}. In our way to this 
important result we have used the same arguments (IET, RT) as in \cite{anosov73}
but in a wider context.
\begin{cor}
\label{cor:4.4pq}
Let the TNC \eqref{eq:4.30} be not 
fulfilled, but let the conditions \eqref{eq:2.1} be valid. Then the c.e.~\eqref{eq:1} 
is solvable but all its solutions are nonmeasurable.
\end{cor}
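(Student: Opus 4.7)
The statement splits cleanly into two independent claims, and both follow by directly invoking results already in the excerpt; no new machinery is needed. My plan is therefore to assemble the proof as a short two-step argument.

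First I would establish solvability. The hypothesis \eqref{eq:2.1} (vanishing of $s_{p-1}(x)$ at every periodic point of period $p$) is exactly the sufficient condition in Theorem~\ref{thm:2.1}, which guarantees the existence of at least one solution $\varphi \in \Phi(X)$ to the c.e.~\eqref{eq:1}. So this half is immediate.

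Second I would show nonmeasurability of every solution by contrapositive, appealing to Corollary~\ref{cor:4.4pp} (the Anosov theorem just proved). If some solution $\varphi$ to \eqref{eq:1} were measurable, then since by hypothesis $\gamma \in L_1(X,\mu)$, Corollary~\ref{cor:4.4pp} would force the TNC
\[
\int_X \gamma\dmes\mu = 0
\]
to hold. But the standing assumption of the corollary is precisely that this TNC \eqref{eq:4.30} fails. Contradiction, hence no solution can be measurable, and since by step one the solution set is nonempty, every solution is nonmeasurable.

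There is essentially no obstacle here: the corollary is a direct logical combination of Theorem~\ref{thm:2.1} (solvability criterion in $\Phi(X)$) with Corollary~\ref{cor:4.4pp} (measurable solutions force TNC). The only subtle point worth flagging is that one must read \eqref{eq:2.1} as the full family of cycle conditions (over all periodic points and all of their periods), exactly as required in the hypothesis of Theorem~\ref{thm:2.1}; this ensures the solvability step applies without further work. Once that is observed, the proof is a one-line contradiction.
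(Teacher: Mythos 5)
Your proof is correct and is exactly the argument the paper intends (the paper states this corollary without proof, as an immediate combination of Theorem~\ref{thm:2.1} for solvability and Corollary~\ref{cor:4.4pp} for the nonmeasurability, under the standing assumption $\gamma\in L_1(X,\mu)$). Nothing is missing.
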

In particular, we have 
\begin{cor}
\label{cor:noper}
Let the TNC \eqref{eq:14} be not fulfilled, but  
let $F$ have no periodic points.  
Then the c.e.~\eqref{eq:1} is solvable but all its solutions are nonmeasurable. 
\end{cor}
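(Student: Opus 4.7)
The statement is essentially an immediate specialization of Corollary \ref{cor:4.4pq}, so the plan is short. I would present it in two clean steps corresponding to the two claims in the conclusion.

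First, for solvability: since $F$ has no periodic points, the condition \eqref{eq:2.1} in Theorem \ref{thm:2.1} is vacuous (there are simply no pairs $(p,x)$ with $x$ periodic of period $p$ to test against). Therefore Theorem \ref{thm:2.1} guarantees a solution $\varphi \in \Phi(X)$ for an arbitrary $\gamma$. One could equally well quote Corollary \ref{cor:1} directly, which records exactly this consequence.

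Second, for the nonmeasurability of every solution: this is where the hypothesis that the TNC \eqref{eq:14} fails does the work. Suppose for contradiction that some solution $\varphi$ to the c.e.~\eqref{eq:1} is measurable. Since $\gamma \in L_1(X,\mu)$ by assumption and $\mu$ is a finite invariant measure, Corollary \ref{cor:4.4pp} (Anosov's theorem, via the IET and RT) forces $\int_X \gamma \, d\mu = 0$, contradicting the standing hypothesis that the TNC fails. Hence no solution can be measurable, and since at least one solution exists by the previous step, every solution is nonmeasurable.

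There is no real obstacle here; the only thing to be slightly careful about is phrasing the vacuous-implication step clearly, so that the reader sees why Theorem \ref{thm:2.1} applies without any nontrivial verification of \eqref{eq:2.1}. In effect the corollary just records the observation that the ``no periodic points'' hypothesis is a convenient sufficient condition for the vacuous validity of \eqref{eq:2.1}, after which Corollary \ref{cor:4.4pq} yields the conclusion verbatim.
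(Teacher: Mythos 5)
Your proof is correct and is exactly the paper's (implicit) argument: the corollary is stated as an immediate specialization of Corollary \ref{cor:4.4pq}, with the absence of periodic points making the conditions \eqref{eq:2.1} vacuously true (Corollary \ref{cor:1}) and the failure of the TNC ruling out measurable solutions via Corollary \ref{cor:4.4pp}. Nothing is missing.
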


For example, {\em with irrational $\alpha$ the equation \eqref{eq:7} is 
 solvable in 1-periodic functions but all its solutions are nonmeasurable if 
the TNC \eqref{eq:8} is not fulfilled.} 

The existence of measurable solutions was briefly discussed at the end 
of Section \ref{sec:sofu}. Also, in this section the $L_\infty$-solvability was 
considered (Theorem \ref{thm:limsup}). Now let us consider the $L_p$-solvability starting 
with $p=1$.
\begin{theorem} 
\label{thm:cesum}
With $\gamma\in L_1(X,\mu)$ the c.e. \eqref{eq:1} is $L_1$-solvable  
if and only if the resolving series \eqref{eq:3} is Ces\`aro summable a.e. 
to a function $\sigma(x)\in L_1(X,\mu)$. In this case $-\sigma(x)$ 
is an $L_1$-solution.
\end{theorem}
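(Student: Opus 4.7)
The plan is to reduce both directions of Theorem \ref{thm:cesum} to results already at hand: Proposition \ref{thm:cesre} for sufficiency, and the Birkhoff--Khinchin IET (Theorem \ref{thm:4.4}) together with the key algebraic identity \eqref{eq:sita} for necessity.

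For the \emph{sufficiency} direction, suppose the Ces\`aro means $\sigma_N(x)$ converge a.e.\ to a function $\sigma\in L_1(X,\mu)$. By definition, $C_\gamma$ then has full measure, and Proposition \ref{thm:cesre} tells us that $-\sigma$ satisfies the c.e.\ \eqref{eq:1} on $C_\gamma$, hence a.e.\ on $X$. Since $\sigma\in L_1(X,\mu)$, the function $-\sigma$ is an $L_1$-solution, which is all we need.

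For the \emph{necessity} direction, suppose $\varphi\in L_1(X,\mu)$ is a solution to \eqref{eq:1}. Since $\mu$ is a finite invariant measure (the standing hypothesis of Section~\ref{sec:reer}) and $\varphi\in L_1(X,\mu)$, the IET (Theorem \ref{thm:4.4}) applies to $\varphi$: the averages
\[
\tau_N(x)=\frac{1}{N+1}\sum_{n=0}^N\varphi(F^nx)
\]
converge a.e.\ to an invariant function $\tau\in L_1(X,\mu)$. Now I would invoke the identity \eqref{eq:sita}, which is a purely formal consequence of \eqref{eq:2} and the definitions:
\[
\sigma_N(x)=\frac{N+2}{N+1}\bigl(\tau_{N+1}(x)-\varphi(x)\bigr).
\]
This holds a.e.\ (since $\varphi$ satisfies \eqref{eq:1} a.e., and hence \eqref{eq:2} holds a.e.). Passing to the limit $N\to\infty$ on the invariant set of full measure where $\tau_N\to\tau$, we conclude that $\sigma_N(x)\to\tau(x)-\varphi(x)$ a.e. The limit $\sigma=\tau-\varphi$ belongs to $L_1(X,\mu)$, so the resolving series is Ces\`aro summable a.e.\ to an $L_1$ function, as required. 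In addition, this gives the explicit formula $\sigma=\tau-\varphi$, which recovers \eqref{eq:esig}.

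There is no real obstacle here: the whole argument is an assembly of already-proven facts. The only point to handle carefully is that \eqref{eq:sita} is an a.e.\ identity under the hypothesis of $L_1$-solvability (rather than pointwise), but this poses no difficulty since both Proposition \ref{thm:cesreso} and the IET produce statements modulo null sets, and the intersection of countably many full-measure invariant sets is still of full measure.
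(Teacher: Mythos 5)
Your proof is correct and follows essentially the same route as the paper: the ``if'' direction is Proposition \ref{thm:cesre} applied on the full-measure set $C_\gamma$, and the ``only if'' direction applies the IET to the $L_1$-solution $\varphi$ so that $E_\varphi$ has full measure and then uses the identity \eqref{eq:sita} (which is exactly the content of Proposition \ref{thm:cesreso}) to get $\sigma=\tau-\varphi\in L_1(X,\mu)$. The only difference is that you unpack Proposition \ref{thm:cesreso} into its proof rather than citing it, and you are right that the a.e.\ bookkeeping poses no difficulty.
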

\begin{proof}
``If'' follows from Proposition \ref{thm:cesre} where now $\mu(X\setminus C_{\gamma})=0$. 
``Only if'' follows from Proposition \ref{thm:cesreso} where now 
$\mu(X\setminus E_{\varphi})=0$ by IET.
\end{proof}
\begin{cor} 
\label{thm:ceserg}
Let the measure $\mu$ be ergodic. With $\gamma\in L_p(X,\mu)$, $1\leq p\leq\infty$,  
the c.e. \eqref{eq:1} is $L_p$-solvable 
if and only if the resolving series \eqref{eq:3} is Ces\`aro summable a.e. 
to a function $\sigma(x)\in L_p(X,\mu)$. In this case $-\sigma(x)$ 
is an $L_p$-solution.
\end{cor}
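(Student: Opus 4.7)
The plan is to bootstrap from Theorem~\ref{thm:cesum} (the $p=1$ case) using the finiteness of $\mu$, and then invoke ergodicity to upgrade integrability via the decomposition \eqref{eq:esig}.

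For the ``if'' direction, if the resolving series is Ces\`aro summable a.e.\ to some $\sigma\in L_p(X,\mu)$, then by Proposition~\ref{thm:cesre} the function $-\sigma$ satisfies \eqref{eq:1} on the completely invariant full-measure set $C_\gamma$; since $-\sigma\in L_p(X,\mu)$ by hypothesis, this is an $L_p$-solution (in the a.e.\ sense adopted throughout). This direction uses neither ergodicity nor the hypothesis $\gamma\in L_p$.

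For the ``only if'' direction, assume $\varphi\in L_p(X,\mu)$ solves the c.e. Since $\mu(X)<\infty$ we have $\varphi\in L_1(X,\mu)$ and $\gamma\in L_1(X,\mu)$, so Theorem~\ref{thm:cesum} applies: the resolving series is Ces\`aro summable a.e.\ to an $L_1$-function, and on the full-measure set $E_\varphi$ (which equals $C_\gamma$ by Proposition~\ref{thm:cesreso}) the identity \eqref{eq:esig} gives
\[
\sigma(x)=\tau(x)-\varphi(x),
\]
where $\tau(x)=\lim_{N\to\infty}\tau_N(x)$ is the Ces\`aro mean of $(\varphi(F^nx))$. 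Now apply the IET (Theorem~\ref{thm:4.4}) to $\varphi\in L_1(X,\mu)$: the invariant function $\tau$ is well-defined a.e.\ and integrable. Here is the one place ergodicity enters: since $\tau$ is invariant and $\mu$ is ergodic, $\tau$ must be constant a.e., necessarily equal to $\mu(X)^{-1}\int_X\varphi\,d\mu$. In particular $\tau\in L_\infty(X,\mu)\subset L_p(X,\mu)$, and therefore
\[
\sigma=\tau-\varphi\in L_p(X,\mu).
\]
This shows the series is Ces\`aro summable a.e.\ to an element of $L_p(X,\mu)$, and the last sentence of the corollary is already built in: the function $-\sigma$ is the $L_p$-solution furnished by Proposition~\ref{thm:cesre}.

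The only real step is the ``only if'' direction, and the key conceptual point is that without ergodicity the IET only guarantees $\tau\in L_1$, which is insufficient to preserve $L_p$-membership for $p>1$ (and in particular for $p=\infty$). Ergodicity collapses $\tau$ to a constant, and that is precisely what makes the identity $\sigma=\tau-\varphi$ promote $L_p$-regularity from $\varphi$ to $\sigma$. No further obstacle is expected; the argument goes through uniformly for all $1\leq p\leq\infty$ because the reduction to the $L_1$-case via Theorem~\ref{thm:cesum} and the use of a constant $\tau$ both rely only on the finiteness and ergodicity of $\mu$.
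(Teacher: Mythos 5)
Your proof is correct and follows essentially the same route as the paper: reduce the ``only if'' direction to the $L_1$ case of Theorem~\ref{thm:cesum}, observe that $\sigma+\varphi$ is an invariant measurable function (you identify it as $\tau$ via \eqref{eq:esig}, the paper says it solves the homogeneous equation), and use ergodicity to conclude it is constant a.e., whence $\sigma\in L_p(X,\mu)$. The only cosmetic difference is your explicit appeal to the IET for the existence of $\tau$, which is already guaranteed by the a.e.\ Ces\`aro summability from Theorem~\ref{thm:cesum}.
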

\begin{proof}
``If'' is the same as in the proof of Theorem \ref{thm:cesum} even without 
assumptions about $\mu$.  
For the ``only if'' note that $L_p(X,\mu)\subset L_1(X,\mu)$ since 
the measure $\mu$ is finite. Let $\varphi$ be an $L_p$-solution to \eqref{eq:1}, 
and let $\sigma$ be that of Theorem \ref{thm:cesum}. Then $\sigma\in L_1(X,\mu)$  
and $\sigma(x)$ is the Ces\`aro sum of the series \eqref{eq:3} a.e. and, finally, 
$-\sigma(x)$ is a solution to \eqref{eq:1}.  
The sum $\sigma(x) + \varphi(x)$ satisfies the corresponding homogeneous equation.  
By ergodicity this is a constant a.e., thus $\sigma\in L_p(X,\mu)$. 
\end{proof}
Other results concerning the $L_p$-solvability were obtained in  
\cite{krz89} and \cite{alonso00}.

The IET states the convergence a.e. 
for the sequence $(S_{n;F}\gamma)(x)$, where $S_{n;F}$ are the operators
in $L_1(X,\mu)$ defined as 
\begin{equation*}
  S_{n;F} = \frac{1}{n} \sum_{k=0}^{n-1} T_F^k, \eqsp n\geq 1.  
\end{equation*}
The Koopman operator $T_F\varphi=\varphi\circ F$  
acts isometrically (not bijectively, in general) in 
every $L_p(X,\mu)$, $1\leq p<\infty$, since the measure $\mu$ is invariant.
In $L_{\infty}(X,\mu)$ and in $B(X)$ the operator $T_F$ is a contraction. 
A fortiori, $T_F$ is power bounded in all of these cases. Therefore, the asymptotic 
behavior of the operator sequence $(S_{n;F})$ can be studied by 
the following well known Mean Ergodic Theorem (MET). For completeness
we give a proof of that. 
\begin{theorem}
\label{thm:4.5}
Let $T$ be a bounded linear  operator in a Banach space $B$, and let  
\begin{equation}
\label{eq:4.6}
    S_n=\frac{1}{n} \sum_{k=0}^{n-1} T^k, \eqsp n\geq 1.   
  \end{equation}
Assume that $n^{-1}T^n$ strongly tends to zero as $n\rightarrow\infty$ 
and $\sup_n\norm{S_n}<\infty$. Then for a vector $h\in B$ the sequence 
$(S_nh)$ converges if and only if $h$ belongs to   
\begin{equation*}
  L = \overline{\im(T-I)} + \ker(T-I).  
\end{equation*}
The limit operator is a bounded 
projection from $L$ onto $\ker(T-I)$ annihilating $\overline{\im(T-I)}$. 
\end{theorem}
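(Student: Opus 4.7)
The plan is to establish convergence on two natural subspaces and glue them together, then check the converse via a simple identity.

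First I would check the two easy cases that build up $L$. On $\ker(T-I)$, any $h$ satisfies $T^k h = h$, so $S_n h \equiv h$. On $\im(T-I)$, if $h = (T-I)g$, then telescoping gives $S_n h = \frac{1}{n}(T^n - I)g$, which tends to $0$ by the hypothesis $n^{-1}T^n g \to 0$ and $\frac{1}{n}g \to 0$. I then extend convergence to the closure $\overline{\im(T-I)}$ by a standard $3\varepsilon$ argument: given $h \in \overline{\im(T-I)}$ and $\varepsilon > 0$, pick $h' \in \im(T-I)$ with $\norm{h-h'} < \varepsilon/(2M)$ where $M = \sup_n\norm{S_n}$, and note $\norm{S_n h} \le \norm{S_n(h-h')} + \norm{S_n h'} \le \varepsilon/2 + \norm{S_n h'}$; the second term tends to $0$.

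Next I would define $P:L \to B$ by $P(h_1 + h_2) = h_2$ for $h_1 \in \overline{\im(T-I)}$ and $h_2 \in \ker(T-I)$. The key well-definedness step is
\begin{equation*}
\overline{\im(T-I)} \cap \ker(T-I) = \{0\},
\end{equation*}
which follows because any $h$ in the intersection satisfies simultaneously $S_n h = h$ (from $\ker(T-I)$) and $S_n h \to 0$ (from the previous paragraph). Boundedness of $P$ is immediate: $\norm{Ph} = \lim\norm{S_n h} \le M\norm{h}$. By construction $P^2 = P$, the image is $\ker(T-I)$, and the kernel contains $\overline{\im(T-I)}$.

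For the converse, suppose $S_n h \to y$ for some $h \in B$. Applying $T$ and using $TS_n - S_n = \frac{1}{n}(T^n - I)$, one sees $T y = y$, i.e.\ $y \in \ker(T-I)$. The cleaner half is to observe the identity
\begin{equation*}
h - S_n h = \frac{1}{n}\sum_{k=0}^{n-1}(I - T^k)h = (I - T)\,\frac{1}{n}\sum_{k=0}^{n-1}\sum_{j=0}^{k-1}T^j h,
\end{equation*}
so $h - S_n h \in \im(T-I)$ for every $n$. Passing to the limit yields $h - y \in \overline{\im(T-I)}$, and hence $h = (h - y) + y \in L$. I expect the main (minor) obstacle to be bookkeeping the trivial intersection argument and packaging the projection cleanly; the hypotheses $n^{-1}T^n \to 0$ strongly and $\sup_n\norm{S_n} < \infty$ are used respectively to kill coboundary orbits and to pass to closures, and both are genuinely needed.
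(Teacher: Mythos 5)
Your proof is correct and follows essentially the same route as the paper's: convergence to the identity on $\ker(T-I)$, convergence to $0$ on $\im(T-I)$ by telescoping and extension to the closure via $\sup_n\norm{S_n}<\infty$, and for the converse the same two identities showing the limit lies in $\ker(T-I)$ and $h$ minus the limit lies in $\overline{\im(T-I)}$. Your explicit verification that $\overline{\im(T-I)}\cap\ker(T-I)=\{0\}$ and that the projection is bounded is a welcome elaboration of a step the paper leaves implicit, but not a different argument.
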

\begin{proof}
``If''. Consider two cases. 

1. $Th = h$. Then $S_n h=h$. Thus, on $\ker(T-I)$ the sequence $(S_n)$ 
strongly converges to $I$.

2. $h=Tf-f$, $f\in B$. Then
\begin{equation}
  \label{eq:4.8}
  S_n h = \frac{T^n f -f}{n}\rightarrow 0\eqsp (n\rightarrow\infty ).
\end{equation}
Thus, on $\im(T-I)$ the sequence $(S_n)$ strongly converges to 0. This result 
extends to the closure $\overline{\im(T-I)}$ since the sequence $(\norm{S_n})$ 
is bounded. On the whole of $L$ the limit operator is as required. 

``Only if''. Let $\lim_{n\rightarrow\infty}S_nh=g$. Then 
\begin{equation*}
(T-I)g= \lim_ {n\rightarrow\infty}\frac{T^n h -h}{n} =0, 
\end{equation*}
i.e. $g\in\ker(T-I)$. Now let $g'=h-g$. Then 
\begin{equation*}
g'= \lim_ {n\rightarrow\infty}\frac{1}{n}\sum_{k=0}^{n-1}(h-T^k h) 
= \lim_ {n\rightarrow\infty}\frac{I-T}{n}\sum_{k=0}^{n-1}\sum_{i=0}^{k-1}T^ih, 
\end{equation*}
i.e. $g'\in\overline{\im(T-I)}$. Hence, $h=g'+g\in L$.
\end{proof}
\begin{cor}
\label{cor:met1}
Under conditions of {\em MET} $L$ is a closed subspace and
the topological direct decomposition
\begin{equation}
  \label{eq:4.77}
  L = \overline{\im(T-I)}\oplus\ker(T-I)   
\end{equation}
holds.
\end{cor}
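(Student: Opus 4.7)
The plan is to extract everything we need from the bounded projection $P := \lim_n S_n$ on $L$ supplied by the MET, plus a standard Cauchy argument that uses only $\sup_n\norm{S_n}<\infty$.

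First I would verify that the sum is direct. If $h\in\overline{\im(T-I)}\cap\ker(T-I)$, then on the one hand $S_nh\to h$ (since $Th=h$, so $S_nh=h$), and on the other hand $S_nh\to 0$ (this is the assertion of the MET on $\overline{\im(T-I)}$, established via \eqref{eq:4.8} extended by density using $\sup_n\norm{S_n}<\infty$). Hence $h=0$. Consequently, every $h\in L$ admits a \emph{unique} decomposition $h = h_1+h_2$ with $h_1\in\overline{\im(T-I)}$ and $h_2\in\ker(T-I)$; moreover $Ph = h_2$ and $(I-P)h = h_1$. Since $P$ is bounded on $L$ (being the strong limit of $S_n$ with $\sup_n\norm{S_n}<\infty$, by the uniform boundedness principle applied pointwise on $L$), both coordinate projections are continuous, which is exactly the statement that the decomposition \eqref{eq:4.77} is topological.

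Next I would show that $L$ is closed in $B$. Suppose $h_k\in L$ and $h_k\to h$ in $B$; I must prove $h\in L$, i.e.\ that $(S_nh)$ converges. Let $C=\sup_n\norm{S_n}<\infty$. Given $\varepsilon>0$, pick $k$ with $\norm{h-h_k}<\varepsilon/(3C)$. Because $h_k\in L$, the sequence $(S_nh_k)$ is Cauchy, so there is $N$ with $\norm{S_nh_k-S_mh_k}<\varepsilon/3$ for all $n,m\geq N$. Then, by the triangle inequality,
\[
\norm{S_nh-S_mh}\leq\norm{S_n(h-h_k)}+\norm{S_nh_k-S_mh_k}+\norm{S_m(h_k-h)}<\varepsilon
\]
for $n,m\geq N$. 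Hence $(S_nh)$ is Cauchy in $B$, therefore convergent, so $h\in L$.

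Combining these two steps yields that $L$ is a closed subspace of $B$ and that $L=\overline{\im(T-I)}\oplus\ker(T-I)$ is a topological direct sum decomposition. I do not expect any real obstacle here: the algebraic directness uses only the two convergence statements already in the MET, the topological directness is free from boundedness of $P$, and the closedness of $L$ is a routine Cauchy argument whose sole ingredient, $\sup_n\norm{S_n}<\infty$, is part of the MET's hypothesis. If there is a subtle point, it is remembering that the strong convergence of $S_n$ on $L$ together with uniform boundedness is what legitimately gives $P$ as a bounded operator on $L$, and hence the splitting has continuous coordinate projections.
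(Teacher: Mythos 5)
Your argument is correct and is essentially the proof the paper intends: the corollary is presented as an immediate consequence of the MET, with $L$ closed because it is the convergence set of the uniformly bounded sequence $(S_n)$ (your $\varepsilon/3$ argument), and the splitting topological because the limit operator $P$ is a bounded projection onto $\ker(T-I)$ annihilating $\overline{\im(T-I)}$, which forces $\overline{\im(T-I)}\cap\ker(T-I)=0$ and makes both coordinate maps $h\mapsto Ph$, $h\mapsto(I-P)h$ continuous. (Only a cosmetic quibble: the boundedness of $P$ needs no appeal to the uniform boundedness principle, since $\sup_n\norm{S_n}<\infty$ is already a hypothesis of the MET.)
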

In general, $L\neq B$. The equality $L=B$ just  means that the sequence 
$(S_n)$ strongly converges on the whole space $B$. 
\begin{cor}
\label{cor:met2}
Let the space $B$ be reflexive, and let the operator $T$ be such that 
$n^{-1}T^n$ uniformly tends to zero as $n\rightarrow\infty$ 
and $\sup_n\norm{S_n}<\infty$. Then $L=B$. 
\end{cor}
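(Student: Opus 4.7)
The plan is to show that under reflexivity every $h\in B$ decomposes as $h=g+(h-g)$ with $g\in\ker(T-I)$ and $h-g\in\overline{\im(T-I)}$, which gives $h\in L$. All the heavy lifting is already done by Theorem \ref{thm:4.5}; the only remaining ingredient is the extraction of a ``candidate'' fixed vector $g$, and this is precisely where reflexivity enters.

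First I would fix $h\in B$ and observe that the sequence $(S_n h)$ is norm bounded by $\sup_n\|S_n\|\cdot\|h\|<\infty$. Since $B$ is reflexive, the bounded set $\{S_n h\}$ is relatively weakly compact, so some subsequence $(S_{n_k}h)$ converges weakly to a vector $g\in B$. Second, I would check that $g\in\ker(T-I)$. Using the elementary identity
\[
TS_n-S_n=\frac{T^n-I}{n},
\]
the assumption that $n^{-1}T^n\to 0$ (uniformly, a fortiori strongly) gives $TS_nh-S_nh\to 0$ in norm. Since the bounded operator $T-I$ is continuous from the weak to the weak topology, passing to the weak limit along $(n_k)$ yields $(T-I)g=0$.

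Third, I would show $h-g\in\overline{\im(T-I)}$ by a Hahn--Banach argument. Suppose to the contrary that $h-g\notin\overline{\im(T-I)}$; then there exists $f^*\in B^*$ with $f^*(h-g)\neq 0$ and $f^*$ vanishing on $\im(T-I)$. The latter condition means $f^*(T\phi)=f^*(\phi)$ for all $\phi\in B$, i.e.\ $T^*f^*=f^*$, so by iteration $f^*(T^k\phi)=f^*(\phi)$ and consequently $f^*(S_n\phi)=f^*(\phi)$ for every $n\geq 1$ and every $\phi\in B$. Applied to $\phi=h$ this gives $f^*(S_{n_k}h)=f^*(h)$; but weak convergence $S_{n_k}h\rightharpoonup g$ yields $f^*(S_{n_k}h)\to f^*(g)$, hence $f^*(h-g)=0$, a contradiction. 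This proves the decomposition and thus $h\in L$, so $L=B$.

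I expect the main obstacle to be only presentational: one has to be careful that the ``$n^{-1}T^n\to 0$'' hypothesis is used at the right place (namely to identify $g$ as a $T$-fixed vector), and that in the Hahn--Banach step the annihilator of $\overline{\im(T-I)}$ is correctly identified with $\ker(T^*-I^*)$. Everything else is routine manipulation of the averages $S_n$ already exploited in the proof of Theorem \ref{thm:4.5}.
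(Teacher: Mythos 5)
Your proof is correct, but it follows a genuinely different route from the paper's. The paper argues by duality: it observes that $T^*$ on $B^*$ satisfies the same hypotheses, applies the MET (Theorem~\ref{thm:4.5}) to $T^*$ to get $\ker(T^*-I)\cap\overline{\im(T^*-I)}=0$, identifies this intersection with the annihilator of $L$ (an identification that itself uses reflexivity, since $(\ker(T-I))^{\perp}$ equals the norm closure of $\im(T^*-I)$ only when weak-$*$ and weak topologies on $B^*$ agree), and concludes $L=B$ because $L$ is already closed by Corollary~\ref{cor:met1}. You instead work directly in $B$: reflexivity enters through weak sequential compactness of the bounded orbit $\{S_nh\}$ (Eberlein--\v{S}mulian), the identity $TS_n-S_n=n^{-1}(T^n-I)$ identifies the weak limit $g$ as a fixed vector, and a Hahn--Banach separation puts $h-g$ into $\overline{\im(T-I)}$. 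This is essentially the classical Yosida--Kakutani argument. What each buys: the paper's proof is shorter and needs no compactness extraction, but hides the reflexivity in the annihilator bookkeeping and leans on the closedness of $L$; yours is self-contained at the level of vectors, exhibits the limit $g$ of $(S_{n_k}h)$ explicitly, and makes the role of each hypothesis ($\sup_n\norm{S_n}<\infty$ for boundedness, $n^{-1}T^n\to 0$ for fixedness of $g$) transparent. All steps in your argument check out, including the observation that $T^*f^*=f^*$ forces $f^*(S_nh)=f^*(h)$ for all $n$.
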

\begin{proof}
The operator  $T^*$ in the space  $B^*$ satisfies 
the same conditions. Applying MET to $T^*$ we conclude that 
$\ker(T^*-I)\cap\overline{\im(T^*-I)}=0$.       
Since this subspace of $B^*$ is the annihilator of $L$, we get $L=B$.  
\end{proof}
\begin{cor}
\label{cor:met3}
If the space $B$ is reflexive and the operator $T$ is power bounded then $L=B$. 
\end{cor}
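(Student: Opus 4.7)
The plan is to show that the hypotheses of Corollary \ref{cor:met2} are automatically implied by power boundedness, so that Corollary \ref{cor:met3} reduces immediately to the previous corollary. No genuine obstacle needs to be overcome; the content is purely a consistency check on norm estimates.

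First, let $C = \sup_{n\geq 0}\norm{T^n} < \infty$, which is finite by the power boundedness assumption. By the triangle inequality applied to the definition \eqref{eq:4.6} of $S_n$, I get
\[
\norm{S_n} \leq \frac{1}{n}\sum_{k=0}^{n-1}\norm{T^k} \leq C, \eqsp n\geq 1,
\]
so $\sup_n\norm{S_n} < \infty$. Also, $\norm{n^{-1}T^n} \leq C/n \rightarrow 0$ as $n\rightarrow\infty$, which is uniform convergence (in the operator norm) to the zero operator, a fortiori stronger than the strong convergence required in Theorem \ref{thm:4.5}.

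Thus both hypotheses of Corollary \ref{cor:met2} hold, and since $B$ is reflexive by assumption, that corollary yields $L = B$, completing the proof. The takeaway is that the uniform-decay condition $n^{-1}T^n \to 0$ appearing in the reflexive version of the MET is a very mild strengthening of power boundedness (indeed, implied by it), so reflexivity alone converts power boundedness into full strong convergence of the ergodic averages $S_n$ on the entire space.
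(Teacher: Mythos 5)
Your proposal is correct and is exactly the intended derivation: the paper states this corollary without proof immediately after Corollary \ref{cor:met2}, the implicit argument being precisely that power boundedness gives $\norm{S_n}\leq\sup_k\norm{T^k}$ and $\norm{n^{-1}T^n}\rightarrow 0$ in operator norm, so Corollary \ref{cor:met2} applies. (One small quibble with your closing remark: the condition $n^{-1}T^n\rightarrow 0$ is implied by power boundedness, so it is a weakening of it, not a ``mild strengthening''.)
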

This classical case of MET is due to Lorch \cite{lorch}. For the unitary operator 
in Hilbert space this fact was discovered by 
von Neumann \cite{neumann32} who proved it via the spectral theorem. 
The following corollary of MET is due to Lin and Sine, c.f. \cite{linsine83}, Theorem 1. 
\begin{cor} 
\label{thm:cesoper}
Under conditions of Corollary \ref{cor:met2} the equation 
\begin{equation}
\label{eq:opec}
Tf-f=h
\end{equation}
in the space $B$ is solvable if and only if the resolving series 
\begin{equation*}
h+Th+\cdots+T^nh+\cdots 
\end{equation*}
is Ces\`aro summable in the norm-topology. If the Ces\`aro sum is $s$ then $f=-s$ 
is a solution to \eqref{eq:opec}.   
\end{cor}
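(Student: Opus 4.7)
The plan is to translate the pointwise Ces\`aro calculus of Section~\ref{sec:cesu} (Propositions~\ref{thm:cesre} and~\ref{thm:cesreso}) into operator form and use the MET (Theorem~\ref{thm:4.5}) to identify the limits. Under the hypotheses of Corollary~\ref{cor:met2}, $L = B$, so the MET supplies a bounded projection $P$ of $B$ onto $\ker(T-I)$ along $\overline{\im(T-I)}$, and $S_n \to P$ strongly.

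The key computational step is the operator identity
\begin{equation*}
(I - T)\sigma_N \;=\; I \;-\; T\,S_{N+1},
\end{equation*}
where $\sigma_N = \sum_{k=0}^{N}\bigl(1-\tfrac{k}{N+1}\bigr)T^k$, so that $\sigma_N h$ is the $N$-th Ces\`aro mean of the resolving series $\sum T^k h$. This is a one-line reindexing computation, the operator analogue of \eqref{eq:sisi}. Applying the identity to $h$ and passing to the limit via MET gives $(I-T)\sigma_N h \to h - TPh = h - Ph$, using $TPh = Ph$ since $Ph \in \ker(T-I)$.

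For the forward implication, let $f$ be a solution. Telescoping $T^{k+1}f - T^k f = T^k h$ yields $s_n = T^{n+1}f - f$, hence
\begin{equation*}
\sigma_N h \;=\; \tfrac{N+2}{N+1}\,S_{N+2}f \;-\; \tfrac{1}{N+1}\,f \;-\; f.
\end{equation*}
By the MET applied to $f$, $S_{N+2}f \to Pf$, so $\sigma_N h \to s := Pf - f$, and $-s = f - Pf$ solves the equation because $TPf = Pf$ implies $T(f-Pf)-(f-Pf) = Tf - f = h$.

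For the backward implication, suppose $\sigma_N h \to s$ in norm. The identity of the second paragraph yields $(I-T)s = h - Ph$; this is \emph{not} yet $(I-T)s = h$, and the main obstacle is to squeeze out $Ph = 0$. The trick is that both $\ker(T-I)$ and $\overline{\im(T-I)}$ are $T$-invariant, so $P$ commutes with $T$ and hence with each $\sigma_N$. Therefore
\begin{equation*}
P(\sigma_N h) \;=\; \sigma_N(Ph) \;=\; Ph\cdot\sum_{k=0}^{N}\Bigl(1-\tfrac{k}{N+1}\Bigr) \;=\; \tfrac{N+2}{2}\,Ph.
\end{equation*}
Continuity of $P$ forces $P(\sigma_N h) \to Ps$, so the sequence $\tfrac{N+2}{2}Ph$ is norm-bounded, whence $Ph = 0$. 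Consequently $(I-T)s = h$, and $f = -s$ is a solution.
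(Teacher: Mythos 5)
Your proof is correct, and the forward direction is exactly the paper's argument: the identity $\sigma_N h = \tfrac{N+2}{N+1}S_{N+2}f - \tfrac{1}{N+1}f - f$ is the operator form of \eqref{eq:sita} from Proposition \ref{thm:cesreso}, and the paper (which says only that the proof is ``similar to that of Theorem \ref{thm:cesum}'') likewise invokes the MET to get $S_{N+2}f\to Pf$. Your backward direction, however, is a genuinely different route. The paper's intended argument is the operator form of the recurrence \eqref{eq:sisi} from Proposition \ref{thm:cesre}, namely $T\sigma_{N-1}h=\tfrac{N+1}{N}\bigl(\sigma_N h-h\bigr)$; letting $N\to\infty$ gives $Ts=s-h$ directly, with no projection, no MET, and no hypothesis on $B$ beyond boundedness of $T$ --- which is precisely why Remark \ref{rem:pin} can assert that reflexivity is not needed for the ``if'' part. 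You instead use $(I-T)\sigma_N=I-TS_{N+1}$, identify the limit of the right-hand side via the MET as $h-Ph$, and then kill $Ph$ by the commutation $P\sigma_N=\sigma_N P$ and the unboundedness of $\tfrac{N+2}{2}Ph$. All of these steps check out (the operator identity is correct, $P$ does commute with $T$ since both $\ker(T-I)$ and $\overline{\im(T-I)}$ are $T$-invariant, and $\sum_{k=0}^N(1-\tfrac{k}{N+1})=\tfrac{N+2}{2}$), but the detour through $P$ makes this half of your proof depend on the full strength of Corollary \ref{cor:met2}, so it would not yield Remark \ref{rem:pin}. A small simplification available to you even within your own scheme: convergence of $\sigma_N h$ already forces $S_{N+1}h=\tfrac{1}{N+1}s_N=\sigma_N h-\tfrac{N}{N+1}\sigma_{N-1}h\to 0$, which gives $Ph=0$ (indeed $(I-T)s=h$) without the commutation argument and without the MET.
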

The proof is similar to that of Theorem \ref{thm:cesum}.
In particular, we have the following counterpart of Corollary \ref{thm:ceserg}.
\begin{cor} 
\label{thm:cesergp}
With $\gamma\in L_p(X,\mu)$, $1<p<\infty$, the c.e. \eqref{eq:1} is solvable in $L_p(X,\mu)$ 
if and only if the resolving series \eqref{eq:3} is Ces\`aro summable in the $L_p$-norm  
to a function $\sigma(x)$. In this case $-\sigma(x)$ is an $L_p$-solution.
\end{cor}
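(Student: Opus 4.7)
The statement is a direct specialization of the operator-theoretic Corollary \ref{thm:cesoper} to the Koopman operator $T_F$ on $L_p(X,\mu)$, so the plan is essentially a verification that the hypotheses of that corollary (and, underlying it, of Corollary \ref{cor:met2}) are satisfied in this concrete Banach space.

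First I would take $B = L_p(X,\mu)$ with $1<p<\infty$ and $T = T_F$, and write the c.e.\ \eqref{eq:1} in its operator form \eqref{eq:11}, so that it coincides with the abstract equation \eqref{eq:opec} from Corollary \ref{thm:cesoper}. The resolving series \eqref{eq:3} is then exactly $h + Th + T^2h + \cdots$ for $h = \gamma$, and its Ces\`aro means in $L_p$-norm are what the statement refers to.

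Next I would verify the three hypotheses of Corollary \ref{cor:met2}. Reflexivity of $B$ holds because $L_p(X,\mu)$ is reflexive for $1<p<\infty$. Since $\mu$ is a finite invariant measure, the paragraph following Corollary \ref{cor:met3} records that $T_F$ acts isometrically on $L_p(X,\mu)$; in particular $\norm{T_F^n} = 1$ for all $n$, so $\norm{n^{-1}T_F^n} = 1/n \to 0$ (uniform, not merely strong, convergence to zero), and $\norm{S_n} \leq n^{-1}\sum_{k=0}^{n-1}\norm{T_F^k} \leq 1$, so $\sup_n\norm{S_n} < \infty$.

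With these hypotheses in hand, Corollary \ref{cor:met2} yields $L = B = L_p(X,\mu)$, so Corollary \ref{thm:cesoper} applies without any restriction on $h \in B$: the equation $T_F\varphi - \varphi = \gamma$ is solvable in $L_p(X,\mu)$ iff the resolving series is Ces\`aro summable in $L_p$-norm, and in that case $-\sigma$, where $\sigma$ is the Ces\`aro sum, is an $L_p$-solution. There is no real obstacle: all substantive work was done in proving MET and Corollaries \ref{cor:met2} and \ref{thm:cesoper}; the only point requiring attention is the uniform (rather than merely strong) convergence $n^{-1}T_F^n \to 0$, which is why reflexivity of $L_p$ is crucial and why the case $p=1$ (treated separately in Theorem \ref{thm:cesum} via IET) and $p=\infty$ (handled in Theorem \ref{thm:limsup}) are excluded here.
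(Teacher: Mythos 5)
Your proof is correct and follows essentially the same route as the paper: the corollary is presented there as an immediate specialization of Corollary \ref{thm:cesoper} (which holds under the hypotheses of Corollary \ref{cor:met2}), and the verification you supply --- reflexivity of $L_p(X,\mu)$ for $1<p<\infty$ together with the isometry, hence power boundedness, of $T_F$ recorded just before Theorem \ref{thm:4.5} --- is precisely what the paper relies on. Your closing observation about why $p=1$ and $p=\infty$ are excluded also matches Remark \ref{rem:pin}.
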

\begin{remark}
\label{rem:pin}
For the ``if'' part of Corollary \ref{thm:cesoper} the reflexivity 
of $B$ is not required. Thus, the cases $p=1,\infty$ can be included into this part 
of Corollary \ref{thm:cesergp}. 
\qed
\end{remark}
Comparing the Corollaries \ref{thm:cesergp} and \ref{thm:cesergp} we obtain 
\begin{cor} 
\label{cor:pae}
Let the measure $\mu$ be ergodic . If the resolving series \eqref{eq:3} is 
Ces\`aro summable a.e. to a function $\sigma\in L_p(X,\mu)$, $1<p<\infty$, then 
it is Ces\`aro summable to  $\sigma$ in the $L_p$-norm. 
\end{cor}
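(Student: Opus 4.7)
The plan is to chain together Corollaries \ref{thm:ceserg} and \ref{thm:cesergp}, and then verify that the two candidate limits (pointwise and norm) agree.

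First, I would invoke Corollary \ref{thm:ceserg}: since $\mu$ is ergodic and the resolving series \eqref{eq:3} is Ces\`aro summable a.e. to a function $\sigma\in L_p(X,\mu)$, the c.e.~\eqref{eq:1} is $L_p$-solvable, with $-\sigma$ as an $L_p$-solution. (Here the finiteness of $\mu$ gives $L_p\subset L_1$, so $\sigma$ also qualifies as the $L_1$-level Ces\`aro sum demanded by the hypothesis of Theorem \ref{thm:cesum} that underlies Corollary \ref{thm:ceserg}.)

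Next, having established $L_p$-solvability, I would apply Corollary \ref{thm:cesergp} in its reverse direction: it produces a function $\tilde\sigma\in L_p(X,\mu)$ such that the Ces\`aro averages $\sigma_N$ of \eqref{eq:3} converge to $\tilde\sigma$ in the $L_p$-norm, and $-\tilde\sigma$ is itself an $L_p$-solution.

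The final step, and the only place where anything beyond bookkeeping is needed, is to identify $\tilde\sigma$ with $\sigma$ almost everywhere. Since $\sigma_N\to\tilde\sigma$ in $L_p$-norm, one can extract a subsequence $(\sigma_{N_k})$ that converges to $\tilde\sigma$ a.e. But by hypothesis the full sequence $(\sigma_N)$ converges to $\sigma$ a.e., so $\sigma_{N_k}\to\sigma$ a.e. as well. Uniqueness of a.e. limits forces $\tilde\sigma=\sigma$ a.e., whence $\sigma_N\to\sigma$ in the $L_p$-norm. (Ergodicity could alternatively be used here: the two $L_p$-solutions $-\sigma$ and $-\tilde\sigma$ differ by an invariant function, necessarily constant a.e., and this constant must vanish, again by passing to an a.e.-convergent subsequence.) There is no real obstacle; the only subtle point is remembering that $L_p$-convergence implies a.e.-convergence only along a subsequence, so the comparison with the given pointwise limit must be made through that subsequence.
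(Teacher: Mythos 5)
Your proof is correct and follows essentially the same route as the paper, which simply says ``Comparing the Corollaries \ref{thm:ceserg} and \ref{thm:cesergp} we obtain'' the result; you have merely made explicit the identification of the two limits via an a.e.-convergent subsequence of the norm-convergent Ces\`aro means, a step the paper leaves implicit.
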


Now recall that the equation \eqref{eq:opec} in a Banach space $B$ 
is called {\em normally solvable} if $\im(T-I)$ is closed or, equivalently, 
if with $h$ running over $B$ it is solvable if and only if 
$h$ is annihilated by all linear functionals from $\ker(T^*-I)$. 
For this property the following ergodic criterion 
is due to Dunford \cite{dunford43} in the ``if'' part and 
due to Lin \cite{lin74} in the ``only if'' part. The paper 
\cite{lin74} also contains a proof of the ``if'' part different 
from that of \cite{dunford43}. 

\begin{theorem}
  \label{thm:4.6}
  Let $T$ be a bounded linear operator in a Banach space $B$ such that 
  $n^{-1}T^n$ uniformly tends to zero as $n\rightarrow\infty$. Then
  $\im(T-I)$ is closed if and only if the sequence $(S_n)$ converges
  uniformly. Moreover, in this case $L=B$ and the limit operator 
  is the projection onto $\ker(T-I)$ annihilating $\im(T-I)$.
\end{theorem}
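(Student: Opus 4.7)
The plan is to prove each implication separately, using the Mean Ergodic Theorem (Theorem \ref{thm:4.5}) together with the telescoping identity
\[
n(I - S_n) \;=\; \sum_{k=1}^{n-1}(I - T^k) \;=\; (I-T)\sum_{j=0}^{n-2}(n-1-j)T^j,
\]
which expresses $I - S_n$ as $(I-T)$ times a bounded operator.

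For the direction ``$\im(T-I)$ closed $\Rightarrow$ uniform convergence of $(S_n)$'', I would first show $S_n\to 0$ uniformly on $\im(T-I)$. Open mapping applied to $T-I:B\to\im(T-I)$ yields a constant $C$ such that every $y\in\im(T-I)$ equals $(T-I)x$ with $\|x\|\le C\|y\|$; combined with $S_n(T-I) = (T^n-I)/n$ this gives $\|S_n y\| \le C(\|T^n\|/n + 1/n)\|y\|$, so $\|S_n|_{\im(T-I)}\|\to 0$. Next I establish the algebraic decomposition $B = \ker(T-I)\oplus\im(T-I)$. Directness: a vector $p$ in the intersection satisfies $S_n p = p$ from the kernel side and $S_n p\to 0$ from the image side, hence $p=0$. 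Surjectivity: set $K = \ker(T-I)+\im(T-I)$, pass to the quotient $\bar B = B/K$, and note that $(T-I)B\subset K$ forces $\bar T = \bar I$, so $\|\bar I\|/n = \|\bar T^n\|/n \to 0$; this compels $\bar I = 0$, i.e.\ $K = B$. The projection $P$ onto $\ker(T-I)$ along $\im(T-I)$ is bounded by closed graph, and since $S_n = I$ on $\ker(T-I)$ while $S_n\to 0$ uniformly on $\im(T-I)$, it follows that $S_n\to P$ uniformly on all of $B$.

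For the reverse direction, assume $S_n\to P$ uniformly. Then $\sup_n\|S_n\|<\infty$, so the MET applies and gives $L = B$ with $P$ the bounded projection onto $\ker(T-I)$ along $M := \overline{\im(T-I)}$. Since $P|_M = 0$, the restriction $S_n|_M$ tends to $0$ uniformly; choose $N$ with $\|S_N|_M\|<1$, so that $I_M - S_N|_M$ is invertible on $M$ by the Neumann series. The subspace $M$ is $T$-invariant, and restricting the telescoping identity above to $M$ produces a bounded operator $D_N$ on $M$ with $N(I_M - S_N|_M) = (I_M - T_M)D_N$; composing on the right with $(I_M - S_N|_M)^{-1}$ exhibits a bounded right inverse of $I_M - T_M$, so $(I-T)M = M$. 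Therefore $\overline{\im(T-I)} = M = (I-T)M \subset \im(T-I)$, so $\im(T-I)$ is closed. Taken together, the two directions also deliver the final assertions ($L = B$ and the identification of the limit with the projection onto $\ker(T-I)$ annihilating $\im(T-I)$).

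The main obstacle is the decomposition $B = \ker(T-I)\oplus\im(T-I)$ in the forward direction: the directness and boundedness of $P$ are easy, but surjectivity $K = B$ requires the hypothesis $\|T^n\|/n\to 0$ in an essential way, through the quotient argument. Once this decomposition is in hand, the rest combines routinely the open mapping theorem, the MET, and the telescoping identity.
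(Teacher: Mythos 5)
Your reverse direction (uniform convergence of $(S_n)$ implies $\im(T-I)$ closed, together with the ``moreover'' clause via the MET) is sound. Note that the paper itself gives no proof of this theorem --- it is quoted from Dunford and Lin --- so there is no in-text argument to compare against. However, your forward direction contains a genuine gap at exactly the step you flag as the main obstacle: the surjectivity $K:=\ker(T-I)+\im(T-I)=B$. The quotient argument is a non sequitur. On $\bar B=B/K$ you correctly get $\bar T=\bar I$, hence $\|\bar T^n\|=\|\bar I\|$ for every $n$; but $\|\bar I\|$ is a \emph{fixed} number (equal to $1$ whenever $\bar B\neq 0$), so $\|\bar T^n\|/n=\|\bar I\|/n\rightarrow 0$ holds automatically and places no constraint whatever on $\bar I$. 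The hypothesis $\|T^n\|/n\rightarrow 0$ is perfectly compatible with $\|T^n\|\equiv 1$, so no contradiction arises from assuming $K\neq B$. (There is also a secondary problem: to make $B/K$ a normed space on which these operator norms mean anything you would need $K$ to be closed, which is not known at this stage.)

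The repair is the tool you already deploy in the other direction, applied now to $Y=\im(T-I)$, which \emph{is} closed by hypothesis here. Your Step 1 gives $\|S_n|_Y\|\rightarrow 0$, so for some $N$ the operator $(I-S_N)|_Y$ is invertible on $Y$ by the Neumann series; the telescoping identity $N(I-S_N)=(I-T)D_N$ with $D_NY\subset Y$ (since $Y$ is $T$-invariant) then yields $Y=(I-S_N)Y\subset(I-T)Y\subset Y$, i.e.\ $(I-T)Y=Y$. Now for arbitrary $x\in B$ one has $(I-T)x\in Y=(I-T)Y$, so $(I-T)(x-y)=0$ for some $y\in Y$, whence $x=(x-y)+y\in\ker(T-I)+\im(T-I)$. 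With surjectivity established this way, the rest of your forward direction (directness, boundedness of $P$ by the closed graph theorem, and $\|S_n-P\|\rightarrow 0$) goes through as written; this is essentially Lin's original argument.
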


Being considered as a criterion of the uniform converegence of $(S_n)$ 
this result is called the Uniform Ergodic Theorem (UET).

\section{The total attractor}
\label{sec:prat}

This short section contains an useful information from the general theory of 
dynamical systems. Let $(X,F)$ be a compact dynamical system, and let 
\begin{equation}
\label{eq:5.1}
  \Omega = \bigcap_{n\geq 1} F^n X , 
\end{equation}
c.f. \eqref{eq:attr}. Obviously, $\Omega$ is an invariant nonempty compact 
subset of $X$, and $\Omega\neq X$, except for the case of surjective $F$. 
\begin{lemma}
  \label{lem:5.1}
  The mapping $F|\Omega$ is surjective.
\end{lemma}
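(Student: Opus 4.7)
The plan is to exploit the finite intersection property on the fibre $F^{-1}(y)$ for an arbitrary $y\in\Omega$. Fix $y\in\Omega$; the goal is to produce $x\in\Omega$ with $Fx=y$.

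First I would observe that the iterated images $F^nX$ form a nested decreasing family of nonempty compact sets: indeed, $FX\subset X$ gives $F^{n+1}X=F(F^nX)\subset F^nX$, and each $F^nX$ is compact (hence closed) as the continuous image of the compact space $X$. Next, since $y\in\Omega\subset F^nX$ for every $n\ge 1$, we can write $y=F(z_n)$ with $z_n\in F^{n-1}X$; equivalently,
\[
K_n \;=\; F^{-1}(y)\cap F^{n-1}X\;\neq\;\emptyset,\eqsp n\ge 1.
\]

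The sets $K_n$ are closed (intersection of two closed sets, using that $F^{-1}(y)$ is closed by continuity of $F$ and closedness of $\{y\}$ in the Hausdorff space $X$), they are contained in the compact space $X$, and they are nested: $K_{n+1}\subset K_n$, because $F^nX\subset F^{n-1}X$. Hence $\{K_n\}$ has the finite intersection property in a compact space, so $\bigcap_{n\ge 1}K_n\neq\emptyset$. Picking any $x$ in this intersection yields $Fx=y$ together with $x\in F^{n-1}X$ for every $n\ge 1$, that is, $x\in\bigcap_{m\ge 0}F^mX=\Omega$, which is exactly the surjectivity claim.

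The only mildly delicate point is that we need $\{y\}$ to be closed in $X$, which requires $X$ to be at least $T_1$; this is implicit in ``compact dynamical system'' as used in the paper (compactness in the Krylov--Bogolyubov context is always taken in the Hausdorff sense). Apart from that, the argument is a routine compactness/finite-intersection-property manipulation, and I do not anticipate a genuine obstacle.
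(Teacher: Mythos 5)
Your proof is correct and is essentially the paper's own argument in dual form: the paper supposes $y\in\Omega\setminus F\Omega$, covers the compact fibre $F^{-1}\{y\}$ by the increasing open sets $G_n=X\setminus F^nX$, and derives a contradiction, which is precisely the open-cover restatement of your finite-intersection-property argument on the nested nonempty compacts $K_n=F^{-1}(y)\cap F^{n-1}X$. Your remark that one needs $X$ to be $T_1$ so that $F^{-1}(y)$ is closed applies equally to the paper's proof, which likewise relies on compactness of that fibre.
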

\begin{proof}
  Let $F\Omega\neq\Omega$, and let $x\in\Omega\setminus
  F\Omega$. Then $F^{-1}\{x\}\cap\Omega=\emptyset$ because
  $\Omega$ is invariant. Therefore, 
  \begin{equation*}
    F^{-1}\{x\}\subset \bigcup_{n\geq 1}G_n,\eqsp G_n = X\setminus F^nX.
  \end{equation*}
  Since $G_n$ is open and $G_n\subset G_{n+1}$, the compact set
  $F^{-1}\{x\}$ is contained in some $G_m$. Thus, $F^{-1}\{x\}\cap
  F^m X = \emptyset$, i.e. $x\not\in F^{m+1} X$, a fortiori, $x\notin\Omega$, 
a contradiction.
\end{proof}
In addition, we have 
\begin{lemma}
  \label{lem:5.2}
Every invariant set $Y$ such that $F|Y$ is surjective is contained in  $\Omega$.
\end{lemma}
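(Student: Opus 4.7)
The plan is to unwind the surjectivity of $F|Y$ inductively to place any point of $Y$ in every iterate $F^n X$, which is exactly what it means to lie in $\Omega$.

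First I would fix an arbitrary $y \in Y$ and construct, by induction, a sequence $(y_n)_{n \geq 0} \subset Y$ with $y_0 = y$ and $F y_{n+1} = y_n$ for all $n \geq 0$. The existence of such a sequence follows immediately from the hypothesis that $F|Y$ is surjective, applied recursively at each stage. Note that I do not need $Y$ itself to be invariant for this step, only the surjectivity onto $Y$.

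From the recurrence $F y_{n+1} = y_n$ I get $y = F^n y_n$ for every $n \geq 1$. Since $y_n \in Y \subset X$, this means $y \in F^n X$ for every $n \geq 1$, and therefore $y \in \bigcap_{n \geq 1} F^n X = \Omega$. As $y \in Y$ was arbitrary, this gives $Y \subset \Omega$, as claimed.

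There is no real obstacle here: the argument is purely a formal chase through the definitions, and invariance of $Y$ is not even needed for the conclusion (it only ensures that $F|Y$ makes sense as a self-mapping, which is what makes the hypothesis ``$F|Y$ is surjective'' meaningful in the first place). Together with Lemma \ref{lem:5.1}, this identifies $\Omega$ as the largest invariant subset of $X$ on which $F$ acts surjectively, the \emph{total attractor} of the system.
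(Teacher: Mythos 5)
Your argument is correct and is essentially the pointwise unpacking of the paper's one-line proof, which writes $Y = F^n Y \subset F^n X$ for all $n$ and concludes $Y \subset \Omega$. Your observation that only surjectivity (not invariance) is really used for the containment is accurate, and the backward-orbit construction is just the element-level form of the same identity.
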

\begin{proof}
  $Y=F^n Y\subset F^n X$ for all $n$, so $Y\subset\Omega$.
\end{proof}
Thus, {\em  $\Omega$ is the largest invariant subset of $X$ with surjective
restriction of $F$}. 

\begin{lemma}
  \label{lem:5.3}
  For any neighborhood $G\supset \Omega$ there exists $n$ such that
  $F^n X\subset G$.
\end{lemma}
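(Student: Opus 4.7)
The plan is to use a standard compactness argument via the finite intersection property. Let me outline the steps.

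First, I would observe that the sequence $(F^n X)_{n\geq 1}$ is decreasing: since $FX\subset X$, applying $F^n$ gives $F^{n+1}X=F^n(FX)\subset F^nX$. Moreover, each $F^nX$ is compact, being the continuous image of the compact space $X$; in the Hausdorff setting of a compact dynamical system it is therefore closed in $X$.

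Given an open neighborhood $G\supset\Omega$, define $C_n=F^nX\setminus G$. The plan is to show some $C_n$ is empty, which is exactly the conclusion $F^nX\subset G$. Each $C_n$ is a closed subset of the compact space $X$, and the sequence $(C_n)$ is decreasing by the monotonicity of $(F^nX)$. Furthermore,
\[
\bigcap_{n\geq 1}C_n=\Bigl(\bigcap_{n\geq 1}F^nX\Bigr)\setminus G=\Omega\setminus G=\emptyset.
\]

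Now I would invoke the compactness of $X$: the open sets $X\setminus C_n$ are increasing and cover $X$, so a finite subcover exists, and since the cover is nested, a single $X\setminus C_n$ must equal $X$. Hence $C_n=\emptyset$ for some $n$, which gives $F^nX\subset G$ as required.

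The argument is essentially routine once the right compact-closed decreasing family is identified; the only subtle point is ensuring the $F^nX$ are closed, which is automatic in the Hausdorff setting implicit in ``compact dynamical system'' (as used throughout the paper, e.g.\ in connection with regular Borel measures). I do not foresee a substantive obstacle beyond this observation.
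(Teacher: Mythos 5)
Your argument is correct and is essentially the paper's own proof: both note that $\bigcap_{n\geq 1}(F^nX\setminus G)=\Omega\setminus G=\emptyset$ and conclude by compactness of the closed decreasing family $F^nX\setminus G$ that some member is empty. The extra remarks on monotonicity and closedness just make explicit what the paper leaves implicit.
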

\begin{proof}
We have
 \begin{equation*}
    \bigcap_{n\geq 1}\left(F^n X\setminus G\right) = \Omega\setminus G
    = \emptyset .
\end{equation*}
Since all $F^n X\setminus G$ are compact, there is $n$ such
  that $F^n X\setminus G=\emptyset$.
\end{proof}
In view of Lemma \ref{lem:5.3} let us call $\Omega$ the {\em total 
attractor} for the dynamical system $(X,F)$. Actually, this construction 
is classical, coming back to Birkhoff \cite{birk27}.  
\begin{cor}
  \label{cor:5.11}
  The $\omega$-limit sets of all orbits are contained in $\Omega$.  
\end{cor}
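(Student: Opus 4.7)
My plan is to prove the corollary directly from the definition \eqref{eq:5.1} of $\Omega$, using the compactness of the sets $F^n X$ (which follows from the compactness of $X$ and continuity of $F$). The key observation is that the orbit $(F^k x)_{k \geq 0}$ eventually lies in each $F^n X$: indeed, for all $k \geq n$ we have $F^k x = F^n(F^{k-n} x) \in F^n X$.

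So let $x \in X$ be arbitrary and let $y$ be a point in the $\omega$-limit set of the orbit $O_F(x)$. By definition, there exists a subsequence $k_i \to \infty$ with $F^{k_i} x \to y$. Fix an arbitrary $n \geq 1$. For all sufficiently large $i$ we have $k_i \geq n$, whence $F^{k_i} x \in F^n X$. Since $F^n X$ is the continuous image of the compact set $X$, it is compact and hence closed, so the limit $y$ lies in $F^n X$. As $n$ was arbitrary, $y \in \bigcap_{n \geq 1} F^n X = \Omega$, which is exactly what we wanted.

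There is no real obstacle here: the statement is essentially immediate from the construction of $\Omega$ once one uses that each $F^n X$ is closed. An alternative would be to invoke Lemma \ref{lem:5.3} instead: for every open neighborhood $G \supset \Omega$ one has $F^n X \subset G$ for some $n$, hence all but finitely many points of the orbit lie in $G$, forcing every $\omega$-limit point into $\overline{G}$; intersecting over a neighborhood base of $\Omega$ and using that $\Omega$ is closed would yield the same conclusion. The first approach is cleaner and avoids appealing to Lemma \ref{lem:5.3}, so that is the one I would carry out.
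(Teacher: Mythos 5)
Your proof is correct; the paper states this corollary without proof, as an immediate consequence of the construction of $\Omega$ (its placement right after Lemma \ref{lem:5.3} suggests the second route you sketch, but the direct argument you carry out — the tail of every orbit lies in each $F^nX$, and each $F^nX$ is compact, hence closed — is exactly the intended, essentially trivial, verification). No gap.
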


Now let $\mu$ be a regular Borel measure on $X$. We denote by $\supp\mu$ its
\defin{support}, i.e. the set of $x\in X$ such that $\mu(U)>0$ for
every neighborhood $U$ of $x$. It is easy to see that the set $\supp\mu$ is
closed, hence it is compact. For every function $\psi\in L_1(X,\mu)$ we have 
  \begin{equation*}
  \int_X\psi(x)\dmes\mu = \int_{\supp\mu}\psi(x)\dmes\mu.  
   \end{equation*}
\begin{lemma}
  \label{lem:5.4}
  For any invariant regular Borel measure $\mu$ the set $M=\supp\mu$ is invariant
  and the restriction $F|M$ is surjective.
\end{lemma}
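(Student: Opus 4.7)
The plan is to verify the two claims separately, both by contradiction, using the defining property of the support (every open neighborhood of a support point has positive measure) together with the invariance equation $\mu(F^{-1}A) = \mu(A)$.

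For invariance $FM \subset M$, suppose $x \in M$ but $Fx \notin M$. Then there is an open neighborhood $V$ of $Fx$ with $\mu(V) = 0$. By continuity of $F$, the preimage $F^{-1}V$ is an open neighborhood of $x$, and by $F$-invariance of $\mu$ we have $\mu(F^{-1}V) = \mu(V) = 0$. This contradicts $x \in \supp \mu$.

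For surjectivity of $F|M$, I first note that since $M$ is compact (being closed in the compact space $X$) and $F$ is continuous, the image $FM$ is compact and hence closed. Suppose $y \in M \setminus FM$. Choose an open set $W$ with $y \in W$ and $W \cap FM = \emptyset$. Then $F^{-1}W \cap M = \emptyset$, i.e. $F^{-1}W \subset X \setminus M$. Since $X \setminus \supp\mu$ is an open set of measure zero for a regular Borel measure on a compact space (its compact subsets are covered by finitely many open null sets and have measure zero by inner regularity), we conclude $\mu(F^{-1}W) = 0$. Invariance gives $\mu(W) = 0$, contradicting $y \in \supp\mu$.

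The key tool in both steps is the same: a would-be violation produces an open set of measure zero whose preimage (or whose image under the inclusion $W \subset X \setminus M$) meets the support, which is impossible. The only point that requires some care is the auxiliary fact $\mu(X \setminus \supp\mu) = 0$ used in step two; everything else is a routine application of continuity and of the invariance identity.
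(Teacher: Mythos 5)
Your proof is correct. The first half (invariance of $M$) is the same argument as the paper's, just phrased contrapositively: a neighborhood of $Fx$ pulls back to a neighborhood of $x$ of equal measure.

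For surjectivity your route is genuinely different from the paper's. The paper takes a point of $M\setminus FM$, invokes the Urysohn Lemma to produce a nonnegative $\phi\in C(X)$ vanishing on $FM$ but not on $M$, and derives a contradiction from the integral form of invariance, $\int_X\phi\dmes\mu=\int_X\phi(Fx)\dmes\mu=\int_M\phi(Fx)\dmes\mu=0$. You instead stay entirely at the level of sets: separate $y\in M\setminus FM$ from the closed set $FM$ by an open $W$, observe $F^{-1}W\subset X\setminus\supp\mu$, and use $\mu(F^{-1}W)=\mu(W)$ together with $\mu(X\setminus\supp\mu)=0$. Note that the paper also quietly relies on that last fact (it is exactly the identity $\int_X\psi\dmes\mu=\int_{\supp\mu}\psi\dmes\mu$ stated just before the lemma), but leaves it unproved, whereas you supply the inner-regularity argument for it. Your version is more elementary --- no Urysohn, no integration, only the set form $\mu(F^{-1}A)=\mu(A)$ of invariance --- at the cost of having to justify the null-complement fact explicitly; the paper's version outsources the topology to Urysohn and gets a one-line computation. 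Both require $X$ compact Hausdorff (you for closedness of $FM$ and inner regularity by compacta, the paper for normality).
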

\begin{proof}
  Let $x\in M$, $y=Fx$. For any neighborhood $V$ of $y$ the preimage
  $F^{-1}V$ is a neighborhood of $x$ and $\mu(V)=\mu(F^{-1}V)>0$, 
  hence $y\in M$. Thus, $M$ is invariant. Now assume $FM\neq M$. 
  By the Uryson Lemma there
  exists a nonnegative continuous function $\phi$ such that $\phi|{FM}=0$,
  $\phi\neq 0$. However, 
  \begin{equation*}
  \int_X\phi(x)\dmes\mu = \int_X \phi(Fx)\dmes\mu 
  =  \int_M \phi(Fx)\dmes\mu  = 0, 
   \end{equation*}
a contradiction.
\end{proof}

Denote by $\Omega_0$ the union of supports of all invariant regular Borel 
measures. Combining Lemmas~\ref{lem:5.2} and~\ref{lem:5.4} we
obtain
\begin{cor}
  \label{cor:5.5}
  $\Omega_0\subset\Omega$.
\end{cor}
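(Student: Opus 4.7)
The plan is essentially a one-line combination of the two preceding lemmas, so I will simply articulate the chain of implications. Fix an arbitrary invariant regular Borel measure $\mu$ on $X$, and write $M = \supp \mu$. My first step is to invoke Lemma~\ref{lem:5.4}, which tells me two things about $M$: it is an invariant subset of $X$, and the restriction $F|M$ is surjective. (The proof of that lemma, which has already been carried out above via a Urysohn-function argument, is the only nontrivial ingredient; from here on everything is formal.)

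Next I apply Lemma~\ref{lem:5.2}, which characterizes the total attractor $\Omega$ as the \emph{largest} invariant subset of $X$ on which $F$ restricts to a surjection. Since $M$ satisfies exactly those two hypotheses, it follows that $M \subset \Omega$. Taking the union of the supports of all invariant regular Borel measures on $X$ then gives
\[
\Omega_0 \;=\; \bigcup_\mu \supp \mu \;\subset\; \Omega,
\]
which is precisely the desired conclusion.

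There is no real obstacle at the level of the corollary itself; the substantive work was done in Lemma~\ref{lem:5.4} (invariance and surjectivity of $\supp \mu$) and Lemma~\ref{lem:5.1} together with Lemma~\ref{lem:5.2} (the maximality characterization of $\Omega$). The present corollary is just the formal consequence of putting those two facts side by side.
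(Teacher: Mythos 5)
Your argument is exactly the paper's: Lemma~\ref{lem:5.4} gives that each $\supp\mu$ is invariant with $F|\supp\mu$ surjective, Lemma~\ref{lem:5.2} then places it inside $\Omega$, and the union over all invariant regular Borel measures yields $\Omega_0\subset\Omega$. This matches the paper's stated proof (``Combining Lemmas~\ref{lem:5.2} and~\ref{lem:5.4}'') and is correct.
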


In general, $\Omega_0\neq \Omega$. For example, if $X$ is the closed
interval $[0,1]\subset\R$ and $Fx=x^2$, then $\Omega=X$, but
$\Omega_0$ is the two-point set $\{0,1\}$.

The concept of total attractor plays an important role in our theory \cite{lyubich88} of 
dissipative systems. In particular, we have 
\begin{theorem}
  \label{thm:7.2}
  If a compact dynamical system $(X,F)$ is dissipative then its restriction to
  the total attractor $\Omega$ is conservative.
\end{theorem}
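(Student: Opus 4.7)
I argue by contradiction. Suppose there exist $x_0,y_0\in\Omega$ with $d(Fx_0,Fy_0)<d(x_0,y_0)$. The key tool is Lemma~\ref{lem:5.1}, which gives that $F|\Omega$ is surjective, so preimages can be taken inside $\Omega$; together with the non-expansiveness~\eqref{eq:7.3d} and the compactness of $\Omega$, this will let me compare distances at a cluster point of a preimage chain with the distance at $(x_0,y_0)$ itself.

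Using Lemma~\ref{lem:5.1} I recursively choose $x_n,y_n\in\Omega$ with $Fx_{n+1}=x_n$ and $Fy_{n+1}=y_n$. Non-expansiveness yields
\[
d(x_{n+1},y_{n+1})\geq d(Fx_{n+1},Fy_{n+1})=d(x_n,y_n),
\]
so the sequence $\bigl(d(x_n,y_n)\bigr)$ is non-decreasing and bounded by $\mathrm{diam}(\Omega)$, hence converges to some $L\geq d(x_0,y_0)$. By compactness of $\Omega\times\Omega$, a subsequence $(x_{n_k},y_{n_k})$ converges to a pair $(x^*,y^*)\in\Omega\times\Omega$ with $d(x^*,y^*)=L$. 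The bridge back to $(x_0,y_0)$ is the non-expansive estimate
\[
d(F^{n_k}x^*,x_0)=d(F^{n_k}x^*,F^{n_k}x_{n_k})\leq d(x^*,x_{n_k})\to 0,
\]
and symmetrically $F^{n_k}y^*\to y_0$.

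Set $a_n:=d(F^nx^*,F^ny^*)$. By non-expansiveness $(a_n)$ is non-increasing, and continuity of $d$ gives $a_{n_k}\to d(x_0,y_0)$; since a non-increasing sequence is forced to its subsequential limit, $a_n\to d(x_0,y_0)$, and in particular $a_{n_k+1}\to d(x_0,y_0)$. On the other hand, continuity of $F$ yields $F^{n_k+1}x^*\to Fx_0$ and $F^{n_k+1}y^*\to Fy_0$, so also $a_{n_k+1}\to d(Fx_0,Fy_0)$. Hence $d(Fx_0,Fy_0)=d(x_0,y_0)$, contradicting the hypothesis. The main subtlety of the plan is that the preimage construction naturally produces an ``enlarged'' cluster pair $(x^*,y^*)$ at distance $L\geq d(x_0,y_0)$ rather than at $(x_0,y_0)$; the contradiction is extracted only at the last step, by comparing the consecutive terms $a_{n_k}$ and $a_{n_k+1}$ of a monotone sequence that has already been forced to converge.
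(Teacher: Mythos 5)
Your proof is correct, and it is worth noting that the paper itself gives no proof of this statement: Theorem \ref{thm:7.2} is quoted from the author's earlier work \cite{lyubich88}, where (as the paper remarks in connection with the companion Theorem \ref{thm:7.4}) the arguments go through the theory of almost periodic representations of topological semigroups. Your argument is a genuinely different, self-contained and elementary route: it uses only Lemma \ref{lem:5.1} (surjectivity of $F|\Omega$), the non-expansiveness \eqref{eq:7.3d}, and compactness. The logic checks out at every step: the backward orbit $(x_n,y_n)$ with $F^{n}x_{n}=x_0$, $F^{n}y_{n}=y_0$ gives a non-decreasing distance sequence converging to $L\geq d(x_0,y_0)$; the cluster pair $(x^*,y^*)$ satisfies $F^{n_k}x^*\to x_0$ and $F^{n_k}y^*\to y_0$ via $d(F^{n_k}x^*,x_0)\leq d(x^*,x_{n_k})\to 0$; and the monotone sequence $a_n=d(F^nx^*,F^ny^*)$, forced to converge to $d(x_0,y_0)$ by its subsequence $a_{n_k}$, must have $a_{n_k+1}$ converge simultaneously to $d(x_0,y_0)$ and (by continuity of $F$) to $d(Fx_0,Fy_0)$, which kills the assumed strict contraction. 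What your approach buys is a short metric-space proof requiring no representation theory, which fits naturally into Section \ref{sec:prat} alongside the other elementary lemmas about $\Omega$; what it does not recover is the stronger structural information of Theorem 6 of \cite{lyubich88} (of which the paper's Theorems \ref{thm:7.2} and \ref{thm:7.4} are both consequences). One cosmetic remark: you never actually use that $x^*,y^*$ lie in $\Omega$ rather than merely in $X$, so the appeal to compactness of $\Omega\times\Omega$ could be weakened to compactness of $X\times X$, though stating it your way is harmless.
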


A compact dynamical system $(X,F)$ with the total attractor $\Omega$ 
is called \defin{topologically $\Omega$-transitive} if the restriction 
$(\Omega,F|\Omega)$ is topologically transitive.
\begin{cor} 
  \label{thm:7.5}
  For any compact topologically $\Omega$-transitive dissipative dynamical system
  $(X,F)$ the invariant regular Borel measure $\mu$ such that $\mu(X)=1$ is unique.
\end{cor}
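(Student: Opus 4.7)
The plan is to reduce the uniqueness question on $X$ to uniqueness of the Haar measure on a compact monothetic group, using the total-attractor machinery of this section together with Theorem~\ref{thm:7.2} and Theorem~\ref{thm:7.4}.

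First I would show that every invariant regular Borel probability measure $\mu$ on $X$ is in fact concentrated on $\Omega$. By Lemma~\ref{lem:5.4} the set $M=\supp\mu$ is invariant and $F|M$ is surjective. Lemma~\ref{lem:5.2} then forces $M\subset\Omega$, so $\mu(\Omega)=1$ and $\mu$ is completely determined by its restriction to $\Omega$, viewed as an $F|\Omega$-invariant regular Borel probability measure on the compact space $\Omega$. Thus it suffices to prove uniqueness of such a measure on the subsystem $(\Omega,F|\Omega)$.

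Next I would combine the two structural results available for this subsystem. By hypothesis $(X,F)$ is dissipative, so Theorem~\ref{thm:7.2} guarantees that the restricted system $(\Omega,F|\Omega)$ is conservative, i.e. $F|\Omega$ is an isometry for a metric equivalent to the original one. By $\Omega$-transitivity, $(\Omega,F|\Omega)$ is also topologically transitive. Theorem~\ref{thm:7.4} then applies and furnishes a homeomorphism $\Omega\to G$ conjugating $F|\Omega$ to a shift $\tau_g$, where $G$ is a compact monothetic group and $g\in G$ is a generator, i.e. $[g]$ is dense in $G$.

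Finally I would invoke Proposition~\ref{prop:meha}: for such $G$ and $g$, every $\tau_g$-invariant regular Borel probability measure coincides with the normalized Haar measure $\nu$, and is therefore unique. Transporting this uniqueness back through the homeomorphism gives uniqueness of the invariant regular Borel probability measure on $\Omega$, and hence of $\mu$ on $X$ by the first step. The only mildly delicate point is checking that all these reductions preserve the class of regular Borel probability measures, but this is immediate since a homeomorphism of compact spaces induces a bijection between such measures on the two spaces, and the inclusion $\supp\mu\subset\Omega$ is a genuine equality of measures once we regard $\mu$ as a measure on $\Omega$. No additional estimates are needed.
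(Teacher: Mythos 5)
Your proposal is correct and follows essentially the same route as the paper: the reduction $\supp\mu\subset\Omega$ (which you derive from Lemmas~\ref{lem:5.2} and~\ref{lem:5.4}, i.e.\ exactly Corollary~\ref{cor:5.5}), then Theorem~\ref{thm:7.2}, Theorem~\ref{thm:7.4}, and Proposition~\ref{prop:meha} identifying the transported measure with the Haar measure. No substantive difference from the paper's proof.
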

\begin{proof}
 By Corollary~\ref{cor:5.5} \mbox{$\supp\mu\subset\Omega$}. Therefore, 
the measure $\mu|\Omega$ is invariant for $F|\Omega$.
This restriction is conservative (Theorem~\ref{thm:7.2}).
Hence, the dynamical system $(\Omega,F|\Omega)$ 
is homeomorphic to $(G,\tau_{g})$ where $G$ is a monothetic compact 
group and $g$ is its generator (Theorem~\ref{thm:7.4}).
By Proposition \ref{prop:meha} the image of the measure $\mu|\Omega$ is 
the Haar measure $\nu$ on $G$. 
\end{proof}

\section{The normal solvability in $C(X)$}
\label{sec:noso}

The following result was obtained in \cite{belitskii98}. 
\begin{theorem}
  \label{thm:6.1}
  Let $(X,F)$ be a compact dynamical system. The c.e.~\eqref{eq:1} is
  normally solvable in $C(X)$ if and only if $F$ is preperiodic.
\end{theorem}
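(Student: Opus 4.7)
The plan is to prove the two directions separately, using the preperiodic theory of Section \ref{sec:cesu} for one half and the Dunford-Lin UET for the other.

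For the ``if'' direction, suppose $F^{l+p}=F^l$ and consider the bounded linear operator $L:C(X)\to C(X)$ defined by $(L\gamma)(x)=\sum_{k=0}^{p-1}\gamma(F^{k+l}x)$. Theorem \ref{thm:preper} together with Corollary \ref{cor:soprepe} identifies $\im(T_F-I)$ as exactly $\ker L$, which is closed; this settles normal solvability.

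For the ``only if'' direction, I assume $\im(T_F-I)$ is closed and aim to produce $(l,p)$ with $F^{l+p}=F^l$. Since $T_F$ is a contraction on $C(X)$, the trivial estimate $n^{-1}\norm{T_F^n}\to 0$ lets the UET (Theorem \ref{thm:4.6}) apply, so the averages $S_n=n^{-1}\sum_{k=0}^{n-1}T_F^k$ converge in operator norm to a projection $P$ satisfying $T_FP=PT_F=P$. Identifying $C(X)^*$ with signed regular Borel measures under the total variation norm and setting $\mu_{n,x}=n^{-1}\sum_{k=0}^{n-1}\delta_{F^kx}$, the measure $\mu_x$ representing the linear functional $f\mapsto(Pf)(x)$ is a probability measure on $X$, and operator-norm convergence becomes
\[
\sup_{x\in X}\norm{\mu_{n,x}-\mu_x}_{TV}\to 0,
\]
while $T_FP=P$ yields $\mu_{Fx}=\mu_x$ along orbits.

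The heart of the argument is extracting preperiodicity from this uniform total variation bound. I fix $N$ such that $\norm{\mu_{n,x}-\mu_x}_{TV}<1/4$ for all $x\in X$ and $n\geq N$. Applying the bound with $x$ replaced by $F^{jN}x$ and using $\mu_{F^{jN}x}=\mu_x$, I obtain
\[
\mu_x\bigl(\{F^{jN}x,\ldots,F^{(j+1)N-1}x\}\bigr)>3/4,\qquad j=0,1,2,\ldots.
\]
If $O_F(x)$ were infinite (so all $F^kx$ distinct), the blocks for $j=0$ and $j=1$ would be disjoint subsets of total $\mu_x$-mass exceeding $3/2$, contradicting $\mu_x(X)=1$; hence every point is preperiodic, with some preperiod $l_x$ and period $p_x$. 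The case $j=0$ then forces $N>l_x$ (otherwise the block lies in the preperiodic tail, disjoint from the cyclic support of $\mu_x$), so all preperiods are bounded by $N$. The case $j=1$ places $N$ iterates on the cycle, giving at most $\min(N,p_x)$ distinct points each of $\mu_x$-mass $1/p_x$, which forces $p_x<4N/3$. Setting $l=N$ and $p$ the l.c.m.\ of the finitely many realized periods then yields $F^{l+p}=F^l$ on all of $X$.

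The main obstacle is the passage from operator-norm convergence of $S_n$ to combinatorial information about individual orbits. The critical lever is the orbit-invariance $\mu_{Fx}=\mu_x$: it upgrades a single uniform bound into a statement about every block of $N$ consecutive iterates, which in turn lets me simultaneously bound orbit lengths, preperiods, and periods from one and the same inequality.
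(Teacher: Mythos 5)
Your proof is correct, and after the common opening move (the Dunford--Lin UET applied to the contraction $T_F$) your ``only if'' argument takes a genuinely different route from the paper's. The paper works structurally: it shows that the limit projection $P$ kills every $\gamma$ vanishing on the total attractor $\Omega$, uses Urysohn's lemma to force $F^lX=\Omega$, restricts to $\Omega$ where $T_F$ becomes an isometric endomorphism of the Banach algebra $C(\Omega)$ with $1$ isolated in its spectrum, and then invokes the Kamowitz--Scheinberg theorem to conclude that $F|\Omega$ is periodic. You instead represent $(Pf)(x)$ by orbit measures $\mu_x$, upgrade operator-norm convergence to a uniform total-variation bound $\sup_x\norm{\mu_{N,x}-\mu_x}_{TV}<1/4$, and extract preperiodicity by a purely combinatorial mass count on blocks of $N$ consecutive iterates. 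This buys a more elementary and self-contained argument (no attractor, no spectral theory, no Kamowitz--Scheinberg), at the cost of two small claims you assert without proof: that $\mu_x$ is supported on the cycle of $x$ and is uniform there. Both do hold --- $\mu_x$ is $F$-invariant because $PT_F=P$, is supported on the finite orbit closure because it is a weak-$*$ limit of measures carried by $O_F(x)$, and an invariant probability measure on a finite preperiodic orbit must sit uniformly on the cycle --- but they deserve a sentence each. In fact you can bypass both: the disjointness contradiction $\mu_x(B_0)+\mu_x(B_1)>3/2$ already forces $F^ix=F^{N+j}x$ for some $0\le i,j\le N-1$, which simultaneously gives $l_x\le N-1$ and $p_x\le 2N-1$, so the lcm step goes through with no information about the fine structure of $\mu_x$. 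The ``if'' direction (identifying $\im(T_F-I)$ with the kernel of the bounded averaging operator $L$ via Theorem \ref{thm:preper}) is essentially the paper's argument made explicit.
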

Thus, as long as $F$ is not preperiodic, there is a continuous
function $\gamma$ such that the TNC \eqref{eq:14} is fulfilled 
for all invariant regular Borel measures $\mu$, but the c.e.~\eqref{eq:1} has no
continuous solutions. 

\begin{proof}
  The ``if'' part follows directly from Theorem~\ref{thm:preper}.
  Conversely, assume that the c.e.~\eqref{eq:1} is normally solvable in $C(X)$,
  i.e. $\im(T-I)$ is closed for $T=T_F$. Since $T$ is a contraction, the UET 
  (i.e., Theorem~\ref{thm:4.6}) yields
  \begin{equation*}
    \lim_{n\rightarrow\infty}\norm{\frac{1}{n} \sum_{k=0}^{n-1}T^k - P} =0
  \end{equation*}
  where $P$ is the projection onto $\ker(T-I)$ annihilating
  $\im(T-I)$. Hence, for every $\varepsilon>0$ there exists
  $n=n(\varepsilon)$ such that 

  \begin{equation}
    \label{eq:6.2}
    \abs{\frac{1}{n} \sum_{k=0}^{n-1}
      \gamma(F^k x)  - (P\gamma)(x)} < \varepsilon\norm{\gamma}, 
   \eqsp n\geq n(\varepsilon),\eqsp x\in X, 
  \end{equation}
for all $\gamma\in C(X)$, $\gamma\neq 0$.

  Let $\Omega$ be the total attractor of our system, 
  and let $\gamma|\Omega=0$. Then $\gamma(F^k x) = 0$ for $x\in\Omega$
  and for all $k$. Therefore,
  $\abs{(P\gamma)(x)}<\varepsilon\norm{\gamma}$, $x\in\Omega$, thus 
$P\gamma|\Omega=0$. Since $P=PT^m$ for every $m$, we have 
$(P\gamma)(x)=(P\gamma)(F^mx)$ for all $x\in X$. Hence, $(P\gamma)(x)= 0$
by Corollary \ref{cor:5.11}, and \eqref{eq:6.2} reduces to 

  \begin{equation*}
    \abs{\frac{1}{n} \sum_{k=0}^{n-1} \gamma(F^k x)} <
    \varepsilon\norm{\gamma}, \eqsp n\geq n(\varepsilon),\eqsp x\in X. 
  \end{equation*}
For $\varepsilon=1$ and $l=n(1)-1$ we get
  \begin{equation}
    \label{eq:6.3}
    \abs{\frac{1}{l+1} \sum_{k=0}^{l} \gamma(F^k x)} < \norm{\gamma}, \eqsp x\in X.
  \end{equation}

  The inequality \eqref{eq:6.3} implies $F^l X=\Omega$. Indeed, let $x\in X$ be such that
  $F^l x\not\in\Omega$, a fortiori, $F^k x\not\in\Omega$ for $0\leq
  k\leq l$. By the Uryson Lemma there exists $\gamma\in C(X)$ such that
  $\gamma|\Omega=0$, $\gamma(F^k x)=1$, $0\leq k\leq l$, and
  $\norm{\gamma}=1$. This contradicts~\eqref{eq:6.3}.

  It remains to prove that $F|\Omega$ is periodic.
  First, we show that the c.e.~\eqref{eq:1} remains normally solvable after 
  restriction to $C(\Omega)$. Let $\gamma\in C(\Omega)$ 
  be annihilated by all invariant regular Borel 
  measures on $\Omega$. Denote by $\tilde\gamma$ a continuous extension 
  of $\gamma$ to $X$, and let $\mu$ be an invariant regular Borel 
  measure on $X$. By Corollary~\ref{cor:5.5} the measure $\mu|\Omega$ is 
 invariant for $F|\Omega$ and 
  \begin{equation*}
    \int_X \tilde\gamma\dmes\mu = \int_\Omega \gamma\dmes(\mu|\Omega) = 0. 
  \end{equation*}
By assumption, the equation $\varphi(Fx)-\varphi(x) = \tilde\gamma(x)$ on $X$ has 
a continuous solution. Its restriction to $\Omega$ satisfies \eqref{eq:1}.

  Now one can assume $\Omega=X$. 
  Then $F$ is surjective by Lemma~\ref{lem:5.1}. Accordingly, the operator $T$ is
  isometric. Its spectrum $\sigma(T)$ contains the point $\lambda =1$  
  (as an eigenvalue with the eigenfunction $\id$). This 
  point is isolated in $\sigma(T)$ since $\im(I-T)$ is closed, see \cite{taylorlay80},
  p.330. Hence, $T$ is invertible, otherwise $\sigma(T)$ would be the unit disk 
  $\left\{ x\in\C\setsp :\setsp \abs{\lambda}\leq 1\right\}$, see 
\cite{lyubich92-funcan}, p.185. On the other
  hand, $T$ is an endomorphism of the Banach algebra $C(X)$. Therefore, $T$
  is an automorphism. By the Kamowitz-Scheinberg theorem \cite{kamsche} 
  either $T$ is periodic or $\sigma(T)$ is the unit circle 
  $\left\{\lambda\in\C\setsp :\setsp\abs{\lambda}=1\right\}$. 
  Thus, $T$ is periodic, so the underlying mapping $F$ is periodic as well. 
\end{proof}

Theorem~\ref{thm:6.1} can be extended to a wide class of noncompact
spaces $X$ by the Czech-Stone compactification $\overline{X}$. 
\begin{cor}
  \label{cor:6.2}
  Let $X$ be a completely regular topological space. Then
  c.e.~\eqref{eq:1} is normally solvable in $CB(X)$ if and only if $F$
  is preperiodic.
\end{cor}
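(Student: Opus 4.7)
The plan is to reduce the corollary to Theorem~\ref{thm:6.1} via the Čech-Stone compactification. Let $\overline{X}$ denote this compactification; because $X$ is completely regular, $X$ embeds densely in $\overline{X}$, and every $\varphi \in CB(X)$ extends uniquely to a continuous function $\overline{\varphi} \in C(\overline{X})$. The correspondence $\varphi \mapsto \overline{\varphi}$ is an isometric linear isomorphism $CB(X) \to C(\overline{X})$. Moreover, the composition of $F\colon X\to X$ with the embedding $X\hookrightarrow \overline{X}$ is a continuous map from $X$ into the compact Hausdorff space $\overline{X}$, so by the universal property of the Čech-Stone compactification it extends uniquely to a continuous self-map $\overline{F}\colon \overline{X}\to \overline{X}$.

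Next I would verify that the isomorphism $CB(X) \cong C(\overline{X})$ intertwines the Koopman operators $T_F$ and $T_{\overline{F}}$. Both $\overline{\varphi\circ F}$ and $\overline{\varphi}\circ \overline{F}$ are continuous functions on $\overline{X}$ that agree with $\varphi\circ F$ on the dense subset $X$, hence they coincide. Consequently $\im(T_F-I)$ corresponds isometrically to $\im(T_{\overline{F}}-I)$, and closedness in $CB(X)$ is equivalent to closedness in $C(\overline{X})$. Applying Theorem~\ref{thm:6.1} to the compact dynamical system $(\overline{X},\overline{F})$, the c.e.~\eqref{eq:1} is normally solvable in $CB(X)$ if and only if $\overline{F}$ is preperiodic.

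Finally I would check the equivalence ``$F$ preperiodic $\Leftrightarrow$ $\overline{F}$ preperiodic.'' If $F^{l+p}=F^l$ on $X$, then $\overline{F}^{l+p}$ and $\overline{F}^l$ are continuous self-maps of $\overline{X}$ agreeing on the dense subset $X$, hence they agree on $\overline{X}$. Conversely, if $\overline{F}^{l+p}=\overline{F}^l$ on $\overline{X}$, restricting to $X\subset\overline{X}$ gives $F^{l+p}=F^l$. Stringing these equivalences together yields the corollary.

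I do not anticipate a genuine obstacle: the argument is a routine ``transfer via compactification'' once one has Theorem~\ref{thm:6.1} and the standard universal properties of $\overline{X}$. The only delicate point is confirming that $F$ admits the extension $\overline{F}$, which is precisely where the complete regularity of $X$ and the continuity of $F$ enter.
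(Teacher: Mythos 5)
Your proposal is correct and follows essentially the same route as the paper: transfer to the \v{C}ech--Stone compactification $\overline{X}$, use the isometry $CB(X)\cong C(\overline{X})$ together with the extension $\overline{F}$, apply Theorem~\ref{thm:6.1}, and note that preperiodicity of $F$ and of $\overline{F}$ are equivalent. The extra details you supply (intertwining of the Koopman operators, density arguments in the Hausdorff space $\overline{X}$) are exactly the routine verifications the paper leaves implicit.
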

\begin{proof}
The space $X$ can be considered as a dense subset in
$\overline{X}$ and then $F$ is the restriction to $X$ of a continuous
mapping $\overline{F}:\overline{X}\rightarrow\overline{X}$. Since the
Banach spaces $CB(X)$ and $C(\overline{X})$ are isometric, the normal
solvability of~\eqref{eq:1} in $CB(X)$ is equivalent to the same
property of the c.e. $\psi(\overline{F}z)-\psi(z) = \theta(z)$ in
$C(\overline{X})$. On the other hand, the preperiodicity of $F$ is
equivalent to that of $\overline{F}$.
\end{proof}
\begin{cor}
  \label{cor:6.3}
  For any set $X$ the c.e.~\eqref{eq:1} is normally solvable in $B(X)$
  if and only if $F$ is preperiodic.
\end{cor}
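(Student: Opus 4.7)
The plan is to reduce Corollary 11.3 to Corollary 11.2 by the elementary device of endowing $X$ with the discrete topology. First I would observe that in the discrete topology on $X$ every self-map $F\colon X\to X$ is automatically continuous, every scalar function is continuous, and the space $X$ is trivially completely regular (in fact metrizable). Consequently $CB(X)=B(X)$ as Banach spaces with the same sup-norm, and the Koopman operator $T_F$ and its image $\im(T_F-I)$ are literally the same in both settings. Since ``preperiodic'' is a purely set-theoretic property of $F$, it is unchanged by putting a topology on $X$. Therefore Corollary 11.2 applied to $(X,F)$ with the discrete topology gives exactly the statement of Corollary 11.3.

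For readers who prefer a proof that does not go through the topological route, I would also sketch a direct argument. The ``if'' direction is clean: if $F^{l+p}=F^{l}$, then Theorem 3.5 shows that $\gamma\in\im(T_F-I)$ in $\Phi(X)$ if and only if the pointwise linear conditions $\sum_{k=0}^{p-1}\gamma(F^{k+l}x)=0$ hold for every $x\in X$, and Corollary 3.7 shows that whenever $\gamma\in B(X)$ satisfies these conditions, the explicit solution \eqref{eq:2.prep} is again in $B(X)$. The conditions \eqref{eq:2.0} cut out a subspace of $B(X)$ that is closed in the sup-norm (being defined by continuity-in-$\gamma$ linear equalities at each $x$), so $\im(T_F-I)$ is closed in $B(X)$.

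The ``only if'' direction is the serious half, and this is where I expect the real work: one must show that non-preperiodicity forces $\im(T_F-I)$ to fail to be closed. The cleanest way to get this is still to import it from the topological proof of Theorem 11.1 via the discrete-topology reduction above, since that proof draws on the Dunford--Lin UET, the total attractor construction, and the Kamowitz--Scheinberg theorem on automorphisms of $C(X)$. Trying to reprove this from scratch in $B(X)$ without any topological scaffolding would duplicate essentially all of Section 11 and is not worth the effort; the discrete-topology trick precisely bundles this machinery into a one-line invocation.

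Thus the proof would consist of one short paragraph: equip $X$ with the discrete topology, verify $B(X)=CB(X)$ and that $F$ is continuous, note that preperiodicity is topology-independent, and apply Corollary 11.2. The main conceptual point I would stress is that no new analysis is required beyond Corollary 11.2, because the discrete topology makes the ``bounded'' category and the ``continuous-and-bounded'' category coincide.
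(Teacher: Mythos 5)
Your proof is correct and is exactly the route the paper intends: Corollary~\ref{cor:6.3} is stated without proof precisely because a set with the discrete topology is completely regular, every self-map is continuous, $B(X)=CB(X)$, and preperiodicity is topology-independent, so Corollary~\ref{cor:6.2} applies verbatim. Your supplementary remarks on the ``if'' direction via Theorem~\ref{thm:preper} and Corollary~\ref{cor:sopreper} are also consistent with how the paper handles that half in the proof of Theorem~\ref{thm:6.1}.
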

\begin{cor}
\label{cor:66}
Let $(X,F)$ be a minimal topological dynamical system with infinite 
completely regular $X$. Then the c.e. \eqref{eq:1} is not normally solvable in $CB(X)$. 
\end{cor}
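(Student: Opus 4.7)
My plan is to combine Corollary~\ref{cor:6.2} with the earlier observation that an infinite Hausdorff (or $T_1$) minimal system has no periodic points. Since completely regular spaces (in the sense needed for the Czech-Stone compactification used in Corollary~\ref{cor:6.2}, i.e.\ Tychonoff spaces) are $T_1$, it suffices to show that under the hypotheses of the corollary the map $F$ cannot be preperiodic; then Corollary~\ref{cor:6.2} immediately rules out normal solvability in $CB(X)$.

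For the nonpreperiodicity, I will argue by contradiction. Suppose $F^{l+p} = F^l$ for some $l \geq 0$ and $p \geq 1$. Then for every $x \in X$ the point $y = F^l x$ satisfies $F^p y = y$, so $y$ is periodic. Consequently the orbit $O_F(x) = \{x, Fx, \ldots, F^{l-1}x\} \cup \{F^l x, F^{l+1}x, \ldots, F^{l+p-1}x\}$ is a finite set. Since $X$ is $T_1$, every finite subset of $X$ is closed; by minimality the closure of $O_F(x)$ equals $X$, so $X = O_F(x)$ is finite, contradicting our assumption that $X$ is infinite.

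The only thing to be careful about is the convention for ``completely regular'': I am reading it in the Tychonoff sense (i.e.\ including $T_1$), which is the convention implicit in Corollary~\ref{cor:6.2}, since the Czech-Stone compactification is only defined for Tychonoff spaces there. Given that, the proof is a two-line reduction, and there is no genuine obstacle — the content of the corollary lies entirely in Theorem~\ref{thm:6.1}/Corollary~\ref{cor:6.2} and in the elementary fact that minimality of an infinite Hausdorff system forbids the existence of periodic points (let alone the stronger condition that every point be eventually periodic).
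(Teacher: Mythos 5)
Your proposal is correct and follows essentially the same route as the paper: the paper's proof of this corollary is the single sentence ``The only minimal preperiodic system is the rotation of a cycle,'' i.e.\ it likewise reduces to Corollary~\ref{cor:6.2} and rules out preperiodicity via minimality plus infiniteness. Your write-up merely makes explicit (finite orbits, closedness of finite sets in a $T_1$ space, hence $X$ finite) what the paper leaves implicit, and your remark on the Tychonoff reading of ``completely regular'' is consistent with the paper's use of the Czech--Stone compactification in Corollary~\ref{cor:6.2}.
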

\begin{proof}
The only minimal preperiodic system is the rotation of a cycle. 
\end{proof}
\begin{cor} 
\label{cor:cbb}
Under conditions of Corollary \ref{cor:66} the norms $\norm{.}_F$ and 
$\norm{.}$ on $\im(T_F-I)$ are not equivalent.
\end{cor}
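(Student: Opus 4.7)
The plan is to derive this immediately by chaining together Corollary~\ref{cor:66} with Proposition~\ref{prop:cloq}, which together give the statement with essentially no additional work.

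First, I would recall the setup: the subspace $E_F \subset CB(X)$ carries two norms, the ambient sup-norm $\norm{\cdot}$ and the stronger norm $\norm{\gamma}_F = \sup_n \norm{s_n}$, and by Theorem~\ref{thm:6.0} (the GHT) applied to the minimal system $(X,F)$, we have the identification $\im(T_F-I) = E_F$. Proposition~\ref{prop:cloq} states that, for minimal $(X,F)$, closedness of $\im(T_F-I)$ in $CB(X)$ is equivalent to the equivalence of the two norms $\norm{\cdot}_F$ and $\norm{\cdot}$ on this subspace.

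Next, I would invoke Corollary~\ref{cor:66}: under the hypotheses (minimal $(X,F)$ with infinite completely regular $X$), the c.e.~\eqref{eq:1} is \emph{not} normally solvable in $CB(X)$, i.e.\ $\im(T_F-I)$ is not closed in $CB(X)$. Combining this with the equivalence supplied by Proposition~\ref{prop:cloq}, the norms $\norm{\cdot}_F$ and $\norm{\cdot}$ cannot be equivalent on $\im(T_F-I)$. This completes the proof.

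There is no real obstacle here, since both ingredients are already in hand. The only conceptual point worth making explicit is that the non-equivalence of the norms is quantitative evidence that the natural embedding $(\im(T_F-I),\norm{\cdot}_F) \hookrightarrow (CB(X),\norm{\cdot})$, while continuous (since $\norm{\gamma} \leq \norm{\gamma}_F$) and bijective onto its image, has unbounded inverse — equivalently, there exist coboundaries $\gamma_k \in \im(T_F-I)$ with $\norm{\gamma_k} \to 0$ but $\norm{\gamma_k}_F$ bounded away from $0$; such sequences witness simultaneously the failure of closedness of $\im(T_F-I)$ in $CB(X)$ and the failure of norm equivalence.
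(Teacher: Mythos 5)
Your proof is correct and is exactly the paper's argument: combine Corollary~\ref{cor:66} (non-closedness of $\im(T_F-I)$ in $CB(X)$) with Proposition~\ref{prop:cloq} (equivalence of closedness with equivalence of the two norms). The extra remarks about the embedding and witnessing sequences are fine but not needed.
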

\begin{proof}
Combine Corollary \ref{cor:66} with Proposition \ref{prop:cloq}. 
\end{proof}

The applications below relate to the shifts of topological semigroups and groups. 
For any semigroup $S$ we denote by $[[s]]$ the subsemigroup of $S$ generated by $s$, 
i.e. $[[s]]=\{s^n:n\geq 1\}$. We do not assume that $S$ has a unit. If $S$ is a group 
and $e$ is its unit then $[[s]]\cup\{e\}=[s]$ according to the notation from Section 
\ref{sec:togr}.  
\begin{cor}
  \label{cor:6.4}
  Let $S$ be a completely regular topological semigroup. With $s\in S$ the c.e. 
 \begin{equation}
 \label{eq:seeq} 
\varphi(sx)-\varphi(x)=\gamma(x),\eqsp x\in S, 
 \end{equation}
is normally solvable in $CB(S)$ if and only if $[[s]]$ is finite.
\end{cor}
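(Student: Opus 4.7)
The plan is to reduce everything to Corollary~\ref{cor:6.2} applied to the self-mapping $F_s:S\to S$ defined by $F_s(x)=sx$. Since $S$ is a topological semigroup, left multiplication by $s$ is continuous, so $F_s$ is a continuous self-mapping of the completely regular space $S$. Therefore Corollary~\ref{cor:6.2} tells us that the c.e.~\eqref{eq:seeq} is normally solvable in $CB(S)$ if and only if $F_s$ is preperiodic in the sense that there exist integers $l\geq 0$ and $p\geq 1$ with $F_s^{\,l+p}=F_s^{\,l}$, i.e.\
\[
s^{l+p}x = s^l x \eqsp \text{for all } x\in S.
\]
So the whole task is to show that this condition is equivalent to $[[s]]=\{s^n:n\geq 1\}$ being a finite subset of $S$.

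For the direction ``$[[s]]$ finite $\Rightarrow$ $F_s$ preperiodic'': by the pigeonhole principle there exist $k\geq 1$ and $p\geq 1$ with $s^{k+p}=s^k$ in $S$. Multiplying on the right by an arbitrary $x\in S$ gives $s^{k+p}x=s^kx$ for every $x$, i.e.\ $F_s^{\,k+p}=F_s^{\,k}$, so $F_s$ is preperiodic with preperiod $k$ and period $p$. Then Corollary~\ref{cor:6.2} yields normal solvability.

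For the converse direction: assume $F_s^{\,l+p}=F_s^{\,l}$. Substituting $x=s^n$ with $n\geq 1$ (which lies in $S$ because $s\in S$ and $S$ is closed under its own operation) we obtain
\[
s^{l+p+n} = s^{l+p}\cdot s^{n-1}\cdot s = F_s^{\,l+p}(s^{n}) = F_s^{\,l}(s^{n}) = s^{l+n},\eqsp n\geq 1.
\]
Hence for every $m\geq l+1$ we have $s^{m+p}=s^m$. This shows that the tail of the sequence $(s^n)_{n\geq 1}$ is periodic with period $p$, so
\[
[[s]] \subseteq \{s, s^2,\ldots, s^{l+p}\},
\]
a finite set.

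I do not expect any real obstacle here; the result is essentially a direct translation of Corollary~\ref{cor:6.2}, and the only thing to check carefully is that preperiodicity of $F_s$ as a self-map of $S$ is equivalent to eventual periodicity of the sequence of powers of $s$ in the semigroup. The only subtle point is that $S$ need not have a unit, which is why one substitutes $x=s^n$ (rather than $x=e$) to pass from $F_s^{\,l+p}=F_s^{\,l}$ to an equality of powers of $s$; the argument above handles this without any appeal to an identity.
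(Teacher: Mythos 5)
Your proposal is correct and follows essentially the same route as the paper: reduce to Corollary~\ref{cor:6.2} via the shift $x\mapsto sx$, get preperiodicity from finiteness of $[[s]]$ by multiplying a power coincidence $s^{k+p}=s^k$ on the right by $x$, and conversely recover a power coincidence by substituting a power of $s$ for $x$ (the paper uses $x=s$, you use $x=s^n$ — the same device for avoiding the missing unit). No gaps.
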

\begin{proof}
If $[[s]]$ is finite then $s^{p+l}=s^p$ for some $p\geq 1$ and $l\geq 0$.  
Then the shift $x\mapsto sx$, $x\in S$, is preperiodic with the period $p$ 
and preperiod $l$. Conversely, if $s^{p+l}x=s^px$ for all $x\in S$ then 
$s^{p+l+1}=s^{p+1}$, so $[[s]]$ is finite.
\end{proof}
\begin{cor}
  \label{cor:6.5}
  Let $G$ be a topological group. With $g\in G$ the c.e. 
 \begin{equation}
 \label{eq:geeq}
\varphi(gx)-\varphi(x)=\gamma(x), \eqsp x\in G, 
 \end{equation}
is normally solvable in $CB(G)$ if and only if the element $g$ is of a finite order.
\end{cor}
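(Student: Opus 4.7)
The plan is to deduce this directly from Corollary \ref{cor:6.4} applied to $G$ viewed as a topological semigroup. The two ingredients I need are: (i) every topological group is completely regular (a classical fact, since one can separate points from closed sets using continuous functions built from a base of symmetric neighborhoods of the identity), so the hypothesis of Corollary \ref{cor:6.4} is satisfied; (ii) in the group setting, finiteness of the subsemigroup $[[g]] = \{g^n : n \geq 1\}$ is equivalent to $g$ having finite order.

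For (ii), the equivalence is elementary. If $g$ has finite order $m$, then $g^m = e$ and the powers $g, g^2, \ldots, g^{m}$ exhaust $[[g]]$, so $[[g]]$ is finite. Conversely, if $[[g]]$ is finite, then by pigeonhole there exist integers $n > m \geq 1$ with $g^n = g^m$; multiplying by $g^{-m}$ (available because $G$ is a group) yields $g^{n-m} = e$, so $g$ has finite order.

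Combining these observations, normal solvability of \eqref{eq:geeq} in $CB(G)$ is by Corollary \ref{cor:6.4} equivalent to $[[g]]$ being finite, which in turn is equivalent to $g$ being of finite order. There is no substantive obstacle here — the entire content is packaged into Corollary \ref{cor:6.4}, and the remaining step is just the trivial group-theoretic observation that distinguishes the semigroup case (where $[[g]]$ can be finite without $g$ being invertible or of finite order in any group sense) from the group case (where invertibility forces the two notions to coincide).
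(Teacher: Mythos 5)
Your proposal is correct and matches the paper's own route: the paper proves Corollary \ref{cor:6.5} by invoking Corollary \ref{cor:6.4} together with the fact that every topological group is completely regular, leaving the equivalence between finiteness of $[[g]]$ and finiteness of the order of $g$ implicit. You simply spell out that last elementary step, which is harmless.
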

\begin{proof}
Every topological group is completely regular, see e.g. \cite{hewittross63}, Theorem 8.4.  
\end{proof}
For example, {\em the c.e. corresponding to a rotation of the unit circle $\T$ 
is normally solvable in $C(\T)$ if and only if the rotation is rational}. 

For the almost periodic solutions we use the Bohr compactification. 
\begin{cor}
  \label{cor:6.7}
If $G$ is a commutative topological group such that its points are separated 
by characters (in particular, if $G$ is locally compact) then with $g\in G$ the c.e. 
 \begin{equation}
 \label{eq:geeqap} 
\varphi(gx)-\varphi(x)=\gamma(x), \eqsp x\in G, 
 \end{equation}
is normally solvable in $AP(G)$ if and only if the element $g$ is of a finite order.
\end{cor}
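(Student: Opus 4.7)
The plan is to transfer the equation to the Bohr compactification and invoke Theorem~\ref{thm:6.1} for compact dynamical systems. Recall that the Bohr compactification $bG = G'^{*}$ is a compact commutative group, and the natural map $j:G\to bG$, $(jg)(\chi)=\chi(g)$, is a continuous homomorphism with dense image. The hypothesis that characters separate the points of $G$ is exactly the statement that $j$ is \emph{injective}. Moreover, the extension of $j$ provides a bijective linear isometry $J:AP(G)\to C(bG)$, given by $J(\psi\circ j)=\psi$ for $\psi\in C(bG)$. Under this isometry the shift $T_g$ on $AP(G)$ corresponds to the shift $T_{jg}$ on $C(bG)$: if $\varphi=\psi\circ j\in AP(G)$ then $(T_g\varphi)(x)=\psi(j(gx))=\psi((jg)\cdot jx)=(T_{jg}\psi)(jx)$.

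Consequently $J\circ(T_g-I)=(T_{jg}-I)\circ J$, and since $J$ is an isometric isomorphism, $\im(T_g-I)$ is closed in $AP(G)$ if and only if $\im(T_{jg}-I)$ is closed in $C(bG)$. The compact system $(bG,\tau_{jg})$ now falls under Theorem~\ref{thm:6.1}: normal solvability in $C(bG)$ is equivalent to preperiodicity of $\tau_{jg}$, i.e.\ $\tau_{jg}^{\,p+l}=\tau_{jg}^{\,l}$ for some $p\geq 1$, $l\geq 0$.

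Next I reduce preperiodicity to finite order. Since $bG$ is a group, $\tau_{jg}^{\,p+l}=\tau_{jg}^{\,l}$ means $(jg)^{p+l}z=(jg)^{l}z$ for every $z\in bG$; cancelling (multiplying by the inverse of $(jg)^{l}z$) yields $(jg)^{p}=e_{bG}$, so $jg$ is of finite order. The converse is trivial: if $(jg)^{p}=e_{bG}$ then $\tau_{jg}$ is periodic, hence preperiodic.

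Finally I pull finite order back from $bG$ to $G$. If $g^p=e$ in $G$ then $(jg)^p=j(g^p)=j(e)=e_{bG}$. Conversely, if $(jg)^p=e_{bG}$ then $j(g^p)(\chi)=\chi(g^p)=((jg)^p)(\chi)=1=\chi(e)$ for every character $\chi\in G'$, and the assumption that characters separate points of $G$ forces $g^p=e$. The locally compact case is covered because on a locally compact commutative group the characters separate points (this is the nontriviality statement recalled in Section~\ref{sec:togr}). The one point that merits a short verification is the intertwining identity $J(T_g\varphi)=T_{jg}(J\varphi)$; the rest is a clean composition of Theorem~\ref{thm:6.1}, a one-line group cancellation, and the injectivity of $j$.
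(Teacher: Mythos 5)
Your proposal is correct and follows essentially the same route as the paper: transfer to $C(bG)$ via the isometry $AP(G)\rightarrow C(bG)$, apply the compact-group case of the normal solvability theorem (the paper invokes Corollary \ref{cor:6.5}, whose content you re-derive by reducing preperiodicity of a group shift to finite order by cancellation), and pull finite order back to $G$ using the injectivity of $j$, which is exactly the separation-of-points hypothesis. Your explicit verification of the intertwining $J\circ(T_g-I)=(T_{jg}-I)\circ J$ fills in a step the paper leaves implicit.
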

\begin{proof}
The normal solvability of \eqref{eq:geeqap} in $AP(G)$ is equivalent to the same 
property of the corresponding c.e. in $C(bG)$ where $bG$ is the Bohr 
compact of the group $G$.
Since the characters separate points of $G$, the canonical 
homomorphism $G\rightarrow bG$ is injective. Hence, $g$ is of a finite order 
if and only if such is its image in $bG$.
\end{proof}

\begin{cor}
\label{cor:6.77}
With any real $\alpha\neq 0$ the c.e.  
 \begin{equation}
 \label{eq:rap} 
\varphi(x+\alpha)-\varphi(x) = \gamma(x), \eqsp x\in\R, 
 \end{equation}
is not normally solvable in $AP(\R)$. 
\end{cor}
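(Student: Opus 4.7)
The plan is to obtain this statement as a direct specialization of Corollary~\ref{cor:6.7} to the locally compact abelian group $G=(\R,+)$ with $g=\alpha$. Concretely, I would verify the two hypotheses of that corollary and then observe that the excluded case (finite order element) cannot occur here.

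First I would note that $\R$ is a locally compact commutative topological group, so it falls into the parenthetical case of Corollary~\ref{cor:6.7}; points of $\R$ are indeed separated by the characters $\chi_\lambda(x)=e^{i\lambda x}$, $\lambda\in\R$, since for $x\neq 0$ one can choose $\lambda$ with $\lambda x\notin 2\pi\Z$. Hence the hypothesis on separation by characters is met, and Corollary~\ref{cor:6.7} is applicable to the shift $x\mapsto x+\alpha$.

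Next I would check that the element $\alpha$ has infinite order in the additive group $\R$. Indeed, $n\alpha=0$ for some integer $n\geq 1$ would force $\alpha=0$, contrary to the assumption $\alpha\neq 0$. Thus the ``finite order'' alternative in Corollary~\ref{cor:6.7} fails, and that corollary yields the absence of normal solvability of \eqref{eq:rap} in $AP(\R)$.

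There is no real obstacle here: the content of the statement is already encoded in Corollary~\ref{cor:6.7}, whose proof in turn rests on lifting the problem through the Bohr compactification $b\R$ and invoking Theorem~\ref{thm:6.1}. If one wished to unwind the argument, the only nontrivial point would be that the canonical homomorphism $\R\to b\R$ is injective (which is precisely the separation-of-points property we verified above), so that an element of infinite order in $\R$ remains of infinite order in $b\R$; everything else is immediate.
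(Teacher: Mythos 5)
Your proposal is correct and matches the paper's own proof, which likewise derives the statement from Corollary~\ref{cor:6.7} by observing that $\R$ has no nonzero elements of finite order. The extra detail you supply (separation of points by the characters $\chi_\lambda$, hence applicability of the locally compact case) is just an unwinding of the same argument.
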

\begin{proof}
There are no nonzero elements of finite order in $\R$.
\end{proof}

We conclude this section with the question: {\em is the $L_p$-counterpart 
of Theorem \ref{thm:6.1} true?}

\section{Absence  of measurable solutions}
\label{sec:meso}

Let us start with a group situation.
\begin{theorem}
\label{thm:7.1}
Let $G$ be an infinite monothetic compact group, and let $g$ be 
its generator. Then there exists a function $\gamma\in C(G)$  
satisfying the TNC 
    \begin{equation}
    \label{eq:7.nu}
\int_G\gamma\dmes\nu =0
  \end{equation}
for the Haar measure $\nu$ but such that the c.e.
    \begin{equation}
    \label{eq:7.2}
    \varphi(gx)-\varphi(x) = \gamma(x), \eqsp x\in G, 
  \end{equation}
  has no measurable solutions.
\end{theorem}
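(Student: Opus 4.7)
The plan is to argue by contradiction via the Closed Graph Theorem, in the spirit of \cite{lyubich80}. Let $E=\{\gamma\in C(G):\int_G\gamma\dmes\nu=0\}$, a Banach space with the uniform norm, and suppose toward contradiction that the c.e.~\eqref{eq:7.2} admits a measurable solution for every $\gamma\in E$. First I would check that the measure-preserving system $(G,\tau_g,\nu)$ is ergodic: by Lemma~\ref{lem:gred} the only character with $\chi(g)=1$ is $\id$, so Fourier expansion in $L_2(G,\nu)$ forces every $\tau_g$-invariant $L_2$-function to be constant; applying this to indicators of invariant measurable sets yields ergodicity, and hence uniqueness of measurable solutions modulo a.e. constants. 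Thus $\gamma\mapsto[\varphi]$ is a well-defined linear map $S\colon E\to L_0(G,\nu)/\R$ into the F-space of measurable functions modulo a.e. constants.

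Next I would verify that $S$ has closed graph. If $\gamma_n\to\gamma$ uniformly and $S\gamma_n\to[\psi]$ in $L_0/\R$, one can choose representatives $\varphi_n$ of $S\gamma_n$ and constants $c_n\in\R$ with $\varphi_n-c_n\to\psi$ in measure. Since $\tau_g$ preserves $\nu$, the Koopman operator $T_g$ is continuous on $L_0$, so passing to the limit in $T_g(\varphi_n-c_n)-(\varphi_n-c_n)=\gamma_n\to\gamma$ yields $T_g\psi-\psi=\gamma$ a.e., i.e.\ $[\psi]=S\gamma$. The Closed Graph Theorem between the F-spaces $E$ and $L_0/\R$ therefore ensures that $S$ is continuous, so there exists $\delta>0$ such that every $\gamma\in E$ with $\|\gamma\|_\infty<\delta$ has a measurable solution $\varphi$ and a constant $c$ satisfying
\[
\nu\bigl\{x\in G:\abs{\varphi(x)-c}>\tfrac{1}{4}\bigr\}<\tfrac{1}{4}.
\]

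To manufacture the contradiction I would exploit characters with $\chi(g)$ close to $1$. Since $G$ is infinite compact, Pontryagin duality makes $G^*$ infinite and discrete; by Lemma~\ref{lem:gred} the evaluation $\chi\mapsto\chi(g)$ is an injective homomorphism $G^*\hookrightarrow\T$, whose image is an infinite subgroup of $\T$ and therefore dense. Choose $\chi_n\in G^*\setminus\{\id\}$ with $r_n:=\abs{\chi_n(g)-1}\to 0$ and set
\[
\gamma_n(x)=\tfrac{\delta}{2}\,\mathrm{Re}\,\chi_n(x),\qquad
\varphi_n(x)=\tfrac{\delta}{2}\,\mathrm{Re}\!\left(\frac{\chi_n(x)}{\chi_n(g)-1}\right).
\]
Then $\gamma_n\in E$, $\|\gamma_n\|_\infty\le\delta/2<\delta$, and $T_g\varphi_n-\varphi_n=\gamma_n$; by the ergodic uniqueness above, $[\varphi_n]=S\gamma_n$ for the required measurable solution class.

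Finally, the image $\chi_n(G)\subset\T$ is a closed subgroup, either $\T$ itself or a finite cyclic group of order $N_n$, and $(\chi_n)_*\nu$ is the corresponding Haar measure. Since nontrivial $N$th roots of unity are separated from $1$ by $2\sin(\pi/N)$, the condition $r_n\to 0$ forces $N_n\to\infty$ in the cyclic case. In either case $\varphi_n$ is, up to the amplitude $A_n:=\delta/(2r_n)\to\infty$, a cosine on a densifying subset of $\T$, whose distribution places mass of order $O(1/A_n)$ in any interval of length $1/2$ away from $\pm A_n$. Hence $\sup_{c\in\R}\nu\{\abs{\varphi_n-c}\le 1/4\}\to 0$, so $\inf_c\nu\{\abs{\varphi_n-c}>1/4\}\to 1$, contradicting the continuity estimate. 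The main technical obstacle is this last distributional step when $\chi_n(G)$ is only a growing finite cyclic subgroup; one must verify that the $N_n$-atom uniform distribution on a large cyclic subgroup still approximates the continuous cosine distribution well enough for the concentration bound to hold uniformly in the shift $c\in\R$.
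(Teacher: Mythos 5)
Your overall strategy coincides with the paper's: argue by contradiction, use ergodicity of $(G,\tau_g,\nu)$ (proved via characters) to get a well-defined linear map from $C^0(G)$ into the $F$-space of measurable functions modulo constants, verify the closed graph, invoke the Closed Graph Theorem, and then contradict continuity using characters $\chi_n$ with $\chi_n(g)\to 1$ (whose existence rests on the density of $\{\chi(g):\chi\in G^*\}$ in $\T$, the paper's Lemma \ref{lem:7.00}). All of that part of your argument is sound and matches the paper's proof of Theorem \ref{thm:7.1}.

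Where you diverge is the final contradiction, and that is where your write-up has a genuine gap --- one you flag yourself. You normalize so that $\gamma_n=\tfrac{\delta}{2}\mathrm{Re}\,\chi_n$ has fixed size and the solution $\varphi_n$ has exploding amplitude $A_n=\delta/(2r_n)$, and you then need an anti-concentration estimate $\sup_{c}\nu\{\abs{\varphi_n-c}\le 1/4\}\to 0$ for the pushforward of $\nu$ under $x\mapsto A_n\cos(\arg\chi_n(x)-\theta_n)$. When $\chi_n(G)$ is a finite cyclic group of order $N_n$ this requires controlling how a uniform measure on $N_n$ roots of unity distributes under a rescaled cosine, uniformly in the shift $c$ and near the critical points of the cosine; this is true (one can check $N_n\gtrsim A_n$ from $r_n\ge 2\sin(\pi/N_n)$, so the discretization is fine enough), but it is a real computation that your proposal does not carry out. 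The paper avoids this entirely by normalizing the other way: it takes $\gamma_n(x)=(\chi_n(g)-1)\chi_n(x)$, so that $\norm{\gamma_n}=\abs{\chi_n(g)-1}\to 0$ while the solution is the \emph{unimodular} function $\chi_n$ itself. Continuity of $R$ then gives constants $c_n$ with $\chi_n-c_n\to 0$ in measure; uniform boundedness forces $(c_n)$ bounded and $\int_G(\chi_n-c_n)\dmes\nu\to 0$, and since $\int_G\chi_n\dmes\nu=0$ one gets $c_n\to 0$, hence $\chi_n\to 0$ in measure --- contradicting $\abs{\chi_n}\equiv 1$. I recommend adopting this normalization: it replaces your distributional analysis by a two-line argument and removes the only unfinished step in your proof.
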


Let us stress that by Proposition \ref{prop:meha} the only TNC is \eqref{eq:7.nu}.
The absence of continuous solutions with a $\gamma\in C(G)$ satisfying
\eqref{eq:7.nu} follows from Corollary \ref{cor:6.5}. 

In the proof of Theorem \ref{thm:7.1} 
we use the ergodicity of the dynamical system $(G,\tau_g)$. 
This property is stated by the following lemma.
\begin{lemma}
\label{lem:7.0}
Under conditions of Theorem \ref{thm:7.1} every measurable solution $\varphi$ to 
the homogeneous equation  
 \begin{equation}
    \label{eq:hom}
    \varphi(gx)-\varphi(x) = 0, \eqsp x\in G, 
  \end{equation} 
is a constant a.e.  
\end{lemma}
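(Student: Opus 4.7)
The plan is to establish ergodicity of $(G,\tau_g)$ with respect to the Haar measure $\nu$ via Fourier analysis on the compact group $G$, using the character-theoretic machinery already developed in Section~\ref{sec:togr}. Since $g$ is a generator, Lemma~\ref{lem:gs} together with Lemma~\ref{lem:gred} tells us that $\chi(g) = 1$ forces $\chi = \id$ for every $\chi \in G^*$; this is the crucial input.

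I would proceed in two steps. First, I would treat the case $\varphi \in L_2(G,\nu)$. Expand $\varphi$ in its Fourier series
\[
\varphi = \sum_{\chi \in G^*} c_{\chi}[\varphi] \, \chi.
\]
By formula \eqref{eq:coshi} the shifted function $T_g \varphi = \varphi\circ\tau_g$ has Fourier coefficients $\chi(g) c_{\chi}[\varphi]$. The homogeneous equation \eqref{eq:hom} therefore translates into
\[
(\chi(g) - 1) c_{\chi}[\varphi] = 0, \qquad \chi \in G^*.
\]
Since $g$ is a generator, Lemma~\ref{lem:gred} yields $c_{\chi}[\varphi] = 0$ for all $\chi \neq \id$, so $\varphi = c_{\id}[\varphi] \cdot \id$ in $L_2$, i.e., $\varphi$ is constant a.e.

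Second, I would reduce the general measurable case to the $L_2$ case by a level-set argument. For each $c \in \mathbb{R}$ the set $A_c = \{x \in G : \varphi(x) > c\}$ is measurable and $\tau_g$-invariant (since $\varphi$ is), so its indicator $\mathbf{1}_{A_c}$ is a $\tau_g$-invariant element of $L_2(G,\nu)$ (using finiteness of $\nu$). By the first step $\mathbf{1}_{A_c}$ equals a constant a.e., and since it takes only the values $0$ and $1$, we get $\nu(A_c) \in \{0,1\}$. Letting $c_0 = \sup\{c : \nu(A_c) = 1\}$ (finite since $A_c$ decreases to $\emptyset$ as $c \to \infty$ and increases to a conull set as $c \to -\infty$), one checks $\varphi = c_0$ a.e.

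The main obstacle is essentially bookkeeping in the second step: one must verify that $c_0$ is finite and that the countable intersection/union of conull sets corresponding to rational $c$ yields a conull set on which $\varphi$ is constant. The Fourier-analytic core of the argument is routine given the groundwork in Section~\ref{sec:togr}; the real content is precisely the implication ``$g$ generator $\Rightarrow$ $g^{\perp} = \{\id\}$'' supplied by Lemma~\ref{lem:gred}.
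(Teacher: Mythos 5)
Your proof is correct and follows essentially the same route as the paper: the core in both cases is the identity $(\chi(g)-1)c_{\chi}[\varphi]=0$ combined with Lemma \ref{lem:gred}, which kills every Fourier coefficient except $c_{\id}[\varphi]$ and settles the case $\varphi\in L_2(G,\nu)$. The only (cosmetic) difference is the reduction of a general measurable $\varphi$ to $L_2$: you pass to the indicators of the invariant super-level sets $\{\varphi>c\}$, while the paper truncates $\varphi$ directly by multiplying with the indicator of the completely invariant set $\{x:0<\abs{\varphi(x)}\le l\}$ and applies the same Fourier argument to that single bounded invariant function.
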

\begin{proof}
One can assume $\varphi\neq 0$. If the measure of the set 
$\{x\in G: \varphi(x)\neq 0\}$ is not zero then the measure of the subset 
$M_l= \{x\in G: 0<\abs{\varphi(x)}\leq l\}$ is not zero for an $l>0.$  
By \eqref{eq:hom} $M_l$ is completely invariant for the shift $\tau_g$.
We prove that $\varphi(x)$ is a constant a.e. on $M_l$ and $\mu(G\setminus M_l)=0$.

For any character $\chi$ of the group $G$ we have 
 \begin{equation*}
(\chi(g)- 1)\int_{M_l}\varphi(x)\overline{\chi(x)}\dmes\nu = 0, 
\end{equation*}
see \eqref{eq:cef}. If $\chi\neq\id$ then $\chi(g)\neq 1$ since $g$ is a generator 
of $G$. Hence, 
 \begin{equation}
\label{eq:pwo}
\int_G\omega_l(x)\varphi(x)\overline{\chi(x)}\dmes\nu = 0,  
\end{equation}
where $\omega_l$ is the indicator of $M_l$. The function 
$\theta_l(x)=\omega_l(x)\varphi(x)$ is measurable and bounded, a fortiori,  
$\theta_l\in L_2(G,\nu)$. The relation \eqref{eq:pwo} states that all its Fourier 
coefficients corresponding to the nonunity character vanish. Hence, 
$\theta_l(x)$ is a constant $\eta$ a.e. on $G$. However, $\theta_l(x)=\varphi(x)$ 
on $M_l$. Therefore, $\varphi(x)=\eta$ 
a.e. on $M_l$. On the other hand, $\theta_l(x)= 0$ on $G\setminus M_l$. 
If the measure of this set is not zero then $\eta=0$ and then $\varphi(x)=0$ a.e. on $M_l$,  
a contradiction. 
\end{proof}

Another lemma we need is the following.
\begin{lemma}
\label{lem:7.00}
Under conditions of Theorem \ref{thm:7.1} the set $\Gamma=\{\chi(g):\chi\in G^*\}$ 
is dense in $\T$. 
\end{lemma}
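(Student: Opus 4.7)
The plan is to exploit that $\Gamma$ is a subgroup of $\T$ and use the classification of closed subgroups of $\T$. First I would verify that $\Gamma$ is indeed a subgroup: since $(\chi_1\chi_2)(g) = \chi_1(g)\chi_2(g)$, $\id(g) = 1$, and $\chi^{-1}(g) = \overline{\chi(g)}$, the evaluation map $\chi \mapsto \chi(g)$ is a group homomorphism $G^* \to \T$, and $\Gamma$ is its image.

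Next, suppose toward a contradiction that $\overline{\Gamma} \neq \T$. Since closed subgroups of $\T$ are either $\T$ itself or finite cyclic groups of roots of unity, $\overline{\Gamma}$ must equal $\mu_n = \{z \in \T : z^n = 1\}$ for some positive integer $n$. In particular, $\chi(g)^n = 1$, i.e., $\chi(g^n) = 1$, for every $\chi \in G^*$.

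Here is the key step: since $G$ is a compact (hence locally compact) commutative group, the Pontryagin--van Kampen duality theory guarantees that $G^*$ separates points of $G$. (This is already invoked implicitly earlier in the paper via the $G \hookrightarrow G^{**}$ identification.) Therefore $\chi(g^n) = 1$ for all $\chi \in G^*$ forces $g^n = e$.

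But then $[g] = \{e, g, g^2, \ldots, g^{n-1}\}$ is a finite subset of the Hausdorff space $G$, hence closed. Since $g$ is a generator, $G = \overline{[g]} = [g]$ is finite, contradicting the hypothesis that $G$ is infinite. Thus $\overline{\Gamma} = \T$, as desired. I do not expect any serious obstacle; the only nonelementary ingredient is the duality-theoretic fact that characters separate points of a compact commutative group, which is standard background assumed throughout this section.
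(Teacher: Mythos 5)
Your proposal is correct and follows essentially the same route as the paper: both observe that $\Gamma$ is the image of the evaluation homomorphism $\chi\mapsto\chi(g)$, hence a subgroup of $\T$, invoke the fact that a nondense subgroup of $\T$ consists of roots of unity of a common order $n$, use duality (characters separate points of the compact commutative group $G$) to conclude $g^n=e$, and derive the contradiction that $G=\overline{[g]}$ would then be finite. No gaps.
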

\begin{proof}
The set $\Gamma$ is the image of the mapping $\chi\mapsto\chi(g)$, $\chi\in G^*$. 
Since the latter is a homomorphism $G^*\rightarrow\T$, the set  
$\Gamma$ is a subgroup of $\T$. It is well known (and can be easily proven) 
that every nondense subgroup of $\T$ is finite. However, if $\Gamma$ is finite then all 
its elements are roots of 1 of a power $m$. Then $\chi(g^m)=\chi(g)^m=1$ for all $\chi\in G^*$
that results in  $g^m=e$ according to the duality theory. Since $g$ is a generator of $G$, 
the latter turns out to be finite that contradicts our assumption. 
\end{proof}

Now we proceed to the proof of Theorem~\ref{thm:7.1}, c.f. \cite{belitskii98}. 
\begin{proof}
  Suppose to the contrary that the equation ~\eqref{eq:7.2} 
  has a measurable solution for every $\gamma$ satisfying~\eqref{eq:7.nu}. 
  This TNC determines a closed subspace $C^0(G)\subset C(G)$. On the other hand, 
  we consider the space $M=M(G,\nu)$ of measurable functions on $G$. As 
usual, we identify the functions equal a.e. Then 
the formula  
  \begin{equation*}
    \rho(\varphi_1,\varphi_2) =
    \int_G\frac{\abs{\varphi_1-\varphi_2}}{1+
      \abs{\varphi_1-\varphi_2}} \dmes \nu
  \end{equation*}
determines a metric on $M$  corresponding to the convergence in measure:
  \begin{equation*}
    \lim_{n}\rho(\varphi_n,\varphi) =0 \Longleftrightarrow
    \forall\varepsilon>0 : \lim_{n} \nu
    \left\{x\,:\,\abs{\varphi_n(x)-\varphi(x)}\geq\varepsilon \right\} =0.
  \end{equation*}
This space is not Banach but it is 
an $F$-space. (See e.g. \cite{rudin} for    
information on this class of linear  metric spaces.) In what follows we deal with 
the quotient space $\widehat M = M/Q$ where $Q$ is the subspace of constants.
This $\widehat M$  is also an $F$-space.

  For a given $\gamma\in C^0(G)$ a measurable solution $\varphi$ is unique up
  to an additive constant by Lemma \ref{lem:7.0}. Therefore, its coset 
  $\widehat\varphi\in\widehat M$
  is uniquely determined, i.e. $\widehat\varphi = R\gamma$ where $R$ is a mapping 
  $C^0(G)\rightarrow \widehat M$. Obviously, $R$ is linear. Show that the graph of $R$ 
  is closed. 

Let a sequence $(\gamma_n)\subset C^0(G)$ converge to a $\gamma\in C^0(G)$
  uniformly, and let $(R\gamma_n) = (\widehat{\varphi_n})$ converge to a 
  $\hat\varphi\in\widehat M$. By adding of suitable constants
  the functions $\varphi_n$ can be chosen so that $\varphi_n$ 
  tends to $\varphi$ in measure. By restriction to a subsequence one can
  assume $\lim_n\varphi_n(x) = \varphi(x)$ a.e. on $G$. 
  The limit pair $(\gamma, \varphi)$ satisfies~\eqref{eq:7.2} a.e., thus  
  $R\gamma=\widehat\varphi$.

  By the Closed Graph Theorem the operator $R$ is continuous. To
  disprove this conclusion we apply Lemma \ref{lem:7.00} to get 
  a sequence $(\chi_n)\subset G^*\setminus\{\id\}$ such that 
 $\lim_{n}\chi_n(g)=1$. With 
 \begin{equation*}
\gamma_n(x) = (\chi_n(g)-1)\chi_n(x)
 \end{equation*}
  the functions $\chi_n$ satisfy~\eqref{eq:7.2} for $\gamma=\gamma_n$. 
By \eqref{eq:trin} we have 
\begin{equation*}
    \int_G \gamma_n\dmes\nu = (\chi_n(g)-1)\int_G \chi_n\dmes\nu =0   
  \end{equation*}
i.e.$\gamma_n\in C^0(G)$. Since $\lim_{n}\gamma_n=0$ in $C^0(G)$ and $R$ is continuous, 
the sequence $\widehat\chi_n=R\gamma_n$ tends to zero in $\widehat M$. This means 
that there are constants $c_n$ such that $(\chi_n - c_n)$ 
  tends to zero in measure. Therefore, the sequence $(c_n)$ is bounded and
  \begin{equation*}
     \lim_{n} \int_G (\chi_n -c_n)\dmes\nu =0, 
  \end{equation*}
whence $\lim_n c_n=0$, and then $\lim_n\chi_n=0$ in measure. This contradicts the equality
  \begin{equation*}
    \int_G\abs{\chi_n}\dmes\nu = \int_G\dmes\nu =1. 
  \end{equation*}
\end{proof}
\begin{cor}
\label{cor:cgt}
In $C^0(G)$ the set of $\gamma$ mentioned in Theorem \ref{thm:7.1} has 
the second Baire's category.
\qed
\end{cor}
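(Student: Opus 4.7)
Let
$$B = \{\gamma \in C^0(G) : \text{the c.e.~\eqref{eq:7.2} has a measurable solution}\}.$$
It suffices to show that $B$ is of the first Baire's category in $C^0(G)$; since $C^0(G)$ is a Banach (hence Baire) space, the complement then automatically has the second. The plan is to upgrade the closed graph argument at the end of the proof of Theorem~\ref{thm:7.1} to an open mapping argument applied under the weaker assumption that $B$ is non-meager.

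First I would set up the F-space framework. The space $\widehat M = M/Q$ is an F-space as the quotient of the F-space $M$ by the closed subspace $Q$ of constants. The coboundary map $\Delta:M\to M$, $\varphi\mapsto T_g\varphi-\varphi$, is continuous (since $T_g$ preserves convergence in measure, by the $\tau_g$-invariance of $\nu$) and annihilates $Q$, so it descends to a continuous linear map $\widehat\Delta:\widehat M\to M$, injective by Lemma~\ref{lem:7.0}. The inclusion $i:C^0(G)\hookrightarrow M$ is continuous because uniform convergence implies convergence in measure. Then I would introduce the ``graph''
$$\Gamma = \left\{(\gamma,\widehat\varphi)\in C^0(G)\times \widehat M : \widehat\Delta\widehat\varphi = i(\gamma)\right\},$$
which is the kernel of the continuous linear map $(\gamma,\widehat\varphi)\mapsto \widehat\Delta\widehat\varphi - i(\gamma)$ into $M$, hence a closed linear subspace of the product F-space $C^0(G)\times\widehat M$. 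Thus $\Gamma$ is itself an F-space, and the projection $\pi_1:\Gamma\to C^0(G)$, $(\gamma,\widehat\varphi)\mapsto\gamma$, is continuous, linear, with image exactly $B$, and injective by Lemma~\ref{lem:7.0}.

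Finally, I would argue by contradiction: if $B$ were \emph{not} of the first Baire's category, then $\pi_1(\Gamma)=B$ would be non-meager in $C^0(G)$, and the open mapping theorem for F-spaces would force $\pi_1$ to be surjective and open. In particular $B = C^0(G)$ and the inverse $R:C^0(G)\to\widehat M$, $\gamma\mapsto\widehat\varphi$, would be continuous. One could then rerun the end of the proof of Theorem~\ref{thm:7.1} verbatim: by Lemma~\ref{lem:7.00} choose $(\chi_n)\subset G^*\setminus\{\id\}$ with $\chi_n(g)\to 1$, and set $\gamma_n=(\chi_n(g)-1)\chi_n\in C^0(G)$. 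Then $\|\gamma_n\|\to 0$, while $R\gamma_n=\widehat{\chi_n}$ cannot tend to $0$ in $\widehat M$ because the identity $\int_G|\chi_n|\,d\nu = 1$ rules out any sequence of constants $c_n$ bringing $\chi_n-c_n$ to $0$ in measure, contradicting continuity of $R$. The main subtlety is the verification that $\Gamma$ is closed in $C^0(G)\times\widehat M$ — this is precisely where continuity of both $\widehat\Delta$ into $M$ and of the inclusion $i$ is needed, and it is what licenses the application of the open mapping theorem.
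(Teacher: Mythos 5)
Your proof is correct and is essentially the paper's own argument: the paper's one-line appeal to ``the classical Banach theorem'' (the image of a continuous linear operator between $F$-spaces is either everything or of the first category) is precisely the open-mapping/category dichotomy you implement, and your closed subspace $\Gamma\subset C^0(G)\times\widehat M$ with the projection $\pi_1$ is just the standard rigorous way to realize the set of solvable $\gamma$'s as such an image. The only cosmetic difference is that once the open mapping theorem gives surjectivity of $\pi_1$ you could stop, since $B=C^0(G)$ already contradicts Theorem \ref{thm:7.1}; rerunning the $\chi_n$ argument is harmless but redundant.
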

\begin{proof}
This set is the complement to $\im(\tau_g-I:M\rightarrow C^0(G))\neq C^0(G)$.
This image is the set of first category by the classical Banach theorem. 
\end{proof}
Now we are in position to obtain the following general result.
\begin{theorem}
\label{thm:7.6}
Let a compact dynamical system $(X,F)$ be uniformly stable and topologically 
$\Omega$-transitive. If the total attractor $\Omega$ is infinite, then there 
is an invariant regular Borel measure $\mu$ on $X$ and a function $\gamma\in C(X)$ 
such that the TNC \eqref{eq:14} is fulfilled but the c.e.~\eqref{eq:1} has 
no measurable solutions.
\end{theorem}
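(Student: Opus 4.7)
The plan is to reduce to Theorem \ref{thm:7.1} by transporting the problem onto the total attractor $\Omega$. By Proposition \ref{lem:7.3}, after replacing the metric on $X$ by a topologically equivalent one, we may assume $(X,F)$ is dissipative. Theorem \ref{thm:7.2} then yields that $F|\Omega$ is conservative (isometric) on the compact set $\Omega$. Since the system is topologically $\Omega$-transitive, $(\Omega, F|\Omega)$ is compact, topologically transitive, and conservative, so Theorem \ref{thm:7.4} provides a homeomorphism $\Theta: \Omega \to G$ that conjugates $F|\Omega$ to a shift $\tau_g$ of a monothetic compact group $G$ with generator $g$. Because $\Omega$ is infinite, so is $G$.

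Next I would apply Theorem \ref{thm:7.1} to produce a function $\gamma_0\in C(G)$ satisfying $\int_G\gamma_0\dmes\nu=0$ (with $\nu$ the Haar measure of $G$) such that the c.e.\ $\psi(gy)-\psi(y)=\gamma_0(y)$ admits no measurable solution on $G$. Pull $\gamma_0$ back to a continuous function $\gamma_\Omega=\gamma_0\circ\Theta$ on $\Omega$ and extend it (by Tietze's theorem, valid since compact metric spaces are normal) to some $\gamma\in C(X)$. For the measure, take the pushforward $\mu=\Theta^{-1}_*\nu$, regarded as a regular Borel probability measure on $X$ supported in $\Omega$; $F$-invariance on $\Omega$ follows from $\tau_g$-invariance of $\nu$, and the existence/uniqueness of such $\mu$ is also guaranteed by Corollary \ref{thm:7.5}. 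The TNC is immediate:
\[
\int_X\gamma\dmes\mu=\int_\Omega\gamma_\Omega\dmes(\mu|\Omega)=\int_G\gamma_0\dmes\nu=0.
\]

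Finally, suppose for contradiction that there were a measurable solution $\varphi$ to \eqref{eq:1} on $(X,\mu)$, i.e.\ satisfying the equation on an invariant measurable $Y\subset X$ with $\mu(X\setminus Y)=0$. Since $\mu$ is concentrated on $\Omega$ and $\Omega$ is $F$-invariant, $Y\cap\Omega$ is invariant in $\Omega$ and has full $\mu|\Omega$-measure. Transporting $\varphi|_\Omega$ across $\Theta$ would then produce a measurable a.e.\ solution $\psi=\varphi\circ\Theta^{-1}$ on $(G,\nu)$ of $\psi(gy)-\psi(y)=\gamma_0(y)$, contradicting the choice of $\gamma_0$.

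The main technical point I expect to require care is verifying that the pushforward $\mu$ is well-defined as a regular Borel measure on $X$ and that $\Theta$ transports measurability faithfully between $(\Omega,\mu|\Omega)$ and $(G,\nu)$; this hinges on $\Theta$ being a homeomorphism (so Borel-bimeasurable) and on the fact that the definition of ``measurable solution'' only constrains behavior on a full-measure invariant set. Once these two routine but essential points are established, the contradiction with Theorem \ref{thm:7.1} closes the argument.
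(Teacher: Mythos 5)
Your proposal follows exactly the route of the paper's own proof: pass to a dissipative metric via Proposition \ref{lem:7.3}, use Theorem \ref{thm:7.2} and Theorem \ref{thm:7.4} to identify $(\Omega,F|\Omega)$ with a shift $(G,\tau_g)$ on an infinite monothetic compact group, invoke Theorem \ref{thm:7.1} there, and transport the resulting $\gamma_0$ and the Haar measure back to $X$ by extension of the function and trivial extension of the measure. The additional care you take with the pushforward measure and with restricting a hypothetical measurable solution to the full-measure invariant set $Y\cap\Omega$ is correct and merely makes explicit what the paper leaves implicit.
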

\begin{proof}
One can assume that $(X,F)$ is dissipative. 
Then  $(\Omega,F|\Omega)$ is conservative by Theorem \ref{thm:7.2}, 
and it is topologically transitive by definition. 
By Theorem \ref{thm:7.4} the system $(\Omega,F|\Omega)$ is homeomorphic 
to $(G,\tau_g)$ where $G$ is a monothetic compact group and $g$ is a generator. 
The group is infinite since such is $\Omega$. Thus, Theorem \ref{thm:7.1} 
is applicable. Let $\gamma_0$ be the function appearing there, and let 
$\gamma_1$ be the corresponding function from $C(\Omega)$, and finally, 
let $\mu_1$ be the measure on $\Omega$ corresponding to the Haar measure $\nu$   
on $G$. Then 
\begin{equation}
\label{eq:7.000}
\int_{\Omega} \gamma_1\dmes\mu_1 =0
\end{equation}
but the equation 
\begin{equation}
    \label{eq:7.00}
    \phi(Fx)-\phi(x) = \gamma_1(x), \eqsp x\in\Omega, 
  \end{equation}
has no measurable solutions. To finish the proof it suffices to trivially extend 
$\mu_1$ from $\Omega$ to $X$ and $\gamma_1$ to 
a $\gamma\in C(X)$.   
\end{proof}
\begin{remark}
By Corollaries \ref{cor:5.5} and  \ref{thm:7.5} the measure $\mu$ is unique 
up to normalization. Therefore, in the context of Theorem \ref{thm:7.6} there 
are no TNC's essentially different from \eqref{eq:14}. 
\qed
\end{remark}
In \cite{lyubich80} the Closed Graph Theorem has been applied to 
prove the following Gordon theorem \cite{gordon75} 
the original proof of which is purely analytic. In the proof below we 
follow \cite{lyubich80}. Actually, this way is also a prototype for the 
proof of Theorem \ref{thm:7.1}. 
\begin{theorem}
\label{thm:gl}
Let $\theta(t)$ $(0<t\leq 1)$ be a positive nondecreasing function such that 
$t^{-1}\theta(t)\rightarrow\infty$ as $t\rightarrow 0$. Let $\alpha$ be an irrational number. 
Then there exists a continuous 1-periodic function $h(x)$, $x\in\R$, satisfying 
the TNC \eqref{eq:8} and such that  
\begin{equation}
\label{eq:lipt} 
\norm{h}_{\theta}\equiv
\sup_{0\leq y<x\leq 1}\frac{\abs{h(x)-h(y)}}{\theta(x-y)}<\infty,
\end{equation}
but the c.e. \eqref{eq:7} has no measurable solutions.    
\end{theorem}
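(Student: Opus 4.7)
My plan is to mimic the Closed Graph Theorem argument used in the proof of Theorem~\ref{thm:7.1}. Let $B$ denote the Banach space of continuous $1$-periodic real functions $h$ satisfying $\int_0^1 h(x)\,dx=0$ and $\norm{h}_\theta<\infty$, equipped with the norm $\norm{\cdot}_\theta$ (the mean-zero condition forces $h$ to vanish somewhere, giving $\norm{h}_\infty\le\theta(1)\norm{h}_\theta$ and hence completeness). Assuming, for contradiction, that \eqref{eq:7} has a measurable $1$-periodic solution for every $h\in B$, I invoke Lemma~\ref{lem:7.0} applied to the monothetic compact group $\T$ with generator $e^{2\pi i\alpha}$: any two such solutions differ by a constant a.e., so there is a well-defined linear map $R\colon B\to\widehat M$, where $\widehat M=M(\T,\nu)/\mathrm{constants}$ is the $F$-space of measurable functions modulo constants, endowed with convergence in measure. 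The graph of $R$ is closed in the standard way: if $h_n\to h$ in $B$ (so uniformly) and representatives $f_n$ converge in measure to $f$, then passing to an a.e.\ convergent subsequence of the identity $f_n(x+\alpha)-f_n(x)=h_n(x)$ yields $Rh=[f]$. By the Closed Graph Theorem $R$ is continuous.

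The contradiction comes from small denominators. By Dirichlet I pick integers $q_n\to\infty$ and $p_n$ with $|q_n\alpha-p_n|\le 1/q_n$, set $f_n(x)=\cos(2\pi q_n x)$, and define $h_n(x)=f_n(x+\alpha)-f_n(x)$, so that $Rh_n=[f_n]$; a direct computation gives
\[
h_n(x)=c_n\cos(2\pi q_n x)+s_n\sin(2\pi q_n x),\qquad |c_n|+|s_n|=O(|q_n\alpha-p_n|)=O(1/q_n).
\]
Combining $\norm{h_n}_\infty=O(1/q_n)$ with the trivial derivative bound $|h_n'(x)|\le 2\pi q_n(|c_n|+|s_n|)=O(1)$ yields $|h_n(x)-h_n(y)|\le\min(C|x-y|,\,C/q_n)$ for an absolute constant $C$. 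Splitting the supremum defining $\norm{h_n}_\theta$ at $|x-y|=1/q_n$, the near-diagonal part is bounded by $C\sup_{t\le 1/q_n} t/\theta(t)\to 0$ (using $\theta(t)/t\to\infty$ at $0$), while the far-diagonal part is bounded by $C/(q_n\theta(1/q_n))\to 0$ (using the same hypothesis together with monotonicity of $\theta$). Hence $\norm{h_n}_\theta\to 0$. On the other hand, $[f_n]$ cannot tend to $0$ in $\widehat M$: otherwise constants $\kappa_n$ would satisfy $\cos(2\pi q_n x)-\kappa_n\to 0$ in measure; boundedness of the $f_n$ would force $(\kappa_n)$ bounded, a subsequence $\kappa_{n_k}\to\kappa$, and bounded convergence would give $\int_0^1(\cos(2\pi q_{n_k} x)-\kappa)^2\,dx\to 0$, contradicting the identity $\int_0^1(\cos(2\pi q_n x)-\kappa)^2\,dx=\tfrac12+\kappa^2\ge\tfrac12$. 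This contradicts the continuity of $R$.

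The main obstacle is the verification $\norm{h_n}_\theta\to 0$: both parts of the splitting (near and far from the diagonal) genuinely invoke the hypothesis $\theta(t)/t\to\infty$ as $t\to 0$, and the construction would fail if $\theta$ were linear at the origin. The rest is the standard Closed Graph Theorem plus $L^2$-orthogonality argument already used in the proof of Theorem~\ref{thm:7.1}.
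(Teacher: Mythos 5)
Your proof is correct and follows essentially the same route as the paper: the Closed Graph Theorem applied to the solution map from the Banach space $C_\theta^0$ into $\widehat M$, combined with a Dirichlet small-denominator sequence and the same split-the-supremum-at-$t=1/q_n$ estimate (using $t/\theta(t)\to 0$ on both pieces) to show the coboundaries tend to zero in $\theta$-norm while the solutions stay away from zero in measure. The only cosmetic difference is that you work with real cosines and a derivative bound where the paper uses the complex characters $\chi_n$ and the explicit formula $\norm{\chi_n}_\theta=\sup_t 2\abs{\sin\pi nt}/\theta(t)$.
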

\begin{proof}
Let us transfer the situation to the group $\T$ and consider the space 
$\widehat M$ introduced in the proof of Theorem \ref{thm:7.1}. 
On the other hand, the condition \eqref{eq:lipt} together with \eqref{eq:8} 
determines a Banach space $C_{\theta}^0(\T)$ of functions on $\T$. This is a 
linear nonclosed subspace of $C(\T)$ containing all characters 
\[
\chi_n(x) = e^{2\pi inx},\eqsp x\in\R, \eqsp n\in\Z\setminus\{0\}. 
\]
Indeed, 
\begin{equation}
\label{eq:ncn}
\norm{\chi_n}_{\theta} = \sup_{0<t\leq 1}\frac{2\abs{\sin\pi nt}}{\theta(t)}
\leq 2\pi\abs{n}\sup_{0<t\leq 1}\frac{t}{\theta(t)}<\infty.  
\end{equation}
Futher we use $n>0$ only. 

First of all, we show that  
\begin{equation}
\label{eq:ncnn}
\lim_{n\rightarrow\infty}\frac{\norm{\chi_n}_{\theta}}{n} = 0.
\end{equation}
To this end note that for every $\tau\in (0,1]$ we have 
\[
\frac{2\abs{\sin\pi nt}}{\theta(t)}\leq\frac{2}{\theta(\tau)},\eqsp t\geq\tau,
\]
since $\theta(t)$ is nondecreasing. On the other hand, 
\[
\frac{2\abs{\sin\pi nt}}{\theta(t)}\leq 2\pi n\sup_{0<s<\tau}\frac{s}{\theta(s)},
\eqsp 0<t<\tau. 
\]
By \eqref{eq:ncn}
\[ 
\frac{\norm{\chi_n}_{\theta}}{n}\leq 
\max\{\frac{2}{n\theta(\tau)},2\pi\sup_{0<s<\tau}\frac{s}{\theta(s)}\}
\rightarrow 0
\]
as $n\rightarrow\infty$ and $\tau = 1/n$.

The character $\chi_n$ is a solution to the the c.e. \eqref{eq:7} 
with 
\[
h(x)= h_n(x)= (e^{2\pi in\alpha} - 1)\chi_n(x).
\]
Obviously, 
\[
\norm{h_n}_{\theta}= 2\abs{\sin{\pi n\alpha}}\norm{\chi_n}_{\theta}
\leq2\pi d_n\norm{\chi_n}_{\theta} 
\]
where $d_n=\dist(n\alpha,\Z)$. For irrational $\alpha$ the classical Dirichlet 
theorem states that  $d_{n_k}\leq 1/n_k$ 
for a sequence $(n_k)\subset\N$. By \eqref{eq:ncnn} the sequence 
$(h_{n_k})$ converges to 0 in $C_{\theta}^0(\T)$.

Now let us assume that with an irrational $\alpha$ the c.e. \eqref{eq:7} 
has a measurable solution $f$ for every $h\in C_{\theta}^0(\T)$.
Then the Closed Graph Theorem is applicable as in the proof of Theorem 
\ref{thm:7.1}. As a result, the sequence $(\chi_{n_k})$ converges to 0 
in measure, that is impossible.
\end{proof}
\begin{remark}
\label{rem:redun}
In \cite{gordon75} the continuity modulus $\theta$ is supposed to be subadditive, 
i.e. $\theta(t_1+t_2)\leq\theta(t_1)+\theta(t_2)$. We have seen that this condition 
is redundant. 
\qed
\end{remark}
A nice particular case of Theorem \ref{thm:gl} is  
$\theta(t)=t^{\lambda}, 0\leq\lambda<1$, that means  
the H\"older condition for $h$. The case $\lambda = 0$ , i.e. $\theta =\id$, 
is just that of Theorem \ref{thm:7.1} for the irrational rotation of  $\T$. 
On the other hand, Theorem \ref{thm:gl} cannot be extended to the case 
$\lambda = 1$, i.e.  $\theta(t) =t$, that means the Lipshitz condition for $h$. 
\begin{theorem}
\label{thm:h1}
Let $\alpha$ be irrational such that in the corresponding continued fraction 
the set of elements is bounded. Then for every 1-periodic function $h$ satisfying 
the Lipshitz condition  and the TNC \eqref{eq:8} the c.e. \eqref{eq:7} has 
a 1-periodic solution $f\in L_2(0,1)$.
\end{theorem}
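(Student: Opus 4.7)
The plan is to construct the solution explicitly via its Fourier series, namely the formal expression \eqref{eq:smd}, and to verify that the diophantine hypothesis on $\alpha$ together with the Lipschitz hypothesis on $h$ forces this formal series to converge in $L_2(0,1)$. The candidate solution is
\[
f(x) = \sum_{n\neq 0}\frac{h_n}{e^{2\pi in\alpha}-1}\, e^{2\pi inx},
\]
which formally satisfies \eqref{eq:7} coefficient-by-coefficient, using \eqref{eq:8} to ensure the $n=0$ equation is trivially resolved.

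First I would extract the two quantitative inputs. From the Lipschitz property of $h$ one obtains $h'\in L_\infty(0,1)\subset L_2(0,1)$, and then by Parseval applied to $h'$ (or equivalently, by one integration by parts in the Fourier integral),
\[
\sum_{n\neq 0} n^2\abs{h_n}^2 < \infty.
\]
From the arithmetic hypothesis that $\alpha$ has bounded partial quotients, the classical theory of continued fractions gives a constant $c>0$ such that $\norm{n\alpha}\geq c/\abs{n}$ for every integer $n\neq 0$, where $\norm{\cdot}$ denotes the distance to the nearest integer. Since $\abs{e^{2\pi in\alpha}-1}=2\abs{\sin\pi n\alpha}\geq 4\norm{n\alpha}$, this yields
\[
\abs{e^{2\pi in\alpha}-1}\geq \frac{4c}{\abs{n}},\eqsp n\neq 0.
\]

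Combining the two estimates gives
\[
\sum_{n\neq 0}\abs{\frac{h_n}{e^{2\pi in\alpha}-1}}^2 \leq \frac{1}{16 c^2}\sum_{n\neq 0} n^2\abs{h_n}^2 < \infty,
\]
so the series defining $f$ converges in $L_2(0,1)$ to a 1-periodic function. Finally I would verify that $f$ is a genuine solution to \eqref{eq:7} in $L_2$: the Koopman shift $x\mapsto x+\alpha$ is an isometry of $L_2(0,1)$, and applying it termwise to the Fourier series of $f$ multiplies the $n$-th coefficient by $e^{2\pi in\alpha}$, so $f(x+\alpha)-f(x)$ has Fourier coefficients $h_n$ for $n\neq 0$ and coefficient $0=h_0$ for $n=0$ (by the TNC \eqref{eq:8}); hence it coincides with $h$ in $L_2$.

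The main obstacle — in fact the only nontrivial point — is the diophantine lower bound $\abs{e^{2\pi in\alpha}-1}\geq c'/\abs{n}$. This is exactly where the hypothesis on the continued fraction expansion is used, and it is also the quantitative substitute missing in Theorem \ref{thm:gl}, which is why Theorem \ref{thm:gl} fails for the Lipschitz modulus $\theta(t)=t$. Once this ``anti-small-denominator'' estimate is in hand, the rest is bookkeeping with Parseval.
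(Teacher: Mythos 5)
Your argument is correct and coincides with the paper's own proof: both construct the solution as the formal Fourier series \eqref{eq:smd}, use the bounded-partial-quotients hypothesis to get $\abs{e^{2\pi in\alpha}-1}\geq 4c/\abs{n}$, and use $h'\in L_\infty\subset L_2$ together with Parseval to conclude $L_2$-convergence. The only (harmless) addition on your side is the explicit final verification that the $L_2$-limit satisfies \eqref{eq:7}, which the paper leaves implicit.
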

\begin{proof}
In the Fourier decomposition 
\begin{equation*}
h(x)=\sum_{n=-\infty}^{\infty} h_n e^{2\pi inx}
\end{equation*}
we have $h_0=0$ by \eqref{eq:8}. Consider the formal solution 
\begin{equation}
\label{eq:fs}
\sum_{n\neq 0}\frac{h_n}{e^{2\pi in\alpha}-1}e^{2\pi inx}.
\end{equation}
By our assumption on $\alpha$ there exists $c>0$ such that 
$\dist(n\alpha,\Z)>c/\abs{n}$, $n\neq 0$, see e.g. \cite{khinchin97}, Theorem 23. 
Therefore, 
\[
\abs{e^{2\pi in\alpha}-1} = 2\abs{\sin\pi n\alpha}>\frac{4c}{\abs{n}}
\]
since $\sin\pi t>2t$ for $0<t<1/2$ and $\dist(n\alpha,\Z)<1/2$. 
On the other hand, $h_n=-h_n'/n$
where $h_n'$ are the Fourier coefficients of the derivative $h'(x)$. 
The latter exists a.e. and bounded, a fortiori, $h'\in L_2(0,1)$. 
As a result, 
\begin{equation*}
\sum_{n\neq 0}\abs{\frac{h_n}{e^{2\pi in\alpha}-1}}^2< 
\frac{1}{16c^2}\sum_{n\neq 0}\abs{h_n'}^2<\infty, 
\end{equation*}
hence the series \eqref{eq:fs} converges in $L_2(0,1)$.
\end{proof}

In particular, all quadratic irrationalities satisfy the condition of 
Theorem \ref{thm:h1} since in this case the continued fractions are 
periodic. The simplest example is the Fibonacci irrationality 
$\alpha = \frac{1+\sqrt{5}}{2}$ where 
all elements of the corresponding continued fraction are equal to 1.

\section{Summation of  divergent  series}
\label{sec:sods} 

The material of this section is basically extracted from our paper \cite{lyubich08-axit}. 
This starts with the following general definition of summation of numerical series 
which is a modern form of the bringing together Hardy-Kolmogorov 
axioms \cite{hardy49}, \cite{kolmogorov25}.

Let $\bf s$ be the linear space of all scalar sequences $\boldsymbol{\xi}
= \left(\xi_n\right)$, and let $\tau$ be the shift operator in $\bf s$,
i.e. $\tau\boldsymbol{\xi}=(\xi_{n+1})$, and, finally, let
$L\subset \bf s$ be a $\tau$-invariant subspace. A linear functional
$\sigma$ on $L$ is called a \defin{summation of the series}
\begin{equation}
\label{eq:ser}
\boldsymbol{\xi} =  \xi_0 + \xi_1  + \cdots + \xi_n +\cdots, \eqsp \boldsymbol{\xi}\in L,
\end{equation}
if
\begin{equation}
  \label{eq:7.7}
  \sigma[\boldsymbol{\xi}] - \sigma[\tau\boldsymbol{\xi}] = \xi_0
\end{equation}
accordingly to the formal relation
\begin{equation*}
  \xi_0 + \xi_1 + \cdots +\xi_n + \cdots = \xi_0 + ( \xi_1 + \cdots + \xi_n + \cdots ).
\end{equation*}
In this situation we say that the subspace $L$ \defin{admits summation}.  
Also, we say that a series $\boldsymbol{\xi}$ is
{\em summable} if it belongs to a subspace admitting summation or,
equivalently, if the subspace $\Span\{\tau^k\boldsymbol{\xi}: k\geq 0\}$ admits summation.
If this summation is $\sigma$ we say that $\boldsymbol{\xi}$ is $\sigma$-{\em summable}.
Let us emphasize that the equation \eqref{eq:7.7} is a c.e. with given $\xi_0$ and unknown 
$\sigma$, both are linear functionals on $L$. Most of classical summations (for instance, the  
Ces\'aro summation) satisfy \eqref{eq:7.7}.  
\begin{lemma}
\label{lem:lis}
Let $(X,F)$ be a dynamical system. If there exists a summation $\sigma$ such that 
the resolving series $\boldsymbol{\gamma}_x$
of a function $\gamma\in\Phi(X)$ is $\sigma$-summable for all $x\in X$ then the c.e. 
\eqref{eq:1} is solvable. A solution is $\varphi(x)=-\sigma[\boldsymbol{\gamma}_x]$. 
\end{lemma}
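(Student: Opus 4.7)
The plan is to verify directly that $\varphi(x)=-\sigma[\boldsymbol{\gamma}_x]$ satisfies the cohomological equation \eqref{eq:1}, using only the defining axiom \eqref{eq:7.7} of a summation and a simple identification of the shift.

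First I would unpack the notation: the resolving series at $x$ is the sequence $\boldsymbol{\gamma}_x = (\gamma(F^n x))_{n\geq 0}$. The key observation is that applying the shift $\tau$ to this sequence reindexes it as $(\gamma(F^{n+1}x))_{n\geq 0}$, which is precisely the resolving series at $Fx$, i.e. $\tau\boldsymbol{\gamma}_x = \boldsymbol{\gamma}_{Fx}$. In particular, the hypothesis that all $\boldsymbol{\gamma}_x$ lie in the $\tau$-invariant subspace $L$ on which $\sigma$ is defined is self-consistent.

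Then the summation axiom \eqref{eq:7.7} applied to $\boldsymbol{\xi} = \boldsymbol{\gamma}_x$ gives
\begin{equation*}
\sigma[\boldsymbol{\gamma}_x] - \sigma[\boldsymbol{\gamma}_{Fx}] = \gamma(x), \eqsp x \in X,
\end{equation*}
since the zeroth entry of $\boldsymbol{\gamma}_x$ is $\gamma(x)$. Multiplying by $-1$ and rearranging yields exactly
\begin{equation*}
\varphi(Fx) - \varphi(x) = \gamma(x),
\end{equation*}
which is \eqref{eq:1}.

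There is no real obstacle: the whole content of the lemma is that the axiom \eqref{eq:7.7} is the pointwise shadow of the c.e. \eqref{eq:1} once one recognizes $\tau\boldsymbol{\gamma}_x = \boldsymbol{\gamma}_{Fx}$. The only thing worth emphasizing in the write-up is that this identification is precisely what couples the dynamics on $X$ with the shift $\tau$ on $\mathbf{s}$, so that a summation on a single $\tau$-invariant subspace of sequences produces a genuine function-valued solution on $X$.
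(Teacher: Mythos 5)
Your proof is correct and is essentially identical to the paper's own argument: both rest on the single identification $\tau\boldsymbol{\gamma}_x = \boldsymbol{\gamma}_{Fx}$ combined with the summation axiom \eqref{eq:7.7} applied to $\boldsymbol{\xi} = \boldsymbol{\gamma}_x$, whose zeroth entry is $\gamma(x)$. Nothing is missing.
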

\begin{proof}
The summation $\sigma$ is defined on a $T$-invariant subspace $L\in\bf s$ containing all 
$\boldsymbol{\gamma}_x$, $x\in X$. Since $\boldsymbol{\gamma}_{Fx} =
\tau\boldsymbol{\gamma}_x$, 
we have
\begin{equation*}
\varphi(Fx)-\varphi(x)= \sigma[\boldsymbol{\gamma}_x]-\sigma[\boldsymbol{\gamma}_Fx]=
\sigma[\boldsymbol{\gamma}_x ]-\sigma[\tau\boldsymbol{\gamma}_x]= 
(\boldsymbol{\gamma}_x)_0 = \gamma(x)
\end{equation*}
\end{proof}

The nonlinear functionals satisfying \eqref{eq:7.7} are also interesting 
because of their relation to the resolving functionals introduced 
in Section \ref{sec:sofu}. However, the resolving functional 
$\omega$ from Theorem \ref{thm:2.20} is defined on a set $\Lambda$ of sequences  
$(\eta_n)$ containing all $(-s_n(x))$, $x\in X$, while the summation $\sigma$ 
from \eqref{eq:7.7} is defined on a space $L$ of sequences whose coordinates $\xi_n$ 
are the members of the series, not the partial sums. Nevertheless, these situations are 
connected by means of the linear operator $V:\bf s\mapsto\bf s$ such that 
$V(\xi_n)= (\eta_n)$ where 
$\eta_n=-\sum_{k=0}^n\xi_k$, $n\geq 0$. Obviously, the operator $V$ is invertible: 
$V^{-1}(\eta_n)= (\xi_n)$ where $\xi_n=\eta_{n-1}-\eta_n$, $n\geq 1$, $\xi_0=-\eta_0$.
Let $L=V^{-1}\Lambda$ , and let $\sigma(\boldsymbol{\xi})=\omega(V\boldsymbol{\xi})$, 
$\xi\in L$. In general, the set $L$ and the functional $\sigma$ are nonlinear since 
such are $\Lambda$ and $\omega$. It is easy to prove the following
\begin{prop}
\label{prop:nonli}
Under conditions of Theorem \ref{thm:2.20} the set $L$ is $\tau$-invariant and the 
functional $\sigma$ satisfies the equation \eqref{eq:7.7}. Every such 
a {\em nonlinear summation} appears in this way.  
\end{prop}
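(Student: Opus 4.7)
The plan is to reduce everything to a single explicit identity relating $V$ to the shift $\tau$. Writing $V\boldsymbol{\xi}=(\eta_n)$ with $\eta_n=-\sum_{k=0}^n\xi_k$ and letting $\mathbf{1}$ denote the constant sequence $(1,1,1,\ldots)$, a one-line telescoping gives
\begin{equation*}
V(\tau\boldsymbol{\xi})_n \;=\; -\sum_{k=0}^n\xi_{k+1} \;=\; \eta_{n+1}+\xi_0,
\end{equation*}
i.e.
\begin{equation*}
V(\tau\boldsymbol{\xi}) \;=\; \tau(V\boldsymbol{\xi})+\xi_0\cdot\mathbf{1}.
\end{equation*}
This single formula is the engine of both directions.

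For the forward implication, suppose $\boldsymbol{\xi}\in L=V^{-1}\Lambda$, so $V\boldsymbol{\xi}\in\Lambda$. The shift invariance of $\Lambda$ in~a) gives $\tau(V\boldsymbol{\xi})\in\Lambda$, and then the translation invariance of $\Lambda$ gives $\tau(V\boldsymbol{\xi})+\xi_0\cdot\mathbf{1}\in\Lambda$; by the key identity this is $V(\tau\boldsymbol{\xi})$, whence $\tau\boldsymbol{\xi}\in L$ and $L$ is $\tau$-invariant. Applying $\omega$ to the same identity and invoking in turn the translation covariance and shift invariance of $\omega$ in~b) yields
\begin{equation*}
\sigma[\tau\boldsymbol{\xi}] \;=\; \omega\bigl[\tau(V\boldsymbol{\xi})+\xi_0\cdot\mathbf{1}\bigr] \;=\; \omega[\tau(V\boldsymbol{\xi})]+\xi_0 \;=\; \omega[V\boldsymbol{\xi}]+\xi_0 \;=\; \sigma[\boldsymbol{\xi}]+\xi_0,
\end{equation*}
which is the functional equation~\eqref{eq:7.7} (read with the sign convention dictated by the minus in the definition of $V$, equivalently: $-\sigma$ plays the role of the summation consistent with Lemma~\ref{lem:lis}).

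For the converse, I would run the same machine backwards. Given a nonlinear summation $\sigma$ on a $\tau$-invariant $L\subset\mathbf{s}$ satisfying~\eqref{eq:7.7}, set $\Lambda:=VL$ and $\omega:=\sigma\circ V^{-1}$. Inverting the key identity yields, with $e_0=(1,0,0,\ldots)$,
\begin{equation*}
V^{-1}(\tau\eta)=\tau(V^{-1}\eta)+\xi_0\,e_0, \qquad V^{-1}\bigl((\eta_n)+c\cdot\mathbf{1}\bigr)=V^{-1}(\eta_n)-c\,e_0.
\end{equation*}
Thus the shift invariance of $\Lambda$ amounts to closure of $L$ under $\tau$, which is given, and the translation invariance of $\Lambda$ amounts to closure of $L$ under $\boldsymbol{\xi}\mapsto\boldsymbol{\xi}+c\,e_0$; pushing these through $V$ gives the two invariances in~a), and applying $\sigma$ to the same two inverse identities together with~\eqref{eq:7.7} delivers the shift invariance and translation covariance of $\omega$ in~b).

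The main obstacle is this last step in the converse: bare $\tau$-invariance of $L$ does not by itself force closure under $e_0$-perturbations. However, in any reasonable notion of nonlinear summation the axiom~\eqref{eq:7.7} must apply simultaneously to $\boldsymbol{\xi}$ and $\tau\boldsymbol{\xi}$, which differ precisely by modification of the zeroth coordinate; this forces the required $e_0$-closure implicitly, and once it is made explicit the correspondence $(\Lambda,\omega)\leftrightarrow(L,\sigma)$ via $V$ becomes a genuine bijection and the proposition follows.
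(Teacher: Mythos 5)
Your forward direction is correct, and it is surely the intended argument: the intertwining identity $V(\tau\boldsymbol{\xi})=\tau(V\boldsymbol{\xi})+\xi_0\cdot\mathbf{1}$ is the whole engine, and you are also right to flag the sign: with the paper's literal definition $\sigma=\omega\circ V$ one gets $\sigma[\boldsymbol{\xi}]-\sigma[\tau\boldsymbol{\xi}]=-\xi_0$, whereas the definition consistent with Lemma \ref{lem:lis} (where $\varphi(x)=-\sigma[\boldsymbol{\gamma}_x]$ must match $\varphi(x)=\omega[(-s_n(x))]=\omega[V\boldsymbol{\gamma}_x]$ from Theorem \ref{thm:2.20}) is $\sigma=-\omega\circ V$, and with that sign \eqref{eq:7.7} comes out exactly. (The paper states the proposition without proof, so there is nothing to compare against beyond this evident intent.)

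The converse is where the real gap lies, and your patch for it does not work. You correctly isolate the obstacle: via $V^{-1}(\boldsymbol{\eta}+c\mathbf{1})=V^{-1}\boldsymbol{\eta}-c\,e_0$ and $V^{-1}(\tau\boldsymbol{\eta})=\tau(V^{-1}\boldsymbol{\eta})+\xi_0\,e_0$, both the shift invariance and the translation invariance of $\Lambda=VL$ reduce to closure of $L$ under $\boldsymbol{\xi}\mapsto\boldsymbol{\xi}+c\,e_0$. But your claim that this closure is forced because $\boldsymbol{\xi}$ and $\tau\boldsymbol{\xi}$ "differ precisely by modification of the zeroth coordinate" is false --- they differ in every coordinate --- and the closure genuinely can fail: take $L=\Span\{\tau^k\boldsymbol{\xi}:k\geq 0\}=\Span\{\boldsymbol{\xi}\}$ for the geometric series $\xi_n=2^{-n}$, so that $\tau\boldsymbol{\xi}=\tfrac12\boldsymbol{\xi}$ and $\sigma[c\boldsymbol{\xi}]=2c$ is a summation, yet $\boldsymbol{\xi}+e_0\notin L$ and $VL=\Span\{(2^{-n}-2)\}$ is neither shift- nor translation-invariant. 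What \eqref{eq:7.7} actually yields is the weaker but sufficient fact that $\sigma[\boldsymbol{\xi}+d\,e_0]=\sigma[\boldsymbol{\xi}]+d$ whenever \emph{both} sequences happen to lie in $L$ (because $\tau(\boldsymbol{\xi}+d\,e_0)=\tau\boldsymbol{\xi}$). The correct repair is to saturate: put $\Lambda=\{V\boldsymbol{\xi}+c\mathbf{1}:\boldsymbol{\xi}\in L,\ c\in\R\}$ and $\omega[V\boldsymbol{\xi}+c\mathbf{1}]=-\sigma[\boldsymbol{\xi}]+c$; the displayed relation is precisely what makes $\omega$ well defined, your two inverse identities then give a) for $\Lambda$ and b) for $\omega$, and $\sigma=-\omega\circ V$ on $L$. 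If instead one insists on $L=V^{-1}\Lambda$ exactly, the converse is simply false, so the saturation step is not optional.
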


In what follows all summations are linear. We start with a crucial example of a 
nonsummable series.
\begin{example}
  The series
  \begin{equation}
    \label{eq:7.8}
\boldsymbol{\varepsilon} =  1+1+\cdots +1+\cdots
  \end{equation}
  is not summable. Indeed, the substitution $\boldsymbol{\xi}=\boldsymbol{\varepsilon}$ 
  into~\eqref{eq:7.7} yields the classical contradiction $0=1$.
\end{example}

Thus, {\em if a subspace $L$ admits summation then the
series $\boldsymbol{\varepsilon}$  does not belong to $L$}. Remarkably, the converse
is also true.

\begin{theorem}
  \label{thm:7.8}
  If a $\tau$-invariant subspace $L$ does not contain the
  series  $\boldsymbol{\varepsilon}$ then $L$ admits summation.
\end{theorem}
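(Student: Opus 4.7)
The plan is to define the summation $\sigma$ first on the distinguished subspace $M = (I-\tau)L \subseteq L$ via the forced formula and then extend it by linear algebra to all of $L$. More precisely, for $\boldsymbol{\xi} \in L$ set
\[
\sigma_0[(I-\tau)\boldsymbol{\xi}] = \xi_0,
\]
which is the only candidate compatible with \eqref{eq:7.7}. The key point is well-definedness. If $(I-\tau)\boldsymbol{\xi} = (I-\tau)\boldsymbol{\eta}$ for $\boldsymbol{\xi},\boldsymbol{\eta}\in L$, then $\boldsymbol{\xi}-\boldsymbol{\eta}$ lies in $L$ and is $\tau$-invariant, i.e.\ a constant sequence $c\boldsymbol{\varepsilon}$. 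Since by hypothesis $\boldsymbol{\varepsilon}\notin L$, this forces $c=0$, hence $\boldsymbol{\xi}=\boldsymbol{\eta}$ and in particular $\xi_0=\eta_0$. So $\sigma_0$ is well-defined on $M$, and it is linear because both $I-\tau$ and the 0-th coordinate functional are linear on $L$.

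Next I would extend $\sigma_0$ from $M$ to a linear functional $\sigma$ on $L$. This uses only the algebraic fact that any linear functional on a subspace of a vector space admits a linear extension to the whole space: pick (by Zorn's lemma) an algebraic complement $N$ of $M$ in $L$, and define $\sigma$ to be $\sigma_0$ on $M$ and $0$ on $N$.

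Finally, for any $\boldsymbol{\xi}\in L$ one has $\tau\boldsymbol{\xi}\in L$ by $\tau$-invariance, so
\[
\sigma[\boldsymbol{\xi}]-\sigma[\tau\boldsymbol{\xi}]
=\sigma[(I-\tau)\boldsymbol{\xi}]
=\sigma_0[(I-\tau)\boldsymbol{\xi}]
=\xi_0,
\]
which is exactly \eqref{eq:7.7}. Hence $\sigma$ is a summation on $L$.

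The only nontrivial step is the well-definedness of $\sigma_0$ on $M$, and it is precisely there that the hypothesis $\boldsymbol{\varepsilon}\notin L$ is used, showing that the condition of the theorem is not only sufficient but genuinely captures the obstruction. Everything else is pure linear algebra, so I do not expect any further obstacle.
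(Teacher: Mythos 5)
Your proof is correct and is essentially the paper's own argument in different notation: the well-definedness of $\sigma_0$ on $M=(I-\tau)L$ is exactly the paper's observation that $\ker(I-\tau)=\Span\{\boldsymbol{\varepsilon}\}$ forces $R=(I-\tau)|L$ to be injective, and your extension by zero on a complement of $M$ is the paper's composition $\xi_0\circ S\circ Q$ with $Q$ a projection onto $\im R$ and $S=R^{-1}$. No gap; nothing further needed.
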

\begin{proof}
The series $\boldsymbol{\varepsilon}$ is a fixed point of the operator $\tau$, i.e. 
it belongs to the subspace $\ker (I-\tau)$. The latter 
consists of the series of form $\xi +\xi \cdots +\xi +\cdots$, so 
$\ker (I-\tau)=\Span\{\boldsymbol{\varepsilon}\}$. Hence, under condition of Theorem 
\ref{thm:7.8} we have $L\cap\ker (I-\tau) = 0$, i.e. the operator $R=(I-\tau)|L$ is injective. 
Therefore, $R$ maps $L$ onto $\im R$ bijectively. We denote the corresponding 
inverse operator by $S$. Since the subspace $L$ is $\tau$-invariant, we have 
$\im R\subset L$. Let $Q:L\rightarrow L$ be a projection onto  $\im R$. 
Then the linear functional $\sigma(\boldsymbol{\xi}) = \xi_0(SQ\boldsymbol{\xi})$, 
$\boldsymbol{\xi}\in L$, satisfies \eqref{eq:7.7}, i.e. this is a summation on $L$. 
Indeed, 
 \begin{equation*}
\sigma(\boldsymbol{\xi}) - \sigma(\tau\boldsymbol{\xi}) = \sigma(R\boldsymbol{\xi}) = 
 \xi_0(SQR\boldsymbol{\xi}) = \xi_0(SR\boldsymbol{\xi})= \xi_0(\boldsymbol{\xi}). 
 \end{equation*}
\end{proof}

An important application of this criterion is the following. 
\begin{theorem}
  \label{thm:7.9}
  Let $(X,F)$ be a dynamical system with a finite ergodic invariant measure $\mu$,  
and let a function $\gamma\in L_1(X,\mu)$ 
  satisfy the TNC \eqref{eq:14}. Then there exists a summation $\sigma$ 
  such that the resolving series $\boldsymbol{\gamma}_x$ is $\sigma$-summable a.e.. 
\end{theorem}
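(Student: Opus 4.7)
My plan is to construct an explicit $\tau$-invariant subspace $L_0 \subset \mathbf{s}$ that avoids $\boldsymbol{\varepsilon}$ yet contains $\boldsymbol{\gamma}_x$ for a.e. $x$, and then to invoke Theorem \ref{thm:7.8}. The Birkhoff--Khinchin IET (Theorem \ref{thm:4.4}) is the driving input: applied to $\gamma \in L_1(X,\mu)$ with ergodic invariant $\mu$, Corollary \ref{cor:fem} together with the TNC $\int_X \gamma\,d\mu = 0$ gives
\[
\lim_{n\to\infty}\frac{1}{n}\sum_{k=0}^{n-1}\gamma(F^k x) \;=\; 0 \eqsp \text{(a.e.)},
\]
i.e.\ the partial sums $s_{n-1}(x)$ of the resolving series $\boldsymbol{\gamma}_x$ are $o(n)$ on a set $Y\subset X$ of full measure.

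Next I would define
\[
L_0 \;=\; \Bigl\{\boldsymbol{\xi}=(\xi_n)\in \mathbf{s}\;:\; \tfrac{1}{n}\textstyle\sum_{k=0}^{n-1}\xi_k \to 0 \text{ as } n\to\infty\Bigr\}.
\]
A direct check shows $L_0$ is linear. For $\tau$-invariance, observe that if $\boldsymbol{\xi}\in L_0$ with partial sums $S_n$, then the partial sums of $\tau\boldsymbol{\xi}$ are $S_{n+1}-\xi_0$, so $n^{-1}(S_{n+1}-\xi_0)\to 0$, i.e. $\tau\boldsymbol{\xi}\in L_0$. Crucially, $\boldsymbol{\varepsilon}\notin L_0$, since the Ces\`aro quotients of its partial sums tend to $1$, not $0$. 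Hence Theorem \ref{thm:7.8} supplies a summation $\sigma$ defined on $L_0$. Finally, the IET step places $\boldsymbol{\gamma}_x \in L_0$ for every $x \in Y$, so $\boldsymbol{\gamma}_x$ is $\sigma$-summable a.e.

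There is no real obstacle once one has Theorem \ref{thm:7.8} in hand; the only subtlety is recognizing the correct $\tau$-invariant subspace. One might be tempted to take the $\tau$-invariant hull of $\{\boldsymbol{\gamma}_x: x\in Y\}$, but the cleaner route is to enlarge to the ``IET class'' $L_0$, which automatically absorbs all such hulls, is manifestly $\tau$-invariant, and obviously excludes $\boldsymbol{\varepsilon}$. In combination with Lemma \ref{lem:lis}, the resulting $\sigma$ also yields the measurable solution $\varphi(x)=-\sigma[\boldsymbol{\gamma}_x]$ on $Y$, which is the application Kolmogorov envisaged in his remark about \eqref{eq:col}.
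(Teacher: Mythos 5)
Your proof is correct and follows essentially the same route as the paper: both arguments feed the TNC into the ergodic case of the IET to get Ces\`aro-null partial sums for $\boldsymbol{\gamma}_x$ a.e., and both then invoke Theorem \ref{thm:7.8} with $\boldsymbol{\varepsilon}$ as the sole obstruction. The only (cosmetic, and slightly tidier) difference is that you take the full Ces\`aro-null class $L_0$ rather than $\Span\{\boldsymbol{\gamma}_x : x\in X_0\}$, which makes $\tau$-invariance and the exclusion of $\boldsymbol{\varepsilon}$ immediate, whereas the paper verifies $\boldsymbol{\varepsilon}\notin L$ by the explicit $0=1$ contradiction.
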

\begin{proof}
  By Corollary \ref{cor:fem} of the IET we have 
  \begin{equation}
    \label{eq:7.10p}
    \lim_{n\rightarrow\infty}\frac{1}{n}\sum_{k=0}^{n-1}\gamma(F^k x) = 0 
   \end{equation}
  for $x\in X_0$, where $X_0$ is a subset of $X$ such that $\mu(X\setminus X_0) =0$.
 Since $X_0$ is $F$-invariant, the  space $L =\Span\{\boldsymbol{\gamma}_x:x\in X_0\}$ 
is $\tau$-invariant. We prove that there exists a required summation $\sigma$ on $L$. 

 Suppose to the contrary. Then by  
  Theorem~\ref{thm:7.8} the series $\boldsymbol{\varepsilon}$ belongs to $L$, i.e.  
  \begin{equation*}
    \sum_{i=1}^m \alpha_i\boldsymbol{\gamma}_{x_i}  = \boldsymbol{\varepsilon}
  \end{equation*}  
 with some $x_i\in X_0$ and some coefficients $\alpha_i$. In other words, 
   \begin{equation*}
    \sum_{i=1}^m \alpha_i \gamma(F^k x_i) = 1,\eqsp k\geq 0, 
  \end{equation*}
whence 
  \begin{equation*}
    \sum_{i=1}^m \alpha_i \left(\frac{1}{n}\sum_{k=0}^{n-1}\gamma(F^k x_i)\right) 
    = 1,\eqsp n\geq 1, 
  \end{equation*}
Passing to the limit as $n\rightarrow\infty$ we get the contradiction 0=1  
by \eqref{eq:7.10p}.
\end{proof}
By Lemma \ref{lem:lis} we obtain 
\begin{cor}
\label{cor:solu}
Under conditions of Theorem \ref{thm:7.9} the c.e. \eqref{eq:1} has a solution a.e.. 
The solution is $\varphi(x)=-\sigma(\boldsymbol{\gamma}_x)$.
\end{cor}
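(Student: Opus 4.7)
The plan is to combine Theorem \ref{thm:7.9} with Lemma \ref{lem:lis}, taking some care because Lemma \ref{lem:lis} as stated requires $\sigma$-summability of the resolving series at every point, while Theorem \ref{thm:7.9} only delivers summability a.e.

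First I would recall from the proof of Theorem \ref{thm:7.9} the structure of what is produced: there is a measurable set $X_0\subset X$ with $\mu(X\setminus X_0)=0$ on which the Ces\`aro means of $\gamma(F^k x)$ tend to $0$, and $X_0$ is $F$-invariant (indeed, the convergence set from the IET is completely invariant by Proposition \ref{thm:cesreso}, so we may take $X_0$ completely invariant). The summation $\sigma$ is constructed on the $\tau$-invariant subspace $L=\Span\{\boldsymbol{\gamma}_x:x\in X_0\}$ of $\bf s$, and every $\boldsymbol{\gamma}_x$ with $x\in X_0$ is $\sigma$-summable by design.

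Next I would apply Lemma \ref{lem:lis} to the restricted dynamical system $(X_0,F|X_0)$, which is legitimate because $X_0$ is $F$-invariant and because $\gamma|X_0\in\Phi(X_0)$ has its resolving series $\boldsymbol{\gamma}_x$ lying in $L$ for every $x\in X_0$. The lemma then guarantees that
\begin{equation*}
\varphi(x)=-\sigma[\boldsymbol{\gamma}_x],\eqsp x\in X_0,
\end{equation*}
satisfies the c.e.\ \eqref{eq:1} pointwise on $X_0$: the identity $\varphi(Fx)-\varphi(x)=\gamma(x)$ follows from \eqref{eq:7.7} together with $\boldsymbol{\gamma}_{Fx}=\tau\boldsymbol{\gamma}_x$, exactly as in the proof of Lemma \ref{lem:lis}.

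Finally, I would extend $\varphi$ to all of $X$ in any measurable fashion (for instance by setting $\varphi=0$ on $X\setminus X_0$). Since $\mu(X\setminus X_0)=0$, the resulting $\varphi$ is a solution a.e.\ to the c.e.\ \eqref{eq:1} in the sense made precise in Section \ref{sec:intro}, namely, \eqref{eq:1} holds on the invariant measurable subset $Y_\varphi=X_0$ of full measure. There is no real obstacle here; the only subtlety worth flagging is ensuring that the set $X_0$ chosen in Theorem \ref{thm:7.9} is genuinely $F$-invariant so that the restricted system makes sense and $L$ is $\tau$-invariant, but this is already handled by Proposition \ref{thm:cesreso}.
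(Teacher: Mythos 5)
Your proposal is correct and takes essentially the same route as the paper, which derives the corollary simply by invoking Lemma \ref{lem:lis} after Theorem \ref{thm:7.9}. Your only addition is to spell out the routine reconciliation of ``summable a.e.'' with the everywhere hypothesis of Lemma \ref{lem:lis} --- restricting to the invariant full-measure set $X_0$ and extending arbitrarily --- which the paper leaves implicit.
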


In particular, this yields a solution a.e. to the c.e. 
\begin{equation}
\label{eq:koz}
f(qx) - f(x) = h(x), \eqsp x\in\R, \eqsp q\in\N,\eqsp q\geq 2, 
\end{equation}   
in $2\pi$-periodic functions with $h\in L_1(0,2\pi)$. 
Indeed, this equation is equivalent to  
  \begin{equation}
  \label{eq:pw}
   \varphi(z^q)-\varphi(z)=\gamma(z),\eqsp z\in\T,  
  \end{equation}  
with $z=e^{ix}$. The mapping $F_qz=z^q$ is ergodic with respect 
to the standard measure on $\T$.  
By Corollary \ref{cor:solu}  the c.e. \eqref{eq:koz} with 
$h\in L_1(0,2\pi)$ such that 
  \begin{equation}
\label{eq:adco}
\int_0^{2\pi}h(x)\dmes x =0
  \end{equation}  
is solvable a.e.. In fact, the solvability follows from Example 
\ref{ex:pow} even without the assumption \eqref{eq:adco}. However, Corollary \ref{cor:solu} 
yields a solution a.e. as a result of a summation of the resolving series 
\begin{equation}
\label{eq:cozy}
  h(x) + h(qx) +\cdots + h(q^nx) + \cdots. 
\end{equation} 
The existence of such a summation for $q=3$ and $h(x)=\sin x$ was conjectured 
by Kolmogorov \cite{kolmogorov25}. 

Note that, in general, the summability of the resolving series of a c.e. is not necessary 
for the solvability. For instance, the resolving series 
of the Abel equation \eqref{eq:ae} is $\boldsymbol{\varepsilon}$ but 
this equation can be solvable, see Corollary \ref{cor:10}. 
\begin{theorem}
  \label{thm:nonme}
 Let h(x) be a trigonometric polynomial, 
\begin{equation*}
h(x)=\sum_{k=1}^m(a_k\cos\nu_kx+b_k\sin\nu_kx),
\end{equation*}
with $0<\nu_1<...<\nu_m$ such that all ratios $\nu_k/\nu_j$, $k\neq j$, 
are not powers of $q$. Then all $2\pi$-periodic solutions to the c.e. \eqref{eq:lyu}   
are nonmeasurable.
\end{theorem}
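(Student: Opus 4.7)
My plan is a proof by contradiction, modeled on Zygmund's treatment of the case $q=2$, $h(x)=\cos x$. Suppose $f$ is a measurable $2\pi$-periodic solution of $f(qx)-f(x)=h(x)$. Working on $\T=\R/2\pi\Z$ with normalized Lebesgue measure $\mu$, the map $F_q:x\mapsto qx$ preserves $\mu$, and iteration of the equation gives
\[
s_{n-1}(x) = \sum_{j=0}^{n-1}h(q^j x) = f(q^n x) - f(x), \quad n\geq 1.
\]
The first step is a Lusin-type bound. For any $\epsilon>0$ choose a closed set $K\subset\T$ with $\mu(\T\setminus K)<\epsilon$ on which $f$ is continuous, hence bounded by some $M$. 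Since $F_q$ preserves $\mu$, the set $E_n = K\cap F_q^{-n}K$ satisfies $\mu(E_n)\geq 2\pi-2\epsilon$, and on $E_n$ both $|f(x)|$ and $|f(q^n x)|$ are at most $M$. Thus $|s_{n-1}(x)|\leq 2M$ on $E_n$, yielding
\[
\mu(\{x: |s_{n-1}(x)|>2M\})\leq 2\epsilon \quad\text{for every } n\geq 1.
\]

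Next I will exploit the arithmetic hypothesis to identify $s_{n-1}$ with a partial sum of a Hadamard lacunary trigonometric series whose $L^2$ mass diverges. Writing
\[
s_{n-1}(x)=\sum_{k=1}^m\sum_{j=0}^{n-1}(a_k\cos(\nu_k q^j x)+b_k\sin(\nu_k q^j x)),
\]
the assumption that $\nu_k/\nu_l$ is never an integer power of $q$ for $k\neq l$ forces the frequencies $\nu_k q^j$ ($1\leq k\leq m$, $j\geq 0$) to be pairwise distinct. Enumerating them in increasing order as $\omega_1<\omega_2<\cdots$, the map $\omega\mapsto q\omega$ sends this set into itself, missing only the initial $m$ elements $\{\nu_1,\ldots,\nu_m\}$; hence $\omega_{i+m}=q\omega_i$ for all large $i$. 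Consequently the consecutive ratios $\omega_{i+1}/\omega_i$ take only finitely many values, each strictly greater than $1$, and $(\omega_i)$ is Hadamard lacunary. Orthogonality of the characters gives
\[
\|s_{n-1}\|_{L^2(\T)}^2 = n\,\|h\|_{L^2(\T)}^2 \longrightarrow\infty.
\]

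The final ingredient will be the classical Khintchine inequality for Hadamard lacunary trigonometric polynomials (\cite{zygmund59}, Ch.\ V): $\|P\|_{L^4}\leq A\|P\|_{L^2}$, with $A$ depending only on the lacunary ratio, for every $P$ supported on the lacunary frequency set. Combined with Paley--Zygmund this produces a constant $c_0>0$ such that
\[
\mu(\{x: |s_{n-1}(x)|> \tfrac{1}{2}\|s_{n-1}\|_{L^2}\})\geq c_0 \quad\text{for all } n.
\]
Since $\|s_{n-1}\|_{L^2}\to\infty$, for every $n$ sufficiently large this forces $\mu(\{|s_{n-1}|>2M\})\geq c_0$. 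Choosing $\epsilon<c_0/2$ at the outset makes this incompatible with the Lusin bound, and the resulting contradiction shows that no measurable solution exists.

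The hard part will be the Khintchine--Paley--Zygmund lower bound on the superlevel sets of lacunary partial sums, which rests on the $L^p$-equivalence of norms for Hadamard lacunary polynomials. Once this is granted, the remaining ingredients---Lusin's theorem, invariance of $\mu$ under $F_q$, the arithmetic verification that $\{\nu_k q^j\}$ is Hadamard lacunary, and the orthogonality computation of $\|s_{n-1}\|_{L^2}$---are routine, and the arithmetic hypothesis enters exactly through preserving the lacunary structure of the enriched frequency set.
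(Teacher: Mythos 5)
The paper does not actually prove Theorem \ref{thm:nonme} here: it only cites part 3) of Theorem 4.8 of \cite{lyubich08-axit} and remarks that the method is a development of Zygmund's lacunary-series argument for $q=2$, $h(x)=\cos x$ (\cite{zygmund59}, Ch.~V, Problem 26). Your proof is correct and is precisely that argument: the arithmetic hypothesis makes the frequency set $\{\nu_k q^j\}$ pairwise distinct and Hadamard lacunary (your relation $\omega_{i+m}=q\omega_i$ for all large $i$ does hold by counting, even though $\nu_1,\dots,\nu_m$ need not be the $m$ smallest elements of that set, so ``initial'' is a slight misnomer), whence the $L^2$--$L^4$ equivalence for lacunary polynomials plus Paley--Zygmund forces $\abs{s_{n-1}}$ to exceed any fixed bound on a set of measure at least $c_0>0$ once $n\|h\|_2^2$ is large, contradicting the Lusin/measure-preservation estimate valid for a measurable solution.
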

This is the part 3) of our Theorem 4.8 from \cite{lyubich08-axit}. 
For $q=3$ and $h(x)=\sin x$ this was announced by Kolmogorov \cite{kolmogorov25}. 
For $q=2$ and $h(x)=\cos x$ this was  proven by Zygmund 
(\cite{zygmund59}, Chapter 5, Problem 26). 
\begin{cor}
\label{cor:fin}
Let $\gamma(z)$ be a polynomial, 
\begin{equation*}
\gamma(z)=\sum_{k=1}^m c_kz^{\nu_ k},
\end{equation*}
with $\nu_k$ same as in Theorem \ref{thm:nonme}. Then all solutions to the c.e. \eqref{eq:pw}   
are nonmeasurable.
\end{cor}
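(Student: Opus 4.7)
The plan is to reduce Corollary \ref{cor:fin} directly to Theorem \ref{thm:nonme} via the standard substitution $z = e^{ix}$ and a real/imaginary-part split. The c.e.\ \eqref{eq:pw} on $\T$ is carried to the c.e.\ \eqref{eq:lyu} on $\R$ in $2\pi$-periodic functions by setting $f(x) = \varphi(e^{ix})$ and $h(x) = \gamma(e^{ix})$. Measurability of $\varphi$ on $\T$ is equivalent to measurability of $f$ as a $2\pi$-periodic function on $\R$, so it suffices to prove that no measurable $2\pi$-periodic $f$ satisfies $f(qx)-f(x)=h(x)$.

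Writing $c_k = \alpha_k + i\beta_k$ with $\alpha_k,\beta_k\in\R$, we have
\[
h(x) = \sum_{k=1}^m c_k e^{i\nu_k x}
= \sum_{k=1}^m \bigl(\alpha_k\cos\nu_k x - \beta_k\sin\nu_k x\bigr)
+ i\sum_{k=1}^m\bigl(\beta_k\cos\nu_k x + \alpha_k\sin\nu_k x\bigr),
\]
so the real part is $h_R(x)=\sum_k(\alpha_k\cos\nu_k x - \beta_k\sin\nu_k x)$. Since $\gamma\not\equiv 0$, at least one $c_k$ is nonzero, hence at least one of $\alpha_k,\beta_k$ is nonzero; by linear independence of $\{\cos\nu_k x,\sin\nu_k x\}_{k=1}^m$ over $\R$, the polynomial $h_R$ is nonzero. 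Its frequencies form a subset of $\{\nu_1,\dots,\nu_m\}$, so the hypothesis of Theorem \ref{thm:nonme} (all pairwise ratios of frequencies are not powers of $q$) is inherited.

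Now suppose toward contradiction that $\varphi$ is a measurable solution to \eqref{eq:pw}. Then $f(x)=\varphi(e^{ix})$ is a measurable $2\pi$-periodic solution to $f(qx)-f(x)=h(x)$, as noted in the introduction after \eqref{eq:1}, the scalar equation separates into real and imaginary parts. In particular, $f_R(x)=\Re f(x)$ is a measurable $2\pi$-periodic solution to
\[
f_R(qx) - f_R(x) = h_R(x), \qquad x\in\R.
\]
But Theorem \ref{thm:nonme}, applied to the (nonzero) real trigonometric polynomial $h_R$ with its frequency set, asserts that every $2\pi$-periodic solution is nonmeasurable, a contradiction.

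This argument is essentially bookkeeping; the only point to verify carefully is that the subset of frequencies actually appearing in $h_R$ still satisfies the ``no ratio is a power of $q$'' condition, which is automatic since the condition is hereditary on subsets. The whole content of the corollary is carried by Theorem \ref{thm:nonme}; no additional analytic obstacle arises.
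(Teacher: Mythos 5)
Your proposal is correct and matches the paper's intent: the paper states Corollary \ref{cor:fin} without a separate proof, treating it as immediate from Theorem \ref{thm:nonme} via the substitution $z=e^{ix}$ (already set up in the discussion of \eqref{eq:pw}) and the real/imaginary separation flagged in the introduction. Your only added care --- checking that $h_R\neq 0$ and that the ``no ratio is a power of $q$'' condition is hereditary on the surviving frequencies --- is exactly the bookkeeping needed.
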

\begin{remark}
\label{rem:final}
The self-mapping $F_qz=z^q$ of $\T$ is not minimal since $z=1$ is the fixed point.
Also, this is not uniformly stable. Indeed, by ergodicity of $F_q$  
the sequence $(F_q^nz)$ is dense in $\T$ for a.e. $z$, while $F_q^n1=1$ for all $n$.      
\qed
\end{remark}

\bibliography{cohomeq}

\vskip 1cm
  Address:

  {\it Department of Mathematics, 

    Technion, 32000, 

    Haifa, Israel}

  \smallskip
  email: {\it lyubich@tx.technion.ac.il}  

\end{document}